\documentclass[reqno,11pt,final]{article}

\usepackage{amssymb,amsmath,amsthm,enumerate,latexsym}
\usepackage{amsfonts,amsmath,mathrsfs,bm}
\usepackage{mathdots}

\usepackage{graphicx,psfrag,fullpage}
\usepackage{subfig}

\usepackage[utf8]{inputenc}   
\usepackage[english]{babel}  
\usepackage[T1]{fontenc}  
\usepackage{lmodern}

\usepackage{bbm}
\usepackage{array}
\usepackage{mathtools}

\usepackage{hyperref}
\hypersetup{
    colorlinks = true,
    citecolor=black,
    filecolor=black,
    linkcolor=blue,
    urlcolor=black
}

\usepackage{graphicx}
\usepackage{tikz}
\usetikzlibrary{arrows,automata,trees}
\usetikzlibrary{shapes}
\usetikzlibrary{calc}

\newtheorem{thm}{Theorem}[section]
\newtheorem{definition}{Definition}[section]

\newtheorem{prop}{Proposition}[section]
\newtheorem{lem}{Lemma}[section]

\theoremstyle{definition}

\newtheorem{rem}{Remark}[section]

\newcommand{\C}{\mathbb C}
\newcommand{\R}{\mathbb R}
\newcommand{\N}{\mathbb N}

\newcommand{\dS}{\mathbb S}

\newcommand{\dP}{\mathbb{P}}
\newcommand{\TR}{\Tr}
\newcommand{\dN}{\N}
\newcommand{\dE}{\E}
\newcommand{\dR}{\R}
\newcommand{\dC}{\C}

\newcommand{\cP}{\mathcal {P}}
\newcommand{\cN}{\mathcal {N}}

\newcommand{\cD}{\mathcal {D}}
\newcommand{\cF}{\mathcal {F}}
\newcommand{\cT}{\mathcal {T}}
\newcommand{\cB}{\mathcal {B}}
\newcommand{\cM}{\mathcal {F}}

\newcommand{\rF}{\mathrm{F}}
\newcommand{\T}{\intercal}
\newcommand{\cH}{\mathcal {H}}
\newcommand{\cE}{\mathcal {E}}
\newcommand{\cR}{\mathcal {R}}
\newcommand{\cA}{\mathcal {A}}

\renewcommand{\rho}{\varrho}

\newcommand{\veps}{\varepsilon}

\newcommand{\iC}{\mathrm{i}}
\newcommand{\Hsym}{$(H_\mathrm{sym}) $ }
\newcommand{\Hac}{$(H_\mathrm{ac})$ }

\DeclareMathOperator{\E}{\mathbb E}

\DeclareMathOperator{\Tr}{Tr}

\newcommand{\1}{\mathbbm 1}
\newcommand{\IND}{\1}

\newcommand{\PAR}[1]{{{\left(#1\right)}}} 
\newcommand{\ABS}[1]{{{\left| #1 \right|}}} 

\newcommand{\NRMHS}[1]{\NRM{#1}_\mathrm{{\tiny{HS}}}}  
\newcommand{\NRM}[1]{{{\left\| #1\right\|}}} 

\usepackage{float}

\DeclareMathOperator{\wind}{wind}
\DeclareMathOperator{\spectrum}{sp}
\DeclareMathOperator{\supp}{supp}

\DeclareMathOperator{\bfa}{\bf a}
\DeclareMathOperator{\diag}{diag}

\DeclareMathOperator{\adj}{adj}

\DeclareMathOperator{\SPAN}{span}

%


\usepackage{todonotes}


\title{Outliers of perturbations of  banded Toeplitz  matrices}
\author{Charles Bordenave\thanks{Aix-Marseille Univ, CNRS, I2M, Marseille, France. Email: charles.bordenave@univ-amu.fr}\,, \; Mireille Capitaine\thanks{Univ Toulouse, CNRS, IMT, Toulouse, France. Email: mireille.capitaine@math.univ-toulouse.fr}\quad and \; Fran\c cois Chapon\thanks{Univ Toulouse, CNRS, IMT, Toulouse, France. Email: francois.chapon@math.univ-toulouse.fr}}

\begin{document}
\maketitle

\begin{abstract}
Toeplitz matrices form a rich class of possibly non-normal matrices whose asymptotic spectral analysis in high  dimension is well-understood. The spectra of these matrices are notoriously highly sensitive to small perturbations. In this work, we analyze the spectrum of a banded Toeplitz matrix perturbed by a random matrix with iid entries of variance $\sigma_n^2 / n$ in the asymptotic of high dimension and $\sigma_n$ converging to $\sigma \geq 0$. Our results complement and provide new proofs on recent progresses in the case $\sigma = 0$. For any $\sigma \geq 0$, we show that the point process of outlier eigenvalues is governed by a low-dimensional random analytic matrix field, typically Gaussian, alongside an explicit deterministic matrix that captures the algebraic structure of the resonances responsible for the outlier eigenvalues.  On our way, we prove a new functional central limit theorem for trace of polynomials in deterministic and random matrices and present new variations around Szeg\H{o}'s strong limit theorem. 
\end{abstract}
\setcounter{tocdepth}{3}
\tableofcontents

\section{Introduction}

\subsection{Perturbation of banded Toeplitz matrices}
Let $T_n(\bfa)$ be a  $n\times n$ banded Toeplitz   matrix, with symbol $\bfa\colon \mathbb S^1 \to \C$ given by the Laurent polynomial
\begin{equation}
{\bf a}(\lambda) = \sum_{k=-r}^s a_k \lambda^k, \quad \lambda\in \mathbb S^1   = \{ z\in \C: |z|=1\},  \label{symbol-def}
\end{equation}
where we assume without loss of generality  that $r\geq 0$, $s>0$ and the $a_k$'s are complex numbers with $a_s \not=0$ (the symmetric case $r>0$ and $s\geq0$ readily follows by considering the transpose). For integer $n > \max(r,s) $, the matrix $T_n(\bf a)$ is thus the matrix $(a_{j-i})_{1\leq i,j \leq n}$ whose diagonals are constant:
\begin{equation} \label{defToeplitz}
T_n(\bfa) = 
\begin{pmatrix}
a_0 & a_1 & \cdots & a_s & 0 & \cdots & 0 \\
a_{-1} & a_0  &  \ddots &  \ddots &  \ddots & \ddots & \vdots \\
\vdots  &  \ddots & \ddots &  \ddots &  \ddots &  \ddots & 0 \\
a_{-r}  &  \ddots &  \ddots & \ddots &  \ddots &  \ddots & a_s \\
0 & \ddots  &  \ddots &  \ddots &  \ddots &  \ddots & \vdots   \\
\vdots  & \ddots & \ddots  &  \ddots &  \ddots & a_0 &  a_1 \\
0 & \cdots & 0 & a_{-r} & \cdots & a_{-1} & a_0
\end{pmatrix}.
\end{equation}

In this paper, we are interested in random perturbations of the matrix $T_n(\bfa)$ in the asymptotic of large dimension $n \to \infty$. That is, we consider the matrix 
\begin{equation}\label{eq:defMn}
M_n = T_n(\bfa) + \sigma_n Y_n,
\end{equation}
where $\sigma_n >0$ and $Y_n = (Y_{ij})_{1 \leq i,j \leq n}$ is some noise matrix normalized so that its normalized Hilbert-Schmidt norm is of order one: 
$$
\dE \frac{1}{n} \| Y \|_{\text{HS}} ^2 = \dE  \frac {1} {n} \sum_{i,j} |Y_{ij}|^2  = 1,
$$
(see below for the precise assumptions on $Y_n$ considered in this paper). With this scaling, the matrices $Y_n$ and $T_n$ are of comparable  Hilbert-Schmidt norms. We will assume that the following limit exists: for some real $\sigma$, 
\begin{equation}\label{eq:defsigma}
\lim_{n \to \infty} \sigma_n = \sigma.
\end{equation}
We will distinguish two distinct regimes: 
\begin{enumerate}[(i)]
\item
$\sigma >0$ 
\item
$\sigma = 0$ and   $\lim_{n \to \infty} \frac{1}{n } \ln \sigma_n = 0$.
\end{enumerate}
The first case corresponds to a macroscopic perturbation of $T_n(\bfa)$. The second case corresponds to a microscopic perturbation which, as illustrated notably \cite{SjostrandVogel,basak-zeitouni20}, is however large enough to have an important effect on the spectrum. We will refer in the sequel these two cases as $\sigma >0$ and $\sigma = 0$.

There are multiple motivations to study random matrices of the form \eqref{eq:defMn}. One of them comes from numerical matrix analysis. The eigenvalues and eigenvectors of non-normal matrices (such as $T_n(\bfa)$ when $a_i \ne \bar a_{-i}$ for some $i$) can be very sensitive to matrix entries and rounding errors. As a consequence numerical outputs for eigenvalues can be completely wrong even for matrices of low dimensions, see Figure \ref{fig:example_num} for an illustration. Attempts to capture this phenomenon include the notion of pseudo-spectrum, see Trefethen and Embree \cite{trefethen2005spectra}. Von Neumann and Goldstine \cite{bams/1183511222} initiated the idea to study numerical rounding errors in matrix computations by random noise. This idea has now a very rich history, see for example Spielman and Teng \cite{10.1145/380752.380813} or Edelman and Rao
\cite{edelman_rao_2005}. 
As pointed in \cite[Chapter 7]{trefethen2005spectra}, non-normal Toeplitz matrices are the best understood non-normal matrices and they are the most obvious choice of matrices to study spectral instability in great details.  This fruitful approach has notably been followed in \cite{bordenave-capitaine16,SjostrandVogel0,SjostrandVogel,BasakPaquetteZeitouni20,basak-zeitouni20,BVZ,alt2024spectrumoccupiespseudospectrumrandom}. As pointed in \cite{SjostrandVogel0,SjostrandVogel}, beyond numerical analysis, another motivation for studying random matrices of the form \eqref{eq:defMn} comes from non-Hermitian quantum mechanics, see \cite{PhysRevLett.77.570,GoldsheidKhoruzhenko}.

\begin{figure}[ht]
    \centering
    \subfloat[\centering $S T(\bfa) S^*$, $n=10$.] {{\includegraphics[width=3.9cm]{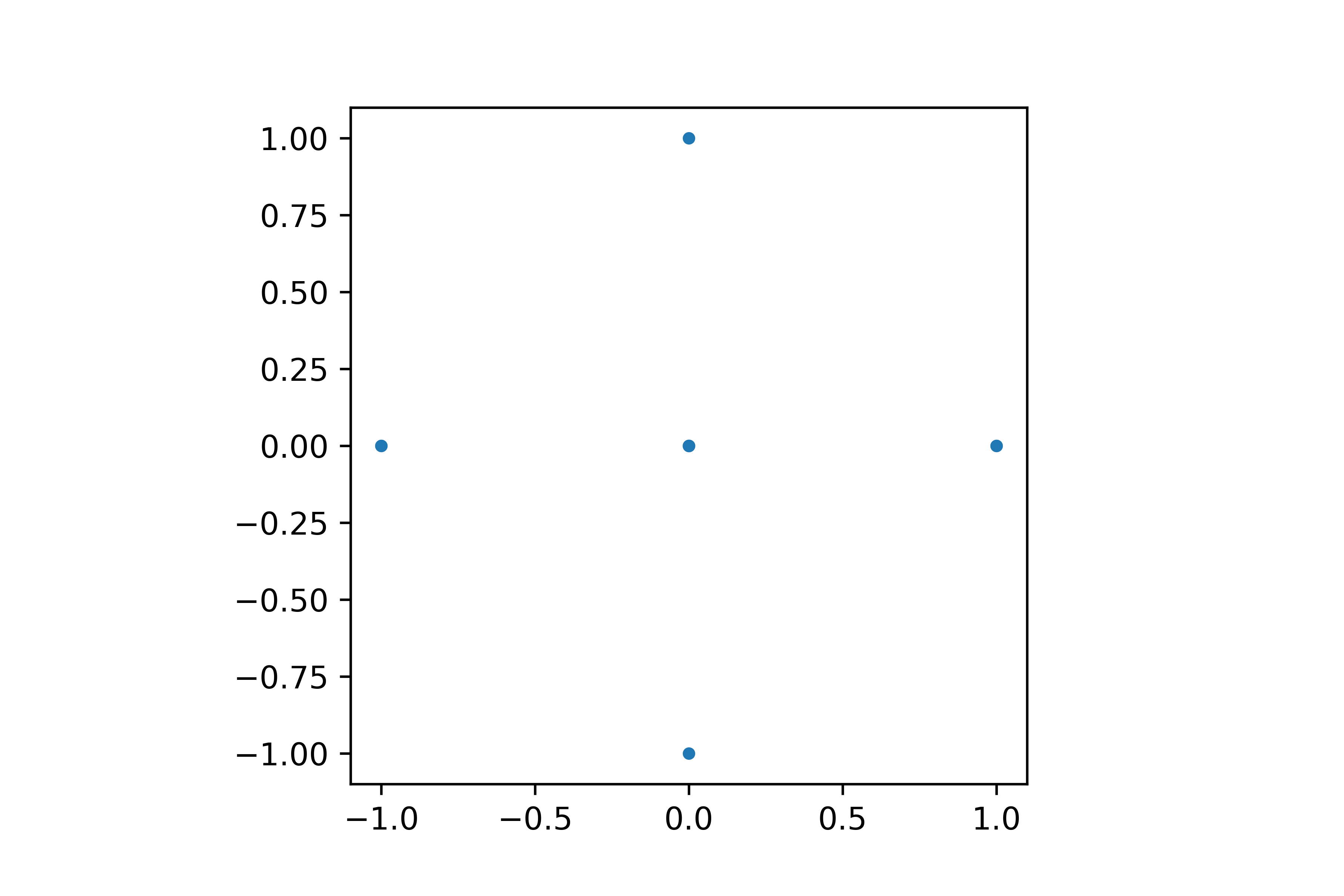} }}        \subfloat[\centering \centering $S T(\bfa) S^*$, $n=2107$.]{{\includegraphics[width=3.9cm]{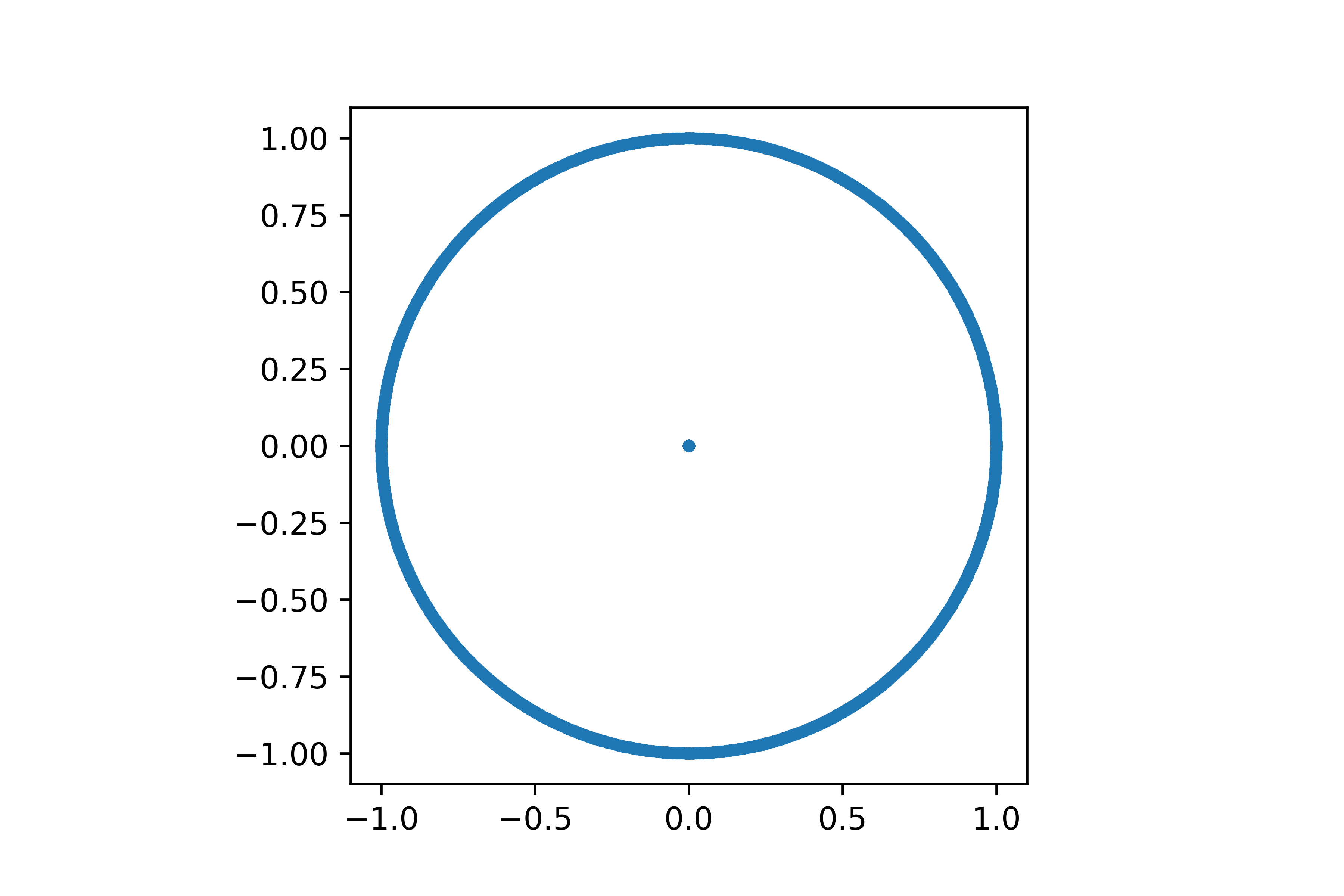} }}     \subfloat[\centering $F T(\bfa) F^*$, $n=10$.] {{\includegraphics[width=3.9cm]{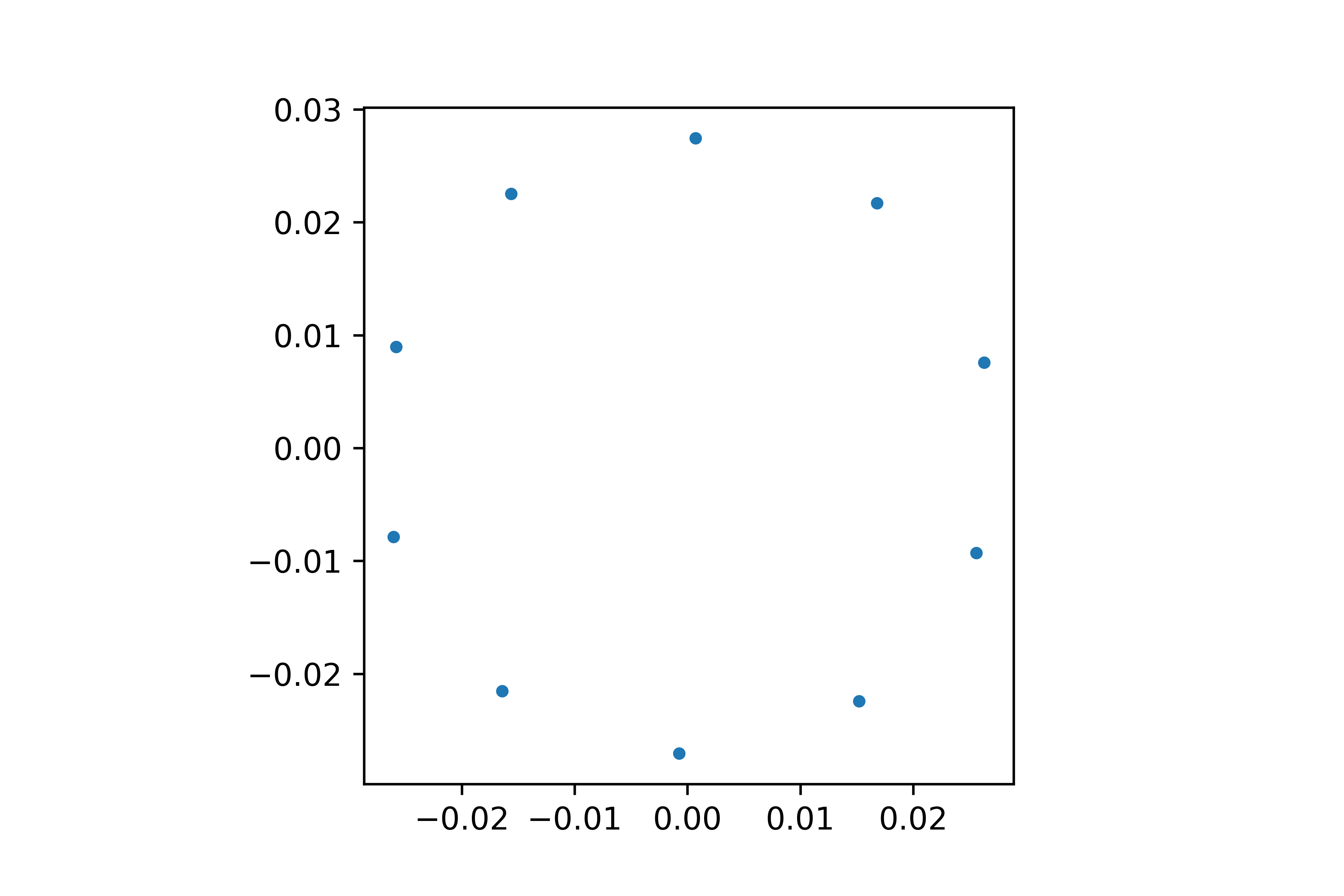} }}%
    \subfloat[\centering $F T(\bfa) F^*$, $n=2107$.]      {{\includegraphics[width=3.9cm]{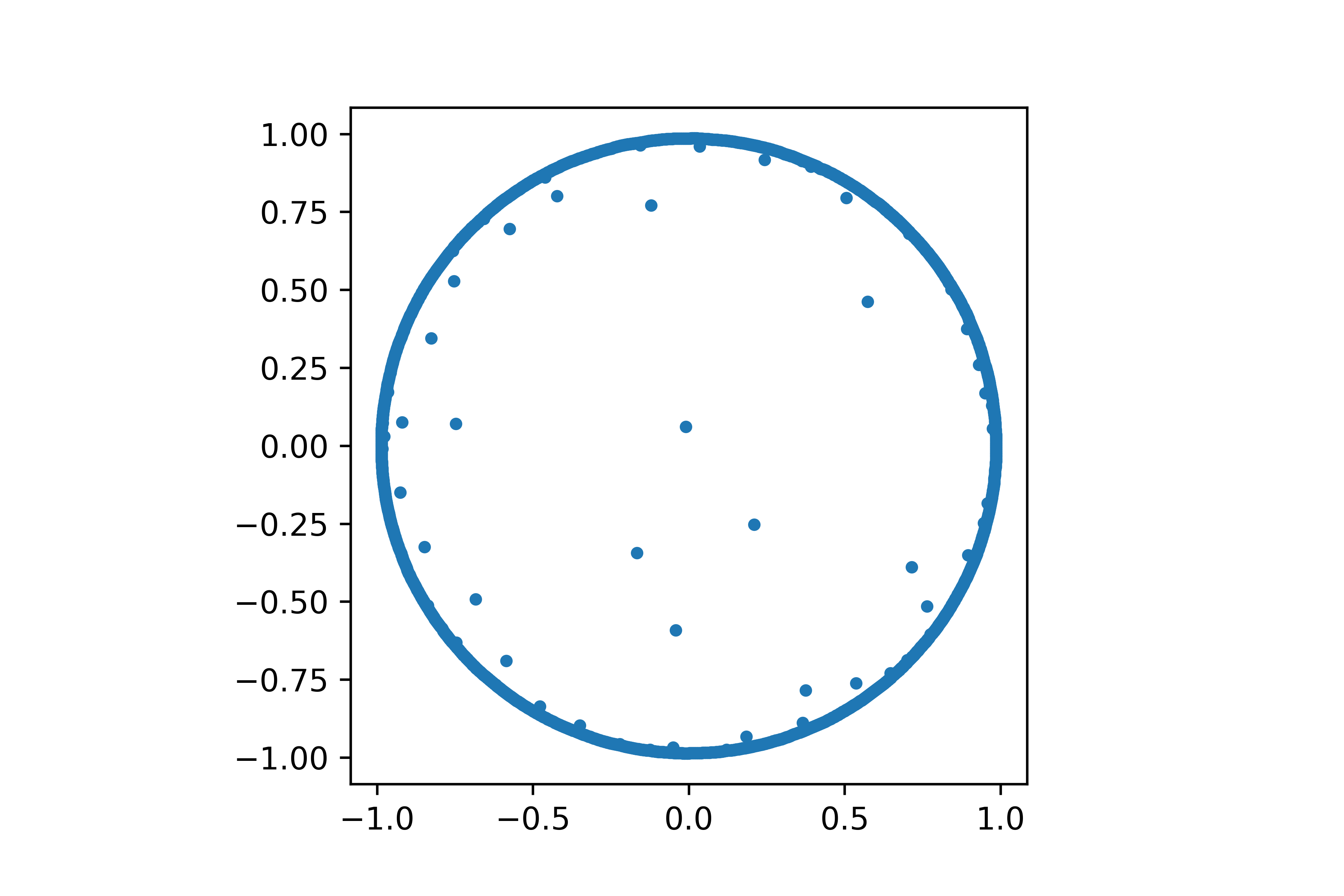} }}
    \caption{Output for eigenvalues in Python using {\ttfamily numpy.linalg.eig} for the nilpotent matrix $\bfa(\lambda) = \lambda$. (a)-(b): $S$ is a (random) permutation matrix which hides the upper-triangular structure, and (c)-(d) $F$ is the Fourier basis, see \eqref{eq:defFn}, which introduces rounding errors in matrix entries.}%
    \label{fig:example_num}%
\end{figure}

\subsection{Assumptions on the noise matrix}

Our matrix $Y_n$ will have independent and identically distributed entries in an orthogonal basis. More precisely, we assume that 
$$
Y_n = U_n \frac{X_n}{\sqrt n} U_n^*,
$$ 
where $U_n$ is a  $n\times n$ unitary matrix and $X_n = (X_{ij})_{1 \leq i,j \leq n}$ will satisfy the assumption:  
\begin{enumerate}
\item[$(H_0)$]\label{H0} $(X_{ij})_{i,j \geq 1}$  are independent and identically distributed complex random variables with $\dE X_{ij} = 0$, $\dE |X_{ij}|^2 =1$, $\dE X_{ij}^2=\varrho$, for some $0 \leq \varrho \leq 1$.
\end{enumerate}

To be precise, the common distribution of the random variables $X_{ij}$ is assumed to be independent of $n$. Throughout this paper, we implicitly assume that $(H_{0})$ holds. For technical reasons, some of the following assumptions will also be needed in the statements of our results. Below, we say that the distribution of a complex random variable $X$ is symmetric if $X$ and $-X$ have the same distributions.  For real $k > 2$, we consider the assumptions:
\begin{enumerate}
\item[$(H_{k})$] \label{H1}  $ \dE |X_{ij}|^k   < \infty$.
\item[\Hsym] \label{H2} The distribution of $X_{ij}$ is symmetric.
\item[\Hac]\label{H3} The law of $X_{11}$   is absolutely continuous with respect to the Lebesgue measure on $\dC \simeq \dR^2$ or on $\omega \dR$, $\omega \in \dS^1$.
\end{enumerate}

We conjecture than none of the assumptions $(H_k)$-\Hsym are necessary for the main results below of this introduction to hold (that is, $(H_0)$ should suffice), this conjecture is notably supported by \cite{BasakPaquetteZeitouni20,basak-zeitouni20,BCG} where closely related results are established without  these assumptions.

There are two particularly interesting choices of unitary matrices $U_n$ which correspond respectively to the canonical basis and the discrete Fourier basis: that is, $U_n = I_n$ and $U_n = F^*_n$ with $F_n = (F_n(i,j))_{1 \leq i,j \leq n}$ given, with $\omega_n = e^{2\iC \pi / n}$, by 
%
\begin{equation}\label{eq:defFn}
F_n= \frac{1}{\sqrt{n}} 
\begin{pmatrix} 1&1&1&\cdots&1\\ 1& \omega_n & \omega_n^2& \cdots &\omega_n^{n-1}\\
\\ 1& \omega_n^2 & \omega_n^4& \cdots &\omega_n^{2(n-1)}\\
\vdots&\vdots& \vdots& & \vdots\\\\ 1& \omega_n^{n-1} & \omega_n^{2(n-1)}& \cdots &\omega_n^{(n-1)(n-1)}\end{pmatrix}.
\end{equation}
The Fourier basis is especially natural because it is the basis which diagonalizes the circulant matrix $C_n(\bfa)$ associated to the symbol $\bfa$ (see below). Note finally that if the distribution of $X_{ij}$ is the complex standard Gaussian distribution on $\dC$ (corresponding to $X_{ij}$ Gaussian and $\rho =0$) then the distribution of $Y_n$ does not depend on the unitary matrix $U_n$.

\subsection{Convergence of the empirical spectral distribution}

Before turning to the main focus of this paper, we describe the asymptotic behavior of typical eigenvalues of $M_n = T_n(\bfa) + \sigma_n Y_n$ defined in \eqref{eq:defMn}.

In the case $\sigma = 0$, under mild assumptions on $Y_n$,  \cite{BasakPaquetteZeitouni20} proved that the empirical measure of the eigenvalues (or empirical spectral distribution) of $M_n$ converges weakly, in probability to $\beta_0$ defined as the law of ${\bf a}(U)$ with $U$ uniformly distributed on 
$\mathbb{S}^1$ (see also \cite{SjostrandVogel} for the case of Gaussian noise). Their assumptions are for example met if $\sigma_n = n^{-\gamma}$, $\gamma > 0$ and if assumptions $(H_0)$ holds. Beware that if $Y_n = 0$, that is $M_n = T_n(\bfa)$ the empirical spectral distribution does not converge toward $\beta_0$ in the non-normal case, see Subsection \ref{subsec:limitESD} below.

In the case $\sigma > 0$, if Assumption $(H_0)$ holds, the empirical spectral distribution of $M_n$ converges towards a probability measure $\beta_\sigma$ on $\dC$. The measure $\beta_\sigma$ is the Brown's spectral measure of ${\bf a}(u)+\sigma c$ where $c$ and $u$ are free noncommutative random variables in some tracial noncommutative probability space $(\mathcal A, \phi)$ such that $c$ is a circular noncommutative random variable and $u$ is Haar distributed. This result will be explained in Section \ref{model} below and is a consequence of the combinations of the main results of \cite{sniady02} and \cite{tao-vu08}.

For $\sigma >0$, the measure $\beta_\sigma$ has a relatively explicit expression (see Section \ref{model}). We will prove the following result valid for all $\sigma  \geq 0$. 
 \begin{lem} \label{le:supportbeta}
For $\sigma \geq 0$, the support of $\beta_\sigma$ is given by
 \begin{equation*}
\supp (\beta_\sigma) = \left\{ z \in \dC  :   \frac{1}{2\pi}\int_0^{2\pi} \frac{d\theta}{\vert {\bf a}(e^{i\theta})-z\vert^2}  \geq \sigma^{-2}\right\} 
\end{equation*}
where by convention $1/0 = \infty$, so that $\supp(\beta_0) = \bfa (\dS^1)$.
\end{lem}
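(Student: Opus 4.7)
The plan is to identify $\beta_\sigma$ as the Brown measure of $X = \mathbf{a}(u) + \sigma c$ (as recalled in the text preceding the lemma), where $u$ is Haar unitary and $c$ is a standard circular element free from $u$ in a tracial $W^*$-probability space $(\mathcal{A}, \phi)$. Since $\mathbf{a}(u)$ is normal, its spectral distribution is $\nu := \mathbf{a}_*(\mathrm{Unif}(\dS^1))$, and the change of variable $\lambda = \mathbf{a}(e^{\iC\theta})$ gives
\[
\frac{1}{2\pi}\int_0^{2\pi}\frac{d\theta}{|\mathbf{a}(e^{\iC\theta})-z|^2} \;=\; \int_{\dC}\frac{d\nu(\lambda)}{|\lambda - z|^2}.
\]
So it suffices to show $\supp(\beta_\sigma) = \mathcal{S}_\sigma := \{z \in \dC : \int d\nu(\lambda)/|\lambda - z|^2 \geq \sigma^{-2}\}$. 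The case $\sigma = 0$ is immediate: $\beta_0 = \nu$ has support $\mathbf{a}(\dS^1)$, which coincides with $\mathcal{S}_0$ under the convention $1/0 = \infty$.

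For $\sigma > 0$ and the inclusion $\supp(\beta_\sigma)\subseteq\mathcal{S}_\sigma$, fix $z_0\notin\mathcal{S}_\sigma$; the integral being finite forces $z_0\notin\mathbf{a}(\dS^1)$, so $D_z := \mathbf{a}(u)-z$ is invertible on a neighborhood $V$ of $z_0$ and one factors $X - z = D_z(I + w_z)$ with $w_z = \sigma D_z^{-1}c$. Since $c$ is circular and free from $D_z^{-1}$, the element $w_z$ is $R$-diagonal (a classical consequence of circularity and freeness, cf.\ Nica--Speicher), and the Haagerup--Larsen theorem gives that its Brown measure is rotationally invariant with outer radius
\[
r_+(z)^2 \;=\; \phi(w_z w_z^*) \;=\; \sigma^2 \phi(|D_z^{-1}|^2) \;=\; \sigma^2 \int \frac{d\nu(\lambda)}{|\lambda - z|^2},
\]
using freeness of $cc^*$ and $|D_z^{-1}|^2$ together with $\phi(cc^*)=1$. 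On a neighborhood $V'\subset V$ of $z_0$, $r_+(z)<1$, so $-1$ lies outside $\supp(\beta_{w_z})$. The mean-value identity for logarithmic potentials of rotationally invariant probability measures with support in $\{|\xi|<1\}$ yields $\phi\log|I + w_z| = 0$, and multiplicativity of the Fuglede--Kadison determinant then gives
\[
L(z) := \phi\log|X - z| \;=\; \phi\log|D_z| \;=\; \int_{\dC}\log|\lambda - z|\, d\nu(\lambda) \quad \text{for } z \in V'.
\]
This is harmonic on $V'$ since $V'\cap\supp(\nu)=\emptyset$, so $\beta_\sigma = \frac{1}{2\pi}\Delta_z L$ vanishes on $V'$ and $z_0\notin\supp(\beta_\sigma)$.

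For the reverse inclusion $\mathcal{S}_\sigma \subseteq \supp(\beta_\sigma)$, note that $z\mapsto r_+(z)^2$ is continuous on $\dC\setminus\mathbf{a}(\dS^1)$ and tends to $+\infty$ as $z\to\mathbf{a}(\dS^1)$, so $\Omega := \{z : r_+(z) > 1\}$ is an open subset of $\dC\setminus\mathbf{a}(\dS^1)$ whose closure equals $\mathcal{S}_\sigma$. On $\Omega$, applying the explicit Haagerup--Larsen formula for the density of the Brown measure of the $R$-diagonal element $w_z$ (and no longer only its support) gives a strictly positive contribution to $\Delta_z \phi\log|I + w_z|$, hence to $\Delta_z L$, so $\Omega\subseteq\supp(\beta_\sigma)$; taking closures finishes the inclusion, the boundary $\mathbf{a}(\dS^1)$ being captured automatically since it lies in $\overline{\Omega}$.

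The main obstacle is this reverse inclusion: one must upgrade the support characterization for $w_z$ to a statement about the strict positivity of the density of $\beta_\sigma$ on $\Omega$, which requires the full Haagerup--Larsen density formula and some care at the boundary $\{r_+(z)=1\}$ and on $\mathbf{a}(\dS^1)$. The forward inclusion, by contrast, uses only the clean support statement for rotationally invariant Brown measures together with multiplicativity of the Fuglede--Kadison determinant, and is conceptually the cleaner half of the argument.
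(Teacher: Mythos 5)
Your forward inclusion $\supp(\beta_\sigma)\subseteq\mathcal S_\sigma$, where $\mathcal S_\sigma=\{z: I(z)\ge\sigma^{-2}\}$ and $I(z)=\frac{1}{2\pi}\int_0^{2\pi}|\bfa(e^{i\theta})-z|^{-2}d\theta$, is correct and follows a genuinely different route from the paper: the paper simply imports from the literature (Zhong, Bercovici--Zhong, as recalled in Section 3.1) the identification $\supp(\beta_\sigma)=\overline{A}$ with $A=\{z: I(z)>\sigma^{-2}\}$, whereas you re-derive the ``no mass outside'' half from scratch via R-diagonality of $w_z=\sigma(\bfa(u)-z)^{-1}c$, the Haagerup--Larsen support theorem, and multiplicativity of the Fuglede--Kadison determinant. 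The computation $r_+(z)^2=\sigma^2 I(z)$, the vanishing $\phi\log|1+w_z|=0$ when $r_+(z)<1$ (rotational invariance plus the circular mean of $\log|1+\xi|$), and the resulting harmonicity of $L(z)=\phi\log|{\bf a}(u)+\sigma c-z|$ near $z_0$ are all sound.

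The reverse inclusion, however, contains a genuine gap, and it is the half that carries the content of the lemma. First, you assert without proof that $\overline{\Omega}=\mathcal S_\sigma$, i.e.\ that the closure of the strict superlevel set $\{I>\sigma^{-2}\}$ is the whole non-strict set $\{I\ge\sigma^{-2}\}$. For a general continuous function this can fail (a plateau or an isolated local maximum at the level $\sigma^{-2}$ would produce points of $\mathcal S_\sigma$ not in the closure), and it is exactly the point to which the paper devotes its proof: one computes $\partial_z\partial_{\bar z}I>0$ off $\bfa(\dS^1)$, so $I$ is strictly subharmonic, has no local maximum, and hence no interior point of $\{I\le\sigma^{-2}\}$ can satisfy $I=\sigma^{-2}$. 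Some argument of this kind is indispensable even if you take the support identification $\supp(\beta_\sigma)=\overline A$ from the literature. Second, your claim that the explicit Haagerup--Larsen density formula for $\beta_{w_z}$ ``gives a strictly positive contribution to $\Delta_z\phi\log|1+w_z|$'' is not a proof: Haagerup--Larsen describes the measure in the variable $\xi$ for each \emph{fixed} $z$, while what you need is the Laplacian in $z$ of the family $z\mapsto\phi\log|1+w_z|$; converting the former into strict positivity of the density of $\beta_\sigma$ on $A$ is precisely the content of the works cited in Section 3.1 (via subordination and the implicitly defined function $\omega(z)$), and you flag it yourself as ``the main obstacle'' rather than supplying it. As written, one of the two inclusions in the statement is left unproven.
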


Figure~\ref{fig:example_brown} shows an example, for the symbol $\bfa(t)=t^2+2t+it^{-2}-0.5it^{-3}$ and $\sigma=0.8$, of the curve $\bfa(\mathbb S^1)$ and  the support of the Brown measure $\beta_\sigma$. 

\begin{figure}[ht]
    \centering
    \subfloat[\centering The curve $\bfa(\mathbb S^1)$. ]{{\includegraphics[width=5cm]{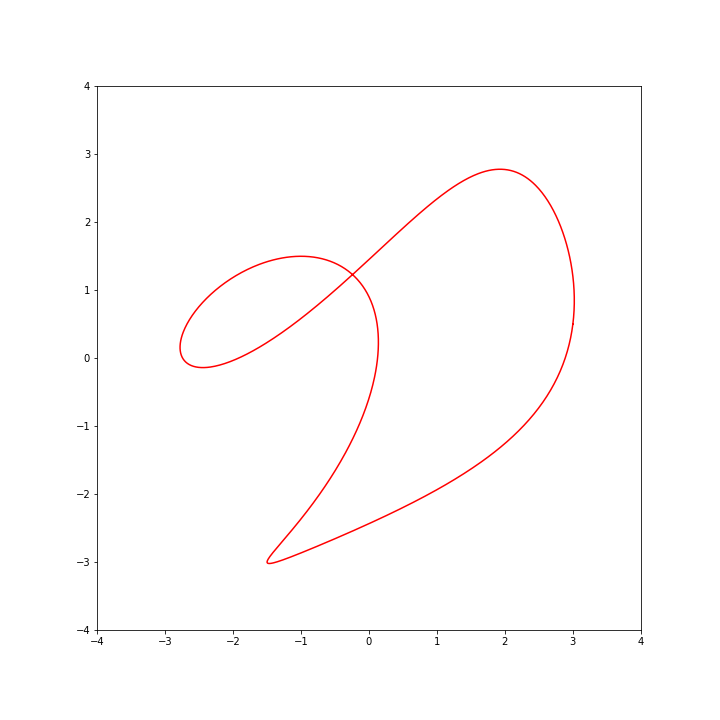} }}%
    \qquad
    \subfloat[\centering The curve $\bfa(\mathbb S^1)$ and the boundary of $\supp \beta_\sigma$. ]{{\includegraphics[width=5cm]{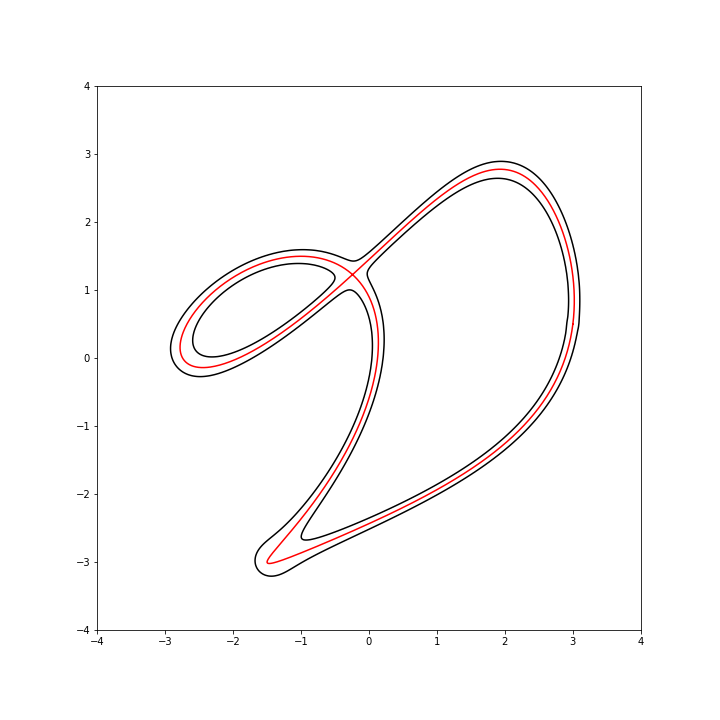} }}%
    \caption{Example of the closed curve  $\bfa(\mathbb S^1)$ and the support of the Brown measure $\beta_\sigma$, for the symbol $\bfa(t)=t^2+2t+it^{-2}-0.5it^{-3}$ and $\sigma=0.8$.}%
    \label{fig:example_brown}%
\end{figure}

The objective of this paper is to describe the eigenvalues of $M_n$ in closed regions of $\dC \backslash \supp(\beta_\sigma)$ which we will call the outlier eigenvalues. For $\sigma = 0$, $U_n = I_n$ (canonical basis) this objective has essentially been completed in \cite{SjostrandVogel,basak-zeitouni20} and, with a very different approach, for $\sigma >0$, $r = 0$, $s=1$ in  \cite{bordenave-capitaine16} (in the Fourier basis).

\subsection{Stable region}

The infinite dimensional  Toeplitz operator
 $T({\bf a})$ is the operator on $\ell^2 (\N)$ defined by 
\[
(T({\bf a})x )_i= \sum_{l=-r}^s a_l  x_{l+i}, \; \mbox{for $i\in \N$, \; where}\;x  =(x_1,x_2,\ldots)
\]
and we set $x_i=0$ for non-positive integer values of $i$. 
The spectrum of $T({\bf a})$ is given by, see \cite[Corollary~1.11]{bottcher-grudsky05},
\[
\spectrum T(\bfa) = \bfa(\mathbb S^1) \cup \left\{z \in \C\setminus \bfa(\mathbb S^1) :  \wind(\bfa-z)\not=0\right\},
\]
where $\wind(\mathbf{b})$ is the winding number around $0$ of a closed curve $\mathbf{b} : \dS^1  \to \C$. If $b$ extends to a meromorphic function, it is equal to the number of zeroes of $\mathbf{b}$ in the unit disk $\mathbb D$ minus the number of its poles in $\mathbb D$.

The following result asserts that there are typically no outliers of $M_n$ in $\spectrum(T(\bfa))^c$ as $n \to \infty$.

 \begin{thm} \label{thm-no-outliers}
Assume that $(H_4)$ holds.
 Let $\Gamma \subset  \dC \backslash ( \supp(\beta_\sigma) \cup \spectrum T(\bfa) )$ be a compact set. Then, almost surely,  for all $n$ large enough, there are no eigenvalues of $M_n$ in $\Gamma$.
 \end{thm}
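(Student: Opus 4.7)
The plan is to show that, almost surely, for all sufficiently large $n$,
\[
\inf_{z \in \Gamma} s_{\min}(M_n - z) \geq c > 0,
\]
which immediately precludes eigenvalues of $M_n$ in $\Gamma$. The proof rests on controlling the deterministic Toeplitz part via Widom's finite section theorem and then absorbing the random perturbation.

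Since $\Gamma$ is compact and disjoint from $\spectrum T(\bfa)$, the resolvent $(T(\bfa) - z)^{-1}$ is bounded and continuous in $z$ on $\Gamma$. As $\wind(\bfa - z) = 0$ for every $z \in \Gamma$, the finite section method for banded Toeplitz matrices is stable: $\|(T_n(\bfa) - z)^{-1}\|_{\mathrm{op}} \to \|(T(\bfa) - z)^{-1}\|_{\mathrm{op}}$ uniformly on $\Gamma$, giving
\[
\liminf_{n \to \infty} \inf_{z \in \Gamma} s_{\min}(T_n(\bfa) - z) \geq \kappa > 0.
\]
In the microscopic regime $\sigma = 0$, Bai–Yin (valid under $(H_4)$) yields $\|Y_n\|_{\mathrm{op}} = \|X_n\|_{\mathrm{op}}/\sqrt n = O(1)$ a.s., hence $\|\sigma_n Y_n\|_{\mathrm{op}} \to 0$. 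Weyl's inequality $s_{\min}(A+B) \geq s_{\min}(A) - \|B\|_{\mathrm{op}}$ immediately gives $s_{\min}(M_n - z) \geq \kappa/2$ uniformly on $\Gamma$ for $n$ large, and the theorem follows.

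In the macroscopic regime $\sigma > 0$, the norm $\|\sigma_n Y_n\|_{\mathrm{op}}$ is comparable to $\sigma$ and may exceed $\kappa$, so Weyl's inequality alone is insufficient; one must exploit the randomness together with the hypothesis $z \notin \supp(\beta_\sigma)$. The route is hermitization: bound $s_{\min}(M_n - z)^2 = \lambda_{\min}((M_n - z)^*(M_n - z))$ via the Stieltjes transform of the empirical singular value distribution of $M_n - z$. Decompose $T_n(\bfa) = C_n(\bfa) + R_n$ with $\mathrm{rank}\,R_n \leq r + s$. The model $C_n(\bfa) + \sigma_n Y_n$ is normal-plus-i.i.d.\ noise, and its empirical singular value distribution at the point $z$ converges to that of $\bfa(u) + \sigma c - z$ in a tracial noncommutative probability space with $u$ Haar unitary, $c$ circular and $u, c$ free. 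By Lemma \ref{le:supportbeta}, $z \notin \supp(\beta_\sigma)$ is exactly the condition that $0$ lies outside the support of this limiting distribution, so concentration under $(H_4)$ yields a uniform lower bound on $s_{\min}(C_n(\bfa) + \sigma_n Y_n - z)$ on $\Gamma$.

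It remains to control the rank-$(r+s)$ correction $R_n$. A Schur complement (Weinstein–Aronszajn) argument, using Step~1 to bound the action of $(T_n(\bfa) - z)^{-1}$ on the $O(1)$-dimensional range of $R_n$, reduces the determinantal equation $\det(M_n - z) = 0$ to the vanishing of an $O(1) \times O(1)$ analytic matrix-valued function of $z$; its zeros accumulate only on the asymptotic resonance set $\spectrum T(\bfa)$, which is disjoint from $\Gamma$. The technical heart — and the main obstacle — is the quantitative concentration of $s_{\min}$ for the normal-plus-noise matrix $C_n(\bfa) + \sigma_n Y_n$ around its free limit, uniformly in $z \in \Gamma$, under the moment assumption $(H_4)$.
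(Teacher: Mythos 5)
Your microscopic case is fine, and is in fact a more elementary route than the paper's for that regime: finite-section stability on $\C\setminus\spectrum T(\bfa)$ gives $\inf_{z\in\Gamma} s_{\min}(T_n(\bfa)-z)\ge\kappa$, and Bai--Yin plus Weyl absorbs the vanishing noise. The genuine gaps are in the case $\sigma>0$, which is the heart of the theorem. First, passing from convergence of the empirical singular value distribution of $C_n(\bfa)+\sigma_n Y_n-z$ to a uniform lower bound on its smallest singular value is not a consequence of weak convergence plus ``concentration'': weak convergence says nothing about outlier singular values, and one needs a no-outlier theorem for a well-conditioned normal matrix plus i.i.d.\ noise. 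This is precisely what the paper imports from Bordenave--Capitaine in Proposition \ref{nooutlier}; you flag it as the ``main obstacle'' but leave it unproven, so as written this step is an assertion, not a proof.

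Second, and more seriously, your finite-rank correction step is circular. Once $s_{\min}(S_n-z)\ge c$ with $S_n=C_n(\bfa)+\sigma_n Y_n$, the Weinstein--Aronszajn/Sylvester reduction gives $\det(z-M_n)=\det(z-S_n)\,\det\bigl(I_{r+s}+Q_n(z-S_n)^{-1}P_n\bigr)$, and the entire content of the theorem is that this $(r+s)\times(r+s)$ determinant does not vanish on $\Gamma$. You assert that its zeros ``accumulate only on $\spectrum T(\bfa)$'' without argument; note also that the resolvent entering the reduction must be that of $S_n$, not of $T_n(\bfa)$ (the noise has full rank, so bounding $(T_n(\bfa)-z)^{-1}$ on the range of the rank-$(r+s)$ correction does not set up the relevant Schur complement). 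To close the gap one needs, as the paper does, that $Q_n(z-S_n)^{-1}P_n-Q_n(z-C_n(\bfa))^{-1}P_n\to 0$ uniformly on $\Gamma$, and that $\det\bigl(I_{r+s}+Q_n(z-C_n(\bfa))^{-1}P_n\bigr)=\det(T_n(\bfa)-z)/\det(C_n(\bfa)-z)$ stays uniformly away from $0$ when $\wind(\bfa-z)=0$ --- obtained there from Szeg\H{o}'s strong limit theorem together with the explicit circulant determinant, and concluded via Rouch\'e's theorem (equivalently, one needs the invertibility of $I+\cH(z)$ exactly when the winding number vanishes). Without an input of this type, the claim that the reduced determinant has no zeros in $\Gamma$ is a restatement of the theorem rather than a proof of it.
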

 
Note that if $\sigma = 0$ then $\supp(\beta_0) = \bfa(\dS^1) \subset \spectrum (T(\bfa))$. In the language of \cite{bordenave-capitaine16,SjostrandVogel}, the region $\spectrum(T(\bfa))^c$ is stable. The random perturbation $M_n$ of $T_n(\bfa)$ has typically no outliers in this region. Our moment condition is probably not optimal: when $U_n = I_n$, if $\bfa = 0$ or $\bfa \ne 0$, $\sigma = 0$, $\sigma_n = n^{-\gamma}$ for some $\gamma >0$, then \cite{BCG} and \cite{basak-zeitouni20} prove respectively that Theorem \ref{thm-no-outliers} holds on the sole condition $(H_0)$. 

\subsection{Unstable region}

To complement Theorem \ref{thm-no-outliers}, we are now interested in the individual eigenvalues of $M_n$ in a compact subsets of $ \spectrum T(\bfa) \backslash \supp(\beta_\sigma)$. In the language of \cite{bordenave-capitaine16,SjostrandVogel}, the region $\spectrum(T(\bfa))$ is unstable and the study of outliers exhibits many interesting phenomena.

When $U_n = I_n$,  $\sigma = 0$, $\sigma_n = n^{-\gamma}$ for some $\gamma >0$ and the entries of $X$ satisfy  a density assumption, Basak and Zeitouni \cite{basak-zeitouni20} proved that, for each $-r \leq \mathfrak{\delta}\leq s$, $\delta \ne 0$, the process of outliers in the region $$ \mathcal R_{0,\sigma} = \left\{z \in \C\setminus \bfa(\mathbb S^1)  :  \wind(\bfa-z)=\mathfrak{\delta}\right\} \subset \spectrum T(\bfa) \backslash \supp(\beta_0)$$
converges to the point process described by the zero set of some random analytic function which differs across the above regions indexed by $\delta$. When $U_n = F_n$ is the Fourier basis, $\sigma >0$ and $r=0$, $s= 1$, \cite{bordenave-capitaine16} established the convergence of the point process of outliers inside $\spectrum T(\bfa) \backslash \supp(\beta_\sigma)$ toward the zero set of a Gaussian analytic function.

In the present paper, we are both interested in the cases $\sigma = 0$ and $\sigma > 0$. We establish the convergence of the point process of outliers of $M_n$ in $\spectrum T({\bf a})\backslash \supp(\beta_\sigma) $ toward the point process of  the zeroes of some random analytic function with an explicit correlation kernel. We clarify when this random analytic function will be Gaussian or not. In particular, when $\sigma = 0$, in comparison to \cite{basak-zeitouni20}, we give an alternative expression for the random analytic function.

\paragraph{Banded Toeplitz matrices as perturbed circulant matrices.} To state our main results in the unstable region, we start with a basic but nevertheless key observation. As in \cite{bordenave-capitaine16}, we see banded Toeplitz matrices  as small rank perturbations of normal matrices (that is a matrix with an orthonormal basis of eigenvectors).  More precisely, given a row vector $(c_0,c_1,\ldots,c_{n-1}) \in \C^n$, the associated circulant matrix $\text{circ}(c_0,c_1,\ldots,c_{n-1})$  is the $n\times n$ matrix whose first row is  $(c_0,c_1,\ldots,c_{n-1})$ and such that each row  is rotated one element to the right relative to the preceding row, that is: 
\[
\text{circ}(c_0,c_1,\ldots,c_{n-1}) =
\begin{pmatrix}
c_0 & c_1 & c_2 & \cdots & c_{n-1} \\
c_{n-1} & c_0 & c_1 & \cdots & c_{n-2} \\
c_{n-2} & c_{n-1} & c_0 & \cdots & c_{n-3} \\
\vdots & \vdots & \vdots & \ddots & \vdots \\
c_1 & c_2 & c_3 & \cdots & c_0
\end{pmatrix}. 
\]

Circulant matrices are special cases of (not necessarily banded) Toeplitz matrices, but are much simpler from the viewpoint of spectral theory. 
To the Toeplitz matrix $T_n(\bfa)$ with symbol the Laurent polynomial $\bfa$, is then associated, for $n > r+s$,  the circulant matrix $C_n(\bfa)$ defined by
\begin{equation} \label{def-circulant}
C_n(\bfa) = \text{circ} (a_0,a_1,\ldots,a_s, \underbrace{0,\ldots,0}_{\mathclap{n-(r+s+1) \text{ times}} }  , a_{-r},\ldots,a_{-2},a_{-1}) . 
\end{equation}

Our study of outliers of noisy banded Toeplitz matrices 
is based on the following decomposition of  $T_n(\bfa)$:
if $n >r+s$, we write
\[
T_n(\bfa)=C_n(\bfa) + B_n,
\]
where 
\[
B_n = -
\begin{pmatrix}
0_{ r \times s} & 0_{r \times (n-r-s)} & D_{r} \\
0_{(n-r-s) \times s} &0_{(n-r-s)\times (n-r-s)} & 0_{(n-r-s)\times r } \\
E_{s} & 0_{s \times (n-r-s) } & 0_{s \times r}
\end{pmatrix}
\]
with  $0_{p \times q}$ denoting the $p\times q$ zero matrix and 
\[
D_{r}= 
\begin{pmatrix}
a_{-r} & a_{-r+1} & \cdots & a_{-1} \\
0 & a_{-r} & \cdots & a_{-2} \\
\vdots & \vdots & \ddots & \vdots \\
0 & 0 & \cdots & a_{-r}
\end{pmatrix},
\quad
E_s = 
\begin{pmatrix}
a_{s} & 0 & \cdots & 0 \\
a_{s-1} & a_{s} & \cdots & 0 \\
\vdots & \vdots & \ddots & \vdots \\
a_{1} & a_{2} & \cdots & a_{s}
\end{pmatrix}.
\] 
Moreover,  we write $B_n = - P_n Q_n$, with $P_n, Q_n^\top \in M_{n,r+s} ( \dC)$ defined as
\begin{equation}\label{PnQn}
P_n= \begin{pmatrix} 
0_{r\times s}&I_r\\
0_{(n-s-r)\times s}&0_{(n-s-r)\times r} \\ 
E_s & 0_{s\times r}
\end{pmatrix},
\quad
Q_n= \begin{pmatrix} 
I_s &0_{s\times (n-s-r)}&0_{s\times r}\\
0_{r \times s} & 0_{r\times (n-s-r)}& D_r
 \end{pmatrix}.
 \end{equation}

Similarly to $M_n$, we introduce the random matrix: 
\begin{equation}\label{eq:defSn}
S_n = C_n(\bfa) + \sigma_n Y_n.
\end{equation}

For $z \in \C$ outside $\spectrum (S_n)$ and $\spectrum(C_n(\bfa))$, these resolvent matrices will play an important role: 
\begin{equation}\label{defgeneralresolvantes}
R_n ( z) = ( z  - S_n )^{-1} \quad \text{ and } \quad R'_n ( z) = ( z  - C_n(\bfa))^{-1}.
\end{equation}

The following matrices in $M_{r+s}(\dC)$ will be ubiquitous in formulas. For any $\omega  \in  \mathbb{S}^1$, we set
\begin{equation}\label{eq:defcM}
\cM(\omega)= (\omega^{p-q})_{1 \leq p,q \leq r+s} \; , 
\quad \mathcal{ D} = \begin{pmatrix}I_s&0\\0&D_r\end{pmatrix} \quad \hbox{ and } \quad \mathcal{ E}= \begin{pmatrix}E_s&0\\0&I_r\end{pmatrix}.
\end{equation}

\paragraph{The master  matrix field.} Let $\Omega \subset \dC$ be a bounded open connected set such that  $\bar \Omega \subset \dC \backslash \supp(\beta_\sigma) $, where $\bar \Omega$ is the closure of $\Omega$. We describe the limiting point processes of zeroes in $\Omega$ through a matrix field $W$ which is a random variable on the set $\cH_{r+s}(\Omega)$ of analytic functions $\Omega \to M_{r+s}(\dC)$ equipped with its usual topology of uniform convergence on compact subsets of $\Omega$, see Subsection \ref{subsec:RAF} in appendix. The set $\cP ( \cH_{r+s}(\Omega))$ of probability measures on $\cH_{r+s}(\Omega)$ is endowed with the weak topology. If $W$ is  a random variable in $\cH_{r+s}(\Omega)$ which is square integrable, we define its covariance kernel as the functions  $ \Omega \times \Omega \to M_{r+s} (\dC) \otimes M_{r+s} (\dC)$, defined for $z,z' \in \Omega$ by
$$
K(z,z') = \dE  [ W(z) \circledast \bar W(z') ] \quad \hbox{ and } \quad  K'(z,z') = \dE [ W(z) \circledast W(z') ],
$$
where $A \circledast B$ is the tensor product of the matrices unfolded as vectors. That is, in coordinates, 
$$(A\circledast B)_{ij,i'j'} = A_{ij} B_{i'j'}.$$ 
We say that $W$ is Gaussian if for all finite subsets $\{z_1,\ldots,z_k\} \subset \Omega$, the random variable $(W(z_1),\ldots, W(z_k))$ is a Gaussian vector, seen as an element of $M_{r+s}(\dC)^k \simeq \dR^{ 2 (r+s) k}$. A Gaussian law in $\cP( \cH_{r+s}(\Omega))$ is characterized by its covariance kernel.

If $\Omega \cap \spectrum (S_n) = \emptyset$, we set for $z \in \Omega$, 
\begin{equation}\label{eq:defWn}
W_n(z) = \frac{\sqrt{n}}{\sigma_n} Q_n \left( R_{n}(z)  - R'_{n}(z) \right) P_n.
\end{equation}
Otherwise, we set $W_n(z) = 0$. The law of $(W_n(z))_{z \in \Omega}$ is an element of  $\cP ( \cH_{r+s}(\Omega))$. As we will state in the next paragraph, weak limits of $W_n$ govern the point processes of eigenvalues in $\Omega$.

A last definition:  we say that the sequence of unitary matrices $U_n = (U_{ij})$ is {\em flat at the border} if for any fixed integer $i \geq 1$, we have
$$
\lim_{n \to \infty} \max_{ j }  |U_{ij}| \vee \max_{ j } | U_{n+1 - i, j} |  = 0.
$$ 
For example, $U_n = I_n$ is not flat at the border but $U_n  = F_n$ is flat. A sequence of Haar distributed elements on the unitary or orthogonal group is flat at the border with probability one.

We are ready to state our main result (in a weaker form) related to the matrix field $W_n(z)$. We recall that if $A, B \in M_{r+s}(\dC)$, their tensor product $A \otimes B  \in M_{r+s}(\dC) \otimes M_{r+s}(\dC)$ has entry $(i,j),(i',j')$ given by
$$
(A \otimes B )_{ij,i'j'} = A_{ii'}B_{jj'}.
$$
\begin{thm}\label{th:WnW}
  Assume  $(H_0)$ and $(H_k)$ for all $k>2$ and either $(H_\text{sym})$ or $U_n = F_n$ defined in \eqref{eq:defFn}. Let $\Omega \subset \dC$ be an open bounded connected set such that $\bar \Omega \cap \supp(\beta_\sigma) = \emptyset$. Then the random variables $W_n$ is tight in $\cP ( \cH_{r+s}(\Omega))$ and any accumulation point $W$ is centered and satisfies  for all $z,z' \in \Omega$, 
  $$
 \dE  [ W(z) \circledast \bar W(z') ] = \frac{\cA(z,z') \otimes \cB(z,z')^{\intercal}}{1 - \sigma^2 \theta(z,z')} ,
  $$ 
  with 
  $$
  \theta(z,z') = \oint_{\dS^1} \frac{d\omega}{\omega ( z - \bfa(\omega)) \overline{( z' - \bfa(\omega))}},
  $$
  $$
  \cA(z,z') = \cD  \oint_{\dS^1}\begin{pmatrix}I_s &0\\0 & \omega^{-(r+s)}I_r\end{pmatrix} \cM(\omega)\begin{pmatrix}I_s &0\\0 & \omega^{r+s}I_r\end{pmatrix}\frac{d\omega}{\omega ( z - \bfa(\omega)) \overline{( z' - \bfa(\omega))}} \cD^*,
  $$
  and 
  $$
  \cB(z,z') = \cE^*  \oint_{\dS^1} \cM(\omega)\frac{d\omega}{\omega ( z - \bfa(\omega)) \overline{( z' - \bfa(\omega))}} \cE.
  $$
  If $\rho = 0$, we have $\dE [ W(z) \circledast W(z')] = 0$. For any $\rho$, if $U_n = F_n$ with $\veps = 1$ or if $U_n$ is in the orthogonal group with $\veps = -1$, we have
  $$\dE [ W(z) \circledast W(z')] =  \rho \frac{\cA_{\veps }(z,z') \otimes \cB_{\veps}(z,z')^\intercal}{1 - \rho \sigma^2 \theta_{\veps}(z,z')} ,$$
with 
  $$
  \theta_\veps (z,z') = \oint_{\dS^1} \frac{d\omega}{\omega ( z - \bfa(\omega))  ( z' - \bfa(\omega^{\veps}))},
  $$
  $$
  \cA_\veps (z,z') = \cD  \oint_{\dS^1}\begin{pmatrix}I_s &0\\0 & \omega^{-(r+s)}I_r\end{pmatrix} \cM_\veps (\omega)\begin{pmatrix}I_s &0\\0 & \omega^{-\veps(r+s)}I_r\end{pmatrix} \frac{d\omega}{\omega ( z - \bfa(\omega))  ( z' - \bfa(\omega^{\veps}))} \cD^\intercal,
  $$
  where $\cM_{-1}(\omega) = \cM(\omega)$ and $\cM_1 (\omega)_{p,q} = \omega^{p+q-2}$,
  and 
  $$
  \cB_\veps (z,z') = \cE^\intercal  \oint_{\dS^1} \cM'_\veps (\omega) \frac{d\omega}{\omega ( z - \bfa(\omega))( z' - \bfa(\omega^{\veps}))} \cE,
  $$
  where $\cM'_{-1}(\omega) = \cM(\omega)$ and $\cM'_1 (\omega)_{p,q} = \omega^{2s+2-(p+q)}$.
If $U_n$ is flat at the border, then any accumulation point is Gaussian (and thus $W_n$ converges in the above examples).  If $U_n = I_n$ then   $W_n$ converges weakly toward an explicit matrix field $W$.
\end{thm}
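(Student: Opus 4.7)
The plan is to build on the exact resolvent identity $R_n(z)-R'_n(z)=\sigma_n R_n(z)Y_n R'_n(z)$, which on the event $\Omega\cap\spectrum(S_n)=\emptyset$ gives the representation $W_n(z)=\sqrt n\,Q_n R_n(z)Y_n R'_n(z)P_n$. I would proceed in three stages: tightness, then the covariance computation (where the main obstacle lies), then Gaussianity and the explicit limit for $U_n=I_n$.

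\emph{Tightness.} By Lemma~\ref{le:supportbeta}, $\spectrum(C_n(\bfa))=\{\bfa(\omega_n^k)\}\subset\bfa(\dS^1)\subset\supp(\beta_\sigma)$, so $\|R'_n(z)\|$ is uniformly bounded on $\bar\Omega$. The analogous bound $\|R_n(z)\|=O(1)$ with high probability would follow from a separate absence-of-outliers statement for the normal-plus-noise matrix $S_n$, in the spirit of \cite{bordenave-capitaine16}. Since $Q_n$ and $P_n$ have bounded rank and norm, $\|Q_n R'_n(z)\|_{\mathrm{HS}}$ and $\|R'_n(z)P_n\|_{\mathrm{HS}}$ are $O(1)$, so $\dE\|W_n(z)\|_{\mathrm{HS}}^2=O(1)$ uniformly on $\bar\Omega$; analyticity and Montel's theorem then give tightness in $\cH_{r+s}(\Omega)$.

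\emph{Centering and covariance.} Both rely on the formal expansion $R_n=\sum_{k\ge 0}\sigma_n^k(R'_n Y_n)^k R'_n$, handled at a large finite order $K$ with an $n$-uniform $K$-tail estimate from Stage 1. The odd-$k$ terms vanish in mean; the even-$k$ contributions to $\dE[W_n(z)]$ are of size $O(\sigma_n/\sqrt n)=o(1)$, either under $(H_{\mathrm{sym}})$ (which kills odd cumulants of $X$) or in the Fourier basis (through explicit cancellations at the $F_n$-corners). For the covariance, expand both factors of $\dE[W_n(z)\circledast\bar W_n(z')]$ and contract pairs of $Y_n$-entries using $\dE[(Y_n)_{ij}\overline{(Y_n)_{kl}}]=n^{-1}\delta_{ik}\delta_{jl}$ and $\dE[(Y_n)_{ij}(Y_n)_{kl}]=\varrho n^{-1}(U_n U_n^\intercal)_{ik}\overline{(U_n U_n^\intercal)_{jl}}$. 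The dominant ``ladder'' pairings match the $k$-th interior $Y_n$ of $W_n(z)$ with the $k$-th interior $\bar Y_n$ of $\bar W_n(z')$ in nested fashion, each producing a factor $\tfrac1n\tr(R'_n(z)R'_n(z')^*)\to\theta(z,z')$. Summing the geometric series $\sum_{k\ge 0}\sigma^{2k}\theta(z,z')^k=(1-\sigma^2\theta(z,z'))^{-1}$ gives the stated denominator, while the two outermost $Y_n$'s, paired against the corner projections, yield the base kernel $\cA(z,z')\otimes\cB(z,z')^\intercal$ after identifying Szeg\H{o}-type sums with the stated contour integrals over $\dS^1$ (the blocks $\cD,\cE$ and the factors $\omega^{\pm(r+s)}$ track the rectangular corner matrices $E_s,D_r$ and the Fourier-index offset between indices near $0$ and near $n$). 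The parallel contractions using $\dE[YY]$ vanish when $\varrho=0$ and otherwise produce the variant $(\theta_\veps,\cA_\veps,\cB_\veps)$, with $\veps=+1$ when $U_n U_n^\intercal$ is (essentially) a permutation (Fourier case) and $\veps=-1$ when $U_n U_n^\intercal=I$ (orthogonal case). The main obstacle lies here: the formal series is not norm-convergent when $\sigma>0$, so the resummation must be justified through truncation, and the non-ladder contractions --- including mixed diagrams generated by the non-Gaussian cumulants of $X_{ij}$ --- must be shown to contribute $o(1)$, using the corner-localization of $Q_n,P_n$, the moment assumptions $(H_k)$ for all $k>2$, and the symmetry or Fourier hypothesis. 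I view this as the natural setting for the functional CLT for traces of polynomials in deterministic and random matrices announced in the abstract.

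\emph{Gaussianity and the case $U_n=I_n$.} To leading order, $(W_n(z))_{pq}$ is a bilinear form $\sum_{i,j}A_{pi}(z)X_{ij}B_{jq}(z)$ in the i.i.d.~$X_{ij}$, with $A=Q_nR'_n U_n$, $B=U_n^* R'_n P_n$. Flatness of $U_n$ at the border yields the Lindeberg condition $\max_{i,j}|A_{pi}B_{jq}|^2/(\|A\|_{\mathrm{HS}}^2\|B\|_{\mathrm{HS}}^2)\to 0$, so the CLT for bilinear forms, combined with the Cram\'er-Wold device and Stage 1 tightness, gives weak convergence of $W_n$ to a Gaussian analytic field. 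When $U_n=I_n$, flatness fails, but the translation invariance $(R'_n(z))_{ij}=c_{j-i\,(\mathrm{mod}\,n)}(z)$ of the circulant resolvent, together with the entrywise convergence $c_k(z)\to\tfrac1{2\pi}\int_0^{2\pi}e^{-ik\theta}/(z-\bfa(e^{i\theta}))\,d\theta$ and the decay of these Fourier coefficients outside a bounded window, implies that the rows of $Q_nR'_n$ and the columns of $R'_n P_n$ asymptotically concentrate on windows of bounded size at the two corners of $\{1,\dots,n\}$; hence $W_n(z)$ converges to an explicit random analytic function of a fixed finite set of $X_{ij}$'s at those corners, non-Gaussian in general.
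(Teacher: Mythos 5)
Your skeleton matches the paper's strategy at a high level (Neumann expansion of the resolvent, ladder resummation giving the factor $(1-\sigma^2\theta)^{-1}$, Riemann-sum-to-contour-integral identification of $\theta,\cA,\cB$ and their primed variants through $U_nU_n^\intercal$, flatness/Lindeberg for Gaussianity of the linear term, corner localization of the circulant resolvent for $U_n=I_n$), but the proof has genuine gaps exactly where the paper's technical work lives. First, your tightness step does not go through as written: in $W_n(z)=\sqrt n\,Q_nR_n(z)Y_nR'_n(z)P_n$ the resolvent $R_n(z)$ is \emph{not} independent of $Y_n$, so the claimed $\dE\|W_n(z)\|_{\mathrm{HS}}^2=O(1)$ does not follow from bounded Hilbert--Schmidt norms of $Q_nR'_n$ and $R'_nP_n$; the naive deterministic bound on the good event is only $O(\sqrt n)$. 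In the paper the uniform moment bound \eqref{tightSigma} comes precisely from the series expansion \eqref{dvptSigma} on the event \eqref{omega_event}, combined with the quantitative estimate of Proposition \ref{prop:tight2} on $\dE|Q_n(R'_nY_n)^kR'_nP_n|^2$ uniformly in $k$ up to $k\le n^c$ --- and that estimate, via the sum-product bound of Theorem \ref{th:sumproduct}, is the \emph{only} place where the hypothesis ``$(H_{\mathrm{sym}})$ or $U_n=F_n$'' is used, a point your proposal never engages with. Second, the step you label ``the main obstacle'' --- showing the non-ladder and non-Gaussian-cumulant contractions are $o(1)$ and justifying the truncation/resummation for $\sigma>0$ --- is simply deferred to an unproved functional CLT; in the paper this is Theorem \ref{th:steinTCL} (Stein's method of exchangeable pairs, all of Section \ref{sec:CLT}) together with the Lévy-distance truncation argument in Theorem \ref{th:cvWn}, and without it neither the covariance identification nor the tightness is established.

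Two further points. Your Gaussianity argument under flatness treats only the linear bilinear form $Q_nR'_nU_nX_nU_n^*R'_nP_n$; when $\sigma>0$ the terms $k\ge2$ of the expansion contribute a nondegenerate part of the limit ($\sigma_n\tilde W^{(2)}_n$ in the paper's notation), and one must also prove that this part is asymptotically Gaussian and asymptotically independent of the linear term --- again the content of Theorems \ref{th:steinTCL} and \ref{th:cvWn}, not of a CLT for bilinear forms alone. Finally, for $U_n=I_n$ the limit is not a function of a ``fixed finite set'' of corner entries: the rows of $Q_nR'_n$ and columns of $R'_nP_n$ have exponentially decaying (not compactly supported) profiles by Lemma \ref{le:bdres}, and the paper's limit $W^{(1)}=\cD\cA\cE$ in \eqref{eq:W1UI} is an infinite, exponentially weighted sum over an infinite iid array indexed by $\mathbb Z^2$; this is a repairable inaccuracy, but as stated the description of the limit object is wrong.
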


We postpone to Theorem \ref{th:cvWn} for a more precise asymptotic equivalent of $W_n(z)$. We will state that $$ W_n(z)  \stackrel{d}{=} P_n R'_n(z) U_n X_n U_n^* R'_n(z) Q_n + \sigma_n \tilde W^{(2)}_n(z) + o(1) = \tilde W^{(1)}_n(z) + \sigma_n \tilde W^{(2)}_n(z) + o(1),$$
where $\tilde W^{(2)}_n$ is a tight sequence of Gaussian fields, independent of $X_n$ with an explicit covariance kernel. In particular if $\sigma = 0$ only $\tilde W^{(1)}_n(z)$ survives. If $U_n = I_n$, this linear term will generically be non-Gaussian (if $X$ is non-Gaussian). The condition that $U_n$ is flat at the border is a sufficient condition to guarantee that  $\tilde W^{(1)}_n$ is asymptotically Gaussian. The examples depicted in Theorem \ref{th:WnW} are the simplest to consider for computing the limiting covariance kernels $\dE [ W(z) \circledast W(z')^{\intercal}]$. Interestingly, the basis $U_n$ plays an important role in the asymptotic distribution of $W_n$. This is transparent in the expression for $\tilde W^{(1)}_n$ but it is also the case for the covariance kernel of $\tilde W_n^{(2)}$ when $\rho \ne 0$ where the matrix $U_n U_n^{\intercal}$ appears, see again Theorem \ref{th:cvWn} for more details.

\paragraph{Convergence of outlier eigenvalues.} 

We now explain the connection between the accumulations points of the matrix field $W_n$ and the point processes of zeroes. This is the place where the winding number $\delta$ plays a central role.

We first need to recall some standard definitions (see \cite{horn-johnson,mneimne89,MR2567816} for instance). . For  a $n\times n$ matrix $A$, we define  its $k$-th compound matrix  as the $\binom{n}{k}\times\binom{n}{k}$ matrix, denoted by $\bigwedge\nolimits^k(A)$, with rows and columns  indexed by subsets of $\{1,\ldots,n\}$ of cardinal $k$ ordered by lexicographic order, and where for $I,J\subset\{1,\ldots,n\}$ of cardinal $k$,  its $(I,J)$-coefficient is given by $\det A(I|J)$, where $A(I|J)$ is the submatrix formed from $A$ by retaining only those rows indexed by $I$ and those columns indexed by $J$ (that is the $(I,J)$-minor of $A$). By convention, $\bigwedge\nolimits^0(A)=I$, and we have $\bigwedge\nolimits^n(A)=\det A$. Note also that $\bigwedge\nolimits^k(A)$ is exactly the $k$-th exterior power of $A$.

Now, the $k$-th adjugate of $A$, denoted by $\adj_k(A)$ is the $\binom{n}{k}\times\binom{n}{k}$ matrix whose $(I,J)$-coefficient is given by
\[
\adj_k(A)_{I,J} = (-1)^{\sum_{i\in I} i + \sum_{i \in J} j} \det A(J^c | I^c).
\]
It can be expressed in terms of compound matrices as follows. Let $S$ be the $n\times n$ sign matrix, given by $S=\operatorname{diag}(1,-1,\ldots,(-1)^{n-1})$. Let $\Sigma$ be the $\binom{n}{k}\times\binom{n}{k}$ exchange matrix, with $1$ on the anti-diagonal, and $0$ elsewhere:
\[
\Sigma=
\begin{pmatrix} 
 & & 1 \\
 & \iddots & \\
 1 & &
\end{pmatrix}.
\]
Then one has 
\[
\adj_k(A) = \Sigma \bigwedge\nolimits^{n-k}(SA S)^\intercal \Sigma.
\]
In particular, $\adj_0(A)=\det A$ and $\adj_1(A)$ is the usual adjugate matrix of $A$, that is the transpose of its cofactor matrix. We will notably use the following remark:
 $\adj_k(A)=0$ if and only if $\text{rank}(A)\leq n-k-1$.

For $z \notin \bfa (\dS^1)$, we set 
\begin{equation}\label{defH}
\cH(z)
 = \frac{1}{2 i \pi} \mathcal{D}\oint_{\mathbb{S}^1}  
\begin{pmatrix}
\omega^sI_s & 0 \\
0 & \omega^{-r}I_r
\end{pmatrix}
 \frac{\cM(\omega)}{\omega(z-\bfa(\omega))} d\omega \;
\mathcal{ E},
\end{equation}
where $\cM(\omega)$, $\cD$ and $\cE$ are defined in \eqref{eq:defcM}.

For $\sigma \geq 0$ and $\delta$ relative integer, let 
$$
\cR_{\sigma,\delta} = \{ z \in \dC \backslash  \supp (\beta_\sigma) :   \wind(\bfa-z)=\delta \}.
$$
The following theorem is our main result on the outliers eigenvalues of $M_n$.
 \begin{thm} \label{th:W2Out}
 Under the assumptions of Theorem \ref{th:WnW},  let $\Omega \subset \dC$ be an open bounded connected subset such that  $\bar \Omega \subset \cR_{\sigma,\delta}$ for some $\delta \ne 0$. Let $W$ be an accumulation point of $W_n$ such that 
 $$
\varphi(z) =  \Tr\left( \adj_{|\delta|} \left( I_{r+s} + \mathcal H(z)  \right)  \bigwedge\nolimits^{|\delta|} \left(   W(z) \right) \right)
 $$
 is a.s.\,not the zero function on $\Omega$. Then, along this accumulation sequence, the point process of eigenvalues of $M_n$ in $\Omega$ converges weakly toward the point process of zeroes of $\varphi$.  Moreover, $\varphi$ is a.s.\,not the zero function on $\Omega$ when $W$ is Gaussian or when $U_n = I_n$ and \Hac holds.
  \end{thm}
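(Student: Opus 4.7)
\textbf{Plan for Theorem~\ref{th:W2Out}.} The strategy is a Schur-complement reduction of the eigenvalue problem in $\Omega$ to a low-dimensional determinant, followed by a compound-matrix expansion in the noise scale $\epsilon_n:=\sigma_n/\sqrt n$. Starting from $T_n(\bfa)=C_n(\bfa)-P_nQ_n$, one has $M_n=S_n-P_nQ_n$, so the matrix determinant lemma gives, for $z\notin\spectrum(S_n)$,
\[
\det(z-M_n)=\det(z-S_n)\,\det\!\bigl(I_{r+s}+Q_nR_n(z)P_n\bigr).
\]
Since $\bar\Omega\subset\dC\setminus\supp(\beta_\sigma)$, the analysis of $S_n$ (a small perturbation of the normal matrix $C_n(\bfa)$, to which the ideas underlying Theorem~\ref{thm-no-outliers} apply) guarantees $\bar\Omega\cap\spectrum(S_n)=\emptyset$ with probability tending to one. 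On that event, the eigenvalues of $M_n$ in $\Omega$ are exactly the zeros of
\[
f_n(z):=\det\!\bigl(A_n(z)+\epsilon_nW_n(z)\bigr),\qquad A_n(z):=I_{r+s}+Q_nR'_n(z)P_n.
\]

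Next I identify the deterministic background. Diagonalising $C_n(\bfa)$ in the Fourier basis, the entries of $A_n(z)$ are trapezoidal Riemann sums of the integrand defining $\cH(z)$ in \eqref{defH}, which is analytic in an annulus containing $\dS^1$ whenever $z\in\bar\Omega$ stays bounded away from $\bfa(\dS^1)$. This yields the \emph{exponential} rate $\|A_n(z)-A(z)\|=O(e^{-cn})$ uniformly on $\bar\Omega$, with $A(z):=I_{r+s}+\cH(z)$. A contour-integral computation then shows that on $\cR_{\sigma,\delta}$ the limit matrix $A(z)$ has rank exactly $r+s-|\delta|$ — this is the algebraic heart of the theorem and where the winding number $\delta$ enters — so $\adj_k(A(z))\equiv 0$ for $k<|\delta|$ while $\adj_{|\delta|}(A(z))\not\equiv 0$. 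Applying the compound-matrix expansion $\det(A+B)=\sum_{k=0}^{r+s}\Tr\!\bigl(\adj_k(A)\bigwedge\nolimits^k(B)\bigr)$ with $A=A_n$ and $B=\epsilon_nW_n$,
\[
\epsilon_n^{-|\delta|}f_n(z)=\sum_{k=0}^{r+s}\epsilon_n^{k-|\delta|}\,\Tr\!\Bigl(\adj_k(A_n(z))\bigwedge\nolimits^k(W_n(z))\Bigr).
\]
Terms with $k>|\delta|$ vanish because $\epsilon_n^{k-|\delta|}\to 0$ while the remaining factors are tight by Theorem~\ref{th:WnW}; terms with $k<|\delta|$ vanish because the universal bound $\det(M+E)=O(\|E\|^{m-\mathrm{rank}\,M})$, applied to each $(n-k)\times(n-k)$ minor of $A_n$, gives $\adj_k(A_n)=O(\|A_n-A\|^{|\delta|-k})$, which is exponentially small in $n$ and dominates the blow-up $\epsilon_n^{k-|\delta|}=(\sqrt n/\sigma_n)^{|\delta|-k}$ precisely under the hypothesis $\tfrac{1}{n}\ln\sigma_n\to 0$. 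Only the $k=|\delta|$ term survives and, along the accumulation subsequence $W_n\Rightarrow W$ provided by Theorem~\ref{th:WnW}, the continuous mapping theorem gives $\epsilon_n^{-|\delta|}f_n\Rightarrow\varphi$ as random analytic functions on $\Omega$.

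For the conclusion I pass to almost-sure convergence in $\cH(\Omega)$ by Skorokhod representation; when $\varphi\not\equiv 0$ almost surely, Hurwitz's theorem then yields the announced weak convergence of the zero set of $\epsilon_n^{-|\delta|}f_n$, and hence of the eigenvalue point process of $M_n$ in $\Omega$, to the zero set of $\varphi$. For the non-vanishing claim, the map $M\mapsto\Tr\bigl(\adj_{|\delta|}(A(z))\bigwedge\nolimits^{|\delta|}(M)\bigr)$ is a nonzero homogeneous polynomial of degree $|\delta|$ in the entries of $M$ (nonzero because $\adj_{|\delta|}(A(z))\neq 0$). In the Gaussian case a nonzero polynomial in a nondegenerate Gaussian vector is a.s.\ nonzero; in the case $U_n=I_n$ with \Hac, the relevant entries of $W(z)$ inherit absolute continuity from $X_n$. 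Either way $\varphi(z)\neq 0$ almost surely at a fixed $z\in\Omega$, and Fubini combined with the analyticity of $\varphi$ excludes the event $\{\varphi\equiv 0\}$.

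The main obstacles I anticipate are three. First, proving rank$\,(I+\cH(z))=r+s-|\delta|$ on $\cR_{\sigma,\delta}$ — this requires relating the adjugate of $I+\cH(z)$ to an explicit factorisation of $\bfa-z$ and reading off the rank deficit from the winding number via contour integration. Second, the quantitative coupling between the Riemann-sum error $\|A_n-A\|$ and the noise scale $\epsilon_n$ in the regime $\sigma=0$: it is the exponential decay of the former that allows the mild assumption $\tfrac{1}{n}\ln\sigma_n\to 0$ to close the argument. Third, non-vanishing of $\varphi$ in the $U_n=I_n$ case, where the limiting covariance of $W$ is degenerate and one must carefully use \Hac to argue that the support of $W(z)$ meets the non-zero locus of the polynomial $M\mapsto\Tr\bigl(\adj_{|\delta|}(A(z))\bigwedge\nolimits^{|\delta|}(M)\bigr)$.
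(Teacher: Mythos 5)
Your first claim follows the paper's own route almost step for step: Sylvester's identity to reduce to $f_n(z)=\det(I_{r+s}+Q_nR_n(z)P_n)$, the adjugate/compound expansion of the determinant, exponential convergence of $Q_nR'_n(z)P_n$ to $I_{r+s}+\cH(z)$ (the paper's Proposition \ref{convH}), the rank statement $\dim\ker(I+\cH(z))=|\delta|$ on $\cR_{\sigma,\delta}$ (Proposition \ref{prop:rank}), tightness/convergence of $W_n$ from Theorem \ref{th:WnW}, and convergence of zero processes of random analytic functions. Your way of killing the terms $k<|\delta|$ — bounding $\adj_k(A_n)$ directly by $O(\|A_n-A\|^{|\delta|-k})$ through the rank deficiency of $A=I+\cH$ — is a harmless variant of the paper's substitution of $\cH(z)$ into the adjugates followed by $\adj_k(I+\cH)=0$; both close the argument using $\tfrac1n\ln\sigma_n\to0$ against the exponential rate. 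So the first part is sound.

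The genuine gap is in the non-vanishing claims. For the Gaussian case you invoke ``a nonzero polynomial in a nondegenerate Gaussian vector is a.s.\ nonzero,'' but nondegeneracy of $W(z)$ as a real Gaussian vector in $\dR^{2(r+s)^2}$ is neither proved nor true in general: the pseudo-covariance $\dE[W(z)\circledast W(z)]$ need not vanish (e.g.\ $\varrho\neq0$ with $U_n$ orthogonal), so the support of $W(z)$ can be a proper real subspace, and your one-line argument does not apply to it. Two ingredients are missing. First, you need that the covariance $\dE[W(z)\circledast\bar W(z)]$ is positive definite, which is a nontrivial computation (the paper's Lemma \ref{le:rankK}, showing $\cA(z,z)$ and $\cB(z,z)$ are invertible via an $L^2(\dS^1)$ orthogonality argument); you never mention this. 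Second, even granting it, one must show the polynomial $M\mapsto\Tr(\adj_{|\delta|}(I+\cH(z))\bigwedge^{|\delta|}(M))$ is not identically zero \emph{on the possibly degenerate support} of $W(z)$; the paper does this by replacing $W(z)$ with an equivalent-in-law Gaussian $Z$ with independent unit-variance entries (Lemma \ref{le:acG}), computing $\dE|\Tr(\adj_{|\delta|}(A)\bigwedge^{|\delta|}Z)|^2=|\delta|!\sum_{I,J}|A_{J,I}|^2>0$ via the orthogonality of the minors $\det Z(I|J)$, and only then concluding a.s.\ non-vanishing (Carbery--Wright or the measure-zero zero set of a nonzero polynomial on the support). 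Similarly, for $U_n=I_n$ under \Hac your sentence ``the relevant entries of $W(z)$ inherit absolute continuity from $X_n$'' is the whole difficulty, not a step: $W^{(1)}(z)$ is an infinite linear statistic of the iid array with exponentially decaying kernel, and establishing that its law is absolutely continuous on the correct image subspace (so that the previous argument applies) requires the finite-rank singular value decomposition and truncation/independence argument the paper carries out; you flag it as an obstacle but propose no mechanism. As it stands, the second and third assertions of the theorem are not proved by your plan.
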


We will prove in Proposition \ref{prop:rank} that $\adj_{|\delta|} \left( I_{r+s} + \mathcal H(z)  \right) $ is non-zero for all $z \in \cR_{\sigma,\delta}$. We emphasize the non-universality of this result, since the  random analytic function whose zeroes approximate the outliers of $M_n$ depends on the distribution of the coefficients of $X_n$.  Moreover, even when $X_n$ is a Ginibre matrix, this   random analytic function  is in general not Gaussian, unless $|\delta|=1$. These phenomena were already uncovered  in \cite{basak-zeitouni20}.

\begin{figure}[t]
    \centering
    \subfloat[]{\includegraphics[width=5cm]{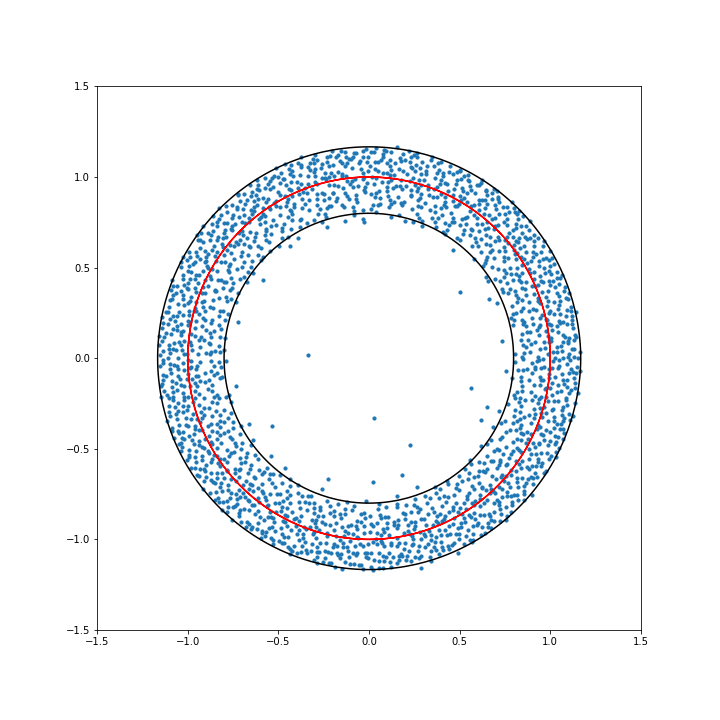} }
    \quad
    \subfloat[]{\includegraphics[width=5cm]{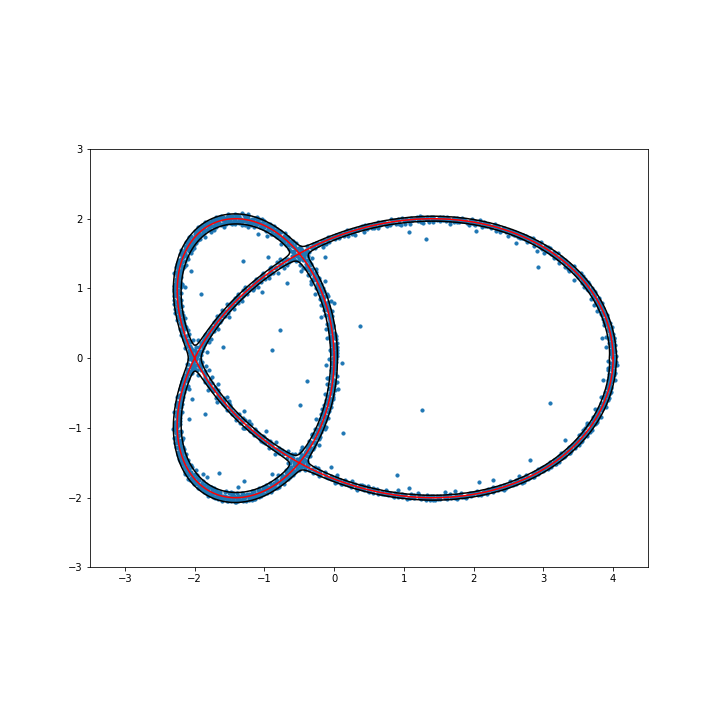} }
    \quad
    \subfloat[]{\includegraphics[width=5cm]{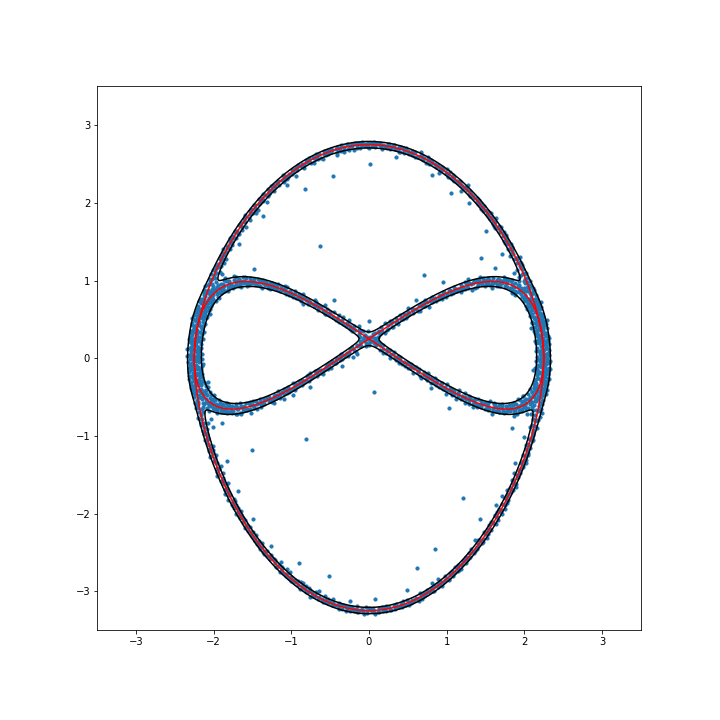} }
    \caption{Eigenvalues (blue dots) of $T_n(\bfa)+\sigma \frac{X_n}{\sqrt n}$ for $n=2000$, $\sigma=0.6$ and various symbols: (a) $\bfa(t)=t^2$, (b) $\bfa(t)=t+2t^2+t^{-1}$ and (c) $\bfa(t)=-\frac34 t + \frac54 i t^2 + \frac34 t^3 + \frac34t^{-1} - i t^{-2} -\frac34 t^{-3}$. Note that there are no outliers in the bounded region of zero winding number in subfigure (c). }
    \label{fig:simu}%
\end{figure}

Figure~\ref{fig:simu} shows simulations of eigenvalues of the model $M_n$ with $\sigma>0$ and $U_n = I_n$. In region with non-zero winding number, some eigenvalues detach from the support of the limiting measure $\beta_\sigma$. 

\subsection{Overview of the proofs}
\label{subsec:overview}

We outline here the proofs of Theorem \ref{thm-no-outliers}, Theorem \ref{th:WnW} and Theorem \ref{th:W2Out}.

\paragraph{Step 1: Sylvester's determinant identity. } We fix a compact connected set $\Gamma \subset \dC \backslash \supp(\beta_\sigma)$. As shown in \cite{bordenave-capitaine16}, normal matrices have a stable spectrum with respect to noise. As we will see with Proposition~\ref{nooutlier} in Section \ref{sec:stable}, it will follow that a.s. for all $n$ large enough, $S_n$ has no eigenvalue in $\Gamma$. On this last event, since $M_n = S_n + B_n = S_n - P_n Q_n$, we may write 
\begin{align}
\det (z -M_n)  & = \det (z -S_n) \times \det (I_n - B_n R_n(z))\nonumber \\
& = \det (z-S_n) \times \det (I_{r+s} + Q_n R_n(z) P_n),\label{proddet}
\end{align}
where the second equality follows from the classical Sylvester's determinant identity:  for $A \in M_{p,q}(\C)$ and $B \in M_{q,p}(\C)$, 
\begin{equation}\label{sylvester-identity}
\det (I_p + AB) = \det (I_q + BA). 
\end{equation}

Since $\det (z -S_n) \not = 0$ on $\Gamma$, $z$ is an eigenvalue of $M_n$ if and only if $z$ is a zero of the $(r+s) \times (r+s)$ determinant
\[
 \det (I_{r+s}+ Q_n R_n(z) P_n). 
\]

\paragraph{Step 2: Taylor expansion for the determinant. }
We thus have to understand the zeroes of the above determinant.  This will depend on whether $z$ is in $\spectrum T(\bfa)$ or not. We introduce the difference matrix 
\begin{equation} \label{dvptSigma0}
G_n(z) = \frac{\sigma_n W_n(z) }{\sqrt n} =Q_n ( R_{n}(z)  - R'_{n}(z) ) P_n. 
\end{equation}
It follows from Proposition~\ref{convunif} below,   that a.s. 
\begin{equation*}\label{eq:G20}
\lim_{n \to \infty} \sup_{ z \in \Gamma} \| G_n(z) \| = 0.
\end{equation*}
By expanding the determinant,  we have, 
\begin{equation} \label{dev-jacobi}
 \det (I_{r+s} + Q_n R_n(z) P_n) =  
   \sum_{k=0}^{r+s} \frac{1}{k!} d^k \det(I_{r+s} + Q_n R'_n(z) P_n) (  G_n(z),\ldots, G_n(z) ),
\end{equation}
where $ d^k \det(A) $ denotes the $k$-th derivative of the determinant at $A$.  
The $k$-th derivative of the determinant can be expressed in terms of compound and higher adjugate matrices (\cite{mneimne89,bhatia-jain09}). The determinant of the sum of two $n\times n$ matrices $A,B$ can be expressed as:
\[
\det(A+B) = \sum_{k=0}^n \Tr \left( \adj_k(A) \bigwedge\nolimits^k(B)  \right),
\]
hence, the above development \eqref{dev-jacobi} writes
\begin{equation} \label{dev-jacobi2}
 \det (I_{r+s} + Q_n R_n(z) P_n) = \sum_{k=0}^{r+s} \Tr \left( \adj_k\left( I_{r+s} + Q_n R'_n(z) P_n \right) \bigwedge\nolimits^k\left( G_n(z) \right)  \right).
\end{equation}

\paragraph{Step 3: Case $\delta = 0$ and Szeg\H{o}'s strong limit theorem. } 
The first term $k= 0$ of the above development \eqref{dev-jacobi2} is equal to
\[
\det ( I_{r+s} + Q_n R'_n(z) P_n ) =  \frac{\det( T_n(\bfa) - z)}{\det( C_n(\bfa) - z )},
\]
by Sylvester's identity. Since $\|G_n(z)\|$ goes to 0, we deduce that, uniformly on $\Gamma$,  
$$
\det (I_{r+s} + Q_n R_n(z) P_n) =  \frac{\det( T_n(\bfa) - z)}{\det( C_n(\bfa) - z )} + o(1).
$$
Now, in the region of spectral stability, i.e. if  $\Gamma\subset\spectrum T(\bfa)^c$, Szeg\H{o}'s strong limit theorem implies that the above quotient  is uniformly lower bounded (we shall also give an alternative proof of this fact in Section \ref{sec:unstable}). We deduce that $\det (I_{r+s} + Q_n R_n(z) P_n)$  has no zeroes in $\Gamma$ and this will lead to Theorem \ref{thm-no-outliers}.

\paragraph{Step 4: case $\delta \ne 0$, identification of the leading term of the Taylor expansion \eqref{dev-jacobi2}. } On the other hand, inside $\spectrum T(\bfa)$, one can show that $\det ( I_{r+s} + Q_n R'_n(z) P_n )$ goes to zero exponentially fast as $n$ goes to infinity, so that the outliers will be described by higher terms of the development of $\det (I_{r+s} + Q_n R_n(z) P_n)$ in \eqref{dev-jacobi2}. In the spectral region $\mathcal R_{\sigma,\delta} = \{ z \notin \supp(\beta_\sigma) : \mathrm{wind} (\bfa -z) = \delta \}$, we will prove that this leading term corresponds precisely to $k = |\delta|$ and get 
\begin{equation}\label{eq:deltalead}
\det (I_{r+s} + Q_n R_n(z) P_n) \approx   \Tr \left( \adj_{|\delta|}\left( I_{r+s} + Q_n R'_n(z) P_n \right) \bigwedge\nolimits^{|\delta|} \left( G_n(z) \right)  \right).
\end{equation}
The reason is that $Q_n R'_n(z) P_n$ converges toward $\cH(z)$ defined in \eqref{defH} exponentially fast (this is Proposition \ref{convH}) and that the dimension of the kernel of  $I + \cH(z)$ is equal to $|\delta|$. This last claim is the content of Proposition \ref{prop:rank} and its proof has some subtleties since we do not have a closed-form formula for the eigenvalues of $\cH(z)$ when $r,s >0$.

\paragraph{Step 5:  convergence of the matrix field. }  We will prove  that the $\Gamma \to M_{r+s}(\dC)$ analytic random field $W_n(z) = \sqrt n G_n(z) / \sigma_n$ defined in \eqref{eq:defWn} is closed in distribution  to some random field as $n\to \infty$. This is the content of Theorem \ref{th:cvWn} and its corollary, Theorem \ref{th:WnW}. The proof of this result will be most challenging in the case  $\sigma >0$. To  prove this, we  expand $W_n(z)$ as 
\begin{equation}
W_n(z) = \frac{\sqrt n G_n(z)}{\sigma_n}  = \frac{\sqrt n}{\sigma_n} \sum_{k\geq 1} Q_n \left( R'_n(z) \sigma_n Y_n \right)^k R'_n(z) P_n. \label{dvptSigma}
\end{equation}
We will prove the tightness of this expansion and establish a functional central limit theorem using Stein's method of exchangeable pairs for the process in $(k,z) \mapsto Q_n \left( R'_n(z) \sigma_n Y_n \right)^k R'_n(z) P_n$. The proof will follow from a general Gaussian approximation result for processes of the type: 
$$
(B_0, B_1,\ldots, B_{k-1}) \mapsto n^{-(k-1)/2} \TR( B_0 X_n B_1  X_n \cdots  B_{k-1} X_n ),
$$
where $B_t \in M_n(\dC)$ and $B_0$ has small rank. This study will be done in Section \ref{sec:CLT} and it contains the main new probabilistic ingredients of this paper. The proof of tightness for the series \eqref{dvptSigma} will notably rely on a known deterministic upper bound on graphical sum-product expressions of the form 
\begin{equation*} 
\sum_{ \bm i  } \prod_{e \in E} (M_e)_{i_{e_-}i_{e_+}},
\end{equation*}
where $G = (V,E)$ is a directed graph, $(M_e)_{e \in E}$ is a collection of matrices of size $n$ indexed by the edges of $G$ and the sum is over all $\bm i = (i_v)_{v \in V}  \in \{1,\ldots,n\}^V$, see Theorem \ref{th:sumproduct} in Appendix  (this is where the restricted assumption that  either $(H_\text{sym})$ holds or $U_n = F_n$ is used). 

\paragraph{Step 6: case $\delta \ne 0$, convergence of the outlier eigenvalues. } 
Combining \eqref{eq:deltalead} and the convergence of $W_n$ toward a limiting matrix field $W$, we will deduce that if $\Gamma \subset \mathcal R_{\sigma,\delta}$, we have the following weak convergence of random analytic functions on $\Gamma$:
\begin{equation*} 
\left( \frac{\sqrt  n}{\sigma_n} \right)^{|\delta|}   \det (I_{r+s} + Q_n R_n(z) P_n) \to \varphi (z),
\end{equation*}
where $\varphi$ is an in Theorem \ref{th:W2Out}: $$
\varphi (z) =  \Tr\left( \adj_{|\delta|} \left( I_{r+s} + \mathcal H(z)  \right)  \bigwedge\nolimits^{|\delta|} \left(   W(z) \right) \right).
 $$ 
It follows classically from Montel's Theorem that, for analytic functions: $\varphi_n \to \varphi$ and $\varphi \ne 0$ imply  the convergence of the set $\varphi_n^{-1} (\{0\})$ toward the set  $\varphi^{-1}(\{ 0\})$ (see Proposition \ref{Shirai:zeroes} for the precise statement we use). Consequently, the main  remaining difficulty to prove Theorem \ref{th:W2Out} will be to prove that the above random analytic function $\varphi$ does not vanish almost surely. This will be done by proving that the covariance kernel  $\dE  [ W(z) \circledast \bar W(z) ]$ is positive definite and using a Gaussian comparison trick to lower bound the variance of $\varphi$.

\paragraph{Comparison with other methods. } Our method of proof follows the strategy initiated in \cite{bordenave-capitaine16}. For outliers, it settled the case $\sigma >0$, $r=0$, $s=1$ and $U_n = F_n$. Here we cope with the numerous new challenges outlined above. Other methods to study random Toeplitz matrices in the case $\sigma = 0$ were introduced in \cite{SjostrandVogel} and \cite{basak-zeitouni20}. Interestingly,  \cite{basak-zeitouni20} also relies on the expansion of the determinant but, with our notation, the authors write directly: 
$$
\det ( M_n - z ) = \det ( T_n(\bfa ) - z + \sigma_n Y_n) = \sum_{k=0}^n \sigma_n^k \Tr \left( \adj_k(T_n(\bfa ) - z) \bigwedge\nolimits^k(Y_n)  \right).
$$
They prove the convergence of this expansion when $\sigma_n = n^{-\gamma}$, $\gamma > 0$ and $U_n = I_n$. In the case $\sigma > 0$, we do not expect the tightness of this expansion anymore. A remarkable advantage of  \cite{basak-zeitouni20} is that they only assume that $(H_0)$ holds and a density assumption of the law of $X$. A feature of our present method is that it disentangles clearly the algebraic deterministic part governed by $\adj_{|\delta|}\left( I_{r+s} + Q_n R'_n(z) P_n \right)$ and the random matrix part governed by the matrix field $W_n(z)$ in $M_{r+s}(\dC)$. We have not found a simple way to compare the limiting random analytical functions we have obtained (which have the same random set of zeros).

\paragraph{Some perspectives. } The method of proofs are rather general and apply for their main parts when $T_n(\bfa)$ is replaced by $C_n - P_n Q_n$,  a finite rank perturbation of a well-conditioned matrix $C_n$ (up to the explicit asymptotic computations on the covariance of the matrix field $W_n$ and the asymptotic properties of the restricted resolvent $Q_n (z - C_n)^{-1} P_n$).  

This work leaves many interesting directions to further researches such as the study of the left and right eigenvectors of the outlier eigenvalues. Also, the case  $\sigma_n = \alpha^n$, $0 < \alpha < 1$, seems to be the critical range for the noise to start to interfere with the spectrum of $T_n(\bfa)$. In another direction, this work shows that the nature of the noise has an important influence on the outliers. It would be interesting to replace $Y_n$ with a more structured noise matrix, such as an Hermitian random matrix. We note also that if the rank of the perturbation $P_n Q_n$, here $r+s$, grows with $n$ many arguments of this work do not apply directly, we may mention general Toeplitz matrices as motivating examples fitting in this category. We finally mention the general study of the outlier eigenvalues of a polynomial in deterministic and random matrices.

\paragraph{Organization of the paper. } The remainder of the paper is organized as follows. In Section \ref{sec:1}, we recall some facts on Toeplitz matrices. In Section \ref{sec:LSD}, we study $\beta_\sigma$ and prove Lemma \ref{le:supportbeta}. In Section \ref{sec:stable}, we study the stable region and prove Theorem \ref{thm-no-outliers}. Section \ref{sec:unstable} is devoted to the proofs of Theorem \ref{th:WnW} and Theorem \ref{th:W2Out}. 
In Section \ref{sec:CLT}, we establish a functional central limit theorem for traces of polynomials of random matrices. Finally Section \ref{sec:appendix} in an appendix which gather known results.

\paragraph{Acknowledgments} We would like to thank Ofer Zeitouni for introducing us to the topic of Toeplitz matrices and for some stimulating discussions.

\section{Notation and preliminaries}
\label{sec:1}
\subsection{Notation and matrix identities}

The spectral norm will be denoted $\| \cdot \|$, that is, for $A \in M_n(\C)$,
\[
\| A \| := \sup_{\|x \|_2=1 } \| Ax \|_2,
\]
where $\| \cdot \|_2$ denotes the Euclidean norm on $\C^n$.

We  will use the following perturbation inequality, which follows from Hadamard's inequality: for $A,B \in M_d(\C)$,
\begin{equation} \label{perturb-inequality}
|\det A - \det B| \leq \| A-B \| d \max(\|A\|,\|B\|)^{d-1}. 
\end{equation}


\subsection{Preliminaries on Toeplitz matrices}

We first  recall some well known facts on banded Toeplitz matrices, which can be found in the book \cite{bottcher-grudsky05}. We start with a useful remark: for any $z\in \C$, $T_n({\bf a})-z I=T_n({\bf a}-z)$, which is the Toeplitz matrix with symbol $\lambda\mapsto {\bf a}(\lambda)-z$. 

\subsubsection{Limiting empirical distribution of the singular values} 

By the Avram-Parter Theorem, see \cite[Chapter~9]{bottcher-grudsky05}, the empirical distribution of the singular values of $T_n({\bf a}-z)$ converges weakly to the push-forward by the map $t\mapsto |{\bf a}(t)-z|$ of the uniform measure on $\mathbb S^1$, that is, for any $z\in\C$,
\begin{equation}\label{nuz}
\mu_{(T_n({\bf a})-z)(T_n({\bf a})-z)^*} \overset{\text{weakly}}{\underset{n\to\infty}{\longrightarrow}} \nu_z,
\end{equation}
where $\nu_z$ is the push-forward by the map $t\mapsto |{\bf a}(t)-z|^2$ of the uniform measure on $\mathbb S^1$.

\subsubsection{Limiting empirical spectral distribution and limiting set of spectra}\label{subsec:limitESD}
Let us now recall that the empirical eigenvalue distribution of $T_n(\bfa)$ converges weakly (see \cite[Chapter~11]{bottcher-grudsky05}).
We introduce the following notation for the zeroes of the symbol ${\bf a}-z$:
\begin{definition}\label{zk}
For any $z\in \C$,
\[
{\bf a}(\lambda)-z = \sum_{k=-r}^s a_k \lambda^k -z= \lambda^{-r } \left(  \sum_{k=0}^{r+s} a_{k-r} \lambda^k -z \lambda^r    \right) = \lambda^{-r} a_s \prod_{k=1}^{r+s} \left(   \lambda  -  \lambda_k(z) \right),
\]
where the $\lambda_k(z)$ are the roots of the polynomial $Q_z(\lambda)= \sum_{k=0}^{r+s} a_{k-r} \lambda^k -z \lambda^r$, counting with multiplicities, and labelled such that:
\[
|\lambda_{1}(z)|  \leq \cdots \leq  |\lambda_{r}(z)|  \leq |\lambda_{r+1}(z)|  \leq \cdots \leq  |\lambda_{r+s}(z)|  .
\]
\end{definition}
\begin{rem} \label{rk-winding-zero}
 Since ${\bf a}-z$ has only one pole of order $r$, the winding number $\wind({\bf a}-z)$ is given by the number of roots of $Q_z$ in $\mathbb D$ minus $r$. In particular, we have $\wind({\bf a}-z)=0$ if and only if 
\[
 |\lambda_{r}(z)| < 1 < |\lambda_{r+1}(z)|.\]  Define the set
\[
\Lambda({\bf a}) = \left\{z \in \C  :  |\lambda_r(z)| = |\lambda_{r+1}(z)| \right\}.
\]
\end{rem}
By Schmidt and Spitzer, see \cite{schmidt-spitzer60} or \cite[Chapter~11]{bottcher-grudsky05},  the set $\Lambda({\bf a})$ is the limiting set of the spectra $\spectrum T_n({\bf a})$, 
and for any compact set $\Gamma \subset \C \setminus \Lambda(\bfa)$, there are no eigenvalues of $T_n(\bfa)$ in $\Gamma$ provided that $n$ is large enough.

 The set $\Lambda({\bf a})$ is easily seen to be a compact subset of $\spectrum T(\bfa)$ and  consists of a finite union of analytic arcs and a finite number of exceptional points.  It is a connected set and has two-dimensional Lebesgue measure equal to  zero. 
 If $z \not \in \Lambda(\bfa)$, there exists $\rho$ such that $|\lambda_r(z)|< \rho < |\lambda_{r+1}(z)|$. Consider the perturbed symbol $\bfa_\rho$ defined by $\bfa_\rho(t)=\sum_{k=-r}^s a_k \rho^k t^k$ and 
 denote by $g$ the function on $\C \setminus \Lambda({\bf a})$ defined by
\[
g(z) = \exp \left( \frac{1}{2\pi} \int_0^{2\pi} \log | {\bf a}_\rho(e^{i\theta}) - z| d\theta \right).
\]
Then the empirical spectral measure of $T_n({\bf a})$ converges weakly to 
\[
\frac{1}{2\pi} \nabla^2 \log g,
\]
in the distributional sense (where $\nabla^2$ denotes the Laplacian). Moreover, Hirschman, see \cite{Hirschman67} or \cite[Chapter~11]{bottcher-grudsky05}), proved that the empirical spectral measure of $T_n({\bf a})$ converges weakly to a measure supported on $\Lambda(\bfa)$, whose density is  given explicitly by a formula involving the boundary behavior of the geometric mean of $\bfa  - z$, as $z$ approaches the different analytic arcs which constitute $\Lambda(\bfa)$. 

\begin{rem}
It is interesting to point out that the set $\Lambda(\bfa)$ will not play any role in the asymptotic spectral analysis of the matrix $M_n$ (at least in the regime $\lim_n \frac{1}{n} \ln \sigma_n = 0$ that we consider here).
\end{rem}

\subsubsection{Szeg\H{o}'s strong limit theorem} Recall now the Szeg\H{o}'s strong limit theorem. Let ${\bf b}$ be the Laurent polynomial ${\bf b}(t)=\sum_{k=-r}^s b_k t^k$ and suppose that 
\[
{\bf b}(t) \not= 0 \ \text{for $t\in \mathbb S^1$ and } \wind {\bf b} =0.
\]
Since $\wind {\bf b}=0$ and ${\bf b}$ has only one pole of order $r$, we have exactly $r$ roots (counting with multiplicities) $\delta_1,\ldots, \delta_r$ of ${\bf b}$ with modulus strictly less than 1, and $s$ roots $\mu_1,\ldots,\mu_s$ with modulus strictly greater than 1.  Define:
\begin{align*}
 G({\bf b})  = b_s (-1)^s \prod_{j=1}^s \mu_j \quad \hbox{ and } \quad 
 E({\bf b})  = \prod_{i=1}^r \prod_{j=1}^s \left( 1 - \frac{\delta_i}{\mu_j}  \right)^{-1} .
\end{align*}

The Szeg\H{o}'s strong limit theorem \cite[Thm~2.11]{bottcher-grudsky05} states that:
\begin{thm}\label{szego-thm}
Let ${\bf b}$ as above with  $\wind {\bf b} =0$. 
Then, as $n\to\infty$,
\[
\det T_n({\bf b}) = G({\bf b})^n E({\bf b}) \left( 1 + O(q^n)\right),
\]
with some constant $q \in \left(\frac{\max_i |\delta_i|}{\min_j |\mu_j|} , 1\right)$.
\end{thm}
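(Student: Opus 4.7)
My plan is to combine the Wiener--Hopf factorization of $\mathbf{b}$ with Sylvester's determinant identity \eqref{sylvester-identity} to reduce the computation to a fixed-size determinant that stabilizes exponentially fast. Since $\mathbf{b}$ is nonvanishing on $\mathbb{S}^1$ with $\wind\mathbf{b}=0$, the roots of the polynomial $t^{r}\mathbf{b}(t)$ split into $r$ roots $\delta_i$ inside $\mathbb D$ and $s$ roots $\mu_j$ outside, giving the factorization
\[
\mathbf{b}(t)=G(\mathbf{b})\,\mathbf{b}_-(t)\,\mathbf{b}_+(t),\qquad \mathbf{b}_+(t)=\prod_{j=1}^s\bigl(1-\tfrac{t}{\mu_j}\bigr),\quad \mathbf{b}_-(t)=\prod_{i=1}^r\bigl(1-\tfrac{\delta_i}{t}\bigr),
\]
with $\mathbf{b}_+$ analytic nonvanishing in a neighbourhood of $\overline{\mathbb D}$, $\mathbf{b}_-$ in a neighbourhood of $\C\setminus\mathbb D$, and both having Laurent constant term equal to $1$. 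Then $T_n(\mathbf{b}_+)$ is upper triangular and $T_n(\mathbf{b}_-)$ lower triangular, each with unit diagonal, so $\det T_n(\mathbf{b}_\pm)=1$, and the linearity of $T_n(\cdot)$ in the symbol gives $\det T_n(\mathbf{b})=G(\mathbf{b})^n\det T_n(\mathbf{b}_-\mathbf{b}_+)$; the task reduces to proving $\det T_n(\mathbf{b}_-\mathbf{b}_+)=E(\mathbf{b})(1+O(q^n))$.

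A direct Laurent-convolution computation yields the finite-rank identity
\[
T_n(\mathbf{b}_-\mathbf{b}_+)=T_n(\mathbf{b}_-)T_n(\mathbf{b}_+)+C_n,
\]
where $C_n$ is supported on the $r\times s$ top-left block and has entries bounded uniformly in $n$ (the correction captures exactly the convolution indices $k\leq 0$ absent from the truncated matrix product). Writing $C_n=J_rC'J_s^\intercal$ with nonzero block $C'\in M_{r,s}(\C)$ and canonical inclusions $J_r\in M_{n,r}(\C)$, $J_s\in M_{n,s}(\C)$, and using $T_n(\mathbf{b}_\pm)^{-1}=T_n(1/\mathbf{b}_\pm)$ (a consequence of $T_n(fg)=T_n(f)T_n(g)$ when both factors are simultaneously upper or lower triangular), Sylvester's identity \eqref{sylvester-identity} gives
\[
\det T_n(\mathbf{b}_-\mathbf{b}_+)=\det\bigl(I_s+M_n C'\bigr),\qquad M_n:=J_s^\intercal T_n(1/\mathbf{b}_+)T_n(1/\mathbf{b}_-)J_r\in M_{s,r}(\C),
\]
an $s\times s$ determinant built from the $s\times r$ top-left block of the product $T_n(1/\mathbf{b}_+)T_n(1/\mathbf{b}_-)$.

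The Fourier coefficients of $1/\mathbf{b}_+$ and $1/\mathbf{b}_-$ decay geometrically at rates $1/\min_j|\mu_j|$ and $\max_i|\delta_i|$ respectively, since these functions extend analytically across $\mathbb{S}^1$ to the appropriate annuli. Comparing $T_n(1/\mathbf{b}_+)T_n(1/\mathbf{b}_-)$ with the corresponding infinite operator product on $\ell^2(\mathbb N)$ (which, with no boundary correction, equals $T(1/(\mathbf{b}_+\mathbf{b}_-))=T(G(\mathbf{b})/\mathbf{b})$), the discrepancy comes entirely from tail indices $k>n$, and a product-of-two-geometric-series estimate shows $M_n\to M_\infty$ at rate $O(q^n)$ for any $q>\max_i|\delta_i|/\min_j|\mu_j|$, where $(M_\infty)_{ij}=G(\mathbf{b})\,(1/\mathbf{b})_{j-i}$. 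Combined with the perturbation inequality \eqref{perturb-inequality}, this gives $\det T_n(\mathbf{b}_-\mathbf{b}_+)=\det(I_s+M_\infty C')+O(q^n)$. \textbf{The hard part} will be the algebraic identification $\det(I_s+M_\infty C')=E(\mathbf{b})=\prod_{i,j}(1-\delta_i/\mu_j)^{-1}$, which I would carry out by induction on $r+s$: the base case $r=s=1$ reduces via a direct geometric-series expansion of $1/\mathbf{b}_\pm$ to $1+(\delta/\mu)/(1-\delta/\mu)=(1-\delta/\mu)^{-1}$, and the inductive step exploits the multiplicativity of $E$ together with a compatible rank-one update of the finite block $C'$ when combining elementary Wiener--Hopf factors $(1-\delta/t)$ and $(1-t/\mu)$.
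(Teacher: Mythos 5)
Your reduction is sound as far as it goes, and up to the last step it is a correct and fairly standard route: the factorization $\mathbf{b}=G(\mathbf{b})\,\mathbf{b}_-\mathbf{b}_+$, the identity $T_n(\mathbf{b}_-\mathbf{b}_+)=T_n(\mathbf{b}_-)T_n(\mathbf{b}_+)+J_rC'J_s^\intercal$ with $C'$ independent of $n$ and supported in the top-left $r\times s$ corner, the inversion $T_n(\mathbf{b}_\pm)^{-1}=T_n(1/\mathbf{b}_\pm)$, the Sylvester reduction to $\det(I_s+M_nC')$, and the exponential convergence $M_n=M_\infty+O(q^n)$ with $(M_\infty)_{ij}=G(\mathbf{b})(1/\mathbf{b})_{j-i}$ and any $q>\max_i|\delta_i|/\min_j|\mu_j|$ are all correct, and your $r=s=1$ base case checks out. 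For context: the paper does not prove this theorem at all (it quotes it from B\"ottcher--Grudsky), and its later ``variation'' only establishes the weaker fact that $\det T_n(\mathbf{b})/\det C_n(\mathbf{b})$ has a nonzero limit, via the circulant comparison and Propositions \ref{convH} and \ref{prop:rank}; so your attempt has to stand on its own.

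The genuine gap is the step you yourself flag as the hard part: the identification $\det(I_s+M_\infty C')=E(\mathbf{b})=\prod_{i,j}(1-\delta_i/\mu_j)^{-1}$. This identity is the real content of the strong Szeg\H{o} theorem in this setting (existence of a limit at exponential rate is the easy half), and the inductive plan you sketch does not yet amount to a proof. In particular, ``multiplicativity of $E$'' is false as stated: for $\mathbf{b}^{(1)}(t)=1-\delta/t$ and $\mathbf{b}^{(2)}(t)=1-t/\mu$ one has $E(\mathbf{b}^{(1)})=E(\mathbf{b}^{(2)})=1$ while $E(\mathbf{b}^{(1)}\mathbf{b}^{(2)})=(1-\delta/\mu)^{-1}$; the correct statement is only that adjoining one interior root $\delta_{r+1}$ multiplies $E$ by $\prod_j(1-\delta_{r+1}/\mu_j)^{-1}$. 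On the determinant side, adjoining a root changes the size of $C'$ from $r\times s$ to $(r+1)\times s$ and alters every entry of both $C'$ and $M_\infty$ (since $\mathbf{b}_-$ and $1/\mathbf{b}$ change), so there is no evident ``compatible rank-one update''; making the inductive step work requires a genuine computation (for instance a Cauchy-determinant evaluation of $\det(I_s+M_\infty C')$ after partial fractions, or an argument via Day/Widom's exact formula for banded $\det T_n$, or via the operator identity behind $\det T(\mathbf{b})T(1/\mathbf{b})=E(\mathbf{b})$), none of which is supplied. As it stands, your argument proves $\det T_n(\mathbf{b})=G(\mathbf{b})^n\bigl(c+O(q^n)\bigr)$ with $c=\det(I_s+M_\infty C')$, but neither that $c=E(\mathbf{b})$ nor even that $c\neq0$.
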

Note that $G({\bf b})$ is exactly the exponential of the $0$-th Fourier coefficient of $\ln {\bf b}$, that is
\[
G({\bf b}) = \exp (\ln {\bf b})_0 = \exp \left( \frac{1}{2\pi} \int_0^{2\pi} \ln {\bf b}(e^{i\theta}) d \theta  \right).
\]
\begin{rem}In Subsection \ref{subsec:QrP} and Subsection \ref{subsec:rk}, we will explore variations around Szeg\H{o}'s strong limit theorem. We will notably give a new proof that $\det T_n({\bf b}) / G({\bf b})^n$ converges to a  non-zero number if $\wind {\bf b} =0$.
\end{rem}

\subsubsection{Circulant matrices}

According to  \cite[(2.2)~Chapter~2]{bottcher-grudsky05} (note that with our notations we are considering the transpose instead of their matrix), $C_n(\bfa)$ is diagonalized by a discrete Fourier transform, that is,
\begin{equation}\label{eq:diagCn}
C_n(\bfa)= F_n \diag(\bfa(\omega_n^{l-1})), l=1,\ldots,n) F_n^*,
\end{equation}
where $\omega_n=e^{\frac{2\iC\pi}{n}}$ and $F_n$ was defined in \eqref{eq:defFn}.


\section{Limiting spectral distribution}

\label{sec:LSD}

\subsection{Convergence of the empirical spectral distribution}
\label{model}

As already pointed, for $\sigma = 0$, $\sigma_n = n ^{-\gamma}$ for some $\gamma > 0$, \cite{BasakPaquetteZeitouni20} proved that the empirical spectral measure  of $M_n$ converges weakly, in probability to the law of $\bfa(u)$,  with support $\bfa(\mathbb S^1)$.

For the rest of the subsection, we focus on the case $\sigma  >0$ and explain that the empirical spectral measure of $M_n$ converges weakly, in probability. Denote by  $N_n$  the nilpotent matrix with $(N_n)_{i,i+1}=1$ and zero elsewhere. Then $T_n({\bf a})$ is the following polynomial in $N_n$ and $N_n^*$:
\[
T_n({\bf a}) =  \sum_{k=0}^s a_k (N_n) ^k + \sum_{k=1}^{r} a_{-k} (N_n^*)^k.
\]
Since, $N_n$ converges, as $n\to\infty$, in $^*$-moments to a Haar unitary $u$
in some noncommutative probability space $(\mathcal A, \phi)$, where $\mathcal A$ is $C^*$-algebra and $\phi$ a normal faithful tracial state on $\mathcal A$,
 then $T_n({\bf a})$ converges in $^*$-moments to 
\[
{\bf a}(u)= \sum_{k=0}^s a_k (u) ^k + \sum_{k=1}^{r} a_{-k} (u^*)^k .
\]

By Theorem 6 in  \cite{sniady02}, if $G_n$ is a  complex Ginibre random matrix, i.e. $G_n(i,j)$ are i.i.d. standard complex Gaussian variables, then one has  that the empirical distribution of $\sigma_n \frac{G_n}{\sqrt{n}}+T_n({\bf a})$ converges weakly almost surely to the Brown measure of ${\bf a}(u)+\sigma c$ where $c$ and $u$ are free noncommutative random variables in  $(\mathcal A, \phi)$,  where $c$ is a standard circular noncommutative random variable and $u$ is Haar distributed (see for instance \cite{mingo-speicher} for a nice introduction to Brown measure in the context of free probability). 

Moreover, by the universality principle Theorem 1.5 of Tao-Vu \cite{tao-vu08}, the same holds for  $X_n$. Indeed, the Hilbert-Schmidt norm of $T_n({\bf a})$ satisfies
\[
\|T_n({\bf a})\|^2_{\text{HS}} := \Tr(T_n({\bf a})T_n({\bf a})^*) \leq n \|T_n({\bf a})\|^2 \leq n \max_{\omega \in\mathbb S^1} |{\bf a}(\omega)|^2,
\]
where $\|\cdot\|$ denotes the spectral norm. Hence,
\[
\sup_n \frac1n \|T_n({\bf a})\|^2_{\text{HS}} < \infty,
\]
so condition (1.3) of \cite[Thm~1.5]{tao-vu08} is satisfied. Moreover, 
since according to \eqref{nuz}, $\mu_{(T_n({\bf a})-z)(T_n({\bf a})-z)^*}$ converges weakly to $\nu_z$, 
  Theorem~1.5 in \cite{tao-vu08} implies that the empirical eigenvalue distribution of $M_n= U^*_n \sigma_n ( \frac{X_n}{\sqrt n} + U_n T_n({\bf a} ) U^*_n / \sigma_n ) U_n$ converges weakly  towards the Brown measure $\beta$ of ${\bf a}(u)+\sigma c$, almost surely. 
  

The support and the density of the Brown measure for the free sum of a circular element and a  normal operator can be computed using subordination techniques as in \cite{bordenave-caputo-chafai14} and this has been generalized to free sums of a circular element and an arbitrary operator in  \cite{zhong21,bercovici-zhong22}. From their results, one extracts that in our case,  the Brown measure of $\bfa(u) + \sigma c$ is supported on the closure of the open set
\[
A=\left\{z \in \mathbb{C} :  \frac{1}{2\pi}\int_0^{2\pi} \frac{d\theta}{\vert {\bf a}(e^{i\theta})-z\vert^2}  > \sigma^{-2}\right\}
\]
and is absolutely continuous with respect to Lebesgue measure on  $A$ with density given by
\begin{align*}
\rho(z)&=\frac{1}{\pi} \Bigg\{ \omega(z)^2 \E\left[ \left( |\bfa(U)-z|^2 + \omega(z)^2  \right)^{-2} \right]   \\
& \quad\quad\quad\quad\quad  + \frac{ \left| \E \left[ ( \bfa(U)-z ) \left( |\bfa(U)-z|^2 + \omega(z)^2  \right)^{-2}  \right] \right|^2 }{   \E\left[ \left( |\bfa(U)-z|^2 + \omega(z)^2  \right)^{-2} \right]  }   \Bigg\}
\end{align*}
where $U$ is uniformly distributed on $\mathbb S^1$, and $\omega\colon A \to (0,+\infty)$ is  the analytic function uniquely determined by
\[
\E\left[ \left( |\bfa(U)-z|^2 + \omega(z)^2  \right)^{-1} \right] = \sigma^{-2}, \quad \text{for any $z\in A$.}
\]

\subsection{Proof of Lemma \ref{le:supportbeta}}
For any $z \in \C$, put
\[
I(z) = \frac{1}{2 \pi} \int_0^{2 \pi }  \frac{d\theta}{|\bfa(e^{i \theta})-z|^2} . 
\]
We have that $I$ is continuous and twice differentiable on the open set $\bfa(\mathbb S^1)^c$, and $I(z)=\infty$ on $\bfa(\mathbb S^1)$. 

If $\sigma = 0$, then $\beta_0$ is the distribution of $\bfa(U)$ with $U$ uniformly distributed on $\dS^1$. In particular, we get
\begin{equation}\label{eq:suppb0}
\supp(\beta_0) = \bfa (\dS^1) = {\left\{z \in \mathbb{C} : I(z)  = \infty\right\}}.
\end{equation}
The conclusion of the lemma for $\sigma = 0$ follows.

For $\sigma >0$. as stated above, one has:
\[
\supp(\beta_\sigma)= \overline{\left\{z \in \mathbb{C} :  I(z)  > \sigma^{-2}\right\}}.
\]

Recall that complex differentiation gives $\partial_z z =  1$ and $\partial_z \overline z =0$. Hence,
\[
\partial_z \partial_{\overline z}  I(z) = \frac{1}{2 \pi} \int_0^{2 \pi} \frac{d\theta}{|\bfa(e^{i \theta})-z|^4}. 
\]
Therefore, since the Laplacian is $\Delta= 4  \partial_z \partial_{\overline z} $, we have that $\Delta I >0$ on $\bfa(\mathbb S^1)^c$, so $I$ is strictly subharmonic on $\bfa(\mathbb S^1)^c$ and $I$ has no local maximum. Indeed, 
let $U$ be some open ball in $\bfa(\mathbb S^1)^c$. Since $I$ is continuous, $I$ attains its maximum at some point  $z_0 \in \overline U$. Now suppose by contradiction that $z_0 \in U$. Then, by Taylor formula, since $\nabla I(z_0)=0$, one has
\[
I(z_0 + \xi) = I(z_0) + \frac12  \xi^T H_I(z_0) \xi  + o(\| h \|^2),
\]
where $H_I$ denotes the Hessian matrix of $I$. 
Hence, the Hessian $H_I$  is negative semidefinite at $z_0$, so its trace is nonpositive. But since $\Delta I (z_0)=\Tr(H_I(z_0))$, we arrive at a contradiction.  

Let
\[
A= \{  z \in \C :   I(z) > \sigma^{-2}\}. 
\]
Note that $A$ is open by Fatou's lemma. Note also that $A$ contains $\bfa(\mathbb S^1)$, since $I(z)=+\infty$ for $z \in \bfa(\mathbb S^1)$. 
Denote by $B$ the set
\[
B= \{  z \in \C :   I(z) \geq  \sigma^{-2}\}. 
\]
The inclusion $\overline A \subset B$ easily  follows  by continuity. 
For the reverse inclusion,  pick $z_0 \in \text{Int}(A^c)$, the interior of $A^c$ (note that $\text{Int}(A^c)$ is non empty since $I(z)< \sigma^{-2}$ for $z$ large enough).
 Hence, there is an open ball $B(z_0,\delta) \subset A^c$, so for all $z \in B(z_0,\delta)$, $I(z) \leq \sigma^{-2}$. Assume by contradiction that $z_0 \not \in B^c$, i.e.  $I(z_0) \geq \sigma^{-2}$. Hence, $I(z_0)= \sigma^{-2}$, so $I$ admits a local maximum inside  $B(z_0,\delta)$ which is not possible since $I$ is strictly subharmonic. Hence, $z_0 \in B^c$, and eventually $\overline A = B$ proving the lemma for $\sigma > 0$ \qed

We remark that the last identity in \eqref{eq:suppb0} implies that for all $\sigma \geq 0$.
\begin{equation}\label{eq:supportbeta}
\supp(\beta_\sigma) = a(\dS^1) \cup \left\{ z \in \dC :  \frac{1}{2 \pi} \int_0^{2 \pi }  \frac{d\theta}{|\bfa(e^{i \theta})-z|^2} \geq \sigma^{-2} \right\}.
\end{equation}

\section{Stable spectrum}

\label{sec:stable}

\subsection{Resolvent bounds}

Recall the definition of $S_n = C_n(\bfa) + \sigma_n Y_n$ in \eqref{eq:defSn}. The goal of this subsection is to prove the following propositions about the resolvents of $S_n$ and $C_n(\bfa)$ denoted in \eqref{defgeneralresolvantes} by $R_n(z)$ and $R'_n(z)$ respectively.

\begin{prop}\label{nooutlier}
Assume that $(H_4)$ holds. 
For any $\sigma \geq 0$ and any compact set  $\Gamma \subset \dC \backslash \supp(\beta_\sigma)$, a.s. for all $n$ large enough, neither $C_n(\bfa)$ nor  $S_n$ has eigenvalues in $\Gamma$. Moreover, there exists $C_\Gamma>0$ such that 
 \begin{equation}\label{borneresolvante} \sup_{z\in \Gamma} \|R'_n(z)\|\leq C_\Gamma \, \mbox{~and a.s. for all large $n$,\,}\sup_{z\in \Gamma} \|R_n(z)\|\leq C_\Gamma.
 \end{equation}
\end{prop}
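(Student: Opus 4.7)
The plan is to split the statement into the deterministic bound on $R'_n(z)$ and the $\sigma = 0$ case (both routine), and the more delicate $\sigma > 0$ case, which is the core of the work. Throughout I conjugate by $U_n$ to reduce to $\tilde S_n := U_n^* S_n U_n = \tilde C_n + \sigma_n X_n/\sqrt{n}$ with $\tilde C_n := U_n^* C_n(\bfa) U_n$ still normal and with the same spectrum as $C_n(\bfa)$; resolvent norms and eigenvalues are unchanged.

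\textbf{Bound on $R'_n$ and the case $\sigma = 0$.} By the Fourier diagonalization \eqref{eq:diagCn}, $C_n(\bfa)$ is normal with spectrum on $\bfa(\dS^1)$. Lemma \ref{le:supportbeta} and \eqref{eq:supportbeta} give $\bfa(\dS^1) \subset \supp(\beta_\sigma)$ for every $\sigma \geq 0$, hence $d := \DIST(\Gamma, \bfa(\dS^1)) > 0$ and normality yields $\|R'_n(z)\| \leq 1/d$ uniformly on $\Gamma$ and in $n$. When $\sigma = 0$ one also has $\supp(\beta_0) = \bfa(\dS^1)$, so the same $d$ applies. Under $(H_4)$, the Bai--Yin theorem gives $\|Y_n\| = \|X_n\|/\sqrt n \to 2$ a.s., and regime (ii) forces $\sigma_n \to 0$, so $\sigma_n \|Y_n\| \|R'_n(z)\| \leq 1/2$ uniformly on $\Gamma$ a.s.\ for $n$ large. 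The Neumann identity $R_n(z) = (I + R'_n(z) \sigma_n Y_n)^{-1} R'_n(z)$ then gives $\|R_n(z)\| \leq 2/d$, ruling out eigenvalues of $S_n$ in $\Gamma$.

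\textbf{Case $\sigma > 0$.} Now $\sigma_n \|Y_n\| \to 2\sigma > 0$ and $\Gamma$ may approach $\bfa(\dS^1)$ while remaining in $\supp(\beta_\sigma)^c$, so the Neumann argument collapses and the crude Weyl estimate $s_{\min}(\tilde S_n - z) \geq d - 2\sigma + o(1)$ is not enough. The target is
\begin{equation*}
\inf_{z \in \Gamma} s_{\min}(\tilde S_n - z) \geq c_\Gamma > 0 \qquad \text{a.s.\ for all } n \text{ large,}
\end{equation*}
which simultaneously yields the absence of eigenvalues in $\Gamma$ and $\|R_n(z)\| \leq 1/c_\Gamma$. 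I proceed by Hermitization. Introduce the singular-value Stieltjes transform $m_n(z,\eta) = \frac{1}{n} \Tr\bigl((\tilde S_n - z)(\tilde S_n - z)^* + \eta^2 I\bigr)^{-1}$ and show, via a Schur-complement/resolvent expansion exploiting the iid structure of $X_n$ and the normality of $\tilde C_n$, that $m_n(z,\eta)$ concentrates around the deterministic subordination solution $m(z,\eta) = \E\bigl[(|\bfa(U) - z|^2 + \omega(z,\eta)^2)^{-1}\bigr]$ from Section \ref{model}. For $z \notin \supp(\beta_\sigma)$, the subordination function $\omega(z) := \omega(z,0^+)$ is strictly positive, so in the limit the singular-value spectrum of $\tilde S_n - z$ is contained in $[\omega(z), \infty)$. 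Continuity of $\omega$ on $\Gamma$ gives $\omega_* := \inf_\Gamma \omega > 0$. To upgrade this macroscopic statement to a uniform lower bound on $s_{\min}$, I combine (i) a polynomial a priori bound $s_{\min}(\tilde S_n - z) \geq n^{-B}$ at fixed $z$, obtained by a Tao--Vu style small-ball argument valid under $(H_4)$; (ii) a finite $n^{-A}$-net on $\Gamma$ together with the $1$-Lipschitz dependence $z \mapsto s_{\min}(\tilde S_n - z)$; and (iii) concentration of $m_n(z,\eta)$ around its limit at a scale $\eta = n^{-\tau}$ slow enough to retain the gap, allowing one to count singular values below $\omega_*/2$ and show this count is zero.

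\textbf{Main obstacle.} The difficult point is the $\sigma > 0$ case: converting the limit of empirical singular-value distributions of $\tilde S_n - z$ into a uniform, almost-sure lower bound on $s_{\min}(\tilde S_n - z)$, under only the fourth-moment assumption $(H_4)$ and uniformly over the compact set $\Gamma$. The fixed non-normal direction brought in by $\tilde C_n$ prevents any straightforward perturbative reduction to a pure Ginibre model, and the threshold $\omega(z)$ close to $\partial \supp(\beta_\sigma)$ demands precise control of the Hermitized Stieltjes transform at small $\eta$. Once this invertibility estimate is in place, the remaining linear-algebra steps are routine.
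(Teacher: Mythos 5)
Your treatment of the deterministic bound on $R'_n$ (normality of $C_n(\bfa)$ plus $\DIST(\Gamma,\bfa(\dS^1))>0$) and of the case $\sigma=0$ (norm of $X_n/\sqrt n$ bounded a.s.\ under $(H_4)$, then a Neumann/perturbation argument, modulo a harmless sign in the identity $R_n(z)=(I-R'_n(z)\sigma_n Y_n)^{-1}R'_n(z)$) matches the paper's proof in substance. The problem is the case $\sigma>0$, which you yourself flag as the main obstacle: what you give there is a program, not a proof. The chain Hermitization $\to$ concentration of $m_n(z,\eta)$ around the subordination solution $\to$ $n^{-A}$-net and Lipschitz continuity $\to$ small-ball bound on $s_{\min}$ $\to$ counting singular values below $\omega_*/2$ is plausible, but none of these steps is carried out, and the crucial one is exactly where the difficulty sits: weak convergence of the empirical singular-value distribution of $\tilde S_n - z$ only forbids a macroscopic proportion of small singular values, so the assertion that ``in the limit the singular-value spectrum of $\tilde S_n-z$ is contained in $[\omega(z),\infty)$'' does not follow from the limit law; one needs a genuine ``no singular values outside the support'' theorem, uniform in $z\in\Gamma$ and almost sure, under only $(H_4)$. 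Establishing that (via a local-law-type control of $m_n(z,\eta)$ down to small $\eta$, or otherwise) is a substantial piece of analysis, not a routine upgrade, and the proposal does not supply it.

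For comparison, the paper does not reprove this at all: after conjugating by $U_n$ and rescaling $\sigma_n/\sigma$ it checks that $T_n(\bfa)$, $C_n(\bfa)$, $B_n=-P_nQ_n$ satisfy the assumptions (A1), (A2), (A3), (A1'), (A4) of \cite{bordenave-capitaine16} (bounded norms, convergence of $\mu_{(C_n(\bfa)-z)(C_n(\bfa)-z)^*}$ to $\nu_z$, the support formula \eqref{eq:supportbeta}, well-conditioning of $C_n(\bfa)-z$ off $\bfa(\dS^1)$), and then invokes Theorem~1.4 and Propositions~3.1 and~4.2 of that reference, where the hard exclusion-of-small-singular-values argument lives. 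So either you carry out your Hermitization program in full (essentially redoing that reference), or you close the gap the way the paper does, by verifying its hypotheses and citing it; as written, the central claim $\inf_{z\in\Gamma}s_{\min}(\tilde S_n-z)\geq c_\Gamma$ a.s.\ for large $n$ remains unproven.
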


As in the introduction, we write $B_n = - P_n Q_n$ with $P_n,Q_n$ given by \eqref{PnQn}. Outside the spectra of $S_n$ and $C_n(\bfa)$, recall that we have set $G_n(z) =Q_n  R_{n}(z) P_n -Q_n R'_{n}(z) P_n$ in \eqref{dvptSigma0}.

\begin{prop}\label{convunif} Assume that $(H_4)$ holds. 
For any $\sigma \geq 0$ and  any compact set  $\Gamma \subset \dC \backslash \supp(\beta_\sigma)$, 
a.s. $\sup_{z\in \Gamma}\| G_n(z)\|$ converges  towards zero as $n $ goes to infinity.
\end{prop}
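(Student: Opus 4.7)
The starting point is the resolvent identity $R_n(z) - R'(z) = \sigma_n R'(z) Y_n R_n(z)$, which immediately gives
\[
G_n(z) = \sigma_n\, Q_n R'(z)\, Y_n\, R_n(z)\, P_n.
\]
Proposition~\ref{nooutlier} furnishes $\|R_n(z)\|, \|R'(z)\| \leq C_\Gamma$ on $\Gamma$ a.s.\ eventually, and under $(H_4)$ Bai--Yin gives $\|Y_n\| \leq C$ a.s.\ eventually. Together with $\|P_n\|, \|Q_n\| = O(1)$, this already yields the crude bound $\sup_{z \in \Gamma} \|G_n(z)\| \leq C \sigma_n \|Y_n\|$, which is $o(1)$ in the regime $\sigma = 0$ (since $\sigma_n \to 0$). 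This disposes of the easy case; the rest of the plan concerns $\sigma > 0$.

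For $\sigma > 0$, the crude bound is only $O(1)$ and one must exploit the bilinear structure. Each entry reads $(G_n(z))_{ij} = \sigma_n (R'(z)^* q_i)^* Y_n (R_n(z) p_j)$, with $q_i = Q_n^* e_i$ and $p_j = P_n e_j$ bounded vectors supported on $O(1)$ coordinates. Iterating the resolvent identity on the random right-hand vector yields, for any $K$,
\[
G_n(z) \;=\; \sum_{k=1}^{K} \sigma_n^{k} Q_n (R'(z) Y_n)^{k} R'(z) P_n \;+\; \sigma_n^{K+1} Q_n (R'(z) Y_n)^{K+1} R_n(z) P_n.
\]
Each term in the sum is a polynomial of degree $k$ in the centered entries of $X_n$ with deterministic $O(1)$ coefficients. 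Using the covariance $\E[(Y_n)_{ab}\overline{(Y_n)_{cd}}] = n^{-1}\delta_{ac}\delta_{bd}$ (since $U_n$ is unitary), one verifies that the $k$-th term has expected Hilbert--Schmidt norm squared of order $\sigma_n^{2k}/n$; a Hanson--Wright-type concentration for polynomials in iid random variables (valid under $(H_4)$) then gives each entry size $O_{\mathbb P}(\sigma_n^{k}/\sqrt n)$.

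The remainder has operator norm bounded by $C (\sigma_n \|Y_n\| \|R'(z)\|)^{K+1}$; when this geometric factor is $< 1$ on $\Gamma$, letting $K \to \infty$ gives a.s.\ pointwise vanishing. When it may exceed $1$, the remainder is refactored as a bilinear form $u^* Y_n v$ with one factor absorbed into $R_n(z)$ (whose boundedness on $\Gamma$ is supplied by Proposition~\ref{nooutlier}), and an HS estimate gains the required $n^{-1/2}$. Once pointwise a.s.\ vanishing is in hand, uniformity on $\Gamma$ is obtained by the following standard device: enlarge $\Gamma$ slightly to an open set $\Gamma' \subset \dC \setminus \supp(\beta_\sigma)$; on $\Gamma'$ the map $z \mapsto G_n(z)$ is a.s.\ eventually analytic (since $\Gamma'$ is disjoint from $\spectrum(S_n)\cup\spectrum(C_n(\bfa))$) and uniformly bounded by Proposition~\ref{nooutlier}, so Cauchy's integral formula gives equicontinuity, and a.s.\ pointwise vanishing on a countable dense subset of $\Gamma$ upgrades to $\sup_{z \in \Gamma} \|G_n(z)\| \to 0$ a.s.

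The main obstacle is controlling the remainder in the macroscopic regime $\sigma > 0$ when $\sigma \|Y_n\| \|R'(z)\|$ is not uniformly sub-unit on $\Gamma$: the naive geometric iteration fails and one must combine the low-rank structure of $P_n, Q_n$ with the uniform resolvent bound $\|R_n(z)\| \leq C_\Gamma$ from Proposition~\ref{nooutlier} and polynomial concentration for bilinear forms in $Y_n$ to extract the needed $n^{-1/2}$ decay entrywise.
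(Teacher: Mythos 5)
Your treatment of the case $\sigma=0$ is exactly the paper's: the resolvent identity $G_n(z)=\sigma_n Q_nR_n(z)Y_nR'_n(z)P_n$, the resolvent bounds of Proposition~\ref{nooutlier}, the Bai--Yin bound on $\|X_n/\sqrt n\|$, and $\sigma_n\to0$. The divergence is in the case $\sigma>0$, where the paper does not run any expansion at all: it verifies the hypotheses (A1)--(A4), (A1') of \cite{bordenave-capitaine16} for $C_n(\bfa)$, $P_n$, $Q_n$ and invokes \cite[Proposition~4.2]{bordenave-capitaine16} directly, which is what makes the statement available under the sole moment assumption $(H_4)$.

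Your route for $\sigma>0$ has genuine gaps. First, the claim that the $k$-th term of the expansion has expected Hilbert--Schmidt norm squared of order $\sigma_n^{2k}/n$ is not correct as stated (for $\sigma\geq1$ it does not even decay in $k$) and misses the actual mechanism: the geometric factor is $\sigma_n^2\theta_n(z,z)=\sigma_n^2\frac1n\Tr R'_n(z)R'_n(z)^*$, and the fact that this is uniformly $<1$ on $\Gamma$ is precisely the defining property of $\dC\setminus\supp(\beta_\sigma)$ (Lemma~\ref{le:supportbeta}); your sketch never uses $\Gamma\cap\supp(\beta_\sigma)=\emptyset$ beyond resolvent boundedness. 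Moreover, a direct second-moment/concentration bound for a degree-$k$ polynomial in the entries of $X_n$ involves moments of $X_{11}$ up to order $2k$, so for $k\geq3$ it is not available under $(H_4)$ alone; controlling these terms with constants growing only polynomially in $k$ is exactly the hard content of Section~\ref{sec:CLT} (Propositions~\ref{prop:tight} and \ref{prop:tight2}), which needs $(H_k)$ for all $k$ and either $(H_\text{sym})$ or $U_n=F_n$. Second, and more seriously, the remainder when $\sigma_n\|Y_n\|\|R'_n(z)\|$ may exceed $1$ is not handled: the remainder $\sigma_n^{K+1}Q_n(R'_n(z)Y_n)^{K+1}R_n(z)P_n$ contains $K+1$ copies of $Y_n$, and $R_n(z)$ is a nontrivial function of $Y_n$, so it cannot be ``refactored as a bilinear form $u^*Y_nv$'' with one deterministic side, and the asserted $n^{-1/2}$ gain from an HS estimate has no justification. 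The ingredient that actually resolves this is the high-probability event $\Omega_n$ of \cite[proof of Proposition~4.3]{bordenave-capitaine16}, on which $\sup_{z\in\Gamma}\|(R'_n(z)\sigma_nY_n)^k\|\leq C(1-\delta)^k$ for all $k$ simultaneously; proving such a bound (or anything equivalent) is the real difficulty in the regime $\sigma>0$, and your sketch leaves it untouched, whereas the paper sidesteps it by citing \cite{bordenave-capitaine16} wholesale.
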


\begin{proof}[Proofs of Proposition \ref{nooutlier} and Proposition \ref{convunif}]

We distinguish the cases $\sigma =0$ and $\sigma >0$. We start with this last case.

\noindent{\em {Case $\sigma  >0$. }} We write 
$$
S_n = \frac{\sigma_n }{ \sigma} U_n \left( \frac{\sigma}{ \sigma_n} U^*_n C_n(\bfa) U_n + \sigma \frac{X_n}{\sqrt n}\right) U_n^* := \frac{\sigma_n }{ \sigma} U_n \tilde S_n U_n^*.
$$
Since $\| U_n A U_n^* \| = \| A \|$ for any matrix $A \in M_n(\dC)$, it is enough to prove the statement for $\tilde S_n$. We may thus assume without loss of generality that $\sigma_n = \sigma$ and $U_n = I_n$ (in the sequel only unitary invariants properties of $C_n(\bfa)$ are invoked). Next, we note that
\begin{enumerate}[-]
\item  the operator norm of $T_n({\bf a})$ is bounded by $\sup_{\omega \in \mathbb{S}^1}|{\bf a}(\omega)|$ so that assumption $(A1)$ in \cite{bordenave-capitaine16} is satisfied. \item For all $z \in \dC$, $\mu_{(T_n({\bf a}) - z I_n)(T_n({\bf a}) - z I_n)^* }$ (and $\mu_{(C_n({\bf a}) - z I_n)(C_n({\bf a}) - z I_n)^* }$) converges weakly to the probability measure $\nu_z$ defined in \eqref{nuz},  so that assumption $(A2)$ in \cite{bordenave-capitaine16} is satisfied.
\item Formula \eqref{eq:supportbeta} corresponds to assumption $(A3)$ in \cite{bordenave-capitaine16} for $\sigma >0$.
\item 
The matrices $B_n,P_n,Q_n$ are uniformly bounded in $n$ so that assumption $(A1')$ in \cite{bordenave-capitaine16} is satisfied (with $A''_n = B_n$).
\item
$B_n$ has a fixed finite rank bounded by $r+s> 0$ and for any  set $\Gamma \subset \mathbb{C}\setminus \supp (\beta_\sigma)$, 
 $C_n({\bf a}) - z I_n$ is well conditioned in $\Gamma$ in the sense of (A4) of \cite{bordenave-capitaine16}. Indeed, the eigenvalues of the normal matrix $C_n({\bf a})$ are $\bfa(e^{\iC 2\pi k/n})$, $k=0,\ldots, n-1$.
Thus, since ${\bf a}(\mathbb{S}^1)\subset \supp ( \beta_\sigma)$, for any $z$ in $\Gamma$, setting $\eta_z = \mathrm{d}(z,\bfa(\dS^1))>0$, for all $n$, $C_n({\bf a})-zI_n$ has no singular value in $[0, \eta_z]$ and $\|R'_n(z) \| \leq \eta_z ^{-1}$. 
\end{enumerate}
All the assumptions of  \cite[ Theorem~1.4]{bordenave-capitaine16} are satisfied,  the first statement of Proposition  \ref{nooutlier} follows from this theorem. The second statement follows from \cite[Proposition~3.1]{bordenave-capitaine16} (since, with the notation of this paper $\{z  \in \C :  0 \notin \supp \mu_z\}= \supp (\beta_\sigma)$). As, for Proposition \ref{convunif}, it is \cite[Proposition~4.2]{bordenave-capitaine16} applied to our setting.
%

%
\noindent{\em{ Case $\sigma = 0$. }}
Using inequality (A.7.2) in \cite{BaiSilversteinbook} and the assumption that $U_n$ is unitary, we have that
$$\max_{k} \left|s_k \left( S_n -zI_n\right)-s_k(C_n(\bfa)-zI_n)\right|
\leq \sigma_n \left\| Y_n \right\| = \sigma_n \left\|\frac{ X_n}{\sqrt{n}}\right\|,$$
where for any $n\times n$ matrix $B$, $s_1(B), \ldots, s_n(B)$ denote the singular values of $B$ in decreasing order.

Now,  for any $\varepsilon>0$, a.s.  for all large $n$, we have that $\left\|\frac{X_n}{\sqrt{n}}\right\|\leq 2+\varepsilon$ (see  \cite[Theorem 5.8]{BaiSilversteinbook}).
 Let $\delta_\Gamma = \mathrm{d}(\Gamma, \bfa(\mathbb{S}^1)) >0.$
Then, a.s.,  for all large $n$, for any $z\in \Gamma$, $s_n \left(  C_n(\bfa)-zI_n\right)\geq \delta_\Gamma >0.$
We readily deduce that, setting $\eta_\Gamma = \delta_\Gamma/2>0$, we have 
a.s. for all large $n$, 
\begin{equation}\label{valsingmin}
\inf_{z\in \Gamma}s_n \left( S_n -zI_n\right)\geq \eta_\Gamma.
\end{equation} 
Since $\| R_n(z) \| = s_n \left( S_n -zI_n\right)^{-1}$, Proposition \ref{nooutlier} follows.

From the resolvent identity, we have
\[
G_n(z)=Q_n R_{n}(z) P_n -Q_n R'_{n}(z) P_n=Q_n R_{n}(z) \sigma_n Y_n R'_{n}(z) P_n,
\]
thus, for any $z \in \Gamma$, \begin{eqnarray*} \| G_n(z)\| &\leq& \sigma_n \|Q_n\|  \|R_{n}(z)\|  \left\|\frac{X_n}{\sqrt{n}}\right\| \|R'_{n}(z)\| \| P_n\|.
\end{eqnarray*}
From Proposition \ref{nooutlier}, a.s. $\|R'_{n}(z)\|$ and $\| R_n(z)\|$ are uniformly bounded by $C_\Gamma$ for all $z \in \Gamma$. Recall also that the norms of the matrices $P_n,Q_n$ are uniformly bounded in $n$. It implies Proposition \ref{convunif} in the case $\lim_n \sigma_n = \sigma = 0$.
\end{proof}

\subsection{No outliers outside \texorpdfstring{$\spectrum T(\bfa)$}{sp(T(a))}: proof of Theorem~\ref{thm-no-outliers}}

The proof follows the ideas of \cite{bordenave-capitaine16} and is based on Rouché's theorem (see \cite[Thm~8.18]{burckel}). We start by the following key  lemma:
\begin{lem} \label{lemma-nooutlier}
For any compact $\Gamma \subset \C\setminus \spectrum T({\bf a})$,
\[
\liminf_n 
 \min_{z \in \Gamma} \left|  \frac{\det( T_n(\bfa)-z)}{\det( C_n(\bfa)-z)}  \right| > \frac{1}{2^{rs}2^{r+s+1}}.
\]  
\end{lem}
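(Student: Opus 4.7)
The plan is to compare both $\det T_n(\bfa - z)$ and $\det C_n(\bfa - z)$ to a common leading factor $G(\bfa - z)^n$, identify their ratio as $E(\bfa - z)$, and then lower-bound $|E(\bfa - z)|$ uniformly on $\Gamma$ using only that the roots of $\bfa - z$ stay strictly separated by the unit circle for $z \in \Gamma$. Since $\Gamma \subset \spectrum T(\bfa)^c = \bigl(\bfa(\dS^1) \cup \{z : \wind(\bfa - z) \ne 0\}\bigr)^c$, the Laurent polynomial $\bfa - z$ has no zero on $\dS^1$ and winding number zero for $z \in \Gamma$. Szeg\H{o}'s strong limit theorem (Theorem~\ref{szego-thm}) therefore gives
\[
\det T_n(\bfa - z) = G(\bfa - z)^n E(\bfa - z)\bigl(1 + O(q^n)\bigr),
\]
uniformly in $z \in \Gamma$ for some $q \in (0,1)$; uniformity follows from the continuity of the roots $\lambda_i(z)$ of Definition~\ref{zk} on the compact set $\Gamma$ together with the explicit form of $q$ in Theorem~\ref{szego-thm}. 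For the denominator, the diagonalization \eqref{eq:diagCn} gives $\det C_n(\bfa - z) = \prod_{k=0}^{n-1}(\bfa(\omega_n^{k}) - z)$. Because $\wind(\bfa - z) = 0$, a single-valued real-analytic branch of $\theta \mapsto \log(\bfa(e^{\iC \theta}) - z)$ exists on $\dS^1$, and the exponential convergence of the trapezoidal rule for periodic analytic functions yields, after possibly enlarging $q$,
\[
\sum_{k=0}^{n-1} \log(\bfa(\omega_n^k) - z) = n \log G(\bfa - z) + O(q^n),
\]
so that $\det C_n(\bfa - z) = G(\bfa - z)^n (1 + O(q^n))$ uniformly on $\Gamma$. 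Taking the quotient,
\[
\frac{\det T_n(\bfa - z)}{\det C_n(\bfa - z)} = E(\bfa - z)\bigl(1 + O(q^n)\bigr).
\]

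It remains to lower-bound $|E(\bfa - z)|$. By Remark~\ref{rk-winding-zero}, for $z \in \Gamma$ we have the strict inequalities $|\lambda_i(z)| < 1 < |\lambda_{r+j}(z)|$ for $1 \leq i \leq r$, $1 \leq j \leq s$. With the notation $\delta_i = \lambda_i(z)$, $\mu_j = \lambda_{r+j}(z)$ of Theorem~\ref{szego-thm}, we obtain $|1 - \delta_i/\mu_j| \leq 1 + |\delta_i/\mu_j| < 2$, hence
\[
|E(\bfa - z)| = \prod_{i=1}^{r}\prod_{j=1}^{s} |1 - \delta_i/\mu_j|^{-1} > 2^{-rs}
\]
uniformly on $\Gamma$. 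For $n$ large, $|1 + O(q^n)| \geq 1/2$, and therefore
\[
\left|\frac{\det T_n(\bfa - z)}{\det C_n(\bfa - z)}\right| \geq \frac{1}{2^{rs+1}} > \frac{1}{2^{rs}\,2^{r+s+1}},
\]
which is the claimed bound (with considerable slack).

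The main obstacle is to guarantee that both asymptotic expansions above are \emph{uniform} in $z \in \Gamma$. For the circulant determinant this comes essentially for free, since the branch of $\log(\bfa(\cdot) - z)$ depends continuously on $z \in \Gamma$ and the trapezoidal-rule error is controlled by the distance from $\dS^1$ to the nearest pole/zero of that branch, which is bounded below by compactness. For $\det T_n(\bfa - z)$ one needs a uniform version of Theorem~\ref{szego-thm}: this follows from the explicit form of the constant $q$ in that theorem together with the uniform separation $\max_i |\lambda_i(z)| < \min_j |\lambda_{r+j}(z)|$ on $\Gamma$, itself a consequence of the continuity of the roots $\lambda_k(z)$ and the definition of $\Lambda(\bfa) \subset \spectrum T(\bfa)$.
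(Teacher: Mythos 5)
Your argument is correct and reaches the stated bound (with room to spare), but it treats the circulant factor differently from the paper. Both proofs share the same skeleton: write the ratio as $\frac{\det(T_n(\bfa)-z)}{G(\bfa-z)^n}\cdot\frac{G(\bfa-z)^n}{\det(C_n(\bfa)-z)}$, control the first factor by Szeg\H{o}'s strong limit theorem and the elementary bound $|1-\lambda_i(z)/\lambda_{r+j}(z)|<2$ giving $|E(\bfa-z)|\geq 2^{-rs}$, with uniformity on $\Gamma$ from the continuity of the roots. The difference is in the second factor: the paper uses the exact closed-form circulant determinant \eqref{det-circulant}, $\det(C_n(\bfa)-z)=a_s^n(-1)^{s(n-1)}\prod_{k=1}^{r+s}(1-\lambda_k(z)^n)$, and bounds each factor $|1-\lambda_k(z)^{\pm n}|<2$, which costs a factor $2^{-(r+s)}$ but needs nothing beyond Remark~\ref{rk-winding-zero}; you instead use the diagonalization \eqref{eq:diagCn} together with the exponential accuracy of the trapezoidal rule for the periodic analytic function $\theta\mapsto\log(\bfa(e^{\iC\theta})-z)$ (single-valued because $\wind(\bfa-z)=0$, analytic in an annulus since the $\lambda_k(z)$ stay uniformly away from $\dS^1$ on $\Gamma$), which shows $\det(C_n(\bfa)-z)=G(\bfa-z)^n(1+O(q^n))$ uniformly. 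Your route requires slightly more machinery (choice of a log branch in an annulus, aliasing/Fourier decay argument, uniformity of the strip width over $\Gamma$), but it buys a genuinely stronger conclusion: the ratio converges uniformly to $E(\bfa-z)$, so the liminf of the minimum is in fact at least $\min_{\Gamma}|E(\bfa-z)|\geq 2^{-rs}$, sharper than the paper's $2^{-rs}2^{-(r+s+1)}$. (As in the paper, the inequality $|E(\bfa-z)|>2^{-rs}$ should be read as $\geq$ when $r=0$, where $E\equiv 1$; this does not affect the conclusion since $s>0$.)
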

\begin{proof}
Let $z \not \in \spectrum T({\bf a})$, that is $z \not \in {\bf a}(\mathbb S^1)$ and  $\wind({\bf a}-z)=0$. Let 
\[
G({\bf a}-z)= a_s (-1)^s \prod_{k=r+1}^{s+r} \lambda_k(z), 
\] where the $\lambda_k$'s  are defined in Definition \ref{zk}.
Write:
\[
\frac{\det( T_n({\bf a})-z)}{\det( C_n({\bf a})-z)} = \frac{\det( T_n({\bf a})-z)}{G({\bf a}-z)^n} \frac{G({\bf a}-z)^n}{\det( C_n({\bf a})-z)} .
\]
The determinant of the circulant matrix is given by (see \cite[Prop~2.2]{bottcher-grudsky05}): for all $n\geq r+s+1$, 
\begin{equation} \label{det-circulant}
\det \left( C_n({\bf a})-z \right)  = a_s^n (-1)^{s(n-1)} \prod_{k=1}^{s+r} \left( 1- \lambda_k(z)^n \right).
\end{equation}
Hence, 
we have:
\begin{align*}
 \frac{G({\bf a}-z)^n}{\det( C_n({\bf a})-z)}  & = (-1)^{s} \frac{\prod_{k=r+1}^{s+r} \lambda_k(z)^n}{\prod_{k=1}^{s+r} \left( 1- \lambda_k(z)^n \right)}
 =(-1)^{s} \prod_{k=1}^{r} \frac{1}{ 1- \lambda_k(z)^n }
 \prod_{k=r+1}^{s+r} \frac{1}{\frac{1}{\lambda_k(z)^n}-1}  
\end{align*}
According to remark~\ref{rk-winding-zero}, for $k=1,\ldots,r$, $|\lambda_k(z)|<1$, hence $|1-\lambda_k(z)^n|<2$, and  for $k=r+1,\ldots,s+r$, $|\lambda_k(z)|>1$, hence $\left|  1-\frac{1}{\lambda_k(z)^n} \right|<2$.  Thus, for all $n\geq r+s+1$, we have
\[
\left|    \frac{G({\bf a}-z)^n}{\det( C_n({\bf a})-z)}   \right| > \frac{1}{2^{r+s}}. 
\]
Since $z \not \in a(\mathbb S^1)$ and $\wind({\bf a}-z)=0$, one can apply   Szeg\H{o}'s strong limit Theorem~\ref{szego-thm}, hence, as $n\to\infty$, 
\[
 \frac{\det( T_n({\bf a})-z)}{G({\bf a}-z)^n} = E({\bf a}-z) \left( 1 + O(q^n) \right)
\]
where
\[
E({\bf a}-z) = \prod_{k=1}^r \prod_{j=r+1}^{s+r} \left( 1- \frac{\lambda_k(z)}{\lambda_j(z)}  \right)^{-1}.
\]
Since $\frac{|\lambda_r(z)|}{|\lambda_{r+1}(z)|}<q<1$, and $q$ depends continuously on $z$ (see \cite[p.268]{bottcher-grudsky05}),  the convergence holds uniformly on compact sets. 

Moreover, for all $k=1,\ldots,r$,  and for all $j=r+1,\ldots,r+s$,  one has
\[
\left|  1- \frac{\lambda_k(z)}{\lambda_j(z)}  \right|  \leq 1 + \frac{|\lambda_k(z)|}{|\lambda_{j}(z)|}<2,
\]
hence,
\[
|E({\bf a}-z)| > \frac{1}{2^{rs}}. 
\]
Eventually,  one gets
that for all compact $\Gamma \subset \C \setminus \spectrum T({\bf a})$, and $n$ large enough,
\[
 \min_{z \in \Gamma} \left|  \frac{\det( T_n({\bf a})-z)}{\det( C_n({\bf a})-z)}  \right| > \frac{1}{2^{rs}2^{r+s+1}}. \qedhere
\]
\end{proof}

We are ready to prove Theorem~\ref{thm-no-outliers}.
\begin{proof}[Proof of Theorem~\ref{thm-no-outliers}]
Let $\Gamma \subset \dC \backslash (\supp( \beta_\sigma) \cup  \spectrum T(\bfa))$ be a compact set. From \eqref{proddet} and Proposition~\ref{nooutlier}, a.s. for all $n$ large enough, the eigenvalues in $\Gamma$ of $M_n$ are zeroes of the holomorphic function $f_n(z)  =  \det (I_{r+s} + Q_n R_n(z) P_n ) $. 

Let $g_n(z)  = \det (I_{r+s} + Q_n R'_n(z) P_n )$. We want to apply Rouché's theorem to the holomorphic functions $f_n$ and $g_n$, see \cite[Thm~8.18]{burckel}. On the one hand, we have, recalling that
$
G_n(z) = Q_n (R_n(z) -  R'_n(z)) P_n,
$
that
\begin{align*}
|f_n(z)-g_n(z)| & = | \det [I_{r+s} + Q_n R_n(z) P_n ]  -  \det [I_{r+s} + Q_n R'_n(z) P_n ] | \\
& =  | \det [I_{r+s} + Q_n R'_n(z) P_n + G_n(z)]  -  \det [I_{r+s} + Q_n R'_n(z) P_n ] | \\
& \leq \| G_n(z) \| (r+s) \max ( \| I_{r+s} + Q_n R'_n(z) P_n \| , \| I_{r+s} + Q_n R'_n(z) P_n + G_n(z) \| )^{r+s-1} ,
\end{align*}
using the  inequality \eqref{perturb-inequality}.
Since, the norms of $P_n $ and $Q_n$ are bounded  uniformly in $n$, and using Prop.~\ref{nooutlier},
we have, 
\[
\sup_{z \in \Gamma} \| I_{r+s} + Q_n R'_n(z) P_n \| \leq    C_\Gamma,  
\]
for some constant $C_\Gamma$ depending only on $\Gamma$.  Since, by Proposition~\ref{convunif}, a.s.,  $ \sup_{z \in \Gamma} \| G_n(z) \| \to 0$ as $n \to \infty$, one has that a.s., for all $\varepsilon>0$, there exists $n_0$ such that for all $n\geq n_0$, for all $z\in \Gamma$, 
\[
|f_n(z)-g_n(z)| < \varepsilon. 
\]
On the other hand, using again  Sylvester's identity,
\begin{equation}\label{SylvesterRprime}
g_n(z) =   \det (I_{r+s} + Q_n R'_n(z) P_n )   =   \frac{\det (z- T_n(\bfa))}{\det (z-C_n(\bfa))}  .
\end{equation}
Hence, since $\Gamma \subset \C \setminus \spectrum T(\bfa)$, we have that for $n$ large enough, 
\[
\inf_{z \in \Gamma}  \left|  \frac{\det (z- T_n(\bfa))}{\det (z-C_n(\bfa))}   \right| > \frac{1}{2^{r+s+rs+1}}
\]
 by Lemma~\ref{lemma-nooutlier}, so we get that, a.s. there exists $n_0$ such that for all $n\geq n_0$, for all $z\in \Gamma$,
\[
|f_n(z)-g_n(z)|  < |g_n(z)| . 
\]
By Rouché's theorem (we only need the above inequality on the boundary $\partial \Gamma$ of $\Gamma$), for $n \geq n_0$, $f_n$ and $g_n$ have the same number of zeroes in $\Gamma$.  Since $g_n$ has no zeroes in $\Gamma$, we deduce that, a.s., there are no eigenvalues of $M_n$ in $\Gamma$ for $n$ large enough.
\end{proof} 
 

\section{Unstable spectrum}
\label{sec:unstable}

\subsection{Convergence of the master matrix field}
\label{subsec:cvWn}

In this subsection, we give an asymptotic Gaussian equivalent of 
$$
W_n(z) = \frac{\sqrt{n}}{\sigma_n} Q_n \left( R_{n}(z)  - R'_{n}(z) \right) P_n
$$
defined in Equation \eqref{eq:defWn}.  This result involves a Gaussian process coming from the CLT proved in Section~\ref{sec:CLT} below. We show that $W_n$ is close to the  matrix field $\tilde W_n$ that we define now.

Let $\Gamma$ be a connected compact set of $\dC \backslash \supp(\beta_\sigma)$ with interior $\overset{\circ}{\Gamma}$. We consider $\tilde W_n, \tilde W^{(1)}_n$ and $\tilde W^{(2)}_n$ as the random elements of $\cH_{r+s} (\Omega)$ defined as follows. We set for $z \in \Gamma$,
$$
\tilde W^{(1)}_n (z) = Q_n R'_n (z) U_n X_n U_n^* R'_n (z) P_n.
$$
The field $\tilde W^{(2)}_n$ is Gaussian and independent of $\tilde W^{(1)}_n $. Recalling the fancy product $\circledast$ introduced for Theorem \ref{th:WnW}, the covariance kernel of $\tilde W^{(2)}_n$ is, for  $z , z' \in \Gamma$,
$$
\dE [ \tilde W^{(2)}_n (z) \circledast  \overline{\tilde W^{(2)}_n (z') } ] = \frac{\theta_n(z,z')} { 1- \sigma^2_n \theta_n(z,z')} \cA_n(z,z') \otimes \cB_n(z,z')^\intercal, 
$$
with 
$$
\theta_n(z,z') = \frac{1}{n}\Tr R'_n(z) R_n'(z')^*,
$$
$$
\cA_n(z,z') = Q_n  R'_n(z) R_n'(z')^* Q_n^* \quad \hbox{ and } \quad \cB_n(z,z') = P^*_n  R'_n(z')^* R_n'(z) P_n,
$$
and 
$$
\dE [ \tilde W^{(2)}_n (z) \circledast   \tilde W^{(2)}_n (z')  ] = \frac{\varrho^2 \theta'_n(z,z')} { 1- \varrho \sigma^2_n \theta'_n(z,z')} \cA'_n(z,z') \otimes \cB'_n(z,z')^\intercal, 
$$
with 
$$
\theta'_n(z,z') = \frac{1}{n}\Tr U_n^* R'_n(z) U_n U_n^\intercal R_n'(z')^\intercal \bar U_n,
$$
$$
\cA'_n(z,z') = Q_n  R'_n(z) U_n U_n ^\intercal R_n'(z')^\intercal  Q_n^\intercal \quad \hbox{ and } \quad \cB'_n(z,z') = P^\intercal_n  R'_n(z')^\intercal \bar U_n U_n^*  R_n'(z) P_n.
$$
The existence of the Gaussian process $\tilde W_n^{(2)}$ will be checked in the proof of Theorem \ref{th:cvWn} below. It is easy to check that $\dE [ \tilde W^{(1)}_n (z) \circledast  \overline{\tilde W^{(1)}_n (z') } ] = \cA_n(z,z') \otimes \cB_n(z,z')^\intercal$ and $\dE [ \tilde W^{(1)}_n (z) \circledast   \tilde W^{(1)}_n (z')  ] = \varrho \cA'_n(z,z') \otimes \cB'_n(z,z')^\intercal $. 
Finally, we set 
\begin{equation}\label{eq:deftildeW}
\tilde W_n = \tilde W_n^{(1)} + \sigma_n \tilde W_n^{(2)}.
\end{equation}
From the independence of $\tilde W_n^{(1)}$ and $\tilde W_n^{(2)}$, we get for  $z , z' \in \Gamma$,
\begin{equation}
\label{eq:kerneln}
\dE [ \tilde W_n (z) \circledast  \overline{\tilde W_n (z') } ] =  \frac{\cA_n(z,z') \otimes \cB_n(z,z')^\intercal} { 1- \sigma^2_n \theta_n(z,z')}  \quad \hbox{ and } \quad  \dE [ \tilde W_n (z) \circledast   \tilde W_n (z')  ] = \varrho \frac{ \cA'_n(z,z') \otimes \cB'_n(z,z')^\intercal} { 1- \varrho \sigma^2_n \theta'_n(z,z')}.
\end{equation}

 Note  that in the case $\sigma = \lim_n \sigma_n = 0$, $\tilde W_n^{(1)}$ and $\tilde W_n$ are close in  Lévy-Prokhorov distance. We say in the statement below that $\tilde W_n^{(1)}$  is {\em asymptotically Gaussian} if the Lévy-Prokhorov distance between the laws of  $\tilde W_n^{(1)}$ and of the Gaussian process in $\cH_{r+s}(\overset{\circ}{\Gamma})$ with the same covariance kernel goes to $0$. By independence if $\tilde W_n^{(1)}$ is asymptotically Gaussian then $\tilde W_n$ is also asymptotically Gaussian (with respect to the Gaussian process with covariance kernel \eqref{eq:kerneln}). Our main result on the matrix field $W_n$ is the following. 

\begin{thm}\label{th:cvWn}
  Assume  $(H_0)$ and $(H_k)$ for all $k>2$ and either $(H_\text{sym})$ or $U_n = F_n$ defined in \eqref{eq:defFn}. Let $\Gamma$ be a compact connected set in $\C \setminus \supp(\beta_\sigma)$. The laws $W_n$ and $\tilde W_n$ are tight in $\cP( \cH_{r+s}(\overset{\circ}{\Gamma}))$ and the  Lévy-Prokhorov distance between the laws of $W_n$ and $\tilde W_n$ in $\cP(\cH_{r+s}(\overset{\circ}{\Gamma}))$ goes to zero as $n$ goes to infinity.  Moreover, if $U_n$ is flat at the border, then $\tilde W_n^{(1)}$ is asymptotically Gaussian.
\end{thm}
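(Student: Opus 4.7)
The plan is to expand $W_n$ via a Neumann series, identify $\tilde W_n^{(1)}$ as the $k=1$ term, and then reduce the tail of the series to a Gaussian process via the functional CLT of Section~\ref{sec:CLT}. On the probability-one event where $\sigma_n \|Y_n\| \sup_{z \in \Gamma} \|R'_n(z)\| < 1$ (which holds by Proposition~\ref{nooutlier} and standard operator-norm bounds on $\|X_n\|$), one has
\[
W_n(z) = \sum_{k \ge 1} T_k(z), \qquad T_k(z) := \sigma_n^{k-1}\, n^{(1-k)/2}\, Q_n \bigl(R'_n(z) U_n X_n U_n^*\bigr)^k R'_n(z) P_n,
\]
so that $T_1 = \tilde W_n^{(1)}$ and each $T_k$ for $k \ge 2$ is a degree-$k$ polynomial in $X_n$ pre-factored by $\sigma_n^{k-1} n^{-(k-1)/2}$.

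Tightness of the series in $\cH_{r+s}(\overset{\circ}{\Gamma})$ would be proved first. The entries of $T_k(z)$ are multilinear forms in $X_n$ whose moments are multigraph sum-product expressions over matrices whose row/column supports are restricted by the finite-rank matrices $P_n, Q_n$; they are controlled uniformly by Theorem~\ref{th:sumproduct}, the resolvent bounds of Proposition~\ref{nooutlier}, and the moment hypothesis $(H_k)$ for every $k>2$. The assumption $(H_\text{sym})$ or $U_n = F_n$ is used here precisely to discard non-paired $\dE X_{ij}^p$ contributions that would otherwise appear as extra diagrams. One expects $\dE \|T_k(z)\|^2 \le C R^{2(k-1)}$ on a compact neighborhood of $\Gamma$ for some $R<1$ (with $R$ governed by $\sigma^2 \theta(z,z) < 1$), yielding tightness of the analytic field by Cauchy integral estimates; tightness of $\tilde W_n$ follows directly from its explicit covariance~\eqref{eq:kerneln}.

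For the Lévy-Prokhorov bound, the tail $\sum_{k \ge 2} T_k$ must be Gaussian-approximated by $\sigma_n \tilde W_n^{(2)}$. Each $T_k$ with $k \ge 2$ takes the form $n^{-(k-1)/2}\, B_0(z) X_n B_1(z) X_n \cdots X_n B_k(z)$ with $B_0$ of rank $\le r+s$, fitting the setting of the functional CLT of Section~\ref{sec:CLT}. That CLT yields joint Gaussian limits for the family $(T_k(z))_{k\ge 2,\, z \in \Gamma}$, with covariance produced by the dominant ladder (non-crossing) Wick pairings of the $X_n$-factors: between $T_k(z)$ and $T_k(z')$ the ladder contributes $(\sigma_n^2 \theta_n(z,z'))^{k-1}$ multiplied by the boundary matrix $\cA_n(z,z') \otimes \cB_n(z,z')^\intercal$, and summing $k \ge 2$ gives the geometric series $\sigma_n^2 \theta_n/(1-\sigma_n^2 \theta_n)$, matching the prescribed covariance of $\sigma_n \tilde W_n^{(2)}$. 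Identical bookkeeping for the $\circledast$-pairing (without conjugation) produces the $\varrho, \theta'_n, \cA'_n, \cB'_n$ structure, with the sign $\veps$ encoding whether $U_n U_n^\intercal$ is essentially $I_n$ or a Fourier shift. Uniform convergence of the geometric series on $\Gamma$ is guaranteed by Lemma~\ref{le:supportbeta}, which reformulates $\bar\Gamma \cap \supp(\beta_\sigma) = \emptyset$ as $\sup_{z \in \Gamma} \sigma^2 \theta(z,z) < 1$; cross-$k$ contributions are subleading by powers of $n^{-1/2}$ and absorbed in the error.

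Finally, suppose $U_n$ is flat at the border. Writing $\tilde W_n^{(1)}(z)_{p,q} = \sum_{i,j} A_{pi}(z) X_{ij} B_{jq}(z)$ with $A(z) = Q_n R'_n(z) U_n$ and $B(z) = U_n^* R'_n(z) P_n$, the multivariate Lindeberg-Feller CLT applied at any finite collection of evaluation points (which suffices once tightness is established) reduces asymptotic Gaussianity to showing $\max_i |A_{pi}(z)| \to 0$ and $\max_j |B_{jq}(z)| \to 0$ uniformly on $\Gamma$. Express $A_{pi}(z) = \langle U_n e_i, w_p(z)\rangle$ with $w_p(z) = (R'_n(z))^* Q_n^* e_p$. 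Since $Q_n^* e_p$ is supported on a fixed set of border indices and the Fourier symbol $\omega \mapsto (z - \bfa(\omega))^{-1}$ is analytic on a neighborhood of $\dS^1$ for $z \in \Gamma$, its Fourier coefficients decay geometrically; by the circulant structure of $R'_n(z)$, the entries of $w_p(z)$ decay geometrically away from the boundary of $\{1,\dots,n\}$. A diagonal-truncation argument, choosing $K = K(n) \to \infty$ slowly enough that $\max_{i \le K,\, j}\bigl(|U_n(i,j)| \vee |U_n(n+1-i,j)|\bigr) \to 0$, combines this geometric tail with the flat-at-border hypothesis to conclude $\max_i |A_{pi}(z)| \to 0$, and the argument for $B$ is symmetric. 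The main obstacle is the third paragraph: verifying the hypotheses of the trace-polynomial CLT of Section~\ref{sec:CLT} uniformly in $k$ and $z$, and carefully matching the summed ladder covariance (in particular the $(\varrho, \veps)$-twist in the $\circledast$-case) with that of $\sigma_n \tilde W_n^{(2)}$, is where the bulk of the technical work lies.
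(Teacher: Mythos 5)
Your overall strategy is the same as the paper's: Neumann expansion of $W_n$, keep the linear term as $\tilde W_n^{(1)}$, Gaussianize the $k\geq 2$ tail through the CLT of Section~\ref{sec:CLT} so that the summed ladder covariance reproduces the geometric kernel \eqref{eq:kerneln}, and treat the flat-at-the-border case by showing the boundary vectors have vanishing sup-norm (your Lindeberg--Feller route and the paper's use of the last claim of Theorem~\ref{th:steinTCL} are interchangeable here; both rest on the geometric decay of the circulant resolvent entries, cf.\ Lemma~\ref{le:bdres}). There is, however, a concrete gap at your very first step when $\sigma>0$: the event $\{\sigma_n\|Y_n\|\sup_{z\in\Gamma}\|R'_n(z)\|<1\}$ is not a probability-one event, and in general it fails asymptotically. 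Since $\|R'_n(z)\|=\mathrm{dist}(z,\bfa(\dS^1))^{-1}$ and $\|Y_n\|\to 2$ a.s., the condition breaks as soon as $\Gamma$ contains points with $\mathrm{dist}(z,\bfa(\dS^1))<2\sigma$, and such compacts $\Gamma\subset\C\setminus\supp(\beta_\sigma)$ do exist because membership in $\supp(\beta_\sigma)$ is governed by the $L^2$-average criterion of Lemma~\ref{le:supportbeta}, not by the distance to the curve; these are precisely the regions near the support where the analysis matters. The convergence of the series $\sum_k T_k$ therefore cannot be justified by a product of operator norms; what is needed is a simultaneous bound on all powers, $\sup_{z\in\Gamma}\|(R'_n(z)\sigma_n Y_n)^k\|\leq C(1-\delta)^k$, on an event of probability tending to one — this is the event \eqref{omega_event}, which the paper imports from the proof of Proposition~4.3 of \cite{bordenave-capitaine16} and which is genuinely finer than a bound on $\|X_n\|/\sqrt n$.

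Relatedly, the step you defer as ``the bulk of the technical work'' is not a routine verification but the core of the argument: Theorem~\ref{th:steinTCL} compares only finitely many coordinates, whereas your plan invokes a ``joint Gaussian limit for the family $(T_k(z))_{k\geq 2}$'' that the CLT does not directly provide. The paper converts the finite-$T$ statement into the Lévy--Prokhorov claim by a two-regime truncation: a fixed cutoff $K$ handled by the CLT, the range $K<k\leq k_n=\lfloor n^c\rfloor$ controlled by the moment bound of Proposition~\ref{prop:tight2} (which is only valid for $k\leq n^c$) together with \eqref{majtrR'}, and the range $k>k_n$ controlled deterministically on \eqref{omega_event}; a separate, simpler argument is then given for $\sigma=0$. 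Without this truncation scheme (and the norm-of-powers event above), the comparison with $\sigma_n\tilde W_n^{(2)}$ and hence the Lévy--Prokhorov convergence is not established.
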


\begin{proof}[Proof of Theorem \ref{th:cvWn}]
We start by proving the existence of $\tilde W^{(2)}_n$ as the realization of a sum of Gaussian processes. Then, we prove the theorem  for $\sigma > 0$ and $\sigma = 0$ separately.

\vspace{5pt}

{\em Step 1: construction of $\tilde W_n$. } 
Let $n \geq r+s$. We write $Q^*_n=( u_1,  \ldots , u_{r+s})$
and $P_n=(v_1,\ldots,v_{r+s})$,  the $u_i$'s and the $v_j$'s being vectors in $\C^n$. We define for any $1\leq p,q\leq r+s$ and $z \in \C \setminus \supp(\beta_\sigma)$ the sequence of matrices in $M_n(\dC)$:
\begin{equation}\label{defB}
B_{p,q,z}= \left(  U_n^* R_n'(z) v_q u_p^* R_n'(z) U_n , \sigma_n U_n^* R_n'(z) U_n, \sigma_n U_n^* R_n'(z) U_n,\cdots \right)
\end{equation}
and set
\[
{\bf B}=(B_{p,q,z}, 1\leq p,q\leq r+s, z\in \C \setminus \mbox{supp}(\beta_\sigma)).
\]

For integer $k \geq 1$, let $P_k$ be the set of $\pi=(j_1,i_1,\ldots,j_k,i_k)$ with $i_t,j_t$ in $\{1,\ldots,n\}$.
Let $\{W_\pi, \pi \in P_k, k\geq1\} $ be an infinite set of  independent and identically distributed centered Gaussian random variables defined on the same probability space with $\mathbb{E}|W_\pi|^2=1$ and $\mathbb{E}W_\pi^2=\varrho^k$ and independent of $X_n$.
We consider $W = W_k(B)$ as the centered Gaussian process, indexed by $k \geq 1$ and $B=(B_0,B_1,\ldots) \in {\bf B}$ defined by
\begin{equation}\label{defWkB}
W_k (B)=  n^{-(k-1)/2} \sum_{\pi \in P_k} W_\pi  \prod_{t=0}^{k-1}  (B_t)_{i_{t} j_{t+1}},
\end{equation}
where $i_0=i_k$
. The random variables $W_k(B), k\geq 1$ are independent  centered random analytic functions defined on the same probability space.
We have 
\[
\mathbb{E}\left(W_k(B_{p,q,z})\overline{W_l}(B_{p',q',z'})\right)= \mathbb{E}\left(W_k(B_{p,q,z}){W_l}(B_{p',q',z'})\right)=0
\] 
for $k \neq  l$, and for $k=l$,
\begin{align*}\label{varianceWtilde}
  \mathbb{E}\left(W_k(B_{p,q,z})\overline{W}_k (B_{p',q',z'})\right)  & = \left( \sigma_n^2 \theta_n(z,z')\right)^{k-1} \cA_n(z,z')_{pp'} \cB_n(z,z')_{q'q} , \\ \nonumber
  \mathbb{E}\left(W_k(B_{p,q,z}){W_k}(B_{p',p',z'})\right) &  =\rho^k \left( \sigma_n^2 \theta'_n(z,z') \right)^{k-1}  \cA'_n(z,z')_{pp'} \cB'_n(z,z')_{q'q} . \nonumber
\end{align*}

  Now, for any $z$ in $\mathbb{C}\setminus \mbox{supp}(\beta_\sigma)$,  when $n$ goes to infinity, 
 \[
f_n(z) =  \sigma^2_n \theta_n(z,z) =\sigma_n^2\frac{1}{n}\sum_{l=1}^n \frac{1}{|z-\bfa(e^{2i\pi l/n})|^2}
 \]
   converges towards $\sigma^{2}\int_{0}^1 |z- \bfa(e^{2i\pi\theta})|^{-2} d\theta <1 $.
  Since $(f_n)_n$ is a bounded sequence for the uniform convergence on compact sets of $\mathbb{C}\setminus \mbox{supp}(\beta_\sigma)$, 
by Montel's theorem,  
$(f_n)_n$ uniformly converges on any compact set in a connected component of $\C \setminus \mbox{supp}(\beta_\sigma)$.
 Thus, for any connected compact set $\Gamma$ in $\C \setminus \mbox{supp}(\beta_\sigma)$, there exists $0 < \varepsilon_\Gamma <1$ such that for large $n$,
 \begin{equation}\label{majtrR'}
 \sup_{z\in \Gamma} |\sigma^2_n \theta_n(z,z)|<\varepsilon_\Gamma.
 \end{equation}
 Then, for any $1\leq p,q\leq r+s$, 
 \begin{eqnarray}
 \sup_{z\in \Gamma}\mathbb{E} \left( \left|W_k(B_{p,q,z})\right|^2\right)&=&  \sup_{z\in \Gamma}  v_{q}^*R_n'(z)^*R_n'(z)v_q u_p^* R_n'(z)      R_n'(z)^*u_{p}\left( \sigma_n^2 \theta_n(z,z) \right)^{k-1}\nonumber\\&\leq& C\varepsilon_\Gamma^{k-1} . \label{majtension} 
 \end{eqnarray}
We deduce  from Proposition \ref{prop2.1Shirai} that  on $\Omega = \overset{\circ}{\Gamma}$, 
$$
\tilde W_n^{(2)} (z)_{pq} = \sum_{k=2}^\infty W_k(B_{p,q,z})
$$
defines a Gaussian process on $\cH(\Omega)$. This concludes the proof of the existence of the Gaussian process $\tilde W_n^{(2)} (z) = (\tilde W_n^{(2)} (z)_{p,q})$ on $\cH_{r+s}(\Omega)$. Also from Lemma~\ref{criterion}, this sequence in tight.

\vspace{5pt}

{\em Step 2:  Case $\sigma >0$. } We denote by $d_{LP}$ the Lévy-Prokhorov distance between probability measures. Let $I$ be a finite set and $(z_i)_{i \in I}$ be a collection of complex numbers. Our first goal is to prove the following
\begin{equation}\label{eq:fidicvWn}
\lim_{n \to \infty} d_{LP} ( (W_n(z_i))_{i \in I} , (\tilde W_n(z_i))_{i \in I} ) = 0,
\end{equation}
in the case $\sigma > 0$.
For any $(p,q)\in\{1,\ldots,r+s\}$, any $z\in \C \setminus  \mbox{supp}(\beta_\sigma)$,  any $k\geq 1$,  set 
\[
a_k^{(p,q)}(z)=\frac{\sqrt{n}}{\sigma_n}[Q_n \left( R'_n(z) \sigma_n Y_n \right)^k R'_n(z) P_n]_{p,q},
\]
and for any $k\geq 2$,
\[
c_k^{(p,q)}(z)=W_k(B_{p,q,z}), \quad \text{and} \quad  c_1^{(p,q)}(z)=u_p^*R'_n(z)  Y_n R'_n(z) v_q=[Q_n  R'_n(z) \sigma_n Y_n R'_n(z) P_n]_{p,q},
\]
where  $B_{p,q,z}$  is defined in \eqref{defB}.

The proof of Proposition~4.3 in \cite{bordenave-capitaine16} (with $A'_n=U^*_n C_n(\bfa) U_n$) implies that  there exists $0<\delta<1$ and $C>0$ such that the sequence of events $(\Omega_n)_n$ defined by
\begin{equation} \label{omega_event}
\Omega_n:=\left\{  \forall  N\geq n, \forall k \geq 1,  \  \sup_{z\in \Gamma}\| \left(R'_{N}(z) \sigma_N  Y_N  \right)^k \| \leq C( 1- \delta)^k   \right\}
\end{equation}
 has a probability converging to one   when $n$ goes to infinity.

On $\Omega_n$, we have the series expansion \eqref{dvptSigma} and for  all $z\in \Gamma$, for any $k\geq 1$ and any $(p,q)\in \{1,\ldots,r+s\}^2$, 
\begin{equation}\label{maja}
| a_k^{(p,q)} (z) | \leq \frac{\sqrt n}{\sigma_n} C ( 1- \delta)^k, 
\end{equation}
using Proposition~\ref{nooutlier}.
We set 
\[
k_n = \lfloor n^{c} \rfloor,  
\]
where $c$ is defined by Proposition \ref{prop:tight2}.
We consider a finite set $I$, complex numbers $\alpha_i$, complex numbers $z_i$ in $\Gamma$ and $(p_i,q_i)\in \{1,\ldots,r+s\}$, all indexed by $I$, and study the convergence of the Lévy distance between the law of  
 $\sum_{i\in I} \Re(\alpha_i (W_n(z_i))_{p_iq_i} )\1_{\Omega_n}=\sum_{k\geq 1} b_k$, with distribution function $F_n$,
 where $b_k=\sum_{i\in I} \Re(\alpha_i a_k^{(p_i,q_i)}(z_i))\1_{\Omega_n}$,    and  the law of  $\sum_{i\in I} \Re(\alpha_i \tilde W_{n}(z_i)_{p_iq_i})\1_{\Omega_n}=\sum_{k\geq 1} d_k$, with distribution function $\tilde F_n$, 
 where $d_k=\sum_{i\in I} \Re(\alpha_i c_k^{(p_i,q_i)}(z_i))\1_{\Omega_n}$. As $\Omega_n$ has a probability converging to one, we would deduce that \eqref{eq:fidicvWn} holds.
 
 Recall that if $P_1$ and $P_2$ are two laws on $\R$ with distribution functions $F_1$ and $F_2$ respectively, the L\'evy distance between $P_1$ and $P_2$ is defined by:
 $$d_L(P_1,P_2)= \inf\{\epsilon>0: F_1(x-\epsilon)-\epsilon \leq F_2(x)\leq F_1(x+\epsilon)+\epsilon \; \mbox{for all} \; x\}.$$ This distance metrizes the convergence in distribution on $\R$.

For any fixed $x\in \R$, $K\geq 2$ and $\epsilon>0$, 
 denoting by $F^{(n)}_K$ the distribution function of  $\sum_{k= 1}^K b_k$, we have that 
\begin{equation}\label{inegalite} 1-F^{(n)}_K(x+\epsilon)-\mathbb{P}\left( |\sum_{k>K}b_k|\geq\epsilon\right)\leq \mathbb{P}\left(\sum_{k\geq 1} b_k >x\right)\leq 1-F^{(n)}_K(x-\epsilon)+\mathbb{P}\left( |\sum_{k>K}b_k|\geq\epsilon\right)\end{equation}
Denote by $\tilde F^{(n)}_K$ the distribution function of  $\sum_{k= 1}^K d_k$,
we have similarly that 
\begin{equation}\label{inegalitetilde} 1-\tilde F^{(n)}_K(x+\epsilon)-\mathbb{P}\left( |\sum_{k>K}d_k|\geq\epsilon\right)\leq \mathbb{P}\left(\sum_{k\geq 1} d_k >x\right)\leq 1-\tilde F^{(n)}_K(x-\epsilon)+\mathbb{P}\left( |\sum_{k>K}d_k|\geq\epsilon\right).\end{equation}

Now,
\begin{eqnarray*}\mathbb{P}\left( |\sum_{k>K}b_k|\geq\epsilon\right)\leq 
\frac{1}{\epsilon}\mathbb{E}\left( |\sum_{k>K}b_k|\right)\end{eqnarray*}
with $|\sum_{k>K}b_k|\leq \sum_{i\in I} |\alpha_i|\sum_{k>K}|a_k^{(p_i,q_i)}(z_i)|\1_{\Omega_n}$.
Using Proposition \ref{prop:tight2} and \eqref{majtrR'},  for large $n$,  if $k_n>K$, 
\begin{eqnarray}\sum_{K<k \leq  k_n}\mathbb{E}\left( | a_k^{(p_i,q_i)} (z_i)|\1_{\Omega_n}\right)&\leq & 
\sum_{K<k \leq  k_n}\left\{C k^{12} \left\{ \sigma_n^2 \theta_n(z_i,z_i) \right\}^{k-1}\right\}^{1/2}\nonumber \\
&\leq&
\sum_{K<k \leq  k_n}\left\{C k^{12}  \varepsilon_\Gamma^{k-1}\right\}^{1/2}\nonumber\\
&\leq&
\sum_{k >K} C k^{6}  \varepsilon_\Gamma^{(k-1)/2}=:\zeta_K \rightarrow_{K\rightarrow+\infty}0.\label{sommeinfkn}\end{eqnarray}
Now, according to \eqref{maja}, \begin{equation}\label{majksupkn}\sum_{k> k_n}\mathbb{E}\left( | a_k^{(p_i,q_i)} (z_i)|\1_{\Omega_n}\right)\leq C \sqrt n ( 1 - \delta_0)^{k_n}
 \end{equation}
Thus,  \begin{equation}\label{sumb}\mathbb{P}\left(|\sum_{k>K}b_k|>\epsilon \right)\leq 
\frac{1}{\epsilon}\left(\sum_{i\in I} |\alpha_i|\right)\left(\zeta_K+C \sqrt n ( 1 - \delta_0)^{k_n}\right).\end{equation}
Now,
\begin{eqnarray*}\mathbb{P}\left( |\sum_{k>K}d_k|\geq\epsilon\right)\leq 
\frac{1}{\epsilon}\mathbb{E}\left( |\sum_{k>K}d_k|\right)\end{eqnarray*}
with $|\sum_{k>K}d_k|\leq \sum_{i\in I} |\alpha_i|\sum_{k>K}|c_k^{(p_i,q_i)}(z_i)|$.
Using 
\eqref{majtension},
 we obtain that   for large $n$, for any $i \in I$,  $$\sum_{k>K}\mathbb{E}\left( | c_k^{(p_i,q_i)} (z_i)|\right)\leq  
C \sum_{k >K}  \varepsilon_\Gamma^{(k-1)/2}=:\psi_K\rightarrow_{K\rightarrow+\infty}0.$$
Thus for large $n$, for any $K$,  \begin{equation}\label{sumd}\mathbb{P}\left( |\sum_{k>K}d_k|\geq\epsilon\right)\leq \frac{1}{\epsilon}\sum_{i\in I} |\alpha_i|\psi_K.\end{equation}
Therefore, using \eqref{sumb} and \eqref{sumd}, for any $K$ and $n$ large enough (independently), we have 
$$\mathbb{P}\left( |\sum_{k>K}b_k|\geq\epsilon\right)\leq \epsilon, \; \mbox{and}\; \mathbb{P}\left( |\sum_{k>K}d_k|\geq\epsilon\right)\leq \epsilon.$$
Then, we can deduce from \eqref{inegalite} and \eqref{inegalitetilde} that 
for any $\epsilon>0$, there exists $n_0(\epsilon)$ and $K_0(\epsilon)$ such that $\forall x \in \R$, $\forall n\geq n_0(\epsilon)$,  $\forall K\geq K_0(\epsilon)$,
$$ F^{(n)}_K(x-\epsilon)-\epsilon\leq  F_n(x)\leq  F^{(n)}_K(x+\epsilon)+\epsilon,$$
$$\tilde F^{(n)}_K(x-\epsilon)-\epsilon\leq \tilde F_n(x)\leq \tilde F^{(n)}_K(x+\epsilon)+\epsilon.$$
Therefore $\forall n\geq n_0(\epsilon)$,  $\forall K\geq K_0(\epsilon)$, 
\[
d_L(\sum_{k=1}^K b_k, \sum_{k\geq 1}b_k) \leq \epsilon \quad  \text{and} \quad d_L(\sum_{k=1}^K d_k, \sum_{k\geq 1}d_k) \leq \epsilon,
\]
and then \begin{eqnarray*} d_L(\sum_{k\geq 1}b_k, \sum_{k\geq 1}d_k)&\leq& d_L(\sum_{k\geq 1}b_k, \sum_{k=1}^K b_k)+d_L(\sum_{k = 1}^K b_k, \sum_{k= 1}^K d_k)+d_L(\sum_{k=1}^Kd_k, \sum_{k\geq 1}d_k)\\&\leq& 2\epsilon +  d_L(\sum_{k = 1}^K b_k, \sum_{k= 1}^K d_k)\end{eqnarray*}
Now, fixing $K\geq K_0(\epsilon)$, we deduce from Theorem \ref{th:steinTCL} that \eqref{eq:fidicvWn} holds (using that $d_L\leq d_{LP}$).

Moreover, following the lines of \eqref{sommeinfkn}, we obtain that 
for large $n$, for any $z\in \Gamma$,  
\begin{equation}\label{majinfkn}\sum_{k \leq  k_n}\mathbb{E}\left( | a_k^{(i,j)} (z)| \1_{\Omega_n}\right)\leq
\sum_{k \geq 0} C k^{6}  \varepsilon_\Gamma^{(k-1)/2}<+\infty
\end{equation}
since $\varepsilon_\Gamma <1$.
It readily follows from \eqref{dvptSigma}, \eqref{majksupkn} and \eqref{majinfkn} that 
 there exists some $C>0$ such that, for any $1\leq p,q\leq r+s$, 
\begin{equation}\label{tightSigma}
\sup_{z \in \Gamma}\mathbb{E} \left[ \Big| W_n(z)_{pq} \Big| \1_{\Omega_n} \right] \leq C.
\end{equation}
From Lemma~\ref{criterion}, this implies that $W_n$ is a tight sequence in $\cH_{r+s} (\overset{\circ}{\Gamma})$. It concludes the proof of Theorem \ref{th:cvWn} in the case $\sigma > 0$.

\vspace{5pt}

{\em Step 3:  Case $\sigma = 0$. }  We define 
\begin{equation*}\label{omegatilde}
\tilde \Omega_n=\left\{\frac{\|X_n\|}{\sqrt{n}}\leq 3\right\}.
\end{equation*} 
 According to Theorem 5.8 in \cite{BaiSilversteinbook},   this event has a probability converging to one   when $n$ goes to infinity. If $C_\Gamma$ is the constant defined in Proposition \ref{nooutlier}, we consider the event
 $$
 \Omega_n = \tilde \Omega_n \cap \left\{ \sup_{z \in \Gamma} \| R_n(z) \| \leq C_\Gamma \right\}. 
 $$
This event has a probability converging to one.  Our goal is to prove that 
\begin{equation}\label{eq:kokgt}
\lim_{n \to \infty} \sup_{z \in \Gamma} \dE [ \| W_n(z)- \tilde W_n^{(1)}(z) \| \1_{\Omega_n} ] =  0.
\end{equation}
From  Lemma~\ref{criterion}, this implies that the  Lévy-Prokhorov distance between the laws of $W_n$ and $\tilde W_n$ goes to $0$ as $n$ goes to infinity.

From the resolvent identity, for any integer $k_n \geq 2$ and $z \in \Gamma$,  \begin{equation*}\label{eq:devtaylor}
Q_n R_n(z) P_n  -  Q_n R'_n(z) P_n  = \sum_{k=1}^{k_n-1} Q_n  \left(R'_n(z) \sigma_n Y_n \right)^k R'_n(z) P_n + Q_n  \left(R'_n(z) \sigma_n Y_n \right)^{k_n} R_n(z) P_n  .
\end{equation*}
Therefore,  for any $z \in \Gamma$, we have 
\begin{equation}\label{dvpsigmagamma}
W_n(z)- \tilde W_n^{(1)}(z)=  \sum_{k = 2}^{k_n} w_k(z) +  \sqrt n  \sigma_n^{k_n-1}  Q_n  \left(R'_n(z)   Y_n \right)^{k_n} R_n(z) P_n ,
\end{equation} 
where
$$
w_k (z) = \sqrt n \sigma_n^{k-1} Q_n  \left(R'_n(z)  Y_n \right)^k R'_n(z) P_n
$$
We set 
\[
k_n = \lceil \log n \rceil +1.
\] 
We first check that the last term on the right-hand side of \eqref{dvpsigmagamma} goes to $0$ on the event $\Omega_n$. Indeed, if $\Omega_n$ holds, we have, for some constant $C = C(\Gamma) >0$, 
$$
 \|  \sqrt n  \sigma_n^{k_n-1}  Q_n  \left(R'_n(z)   Y_n \right)^{k_n} R_n(z) P_n  \|  \leq C  \sqrt n  (C \sigma_n) ^{k_n -1}  \leq C \sqrt n (C \sigma_n)^{\log n}.
$$
This goes to $0$ as $n$ goes to infinity uniformly in $z \in \Gamma$.

We next check that the first term on the right-hand side of \eqref{dvpsigmagamma} goes to $0$. Using Proposition \ref{prop:tight2},  for large $n$, for any $z\in \Gamma$ and $1 \leq p , q \leq r +s$, for some constant $C = C(\Gamma)$ changing from line to line,
\begin{eqnarray*}
\sum_{k = 2}^{k_n}\mathbb{E}\left( | w_k (z)_{pq}|\right) &\leq &  \sum_{k = 2}^{k_n} \sigma_n^{k-1}\left(C k^{12}  \theta_n(z,z)^{k-1} \right)^{1/2}\\
&\leq & \sum_{k = 2}^{k_n} C k^{6} \left( C \sigma_n\right)^{k-1}\nonumber\\
&\leq & \sum_{k =1}^\infty \left(C \sigma_n\right)^{k},
\end{eqnarray*}
where we have used that for all $k \geq 2$, $k^6 \leq C^k$ for some $C >0$. For $n$ large enough, $\sigma_n C < 1$, we get 
\begin{eqnarray}\label{pourtensioninfini}
\sum_{k = 2}^{k_n}\mathbb{E}\left( | w_k (z)_{pq}|\right) &\leq & \frac{ C \sigma_n} {1 - C \sigma_n},
\end{eqnarray}
which goes to $0$ as $n$ goes to infinity. It concludes the proof of \eqref{eq:kokgt}.

\vspace{5pt}

{\em Step 4 :  $U_n$ flat at the border. } It remains to check that if $U_n$ is flat at the corner then $\tilde W_n^{(1)}(z)$ is close to Gaussian for any $z \in \Gamma$. We fix $z \in \Gamma$ and for $1 \leq p \leq r +s$, we introduce the vectors in $\dC^n$,
$$
x_p =  U_n^* R_n'(z)^* u_p \quad \hbox{ and } \quad y_p = U_n^* R_n'(z) v_p .
$$
From Theorem \ref{th:steinTCL}, it suffices to check that for all $1 \leq p \leq r +s$, $\|x_p\|_{\infty} \to 0$ and $\|y_p \|_{\infty} \to 0$ as $n \to \infty$. For $1 \leq i \leq n$, we write
$$
(x_p)_i = \sum_{j,k} \bar U_{ji} \bar R'_n(z) _{kj} (u_p)_k.
$$
We have $(u_p)_k = 0$ unless $k \in C_n = \{1,\ldots,s\} \cup \{ n-r+1,\ldots,n\}$ and $(u_p)_k$ is uniformly bounded. Therefore from Lemma \ref{le:bdres} below, for some $0 < \kappa < 1$ and $C > 0$,
$$
\| x_p \|_{\infty} \leq C  \sum_{k \in C_n} \sum_{j = 1}^n \kappa^{d_n(k,j)} \max_{1 \leq i \leq n} |U_{ji}|,
$$
where $d_n(i,j) = \min_{k \in \mathbb Z} |i - j - k n |$ is the distance between $i$ and $j$ modulo $n$. For integer $h \geq 1$, let $\veps_h = \max_{j : d_n(j,C_n) \leq h }  \max_i |U_{ji}|$. Since $|U_{ij}| \leq 1$ and since, for any $l \geq 0$, there are at most two $j$ such that $d_n(k,j) = l$, we find 
$$
\| x_p \|_{\infty} \leq   C \sum_{k \in C_n}  \sum_{l > h} 2 \kappa^l + C \sum_{k \in C_n} \sum_{l = 0}^h 2 \kappa^l \veps_h \leq \frac{ 2 C (r+s) }{1 - \kappa} \left ( \kappa^h + \veps_h\right).
$$
The assumption that $U_n$ is flat at the border implies that $\veps_{h_n} \to 0$ for some sequence $h_n \to \infty$. We deduce that $\| x_p \|_{\infty} \to 0$. The proof of $\| y_p \|_{\infty} \to 0$ is identical.
\end{proof}

We  have used the following probably well-known lemma. 

\begin{lem}\label{le:bdres}
Let $z \in \dC \backslash \bfa (\dS^1)$. There exist positive reals $C > 0$ and  $0 < \kappa  < 1$ such that for all integers $n \geq 1$, $1 \leq i,j \leq n$
$$
| R'_n(z)_{ij} | \leq C  \kappa^{ d_n(i,j)},
$$ 
where $d_n(i,j) = \min_{k \in \mathbb Z} |i - j - k n |$ is the distance between $i$ and $j$ modulo $n$.
\end{lem}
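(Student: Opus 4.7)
The plan is to exploit that $R'_n(z) = (z - C_n(\bfa))^{-1}$ is itself circulant (being the inverse of a circulant matrix) and to read off the decay of its entries from the Laurent expansion of $\omega \mapsto 1/(z - \bfa(\omega))$ in an annulus around $\dS^1$. First I would use the diagonalization \eqref{eq:diagCn} of $C_n(\bfa)$ in the Fourier basis to obtain the closed formula
$$
R'_n(z)_{ij} = \frac{1}{n}\sum_{\ell=0}^{n-1} \frac{\omega_n^{\ell(i-j)}}{z - \bfa(\omega_n^\ell)},
$$
which in particular confirms that the entry depends only on $k := (i - j) \bmod n$.

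Next I would analyze the meromorphic function $f(\omega) = 1/(z - \bfa(\omega)) = -\omega^r / Q_z(\omega)$ on $\dC \setminus \{0\}$, where $Q_z$ is the polynomial of Definition \ref{zk}. Since $z \not\in \bfa(\dS^1)$, the denominator does not vanish on $\dS^1$, so none of the roots $\lambda_k(z)$ of $Q_z$ lie on $\dS^1$; hence there exist radii $0 < \rho_1 < 1 < \rho_2$ (depending only on $z$) such that $f$ is analytic on the annulus $\{\rho_1 < |\omega| < \rho_2\}$. Its Laurent expansion $f(\omega) = \sum_{m \in \mathbb Z} \hat f_m \omega^m$ has coefficients $\hat f_m = \frac{1}{2i\pi}\oint_{|\omega|=r} f(\omega)\omega^{-m-1}d\omega$; deforming the contour to $|\omega| = r$ with $r$ close to $\rho_2$ for $m \geq 0$ and to $\rho_1$ for $m < 0$ yields $|\hat f_m| \leq C \kappa^{|m|}$ for some $C > 0$ and $\kappa \in (0,1)$ depending only on $z$.

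The key step is then an aliasing identity. Substituting the Laurent expansion into the discrete Fourier sum above and exchanging the two sums (justified by the absolute summability $\sum_m|\hat f_m| < \infty$) gives
$$
R'_n(z)_{ij} = \sum_{m \in \mathbb Z} \hat f_m \cdot \frac{1}{n}\sum_{\ell=0}^{n-1} \omega_n^{\ell(m + i - j)} = \sum_{q \in \mathbb Z} \hat f_{qn - (i-j)},
$$
since the inner average equals $1$ when $n$ divides $m + (i-j)$ and $0$ otherwise. Taking absolute values leaves me with controlling
$$
|R'_n(z)_{ij}| \leq C \sum_{q \in \mathbb Z} \kappa^{|qn - (i-j)|},
$$
and a direct geometric-series computation bounds this by a constant multiple of $\kappa^{\min_q|qn - (i-j)|} = \kappa^{d_n(i,j)}$, uniformly in $n$, after possibly shrinking $\kappa$.

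The main obstacle is purely conceptual, namely setting up the aliasing identity cleanly and checking uniformity in $n$ of the residual geometric sum; the underlying analytic ingredient (an annulus of analyticity around $\dS^1$ for $f$, and hence exponential decay of its Laurent coefficients) is immediate from $z \notin \bfa(\dS^1)$.
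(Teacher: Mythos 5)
Your argument is correct, but it takes a different route from the paper. The paper's proof is the classical ``exponential decay of inverses of banded matrices'' argument: it approximates $\omega\mapsto(z-\bfa(\omega))^{-1}$ uniformly on $\dS^1$ by a polynomial $P_l$ with error $O(\kappa^l)$ (obtained by partial fractions and the truncated geometric expansions of Proposition \ref{prop:convratfunc}), then uses the spectral theorem to get $\|R'_n(z)-P_l(C_n(\bfa))\|\leq C\kappa^l$, observes that $P_l(C_n(\bfa))$ is banded, with entries vanishing when $d_n(i,j)>(r+s)l$, and optimizes $l\sim d_n(i,j)/(r+s)$ (adjusting $\kappa$ to $\kappa^{1/(r+s)}$). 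You instead exploit the circulant structure exactly: the diagonalization \eqref{eq:diagCn} gives the discrete Fourier formula for $R'_n(z)_{ij}$ (the same expression as \eqref{eq:gammaldhedhe}), and the aliasing/Poisson-type identity shows that the entries of the circulant resolvent are the wrapped Laurent coefficients $\sum_q \hat f_{qn+(j-i)}$ of $1/(z-\bfa(\cdot))$, which decay geometrically because $z\notin\bfa(\dS^1)$ provides an annulus of analyticity; the residual geometric sum is bounded by $C\kappa^{d_n(i,j)}$ uniformly in $n$ (and in fact without shrinking $\kappa$, since all non-minimal terms contribute at most $2\kappa^{n-d}/(1-\kappa^n)\leq 2\kappa^{d}/(1-\kappa)$ for $d=d_n(i,j)\leq n/2$). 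Both proofs rest on the same analytic fact — geometric decay of the Laurent coefficients of the symbol's resolvent — but your exact identity is arguably cleaner for this specific lemma and gives the limiting coefficients $\gamma(\ell)$ as a by-product, while the paper's approximation argument is more robust: it does not use the circulant (or even normal) structure beyond a resolvent bound, and so extends to general banded matrices, which is the spirit of the references invoked there. One cosmetic point: constants here depend on $z$ (and $\bfa$) only, as required; if you later need uniformity over a compact $\Gamma\subset\dC\backslash\bfa(\dS^1)$, your annulus and hence $C,\kappa$ can be chosen uniformly over $\Gamma$ exactly as in the paper.
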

\begin{proof} We may reproduce a classical argument in \cite{EIJKHOUT1988247,MR1708693} for symmetric band matrices. 
It is standard that there exist $C > 0$ and $0 < \kappa < 1$ such that for all integer $l \geq 1$, there exists a polynomial $P_l$ (depending on $\bfa , z$) of degree $l$ such that for all $\omega \in \dS^1$,
\begin{equation}\label{eq:berns}
| (z-\bfa(\omega)) ^{-1} - P_l(\omega) | \leq C \kappa^l ,
\end{equation}
(see below). Then we deduce from \eqref{eq:berns} and the spectral theorem that 
$$
| R'_n(z)_{ij}  - P_l(C_n(\bfa))_{ij} | \leq  C \kappa^l,
$$
where $C_n(\bfa)$ is the circulant matrix. Observe that $P_l(C_n(\bfa))_{i,j} = 0$  if $d_n(i,j) > (r + s)l$. We take $l = \lceil d_n(i,j)/(r+s) \rceil +1$. Adjusting the values of $\kappa$ to $\kappa^{1/(r+s)}$ and $C$, we deduce the lemma.

It remains to check that \eqref{eq:berns} holds. Up to replace $\bfa$ by $\bfa - z$, without loss of generality, assume $z = 0$.  For $\omega \in \dS^1$, we write 
$$
 \bfa(\omega) ^{-1} = \frac{\omega^{r}}{ \sum_{\ell= -r}^s a_{\ell} \omega^{\ell + r} } = \frac{ \omega^{r}}{ Q (\omega)} .
$$
By assumption $Q (\omega) = \prod_i ( \omega - \lambda_i)$ with $\lambda_i \ne 0$. We may decompose $1/Q$ into simple elements. The proof is then contained in the proof of Proposition \ref{prop:convratfunc} in Appendix.
\end{proof}

\subsection{Limit of \texorpdfstring{$Q_n R'_n(z) P_n$}{QnR'n(z)Pn}} 
\label{subsec:QrP}
We now study the limit of $Q_n R'_n(z) P_n$ when $n$ goes to infinity for $z\in \C\setminus \bfa(\mathbb S^1)$. Recall the definitions of $\cM(\omega)$, $\cD$ and $\cE$ in \eqref{eq:defcM} and the definition of $\cH(z)$ in \eqref{defH}.

\begin{prop}\label{convH}  Let $\Gamma$ be a compact subset of $\C \setminus \bfa (\mathbb{S}^1)$. There exist $C>0$ and $0 < \kappa < 1$ (depending on $\bfa$ and $\Gamma$) such that for any $z \in \Gamma$, for all $n \geq 1$,
\[
\| Q_n R'_n(z) P_n  -   \cH(z) \| \leq C   \kappa ^n ,
\]
\end{prop}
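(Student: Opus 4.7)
The plan is to use the Fourier diagonalization of $C_n(\bfa)$, express the entries of $R'_n(z)$ as discrete Fourier coefficients of $\omega\mapsto (z-\bfa(\omega))^{-1}$ on $\dS^1$, and approximate them by the corresponding continuous Fourier coefficients (which assemble into $\cH(z)$) with exponentially small error uniformly on $\Gamma$.

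First, from \eqref{eq:diagCn}, one has
\[
(R'_n(z))_{i,j}= \frac{1}{n}\sum_{l=0}^{n-1}\frac{\omega_n^{(i-j)l}}{z-\bfa(\omega_n^l)}=\rho_n((j-i)\bmod n),
\]
where $\rho_n(d)$ is the $d$-th discrete Fourier coefficient of $f_z(\omega)=(z-\bfa(\omega))^{-1}$ on $\dS^1$. By Poisson summation, $\rho_n(d)=\sum_{m\in\Z}c_{d+mn}(z)$ with continuous Fourier coefficient $c_d(z)=\frac{1}{2i\pi}\oint_{\dS^1}\omega^{-d-1}f_z(\omega)\,d\omega$. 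Using the identity $f_z(\omega)=-\omega^r/Q_z(\omega)$ from Definition~\ref{zk} and a partial-fraction decomposition over the roots $\lambda_k(z)$ of $Q_z$ (all of modulus $\neq 1$ since $z\notin\bfa(\dS^1)$ excludes roots on $\dS^1$), one obtains the exponential bound $|c_d(z)|\leq C\kappa_\Gamma^{|d|}$, where
\[
\kappa_\Gamma:=\sup_{z\in\Gamma}\max_{1\leq k\leq r+s}\min(|\lambda_k(z)|,|\lambda_k(z)|^{-1})<1
\]
by continuity of the roots in $z$ and compactness of $\Gamma$. Choosing the representative $|d|\leq n/2$ thus yields $|\rho_n(d)-c_d(z)|\leq C\kappa_\Gamma^{n-|d|}$.

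Next, the sparse structure of $P_n$ and $Q_n$ in \eqref{PnQn} shows that, for $n>2(r+s)$, the four $s\times s$, $s\times r$, $r\times s$, $r\times r$ blocks of $Q_nR'_n(z)P_n$ are fixed finite linear combinations, with coefficients independent of $n$ coming from the entries of $E_s$ and $D_r$, of entries $(R'_n(z))_{i,j}$ whose representative of $(j-i)\bmod n$ in $[-n/2,n/2]$ is uniformly bounded by $\max(2r,2s)$. Substituting the approximation $\rho_n(d)=c_d(z)+O(\kappa_\Gamma^{n/2})$ in each such linear combination produces an explicit $(r+s)\times(r+s)$ matrix $\widehat H(z)$ built from the $c_d(z)$'s and the entries of $D_r,E_s$, such that $\|Q_nR'_n(z)P_n-\widehat H(z)\|\leq C\kappa_\Gamma^{n/2}$ uniformly on $\Gamma$.

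Finally, one has to verify the algebraic identity $\widehat H(z)=\cH(z)$. Writing $\cH(z)=\cD\,H(z)\,\cE$ from \eqref{defH}, a direct computation using $\cM(\omega)_{p,q}=\omega^{p-q}$ gives $H(z)_{p,q}=c_{-\nu(p,q)}(z)$, with $\nu(p,q)=s+p-q$ if $p\leq s$ and $\nu(p,q)=p-q-r$ if $p>s$. Expanding $\cD H(z)\cE$ against the explicit block-diagonal structure of $\cD=I_s\oplus D_r$ and $\cE=E_s\oplus I_r$ matches, block by block, the expressions of $\widehat H(z)$ obtained in the previous step. The main obstacle is the bookkeeping in this final matching, which depends on which corner of $R'_n(z)$ each block is read from (for example, the top-left block involves indices $(i,j)=(k,n-s+m)$ whose mod-$n$ difference, reduced to $[-n/2,n/2]$, is $m-s-k$); the analytical input (exponential decay of Fourier coefficients and Poisson summation for rational functions with no pole on $\dS^1$) is classical.
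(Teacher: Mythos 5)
Your proof is correct and follows essentially the same route as the paper's: diagonalize $C_n(\bfa)$ in the Fourier basis, observe that only boundary rows/columns of $R'_n(z)$ enter $Q_nR'_n(z)P_n$ so its entries are discrete Fourier coefficients of $(z-\bfa)^{-1}$ with bounded index, and show these converge exponentially fast, uniformly on $\Gamma$, to the continuous Fourier coefficients that assemble into $\cH(z)$. The paper packages the analytic input as Proposition \ref{prop:convratfunc} on Riemann sums of rational functions, whose proof is exactly the partial-fraction/aliasing computation you sketch; the only point to tighten is the uniformity on $\Gamma$ of the constant in $|c_d(z)|\le C\kappa_\Gamma^{|d|}$ (partial-fraction coefficients degenerate when roots of $Q_z$ collide, so it is cleaner to get the decay from Cauchy estimates on circles of radii $\rho_1<1<\rho_2$ on which $z-\bfa(\omega)$ is uniformly bounded away from $0$ by compactness).
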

\begin{proof}
Recall that the circulant matrix $C_n(\bfa)$ is diagonalized as
\[
C_n(\bfa)= F_n \diag(\bfa(\omega_n^{i-1})), i=1,\ldots,n) F_n^*,
\]
where $\omega_n=e^{\frac{i2\pi}{n}}$ and $F_n$ is the eigenvector matrix given by $F_{i,j} = \frac{1}{\sqrt n} \omega_n^{(i-1)(j-1)}$, for $1\leq i,j \leq n$.  We decompose the matrix $F_n$ into block matrices as follows:
\begin{equation}\label{F}
F_n=
\begin{pmatrix}
B_1 \\
B_2 \\
B_3 \\
\end{pmatrix} 
=
\begin{pmatrix}
\tilde B_1 \\
\tilde B_2 \\
\tilde B_3 \\
\end{pmatrix} ,
\end{equation}
where $B_1$ and $B_3$ are respectively the $s\times n$ matrix given by the $s$ first rows of $F_n$ and the    $r\times n$ matrix given by the last $r$ rows of $F_n$, that is
\[
B_1 = \frac{1}{\sqrt n}  \left(  \omega_n^{(i-1)(j-1)}  \right)_{\substack{1 \leq i \leq s \\ 1 \leq j \leq n}} ; 
\quad B_3 =  \frac{1}{\sqrt n}  \left(  \omega_n^{(n-r+i-1)(j-1)}  \right)_{\substack{1 \leq i \leq r \\ 1 \leq j \leq n}}  ,
\]
and similarly $\tilde B_1$ is the $r$ first rows of $F_n$ and $\tilde B_3$ the $s$ last rows:
\[
\tilde B_1 = \frac{1}{\sqrt n}  \left(  \omega_n^{(i-1)(j-1)}  \right)_{\substack{1 \leq i \leq r \\ 1 \leq j \leq n}} ; 
\quad \tilde B_3 =  \frac{1}{\sqrt n}  \left(  \omega_n^{(n-s+i-1)(j-1)}  \right)_{\substack{1 \leq i \leq s \\ 1 \leq j \leq n}}  ,
\]
($B_2$ and $\tilde B_2$ will be of no use).

Setting
\[
\Delta_n(z)=\diag((z-\bfa(\omega_n^{i-1}))^{-1}), i=1,\ldots,n),
\]
it follows from \eqref{eq:diagCn} that
\begin{align*}
Q_n R'_n(z) P_n 
& = Q_n F_n \Delta_n(z) F_n^* P_n \\
& = 
\begin{pmatrix}
I_s & 0 \\
0 & D_r
\end{pmatrix}
\begin{pmatrix}
B_1 \Delta_n(z) \tilde B_3^* & B_1 \Delta_n(z) \tilde B_1^* \\
B_3 \Delta_n(z) \tilde B_3^* & B_3 \Delta_n(z) \tilde B_1^*
\end{pmatrix}
\begin{pmatrix}
 E_s & 0 \\
0 & I_r 
\end{pmatrix}. 
\end{align*}
Hence, computing the limit of $Q_n R'_n(z) P_n$ consists of computing the limit of each $B_i \Delta_n(z) \tilde B_j^*$, for $i,j=1,3$. One has, for $1 \leq p,q \leq s$, 
\begin{align*}
(B_1 \Delta_n(z) \tilde B_3^* ) _{p,q}
& =\frac1n \sum_{k=1}^n \omega_n^{(p-1)(k-1)} \frac{1}{z-\bfa(\omega_n^{k-1})} \overline{\omega_n^{(n-s+q-1)(k-1)}}\\
& = \frac1n \sum_{k=1}^n e^{i 2 \pi (p-q+s)(k-1)/n}  \frac{1}{z-\bfa(e^{i 2 \pi (k-1)/n})} \\
& \underset{n \to \infty}{\longrightarrow} \frac{1}{2 \pi} \int_0^{2 \pi} \frac{e^{i (p-q+s) \theta}}{z-\bfa(e^{i\theta})} d\theta,
\end{align*}
where one recognizes the limit of a  Riemann sum. Proposition \ref{prop:convratfunc} in Appendix gives an exponential rate of convergence for such Riemann sum. Similarly, one has
\begin{align*}
(B_1 \Delta_n(z) \tilde B_1^* ) _{p,q}  &  \underset{n \to \infty}{\longrightarrow}  \frac{1}{2 \pi} \int_0^{2 \pi} \frac{e^{i (p-q) \theta}}{z-\bfa(e^{i\theta})} d\theta , \quad \text{for $1\leq p\leq s, \ 1 \leq q \leq r,$}\\
(B_3 \Delta_n(z) \tilde B_3^* ) _{p,q}  &  \underset{n \to \infty}{\longrightarrow}  \frac{1}{2 \pi} \int_0^{2 \pi} \frac{e^{i (p-q +s-r) \theta}}{z-\bfa(e^{i\theta})} d\theta , \quad \text{for $1\leq p\leq r, \ 1 \leq q \leq s,$} \\
(B_3 \Delta_n(z) \tilde B_1^* ) _{p,q}  &  \underset{n \to \infty}{\longrightarrow}  \frac{1}{2 \pi} \int_0^{2 \pi} \frac{e^{i (p-q-r) \theta}}{z-\bfa(e^{i\theta})} d\theta , \quad \text{for $1\leq p\leq r, \ 1 \leq q \leq r$} .
\end{align*}
The result follows. 
\end{proof}

\subsection{Rank of \texorpdfstring{$I  + \cH(z)$}{I+H(z)}} 
\label{subsec:rk}

Our next proposition connects the rank of $I  + \cH(z)$ to the winding number of $\bfa - z$. 
\begin{prop} \label{prop:rank}
If $z \in  \C \setminus \bfa (\mathbb{S}^1)$ and $\delta (z) = \wind(\bfa - z)$ then
$\dim(\ker( I  + \cH(z) )) = |\delta(z)|.
$\end{prop}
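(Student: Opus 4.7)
The plan is to evaluate the contour integral defining $\cH(z)$ by residues and then exhibit an explicit factorization of $I_{r+s}+\cH(z)$ from which the rank can be read off. Throughout, fix $z\in\C\setminus\bfa(\dS^1)$ and assume by a continuity argument that the roots $\lambda_1(z),\ldots,\lambda_{r+s}(z)$ of $Q_z$ are distinct; set $I_\pm=\{k:|\lambda_k|\gtrless 1\}$, so $|I_-|=r+\delta$ and $|I_+|=s-\delta$.

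The partial fraction decomposition $\frac{1}{z-\bfa(\omega)}=\sum_k A_k/(\omega-\lambda_k)$ with $A_k=-\lambda_k^r/(a_s\prod_{j\ne k}(\lambda_k-\lambda_j))$ gives, by direct residue computation on $\dS^1$, that $h(m):=\tfrac{1}{2i\pi}\oint_{\dS^1}\omega^m\,d\omega/(z-\bfa(\omega))$ equals $\sum_{k\in I_-}A_k\lambda_k^m$ for $m\ge 0$ and $-\sum_{k\in I_+}A_k\lambda_k^m$ for $m\le -1$. Writing $\cH(z)=\cD K(z)\cE$ with $K(z)_{p,q}=h(\alpha_p+p-q-1)$, where $\alpha_p=s$ for $p\le s$ and $\alpha_p=-r$ for $p>s$, one obtains explicit formulas for the entries of $\cH(z)$. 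Combining these with the Lagrange identity $\sum_k\lambda_k^N/P'(\lambda_k)=0$ for $0\le N\le r+s-2$ and with the coefficient identity $\sum_{j=0}^\ell h_{\ell-j}a_{s-j}=a_s\delta_{\ell,0}$ for $0\le \ell\le s-1$ (both equivalent to $\prod(1-\lambda_k x)\cdot\sum_l h_l x^l=1$), a careful case analysis yields the clean identity
\[
I_{r+s}+\cH(z)=\begin{pmatrix}0_s&0\\0&I_r\end{pmatrix}-\cD\,\Lambda_+(z)\,\cE,\qquad \Lambda_+(z)_{p,q}=\sum_{k\in I_+}A_k\lambda_k^{\alpha_p+p-q-1}.
\]
Factoring $\Lambda_+(z)_{p,q}=\sum_{k\in I_+}(A_k\lambda_k^{\alpha_p+p-1})(\lambda_k^{-q})$ shows that $\mathrm{rank}(\Lambda_+(z))\le|I_+|=s-\delta$, so $\mathrm{rank}(I+\cH(z))\le r+s-\delta$, which equals $r+s-|\delta|$ when $\delta\ge 0$. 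The case $\delta\le 0$ is handled by the symmetric identity $I+\cH(z)=\mathrm{diag}(I_s,0_r)-\cD\Lambda_-(z)\cE$ with $\mathrm{rank}(\Lambda_-(z))\le|I_-|=r-|\delta|$, derived by using $h=\sum_{I_-}A_k\lambda_k^m$ throughout and the analogous generating-function identity $\prod(1-y/\lambda_k)\cdot\sum_l h_l(\lambda_1^{-1},\ldots,\lambda_{r+s}^{-1})y^l=1$ for the correction in the bottom-right $r\times r$ block; this again yields $\mathrm{rank}(I+\cH(z))\le r+s-|\delta|$.

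For the matching lower bound $\mathrm{rank}(I+\cH(z))\ge r+s-|\delta|$, I would identify an explicit nondegenerate $(r+s-|\delta|)$-minor of the factored form using the Vandermonde structure of $\Lambda_\pm$ in the distinct $\lambda_k$'s and the invertibility of $\cD,\cE$ (following from $a_s\ne 0$ and, after a trivial reduction, $a_{-r}\ne 0$). Values of $z$ where roots coincide are handled by analyticity: a nonzero analytic minor cannot vanish identically on a connected component of $\{z:\wind(\bfa-z)=\delta\}$. As a sanity check, the case $\delta=0$ reduces via Sylvester's identity to Lemma~\ref{lemma-nooutlier}, where Szeg\H{o}'s theorem directly gives $\det(I+\cH(z))\ne 0$ and hence trivial kernel. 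The main obstacle is the algebraic identity for $I+\cH(z)$---the cancellations in the top-left $s\times s$ corner (for $\delta\ge 0$) and in the bottom-right $r\times r$ corner (for $\delta\le 0$) require careful tracking of Lagrange-type sums; the low-dimensional cases $r=0,s\in\{1,2\}$ and $r=1,s=0$ (verified by direct hand computation) confirm the structure and provide the template for the general identification.
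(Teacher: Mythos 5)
Your residue computation and the claimed factorization $I_{r+s}+\cH(z)=\mathrm{diag}(0_s,I_r)-\cD\,\Lambda_+(z)\,\cE$ are plausible (they amount to a variant of the paper's Lemma~\ref{le:resH} in which the inside roots and the pole at $0$ are traded for the outside roots via the generating-function identity), and they do give the easy half, $\dim\ker(I+\cH(z))\ge|\delta|$, at least when the roots of $Q_z$ are simple. But the two hard steps are only asserted. First, the matching bound $\mathrm{rank}(I+\cH(z))\ge r+s-|\delta|$ does not follow from the ``Vandermonde structure'' of $\Lambda_+$ alone: your identity writes $I+\cH$ as the sum of a matrix of rank $r$ and a matrix of rank at most $s-\delta$, and such a sum can have rank as small as $|r-(s-\delta)|$; what is needed is a transversality statement saying that the range of $\cD\Lambda_+\cE$ and the block $\mathrm{diag}(0_s,I_r)$ interact nondegenerately. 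That is exactly the content of the paper's bi-orthogonality relation $g_j^{\intercal}\cE\tilde f_i=\delta_{ij}Q'(\lambda_j)$ (Lemma~\ref{le:biorth}), which is the heart of its proof; ``identify an explicit nondegenerate $(r+s-|\delta|)$-minor'' is the whole difficulty, not a verification, and your key algebraic identity itself is checked only in low-dimensional examples.

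Second, and more seriously, your treatment of the $z$ where $Q_z$ has multiple roots would fail. Rank is lower semicontinuous, so from nearby simple-root points you can push $\dim\ker(I+\cH(z))\ge|\delta|$ to such $z$, but not the reverse inequality: the kernel can jump \emph{up} precisely at the finitely many $z$ where the discriminant of $Q_z$ vanishes. The argument ``a nonzero analytic minor cannot vanish identically on a connected component'' only shows that your chosen minor is nonzero outside a discrete set; it says nothing at the exceptional points themselves, which are exactly the points in question, while Proposition~\ref{prop:rank} is claimed for every $z\notin\bfa(\dS^1)$. The paper flags this explicitly (``the dimension of the kernel is not a continuous function of the entries of a matrix'') and devotes its Case~2 to a confluent, divided-difference analogue of the Vandermonde vectors with controlled limits, so that the simple-root argument survives the degeneration; some substitute for that construction (or a direct computation of $\cH$ at multiple roots) is missing from your plan, so as written it proves the statement only for $z$ at which all roots of $Q_z$ are simple.
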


This subsection is devoted to the proof of Proposition \ref{prop:rank}. Up to replace $\bfa $ by $\bfa -z$, we may assume without loss of generality that $z =0$. For ease of notation, we remove the dependency in $z = 0$ of all considered quantities (we write $\cH$, $\delta$ in place of $\cH(z)$, $\delta(z)$ for example). We set 
$$
Q (\omega)  = \omega^r \bfa(\omega) = \sum_{\ell = -r}^{s} a_{\ell}  \omega^{\ell+r}.
$$
We denote by $\lambda_1 , \ldots, \lambda_{r+s}$ the roots of $Q$ (with multiplicities) and  order them by non-decreasing modulus, $|\lambda_1| \leq |\lambda_2| \leq \cdots \leq |\lambda_{r+s}|$. The number of roots in $\mathbb{D}$ is denoted by $J$: $|\lambda_{J} | < 1 < |\lambda_{J+1}|$. By definition, we have 
$$
\delta = J - r.
$$

We start the proof by a two technical lemmas when all roots of $Q$ are simple  (i.e. all $\lambda_i$'s are distinct). We introduce the following vectors and matrices. For $j \geq 1$, we set 
$$
f_j = \begin{pmatrix}
\phi_j \\ D_r \psi_j
\end{pmatrix} \in \dC^{r+s},
$$
where $(\phi_j)_p = \lambda_j^{r+p-1}$, $1 \leq p \leq s$, $(\psi_j)_p = \lambda_j^{p-1}$, $1 \leq p \leq r$. We also define the vector in $\dC^{r+s}$, $(g_j)_p = \lambda_j^{s-p}$. Finally for $r \geq 1$, the matrix $\cT \in M_{r+s}(\dC)$ is the upper-triangular matrix 
$$
\cT = \begin{pmatrix} 0 & 0 \\
0 & D_r T
\end{pmatrix}
$$
with $T_{pq} = 0$ for $1 \leq q < p \leq r$ and $T_{pq} = 1/ (q-p)! \cdot (1/Q)^{(q-p)} (0)$ for $1 \leq p \leq q \leq r$ where $(1/Q)^{(0)} = 1/Q$ and for $i \geq 1$, $(1/Q)^{(i)}$ is the $i$-th derivative of $1/Q$. Note that since $a_{-r} \ne 0$, $0$ is not a pole of $1/Q$. If $r = 0$, then we simply set $\cT = 0$.

\begin{lem}\label{le:resH}
If all roots of $Q$ are simple then 
$$
\cH = - \sum_{j = 1}^{J}\frac{1}{Q'(\lambda_j)} f_j  g_j^\intercal \mathcal E- \cT.
$$
\end{lem}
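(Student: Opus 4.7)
The strategy is a direct residue computation of the contour integral defining $\cH(z)$, carried out entry by entry. First I reduce to $z=0$ by replacing $\bfa$ with $\bfa-z$, so that $z-\bfa(\omega)=-\omega^{-r}Q(\omega)$ and $[\omega(z-\bfa(\omega))]^{-1}=-\omega^{r-1}/Q(\omega)$. Unfolding the matrix product inside the integrand, the $(p,q)$ entry of $\begin{pmatrix}\omega^{s}I_s & 0 \\ 0 & \omega^{-r}I_r\end{pmatrix}\cM(\omega)\cdot[\omega(z-\bfa(\omega))]^{-1}$ equals $-\omega^{r+s+p-q-1}/Q(\omega)$ for $1\le p\le s$ and $-\omega^{p-q-1}/Q(\omega)$ for $s<p\le r+s$.

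Next, I apply the residue theorem on $|\omega|=1$. Inside $\mathbb D$ there are at most two types of poles: the roots $\lambda_1,\dots,\lambda_J$ of $Q$ lying in $\mathbb D$ (simple by hypothesis), and possibly $\omega=0$ (note $Q(0)=a_{-r}\neq 0$, so $0$ is not a root). In the first case ($p\le s$), the exponent $r+s+p-q-1$ is always nonnegative (since $p\ge 1$ and $q\le r+s$), so the integrand is holomorphic at $0$; in the second case ($s<p\le r+s$), a pole at $0$ occurs exactly when $p\le q$, and its residue equals $-[\omega^{q-p}]Q^{-1}=-\tfrac{1}{(q-p)!}(1/Q)^{(q-p)}(0)$. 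At each simple root $\lambda_j\in\mathbb D$ the residue is $-\lambda_j^{\alpha_{p,q}}/Q'(\lambda_j)$ with the exponent $\alpha_{p,q}$ read off from the integrand.

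Finally, I recognise the tensor structure. Writing $\lambda_j^{r+s+p-q-1}=\lambda_j^{r+p-1}\cdot\lambda_j^{s-q}$ for $p\le s$ and $\lambda_j^{p-q-1}=\lambda_j^{(p-s)-1}\cdot\lambda_j^{s-q}$ for $s<p\le r+s$, the residue at $\lambda_j$ assembles (before flanking by $\cD$ and $\cE$) into the rank-one matrix $-\frac{1}{Q'(\lambda_j)}\begin{pmatrix}\phi_j \\ \psi_j\end{pmatrix}g_j^{\intercal}$, while the residue-at-$0$ contributions form a matrix supported on the lower-right $r\times r$ block whose upper-triangular entries are exactly $-T_{p-s,q-s}$. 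Left-multiplication by $\cD$ then sends $\begin{pmatrix}\phi_j \\ \psi_j\end{pmatrix}$ to $f_j$ and sends the lower $r\times r$ block $-T$ to $-D_r T$, producing $-\cT$; right-multiplication by $\cE$ is left intact (it annihilates neither contribution because $\cE$ is block diagonal, and one checks that the lower-right block of $T''\cE$ is still $T$, yielding $\cT$ after multiplication by $\cD$ on the left). Combining these identifications gives the announced formula.

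The computation is entirely elementary; the only obstacle is bookkeeping: one must track carefully the exponents in the two cases $p\le s$ and $p>s$, verify that in the first case no residue at $0$ is missed, and confirm that the residue at $0$ precisely reproduces $\cT$ with the correct factor of $D_r$. The simplicity of the roots $\lambda_j$ is what allows the elementary formula $\text{Res}_{\lambda_j}(1/Q)=1/Q'(\lambda_j)$; in the general case one would have to replace this by Hermite-type partial fraction expansions, but that is not needed here.
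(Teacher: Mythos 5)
Your proof is correct and follows essentially the same route as the paper: after reducing to $z=0$, both arguments compute the contour integral entrywise by the residue theorem, with the simple roots $\lambda_j\in\mathbb D$ producing the rank-one terms $-\frac{1}{Q'(\lambda_j)}f_jg_j^\intercal\cE$ (via the factorization of the powers of $\lambda_j$, equivalently the rank-one structure of $\cM$) and the pole at $0$ from the negative exponents in the lower-right block producing exactly $-\cT$ after left multiplication by $\cD$. Your bookkeeping of the exponents, the sign from $z-\bfa=-\bfa$, and the location of the pole-at-zero contributions all match the paper's computation.
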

\begin{proof}
We have $$
\frac{1}{\omega \bfa(\omega)} 
\begin{pmatrix}
\omega^sI_s & 0 \\
0 & \omega^{-r}I_r
\end{pmatrix}
\cM(\omega) =  \frac{1}{Q(\omega)} {
\begin{pmatrix}
\omega^{s+r-1}& \cdots & 1 \\
\omega^{s+r} & \cdots & \omega\\
\vdots &   &  \vdots \\
\omega^{2 s + r -2} & \cdots & \omega^{s-1} \\
\omega^{s-1} & \ldots & \omega^{-r} \\
\vdots & & \vdots  \\
\omega^{s+r-2} & \ldots & \omega^{-1}
\end{pmatrix}}.
$$
Observe that the only coefficients of the above matrix with elements of the form $\omega^{p}$ with $p < 0$ are in the upper triangular part of the bottom left $r \times r$ corner. 
On the other end, since all roots of $Q$ are simple, from Cauchy's Residue Theorem, we have for integer $p \geq 0$,
$$ \frac{1}{2 i \pi}  \oint_{\mathbb{S}^1}   \frac{\omega^p }{Q(\omega)} d\omega = \sum_{j=1}^{J} \frac{\lambda_j^{p}}{Q'(\lambda_j)} 
$$
while for $p < 0$, 
$$ \frac{1}{2 i \pi}  \oint_{\mathbb{S}^1}   \frac{\omega^p }{Q(\omega)} d\omega = \sum_{j=1}^{J} \frac{\lambda_j^{p}}{Q'(\lambda_j)} + \frac{1}{(-p-1)!} \left(\frac{1}{Q}\right)^{(-p-1)}(0).
$$
It follows that 
$$
\frac{1}{2 i \pi}  \oint_{\mathbb{S}^1}   \frac{1}{\omega \bfa(\omega)} 
\begin{pmatrix}
\omega^sI_s & 0 \\
0 & \omega^{-r}I_r
\end{pmatrix}
\cM(\omega) = \sum_{j=1}^J \frac{\lambda_j^{r-1}}{Q'(\lambda_j)} \begin{pmatrix}
\lambda_j^s I_s & 0 \\
0 & \lambda_j^{-r}I_r
\end{pmatrix}
\cM(\lambda_j) + \begin{pmatrix} 0 & 0 \\ 0 & T 
\end{pmatrix}.
$$
Consequently, 
$$
\cH = - \sum_{j=1}^J \frac{\lambda_j^{r-1}}{Q'(\lambda_j)} \cD \begin{pmatrix}
\lambda_j^s I_s & 0 \\
0 & \lambda_j^{-r}I_r
\end{pmatrix}
\cM(\lambda_j) \mathcal E - \mathcal T.
$$

We next use that $\cM(\omega)$ is a rank one matrix: $\cM(\omega)= a(\omega) b^\intercal (\omega)$ with $a(\omega)_p = \omega^p$ and $b(\omega)_p = \omega^{-p}$. It is then easy to check that  
$$
\lambda_j^{r-1}  \cD  \begin{pmatrix}
\lambda_j^s I_s & 0 \\
0 & \lambda_j^{-r}I_r
\end{pmatrix}
\cM(\lambda_j) = f_j g_j^{\intercal}.
$$
The conclusion follows.
\end{proof}

We next define for $1 \leq j \leq r+s$,
$$
\tilde f_j = \begin{pmatrix}
\phi_j \\ - D_r \psi_j
\end{pmatrix} \in \dC^{r+s},
$$
Since all $\lambda_j$'s are distinct, note that the vectors $\begin{pmatrix}
 \phi_j  \\ \psi_j 
\end{pmatrix}$, $1 \leq j \leq r+s$, classically form a basis of $\dC^{r+s}$. Hence since $\begin{pmatrix} I_s & 0 \\ 0 & \pm D_r \end{pmatrix}$ is invertible, $(f_j)_j$ and $(\tilde f_j)_j$ are two other basis of $\dC^{r+s}$. 

The next lemma asserts that the family of vectors $(g_j)$ and $\cE \tilde f_j$ are bi-orthogonal. 

\begin{lem}\label{le:biorth}
If all roots of $Q$ are simple then for all $1 \leq i , j \leq r+s$,
$$
g_j^{\intercal} \cE \tilde f_i = \delta_{ij} Q'(\lambda_j).
$$ 
\end{lem}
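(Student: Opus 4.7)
The strategy is to treat $\lambda_j$ and $\lambda_i$ as two independent formal variables $\omega$ and $\xi$, so that the scalar $g_j^{\intercal} \cE \tilde f_i$ becomes the evaluation at $(\omega,\xi) = (\lambda_j,\lambda_i)$ of an explicit two-variable rational function whose numerator vanishes as soon as $\omega$ or $\xi$ is a root of $Q$. Introduce the vector-valued polynomials $g(\omega), \phi(\xi), \psi(\xi)$ defined by the same formulas as $g_j, \phi_i, \psi_i$ with $\lambda_j, \lambda_i$ replaced by $\omega, \xi$, and split $g(\omega) = \bigl( g^{(1)}(\omega), g^{(2)}(\omega) \bigr)$ conformally with the block structure of $\cE$, so that
$$
g(\omega)^{\intercal} \cE \tilde f(\xi) = \bigl( g^{(1)}(\omega) \bigr)^{\intercal} E_s\, \phi(\xi) - \bigl( g^{(2)}(\omega) \bigr)^{\intercal} D_r\, \psi(\xi).
$$

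Each of the two summands can be evaluated in closed form. Using the triangular structure of $E_s$ and $D_r$ and summing the resulting geometric series in $\xi/\omega$, a direct calculation gives
$$
\bigl( g^{(1)}(\omega) \bigr)^{\intercal} E_s\, \phi(\xi) = \frac{\xi^r}{\omega - \xi} \sum_{\ell = 1}^{s} a_\ell (\omega^\ell - \xi^\ell), \qquad \bigl( g^{(2)}(\omega) \bigr)^{\intercal} D_r\, \psi(\xi) = \frac{1}{\omega - \xi} \sum_{\ell = -r}^{-1} a_\ell (\xi^{\ell + r} - \xi^r \omega^\ell).
$$
Subtracting, the $a_0$ terms are automatically absent on both sides, and one obtains the key identity
$$
g(\omega)^{\intercal} \cE \tilde f(\xi) = \frac{\xi^r\, \bfa(\omega) - Q(\xi)}{\omega - \xi}.
$$

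The conclusion then follows immediately. For $i \neq j$ the simplicity of the roots gives $\lambda_j \neq \lambda_i$, while $\bfa(\lambda_j) = 0$ (recall $Q(\omega) = \omega^r \bfa(\omega)$ and $\lambda_j \neq 0$ since $a_{-r} \neq 0$) and $Q(\lambda_i) = 0$; the numerator vanishes and $g_j^{\intercal} \cE \tilde f_i = 0$. For $i = j$ the ratio has a removable singularity on the diagonal $\omega = \xi$, and L'H\^opital's rule yields the value $\xi^r \bfa'(\xi)$; specializing at $\xi = \lambda_j$ and combining with $Q'(\omega) = r \omega^{r-1} \bfa(\omega) + \omega^r \bfa'(\omega)$ and $\bfa(\lambda_j) = 0$ gives $g_j^{\intercal} \cE \tilde f_j = \lambda_j^r \bfa'(\lambda_j) = Q'(\lambda_j)$, as required. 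The only non-automatic step is the bookkeeping in the two geometric sums: one must carefully track index shifts so that the contributions of $E_s$ and $D_r$ recombine into the compact ratio displayed above; beyond that, the proof is purely formal.
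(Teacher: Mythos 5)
Your proof is correct and follows essentially the same route as the paper: expanding $g_j^{\intercal}\cE\tilde f_i$ using the triangular structure of $E_s$ and $D_r$ and summing geometric series to land on the divided-difference expression $\lambda_i^{r}\bigl(\bfa(\lambda_j)-\bfa(\lambda_i)\bigr)/(\lambda_j-\lambda_i)$, which vanishes for $i\neq j$ since both are roots. The only (harmless) variation is the diagonal case $i=j$, which you treat by continuity/L'H\^opital on the two-variable identity, whereas the paper substitutes $\lambda_i=\lambda_j$ in the partially summed expression and identifies $\sum_\ell \ell\,a_\ell\lambda_i^{\ell+r-1}=Q'(\lambda_i)$ directly; both give the same result.
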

\begin{proof}
For $1 \leq  q \leq r$, we have $$(D_r \psi_i)_q = \sum_{p \geq q}  a_{-r+p -q} (\lambda_i)^{p-1} = \lambda_i^r \sum_{\ell = -r}^{-q} a_{\ell} \lambda_i ^{\ell + q -1}.$$ 
Similarly, for $1 \leq q \leq s$, 
$$
(E_s\phi_i)_q = \lambda_i ^r \sum_{\ell = s - q +1}^s a_\ell \lambda_i ^{\ell - s + q  -1}.
$$
We get
\begin{eqnarray*}
g_j^{\intercal} \cE \tilde f_i & = & \sum_{q=1}^s \lambda_j^{-q + s} ( E_s\phi_i)_q - \sum_{q=1}^r \lambda_j^{-q} (D_r\psi_i)_q \\
 & = & \lambda_i^r \left(  \sum_{q=1}^s \lambda_j^{s-q} \sum_{\ell = s - q +1}^s a_\ell \lambda_i ^{\ell - s + q  -1} - \sum_{q=1}^r \lambda_j^{-q} \sum_{\ell = -r}^{-q} a_{\ell} \lambda_i ^{\ell + q  -1} \right) \\
 & = &\lambda_i^r  \left(\sum_{\ell =1}^s a_{\ell} \lambda_i^{\ell-1} \sum_{q = s - \ell +1}^s \left( \frac{\lambda_j}{\lambda_i} \right)^{s-q} - \sum_{\ell = -r}^{-1} a_\ell \lambda_i^{\ell -1} \sum_{q=1}^{-\ell} \left( \frac{\lambda_i}{\lambda_j} \right)^{q}  \right)\\
& = &  \lambda_i^r  \left(\sum_{\ell =1}^s a_{\ell} \lambda_i^{\ell-1} \sum_{k =0}^{\ell -1} \left( \frac{\lambda_j}{\lambda_i} \right)^{k} - \sum_{\ell = -r}^{-1} a_\ell \lambda_i^{\ell -1}  \sum_{k=1}^{-\ell} \left( \frac{\lambda_i}{\lambda_j} \right)^{k}  \right).
\end{eqnarray*}
If $i \ne j$, we thus find
\begin{eqnarray*}
g_j^{\intercal} \cE \tilde f_i & = &  \lambda_i^r  \left(\sum_{\ell =1}^s a_{\ell} \lambda_i^{\ell-1}  \frac{1 - \left( \frac{\lambda_j}{\lambda_i} \right)^{\ell} }{1 - \frac{\lambda_j}{\lambda_i} }  - \sum_{\ell = -r}^{-1} a_\ell \lambda_i^{\ell -1}   \frac{\frac{\lambda_i}{\lambda_j}  - \left( \frac{\lambda_i}{\lambda_j} \right)^{-\ell+1} }{1 - \frac{\lambda_i}{\lambda_j} } \right) \\
 & = & \frac{ \lambda_i^r }{\lambda_i - \lambda_j} \left( \sum_{\ell =1}^s a_\ell  (\lambda_i^\ell - \lambda_j^{\ell}) +  \sum_{\ell =-r}^{-1} a_\ell  (\lambda_i^\ell - \lambda_j^{\ell})\right)\\
& = &  \lambda_i^r \frac{\bfa(\lambda_i) - \bfa(\lambda_j)}{\lambda_i - \lambda_j}.
\end{eqnarray*}
Since $Q(\lambda_i) = Q (\lambda_j) = 0$, the claim follows. Similarly, for $i = j$, we get from what precedes
\begin{eqnarray*}
g_i^{\intercal} \cE \tilde f_i & = &  \lambda_i^r  \left(\sum_{\ell =1}^s \ell a_{\ell} \lambda_i^{\ell-1}    - \sum_{\ell = -r}^{-1} (-\ell) a_\ell \lambda_i^{\ell -1}   \right) \\
 & = & \sum_{\ell=-r}^{s} \ell  a_{\ell}  \lambda_i^{\ell + r -1} \\
& = &  Q'(\lambda_i) ,
\end{eqnarray*}
where at the third line, we have used that $Q(\lambda_i) = \sum_{\ell} a_{\ell} \lambda_i^{\ell + r} = 0$ so that $Q'(\lambda_i) = \sum_{\ell}  (\ell + r) a_{\ell}  \lambda_i ^{\ell + r -1} =  \sum_{\ell} \ell a_{\ell}  \lambda_i ^{\ell + r -1}$. It concludes the proof.
\end{proof}

We are ready to prove Proposition \ref{prop:rank}.

\begin{proof}[Proof of Proposition \ref{prop:rank}]

We shall first prove Proposition \ref{prop:rank} when $\delta \geq 0$ and all roots simple. Then extend  to the case of roots with arbitrary multiplicities and finally to $\delta < 0$. 

\vspace{5pt}

\noindent {\em Case 1: simple roots, non-negative winding number. } We assume here that all roots of $Q$ are simple and $J = \delta +r \geq 0$.

Let $(e_p)_{1 \leq p \leq r+s}$ be the canonical basis of $\dC^{r+s}$, $\Pi_+$ be the orthogonal projection onto the vector space $(e_1\ldots,e_s)$ and $\Pi_- = I - \Pi_+$ the orthogonal projection onto  the vector space $(e_{r+1}\ldots,e_{r+s})$ (if $r >0$). 

For $x \in \dC^{r+s}$, we write 
$$
x = \sum_{j=1}^{r+s} x_j \tilde f_j. 
$$
If $x \in \ker (I + \cH)$, we get from Lemma \ref{le:resH} and Lemma \ref{le:biorth},  that
\begin{equation}\label{eq:kerIH}
0 = (I + \cH) x = \sum_{j=1}^{J} x_j (\tilde f_j - f_j) + \sum_{j = J+1}^{r+s} x_j  \tilde f_j - \sum_{j=1}^{r+s} x_j \cT \tilde f_j. 
\end{equation}
We first compose \eqref{eq:kerIH}  by $\Pi_+$. Since the image of $\Pi_+ \cT = 0$ and $\Pi_+ (\tilde f_j - f_j) = 0$, we find in $\dC^s$ that,
$$
0 = \sum_{j = J+1}^{r+s} x_j \Pi_+ \tilde f_j = \sum_{j = J+1}^{r+s} x_j \phi_j.
$$ 
Remark that $(\phi_1,\ldots,\phi_{s})$ forms a basis of $\dC^{s}$ and that, since $J \geq r$, we have $r+s -J \leq s$. It follows that if $x \in \ker (I + \cH)$ then $x_j  = 0$ for all $j > J$.

If $r >0$, we next compose \eqref{eq:kerIH} by $\Pi_-$. Since $\Pi_- (\tilde f_j - f_j) = - 2 D_r \psi_j$, we find in $\dC^r$,
$$
0 = - 2 \sum_{j=1}^{J} x_j D_r \psi_j + \sum_{j=1}^{J} x_j D_r T D_r \psi_j.
$$
We use that $D_r$ is invertible ($D_r$ is upper triangular and $D_{pp} = a_{-r} \ne 0$), we deduce that the above equation is equivalent to
$$
0 = \sum_{j=1}^{J} x_j ( 2 I  - T D_r) \psi_j.
$$
We observe that $TD_r$ is upper triangular with $(TD_r)_{pp} = 1$. In particular $2 I  - TD_r$ is invertible and the above equation is equivalent to
$$0 = \sum_{j=1}^{J} x_j \psi_j.$$
Finally, since any subset of the $\psi_j$'s of cardinality $r$ forms a basis of $\dC^r$, we deduce that $\dim(\ker (I + \cH)) = J - r$ as requested. If $r=0$, we also find $\dim(\ker (I + \cH)) = J$ since   $f_j = \tilde f_j = \phi_j$, $1 \leq j \leq s$ forms a basis of eigenvectors of $I + \cH$. 

\vspace{5pt}

\noindent {\em Case 2: general multiplicities, non-negative winding number. } We still assume that $\delta = J - r \geq 0$. If $Q$ has roots with arbitrary multiplicities, it should be possible to adapt the above Lemma \ref{le:resH} and Lemma \ref{le:biorth} to a general polynomial $Q$ but the computations with Cauchy's residue formula seems daunting. Alternatively, the strategy of the proof in the general case consists in using that the set of polynomials with simple roots is dense among all polynomials. It however requires some care since the dimension of the kernel is not a continuous function of the entries of a matrix.

Before entering in the technical details, we describe the strategy. Assume that we are given polynomials $Q_{\veps} = a_s \prod_{i=1}^{r +s} (\omega - \lambda_i(\veps))$ with simple roots such that $(\lambda_1(\veps),\ldots,\lambda_{r+s}(\veps))$ converges to  $(\lambda_1 ,\ldots,\lambda_{r+s})$ as $\veps \to 0$. Then the associated matrices $E(\veps)$, $D(\veps)$, $\cH(\veps)$, converges to $E$, $D$, $\cH$. Also, if $r > 0$, $\cT(\veps)$ converges toward the matrix $\cT$, since $\lambda_i \ne 0$ in this case. Assume further that there are vectors, $1 \leq i \leq r+s$,
$$
f'_i(\veps) = \begin{pmatrix}
\phi'_i(\veps) \\
D (\veps) \psi'_i(\veps)
\end{pmatrix}, \quad  f'_i  = \begin{pmatrix}
\phi'_i  \\
D \psi'_i 
\end{pmatrix}, \quad \tilde f'_i(\veps) =  \begin{pmatrix}
\phi'_i(\veps) \\
- D (\veps) \psi'_i(\veps)
\end{pmatrix} ,  \quad f'_i  = \begin{pmatrix}
\phi'_i  \\
- D  \psi'_i 
\end{pmatrix},
$$
such that   $(\tilde f_i)_i$ is a basis of $\dC^{r+s}$, $\phi'_i(\veps)$ converges to $\phi'_i$, $\psi'_i (\veps)$ converges to $\psi'_i $, $(\phi'_i)_{i > J}$ free in $\dC^s$, any subset of the $\psi'_i$'s of cardinality $r$ forms a basis of $\dC^r$ and finally
\begin{equation}\label{eq:Hftilde}
\cH (\veps) \tilde f'_i (\veps) = \IND_{i \leq J} f'_i  (\veps) - \cT(\veps) \tilde f'_i(\veps).
\end{equation}
Then we claim that all these assumptions imply that 
\begin{equation*}
\dim  (\ker (I + \cH)) = J - r. 
\end{equation*}
Indeed, since all matrices and vectors converge, our assumptions imply that 
$
\cH  \tilde f'_i = \IND_{i \leq J} f'_i   - \cT \tilde f'_i,
$
We may then reproduce the argument given in case 1.

Our task is thus to construct this polynomial $Q_\veps$ and these converging vectors $\phi'_i(\veps)$ and $\psi'_i(\veps)$.  For $\lambda \in \dC$ and $\veps > 0$, we define $\phi_{\lambda}(\veps) \in \dC^s$ by $(\phi_{\lambda}(\veps))_{p} = (\lambda + \veps ) ^{r+p -1}$. Similarly $\psi_{\lambda}(\veps) \in \dC^r$ is defined by $\psi_{\lambda}(\veps)_p = (\lambda + \veps )^{p-1}$. By construction, we have $\psi_{\lambda} = \psi_{\lambda}( 0)$ and $\phi_{\lambda} = \phi_{\lambda} (0)$. 
 For integer $m \geq 1$, we set $\phi^{1,m}_{\lambda} (\veps) = \phi_{\lambda} ( (m-1) \veps)$ and $\psi^{1,m}_{\lambda} (\veps) = \psi_{\lambda} ( (m-1) \veps)$. By recursion on $\ell \geq 2$, we define 
 $$
 \psi^{\ell,m}_{\lambda} (\veps) = \frac{ \psi^{\ell-1,m+1}_{\lambda}(\veps)-  \psi^{\ell-1,m}_{\lambda} (\veps)}{\veps} \quad \hbox{ and } \quad  \phi^{\ell,m}_{\lambda} (\veps) = \frac{ \phi^{\ell-1,m+1}_{\lambda}(\veps) -  \phi^{\ell-1,m}_{\lambda} (\veps)}{\veps}
 $$
We denote by $\Phi_{\lambda}^{m,1}(\veps)$, $\Phi_{\lambda}^{1,m}(\veps)$, $\Psi_{\lambda}^{m,1}(\veps)$ and $\Psi_{\lambda}^{1,m}(\veps)$ the matrices whose columns are the vectors, for $1 \leq k \leq m$, $\phi_{\lambda}^{k,1}(\veps), \phi_{\lambda}^{1,k}(\veps)$, $\psi_{\lambda}^{k,1}(\veps)$ and $\psi_{\lambda}^{1,k}(\veps)$ respectively. By construction, 
$$
\Phi_{\lambda}^{m,1} (\veps)  = \Phi_{\lambda}^{1,m} T_m(\veps)\quad  \hbox{ and } \quad \Psi_{\lambda}^{m,1} (\veps)  =\Psi_{\lambda}^{1,m}  T_m(\veps),
$$
where $T_m \in M_{m} (\dR)$ is an upper triangular matrix with coefficients of the diagonal $(1,\veps^{-1},\ldots, \veps^{-m+1})$. In particular $T_{m}(\veps)$ is invertible and the vector space spanned by $(\phi_{\lambda}^{1,1}(\veps),\ldots,\phi_{\lambda}^{m,1}(\veps))$ is equal to the vector space spanned by $(\phi_{\lambda}^{1,1}(\veps),\ldots,\phi_{\lambda}^{1,m}(\veps))$. The same comment holds for the $\psi_{\lambda}$'s.

We also have, by recursion on $m$, that
$$
\lim_{\veps \to 0} \phi_{\lambda}^{m,1} (\veps) =   \phi_{\lambda}^{m} \quad  \hbox{ and } \quad \lim_{\veps \to 0} \psi_{\lambda}^{m,1} (\veps) =   \psi_{\lambda}^{m},
$$
where for $1 \leq p \leq s$, $(\phi_{\lambda}^{m})_p = (r+p-1)_{m-1} \lambda^{r+p-m}$,  for $1 \leq p \leq r$, $(\psi_{\lambda}^{m})_p = (p-1)_{m-1} \lambda^{p-m}$  and we have used the Pochhammer symbol: $(k)_0 = 1$, $(k)_l = k (k-1)\ldots (k- l+1)$ for integer $l \geq 1$. The matrix $\Psi_{\lambda}^{m}$ whose column are $(\psi_{\lambda}^1,\ldots,\psi_{\lambda}^m)$ is lower triangular. Hence, we get the key property, for any $\veps > 0$,
\begin{equation*}\label{eq:diffphi}
\dim (\SPAN (\psi^1_{\lambda},\ldots, \psi^m_{\lambda})) = \min(m,s) = \dim (\SPAN (\psi^{1,m}_{\lambda}(\veps),\ldots, \psi^{1,m}_{\lambda}(\veps))),
\end{equation*}
and similarly for the $\phi_{\lambda}'$s.

We are ready to define the vectors $\phi'_i(\veps)$ and $\psi'_i(\veps)$. Let $d \geq 1$ be the number of distinct roots of $Q$, $(\lambda_1,\ldots,\lambda_d)$ be those roots and denote by $m_j$ the multiplicity of $\lambda_j$. For $\veps  >0$, we consider the polynomial $Q_\veps$ of degree $r+s$ with leading coefficient $a_s$ and whose  roots are $(\lambda_j + (k-1)\veps)$, $1 \leq j \leq d, 1 \leq k \leq m_j$. We reorder these roots as $(\lambda_i(\veps))_i$, $1 \leq i \leq r+s$ by non-decreasing magnitude. For $\veps$ large enough, all roots $\lambda_i(\veps)$ are distinct and there are precisely $J$ roots in the unit disc $\mathbb{D}$.  We consider the vectors $\phi'_{j,k}(\veps) = \phi_{\lambda_j}^k(\veps)$, $\psi'_{j,k}(\veps) = \psi_{\lambda_j}^k(\veps)$, $1 \leq k \leq m_j$. We re-order them to get vectors $\phi'_i(\veps)$ and $\psi'_i(\veps)$, $1 \leq i \leq r+s$. We can check that all required properties are satisfied. Indeed, \eqref{eq:Hftilde} follows from Lemma \ref{le:resH} and Lemma \ref{le:biorth}, and all other properties have been discussed above.

\vspace{5pt}

\noindent {\em Case 3: negative winding number. } We now assume that $J < r$. In this case, we  may reverse the sense of integration: more precisely, we write 
$$
\oint_{\dS^1} \frac{\omega^p}{Q(\omega)} d \omega = \oint_{\dS^1} \frac{\omega^{-p}}{Q(\omega^{-1})} d \omega =  \oint_{\dS^1} \frac{ \omega^{r+s -p}}{\widehat Q(\omega)} d\omega,
$$
where $\widehat{Q}(\omega) = \omega^{r+s} Q(\omega^{-1})$ is the reciprocal polynomial of $Q$ whose roots are $(\lambda_j^{-1})_{j}$. We may thus perform the same analysis of $I + \cH$ with $Q$ replaced by $\widehat{Q}$, $J$ replaced by $\widehat{J} = r+s - J$, $(r,s)$ replaced by $(\widehat r,\widehat{s} ) = (s,r)$. In particular, since $J < r$, we have $\hat J > \widehat{r}$. It thus suffices to reproduces the same argument to conclude. (At the level of matrices it amounts to replace $\cH$ by its transpose $\cH^\intercal$).
\end{proof}

\subsection{Convergence of the covariance kernel}
In this subsection, we perform some computations on the kernel of $\tilde W_n$ which appears in Theorem \ref{th:cvWn}. Recall the definitions of $\cA_n$, $\cB_n$, $\theta_n$ and their analogs with a prime defined in Subsection \ref{subsec:cvWn}. Recall finally, the definitions of $\cA$, $\cB$, $\theta$ in Theorem \ref{th:WnW}. We gather these computations in the following proposition.

\begin{prop}\label{prop:kernel}
Let $\Gamma$ be a compact subset of $\dC \backslash \bfa (\dS^1)$. Uniformly over all $z,z' \in \Gamma$, we have the following convergence:
$$
\lim_n \theta_n(z,z') = \theta ( z , z') \; , \quad 
\lim_n \cA_n(z,z') = \cA(z,z')
\quad \hbox{ and } \quad \lim_n \cB_n(z,z') = \cB(z,z).
$$
If $U_n = F_n$ with $\veps = 1$ or if $U_n$ is in the orthogonal group with $\veps = -1$, then 
$$
\lim_n \theta'_n(z,z') = \theta_\veps ( z , z') \; , \quad 
\lim_n \cA'_n(z,z') = \cA_\veps(z,z')
\quad \hbox{ and } \quad \lim_n \cB'_n(z,z') = \cB_\veps (z,z).
$$
\end{prop}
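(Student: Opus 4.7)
The plan is to reduce every entry of each kernel to a Riemann sum of the form $\frac{1}{n}\sum_{k=1}^n \omega_n^{(k-1)m}/R_n(\omega_n^{k-1})$ for some integer $m$ and rational function $R_n$ with no poles on $\mathbb{S}^1$, and then invoke Proposition \ref{prop:convratfunc} of the appendix to get uniform (in fact exponential) convergence on $\Gamma\times\Gamma$. Throughout, the essential input is the diagonalization $R'_n(z) = F_n \Delta_n(z) F_n^*$ with $\Delta_n(z) = \diag((z-\bfa(\omega_n^{k-1}))^{-1})$, combined with the symmetry $F_n^\intercal = F_n$ (so that $\bar F_n = F_n^*$) and the identity $F_n^2 = J_n$ where $J_n$ is the flip permutation $(J_n)_{ij} = \mathbb 1\{i+j \equiv 2 \pmod n\}$.

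For the unprimed kernels, $F_n^* F_n = I$ gives $R'_n(z) R'_n(z')^* = F_n \Delta_n(z) \overline{\Delta_n(z')} F_n^*$, which has the same shape as $R'_n$ with a shifted diagonal. Then $\theta_n(z,z') = \frac{1}{n}\Tr \Delta_n(z)\overline{\Delta_n(z')}$ is immediately a Riemann sum for $\theta(z,z')$. For $\cA_n$ and $\cB_n$, I would recycle verbatim the block decomposition of $F_n$ used in the proof of Proposition \ref{convH}: $Q_n F_n$ keeps the first $s$ rows $B_1$ and $D_r$ times the last $r$ rows $B_3$ of $F_n$, and similarly for $F_n^* P_n$ via $\tilde B_1,\tilde B_3$. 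Each block product, e.g.\ $B_1 \Delta_n(z)\overline{\Delta_n(z')} B_1^*$, has $(p,q)$-entry $\frac{1}{n}\sum_k \omega_n^{(k-1)(p-q+\mathrm{shift})}/[(z-\bfa(\omega_n^{k-1}))\overline{(z'-\bfa(\omega_n^{k-1}))}]$; Proposition \ref{prop:convratfunc} gives its uniform convergence on $\Gamma\times\Gamma$, and reassembling with $\cD$ and $\cE$ recovers exactly the integral representations of $\cA(z,z')$ and $\cB(z,z')$ in Theorem \ref{th:WnW}.

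For the primed kernels with $U_n = F_n$ and $\veps = 1$, the symmetry of $F_n$ gives $\bar F_n F_n = F_n^* F_n = I$, so the sandwich collapses: $U_n^* R'_n(z) U_n = \Delta_n(z)$ and similarly $U_n^\intercal R'_n(z')^\intercal \bar U_n = \Delta_n(z')$, yielding $\theta'_n = \frac{1}{n}\sum_k [(z-\bfa(\omega_n^{k-1}))(z'-\bfa(\omega_n^{k-1}))]^{-1}$, a Riemann sum for $\theta_1$. For $\cA'_n$ and $\cB'_n$, the auxiliary identity $F_n^* J_n = F_n$ (a short check using $(J_n F_n)_{ij}=\bar F_{ij}$) reduces them respectively to $Q_n F_n \Delta_n(z)\Delta_n(z') F_n Q_n^\intercal$ and $P_n^\intercal F_n \Delta_n(z')\Delta_n(z) F_n P_n$, whose block expansion produces coefficients with exponents $\omega_n^{(k-1)(p+q-2)}$ and $\omega_n^{(k-1)(2s+2-p-q)}$. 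This is exactly the combinatorial content of $\cM_1(\omega)_{p,q} = \omega^{p+q-2}$ and $\cM'_1(\omega)_{p,q} = \omega^{2s+2-(p+q)}$ in the limiting formulas. For $U_n$ orthogonal and $\veps = -1$, one has $U_n U_n^\intercal = I$ and $\bar U_n = U_n$, hence all $U_n$'s cancel, leaving $\frac{1}{n}\Tr R'_n(z) R'_n(z')^\intercal$ and its $Q_n,P_n$-sandwiched analogues; then $R'_n(z')^\intercal = \bar F_n \Delta_n(z') F_n$ together with $F_n^* \bar F_n = J_n$ and $J_n\Delta_n(z') J_n = \Delta_n^\vee(z')$ (with $\Delta_n^\vee(z')_{kk} = (z'-\bfa(\omega_n^{-(k-1)}))^{-1}$) produces Riemann sums whose denominators involve both $\bfa(\omega)$ and $\bfa(\omega^{-1})$, matching $\theta_{-1},\cA_{-1},\cB_{-1}$.

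The main obstacle is purely bookkeeping: carefully tracking the row/column shifts hidden in $B_1,B_3,\tilde B_1,\tilde B_3$, the sign flips in the $\omega_n$-exponents caused by $J_n$, and the extra $D_r,E_s$ factors when reassembling the blocks so that the resulting limits agree with the closed-form matrices $\cA,\cB,\cA_\veps,\cB_\veps$ stated in Theorem \ref{th:WnW}. No analytic input beyond Proposition \ref{prop:convratfunc}---which provides exponentially fast Riemann sum approximation for rational functions with no poles on $\mathbb{S}^1$, uniformly in the parameter $(z,z') \in \Gamma\times\Gamma$---is required.
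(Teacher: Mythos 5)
Your proposal follows essentially the same route as the paper: diagonalize $R'_n(z)=F_n\Delta_n(z)F_n^*$, reuse the block decomposition $B_1,B_3,\tilde B_1,\tilde B_3$ from Proposition \ref{convH}, identify every kernel entry as a Riemann sum and conclude by its uniform convergence (Proposition \ref{prop:convratfunc}), with the flip matrix $F_nF_n^{\intercal}=K_n$ handling the primed kernels exactly as the paper does for the orthogonal case. One small bookkeeping slip: for $\cB'_n$ with $U_n=F_n$ the sandwich collapses to $P_n^{\intercal}\bar F_n\,\Delta_n(z')\Delta_n(z)\,\bar F_n P_n$ (conjugated factors, not $F_n$), and it is this conjugation that yields the exponents $\omega_n^{(k-1)(2s+2-p-q)}$ you correctly state as matching $\cM'_1$.
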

\begin{proof}
The proofs are entirely similar to the proof of Proposition \ref{convH}. We only give the proofs of the convergence to $\cA(z,z')$ and $\cA_{-1}(z,z')$. We start with the convergence of $\cA_n(z,z')$. Using the notation of Proposition \ref{convH}, we write $R'_n(z) = F_n \Delta_n(z) F_n^*$, we find 
$$
\cA_n(z,z') = Q_n F_n \Delta_n(z) \overline \Delta_n(z') F_n^* Q_n^* =   \cD \begin{pmatrix}
B_1  \Delta_n(z) \overline \Delta_n(z') B_1^* & B_1    \Delta_n(z) \overline \Delta_n(z') B_3^* \\
B_3  \Delta_n(z) \overline \Delta_n(z') B_1^* & B_3  \Delta_n(z) \overline \Delta_n(z') B_3^*
\end{pmatrix} \cD^*,
$$
where $B_1,B_3$ are as in \eqref{F}. We may then argue as in Proposition \ref{convH}. For $1 \leq p,q \leq s$, 
\begin{align*}
(B_1 \Delta_n(z)\overline  \Delta_n(z')B_1^* ) _{p,q}
& =\frac1n \sum_{k=1}^n \omega_n^{(p-1)(k-1)} \frac{1}{(z-\bfa(\omega_n^{k-1}))\overline{ (z'-\bfa(\omega_n^{k-1}) )}}  \omega_n^{-(q-1)(k-1)}\\
& = \frac1n \sum_{k=1}^n e^{i 2 \pi (p-q)(k-1)/n}  \frac{1}{(z-\bfa(e^{i 2 \pi (k-1)/n})) \overline{(z'-\bfa(e^{i 2 \pi (k-1)/n}) )}} \\
& \underset{n \to \infty}{\longrightarrow} \frac{1}{2 \pi} \int_0^{2 \pi} \frac{e^{i (p-q) \theta}}{(z-\bfa(e^{i\theta}))\overline{ (z'-\bfa(e^{i\theta}))} } d\theta.
\end{align*}
Similarly, for $1 \leq p \leq s$, $1 \leq q \leq r$,
\begin{align*}
(B_1 \Delta_n(z) \overline \Delta_n(z') B_3^* ) _{p,q}
& =\frac1n \sum_{k=1}^n \omega_n^{(p-1)(k-1)} \frac{1}{(z-\bfa(\omega_n^{k-1}))\overline{ (z'-\bfa(\omega_n^{k-1}) )}}  \omega_n^{-(n-r+q-1)(k-1)}\\
& \underset{n \to \infty}{\longrightarrow} \frac{1}{2 \pi} \int_0^{2 \pi} \frac{e^{i (p-q+r) \theta}}{(z-\bfa(e^{i\theta}))\overline{ (z'-\bfa(e^{i\theta}))} } d\theta.
\end{align*}
For $1 \leq p \leq r$, $1 \leq q \leq s$,
\begin{align*}
(B_3 \Delta_n(z) \overline  \Delta_n(z') B_1^* ) _{p,q}
& =\frac1n \sum_{k=1}^n \omega_n^{(n - r + p-1)(k-1)} \frac{1}{(z-\bfa(\omega_n^{k-1}))\overline{ (z'-\bfa(\omega_n^{k-1}) )}}  \omega_n^{-(q-1)(k-1)}\\
& \underset{n \to \infty}{\longrightarrow} \frac{1}{2 \pi} \int_0^{2 \pi} \frac{e^{i (p-q-r) \theta}}{(z-\bfa(e^{i\theta}))\overline{ (z'-\bfa(e^{i\theta}))} } d\theta.
\end{align*}
Finally, for $1 \leq p,q \leq r$, 
\begin{align*}
(B_3 \Delta_n(z) \overline  \Delta_n(z')B_3^* ) _{p,q}
& =\frac1n \sum_{k=1}^n \omega_n^{(n-r+p-1)(k-1)} \frac{1}{(z-\bfa(\omega_n^{k-1}))\overline{ (z'-\bfa(\omega_n^{k-1}) )}}  \omega_n^{-(n-r+ q-1)(k-1)}\\
& \underset{n \to \infty}{\longrightarrow} \frac{1}{2 \pi} \int_0^{2 \pi} \frac{e^{i (p-q) \theta}}{(z-\bfa(e^{i\theta}))\overline{ (z'-\bfa(e^{i\theta}))} } d\theta.
\end{align*}
We deduce easily that $\cA_n(z,z') \to \cA(z,z')$.

We next suppose that $U_n$ is orthogonal, that is $U_n U_n^\intercal = I_n$. We prove the convergence of $\cA'_n(z,z)$ in this case. We observe that  
$$
F_n F_n^{\intercal}  = F_n^\intercal F_n   = K_n = \begin{pmatrix} 1 & && \\
 & & & 1 \\
 & & \iddots & \\
 & 1 & &
\end{pmatrix}.
$$
That is, $ (F_n F_n^{\intercal} )_{pq} = \1(p+q-2 = 0 \; [n])$. Since $U_n U_n^\intercal = I_n$ and $F_n F_n^{\intercal}$ is a real matrix, we get
$$
\cA'_n(z,z') = Q_n F_n \Delta_n(z) K_n \Delta_n(z')  F^\intercal_n Q_n^\intercal  =  \cD \begin{pmatrix}
B_1  \Delta_n(z) K_n \Delta_n(z') B_1^\intercal & B_1  \Delta_n(z) K_n \Delta_n(z') B_3^\intercal \\
B_3  \Delta_n(z) K_n \Delta_n(z') B_1^\intercal & B_3  \Delta_n(z) K_n \Delta_n(z') B_3^\intercal
\end{pmatrix} \cD^\intercal. 
$$
We note that 
\begin{align*}
(B_1 \Delta_n(z) K_n \Delta_n(z') B_1^\intercal  )_{p,q} & =\sum_{k=1}^n \omega_n^{(p-1)(k-1)} \frac{1}{(z-\bfa(\omega_n^{k-1})){ (z'-\bfa(\omega_n^{n-k +2 -1}) )}}  \omega_n^{(q-1)(n-k +2 -1)}\\
& \underset{n \to \infty}{\longrightarrow} \frac{1}{2 \pi} \int_0^{2 \pi} \frac{e^{i (p-q) \theta}}{(z-\bfa(e^{i\theta}))  (z'-\bfa(e^{-i\theta})) } d\theta.
\end{align*}
The rest of the proof repeats similar computations.
\end{proof}

We conclude this subsection with a lemma on the rank of the covariance kernel. 

\begin{lem}\label{le:rankK}
For any $z \in \dC \backslash \supp(\beta_\sigma)$, the matrix $\cA(z,z) \otimes \cB(z,z)^\intercal / (1 - \sigma^2 \theta(z,z))$ is positive-definite.
\end{lem}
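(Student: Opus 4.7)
Plan: I will prove positive-definiteness of each factor separately and then combine them. The scalar factor $1-\sigma^2\theta(z,z)$ is positive because by the Riemann-sum computation already carried out in Proposition~\ref{prop:kernel}, one has $\theta(z,z) = \frac{1}{2\pi}\int_0^{2\pi}|z-\bfa(e^{i\theta})|^{-2}\,d\theta = I(z)$, and Lemma~\ref{le:supportbeta} asserts $z\notin\supp(\beta_\sigma)$ iff $I(z) < \sigma^{-2}$, with the convention $1/0=\infty$ handling $\sigma=0$ (the factor is then simply $1$). In particular $z\notin\bfa(\dS^1)\subseteq\supp(\beta_\sigma)$, so the positive measure $d\mu_z := d\theta/|z-\bfa(e^{i\theta})|^2$ on $[0,2\pi]$ has bounded continuous density.

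For $\cA(z,z)$, the key algebraic observation is the rank-one factorization $\cM(\omega) = a(\omega)a(\omega)^*$ valid for $\omega\in\dS^1$, where $a(\omega) = (\omega, \omega^2, \ldots, \omega^{r+s})^\intercal$; this uses $\bar\omega=\omega^{-1}$. Write $A(\omega) = \begin{pmatrix}I_s & 0\\ 0 & \omega^{-(r+s)}I_r\end{pmatrix}$ and $B(\omega) = \begin{pmatrix}I_s & 0\\ 0 & \omega^{r+s}I_r\end{pmatrix}$ for the two diagonal block matrices sandwiching $\cM$ in the definition of $\cA$. Then on $\dS^1$ one checks $B(\omega) = A(\omega)^*$, so the integrand equals $\cD\,\phi(\omega)\phi(\omega)^*\,\cD^*$ with $\phi(\omega) := A(\omega)a(\omega)$. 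This exhibits
\[
\cA(z,z) = \cD\left[\frac{1}{2\pi}\int_0^{2\pi}\frac{\phi(e^{i\theta})\phi(e^{i\theta})^*}{|z-\bfa(e^{i\theta})|^2}\,d\theta\right]\cD^* = \cD\, G_A\, \cD^*,
\]
with $G_A$ the Gram matrix, in $L^2(d\mu_z)$, of the components of $\phi$. A direct reading shows these components are the monomials $\omega^k$ for the $r+s$ \emph{distinct} exponents $k\in\{1-r,\ldots,-1,0,1,\ldots,s\}$, so they are linearly independent in $L^2(\mu_z)$ (a non-zero Laurent polynomial has finitely many zeros). Hence $G_A\succ 0$, and since $\cD$ is invertible ($\det D_r = a_{-r}^r \ne 0$), $\cA(z,z) \succ 0$ as a Hermitian matrix.

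For $\cB(z,z)$ the argument is strictly simpler: the same rank-one identity yields $\cB(z,z) = \cE^* G_B \cE$, with
\[
G_B = \frac{1}{2\pi}\int_0^{2\pi}\frac{a(e^{i\theta})a(e^{i\theta})^*}{|z-\bfa(e^{i\theta})|^2}\,d\theta
\]
the Gram matrix of the $r+s$ distinct monomials $\omega,\omega^2,\ldots,\omega^{r+s}$ in $L^2(\mu_z)$, hence $G_B\succ 0$, while $\cE$ is invertible ($\det E_s = a_s^s \ne 0$). Finally, since $\cB(z,z)$ is Hermitian positive-definite, so is $\cB(z,z)^\intercal = \overline{\cB(z,z)}$ (complex conjugation preserves the Hermitian PD cone), and the tensor product of two Hermitian positive-definite matrices is Hermitian positive-definite (its spectrum is the product of the two spectra, hence strictly positive). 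Dividing by the positive scalar $1-\sigma^2\theta(z,z)$ yields the claim.

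The main obstacle is recognising the rank-one factorisation $\cM(\omega)=a(\omega)a(\omega)^*$ together with the Hermitian conjugacy $A(\omega)^* = B(\omega)$ on $\dS^1$; once these algebraic coincidences are spotted, the Gram-matrix interpretation makes positive-definiteness immediate, and the remaining care is only the routine checks that $\cD$ and $\cE$ are invertible and that the exponents appearing in $\phi$ are genuinely distinct.
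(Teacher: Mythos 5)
Your proof is correct and takes essentially the same route as the paper's: positivity of $1-\sigma^2\theta(z,z)$ from Lemma \ref{le:supportbeta}, the rank-one factorization $\cM(\omega)=\varphi(\omega)\varphi(\omega)^*$ on $\dS^1$, invertibility of $\cD$ and $\cE$, and the distinctness of the monomial exponents appearing after the diagonal twist. The only cosmetic difference is that the paper lower-bounds the quadratic form $\langle f,\mathcal K f\rangle$ directly via orthonormality of the exponentials in $L^2$ of the circle, whereas you phrase the same fact as positive-definiteness of a Gram matrix of linearly independent functions in $L^2(d\theta/|z-\bfa(e^{i\theta})|^2)$.
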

\begin{proof}
Since $z \notin \supp(\beta_\sigma)$, we have $ \sigma^2 \theta(z,z) < 1$. We need to prove that $\cA(z,z) \otimes \cB(z,z)^\intercal$ is invertible. Since the rank is multiplicative under tensor product, we need to prove  that $\cA(z,z) $ and $\cB(z,z)$ are invertible. We prove the claim for $\cA(z,z)$.  Since $\cD$ and $\cE$  are invertible is suffices to prove that 
$$
\mathcal K  = \mathcal K(z) =   \oint_{\dS^1}\begin{pmatrix}I_s &0\\0 & \omega^{-(r+s)}I_r\end{pmatrix} \cM(\omega)\begin{pmatrix}I_s &0\\0 & \omega^{r+s}I_r\end{pmatrix}\frac{d\omega}{\omega | z - \bfa(\omega)|^2}
$$
is invertible.
Let $f \in \dC^{r+s}$, we write
$$
f = \begin{pmatrix}
f_1 \\ 
f_2
\end{pmatrix} 
\quad \hbox{ and } \quad f(\omega) = \begin{pmatrix}
f_1 \\ 
 \omega^{r+s} f_2
\end{pmatrix} 
$$
with $f_1 \in \dC^s$ and $f_2 \in \dC^r$. Observe that $\cM(\omega) = \varphi(\omega)  \varphi(\omega)^*$ with $\varphi(\omega)_p = \omega^p$. Let $C = 1 /   \mathrm{dist}(z,\bfa(\dS^1))^2 $, we get
\begin{align*}
\langle f , \mathcal K f \rangle &= \frac{1}{2 \pi} \int_0 ^{2\pi} \frac{ |\langle \varphi(e^{i\theta}) ,  f(e^{i \theta}) \rangle |^2 }{| z - \bfa(e^{i\theta})|^2 } d \theta \\
 & \geq \frac{C}{2 \pi}   \int_0 ^{2\pi}   |\langle \varphi(e^{i\theta}) ,  f(e^{i \theta}) \rangle |^2  d \theta \\
 & = \frac{C}{2 \pi}    \int_0 ^{2\pi}   \left|\sum_{p=1}^s f_{1}(p)  e^{-i\theta p} + \sum_{q=1}^r f_{2}(q)  e^{-i\theta (q-r)}  \right|^2  d \theta \\
 & = \frac{C}{2 \pi}  \int_0 ^{2\pi}   \sum_{ 1 \leq p,p' \leq s}   \bar f_{1}(p) f_{1}(p') e^{i\theta (p-p')} +  \sum_{ 1 \leq q,q' \leq r}  \bar f_{2}(q) f_{2}(q') e^{i\theta (q-q')}  \\ 
 & \quad \quad  \quad \quad  \quad \quad  + 2 \Re \left(\sum_{1 \leq p \leq s , 1 \leq q \leq r}  \bar f_{1}(p)f_2 (q) e^{i \theta (p - q + r)} \right)  d \theta \\
 & = C \left( \| f_1 \|^2  + \|f_2 \|^2 \right) = C \| f\|^2,
\end{align*}
where at the last line, we have used the linearity of the trace and the orthonormality of the functions $e_p  : \theta \to e^{i p \theta}$ in $L^2 ( [0,2\pi], d\theta/(2\pi))$.  
We deduce that $\cA(z,z)$ is positive definite. The proof for $\cB(z,z)$ is identical (it is even simpler). The conclusion follows.
\end{proof}

\subsection{Proof of Theorem \ref{th:WnW}}
We first claim that $
\max_{p,q} \dE | W^{(1)} _n(z)_{p,q} |^4  $ is bounded uniformly in $n$ and $z \in \Gamma = \bar \Omega$. Indeed, since the operator norms of $P$, $Q$, $U_n$ and $R'_n(z)$ are bounded uniformly, it suffices to check that there exists a constant $C >0$ such that for any deterministic vectors $\phi, \psi \in \dC^n$ of unit norms, 
\begin{equation}\label{eq:tight4}
\dE |\langle \phi , X \psi \rangle|^4    \leq C.
\end{equation}
To this end, we write
\begin{align*}
\dE |\langle \phi , X \psi \rangle| ^4 & = \dE \langle \phi , X \psi \rangle^2 \overline{\langle \phi , X \psi \rangle}^2  \\
& = \sum_{i,j} \bar \phi_{i_1} \bar \phi_{i_2} \phi_{i_3}  \phi_{i_4} \dE [ X_{i_1j_1} X_{i_2j_2} \bar X_{i_3j_3} \bar X_{i_4j_4} ]    \psi_{j_1}  \psi_{j_2} \bar \psi_{j_3} \bar  \psi_{j_4},
\end{align*}
where the sum is over all $1 \leq i_l ,j_l \leq N$, $1 \leq l \leq 4$. Using assumptions $(H_0)$ and $(H_4)$, we deduce that, with $m = \dE |X_{ij}|^4$, we have
$$
\dE |\langle \phi , X \psi \rangle| ^4  \leq m \sum_p \sum_{i,j} \delta_p (i,j) | \phi_{i_1}  \phi_{i_2} \phi_{i_3}  \phi_{i_4}    \psi_{j_1}  \psi_{j_2} \psi_{j_3}   \psi_{j_4} |,
$$
where the sum is over all pairings $p$ of $\{1,2,3, 4\}$ (that is $p$ is one of the three pair partitions of $\{1, \ldots , 4\}$) and $\delta_p(i,j) = 1$ if and only if for all pairs $(a,b)$ of $p$, $(i_a , j_a) = (i_{b},j_b)$ and $\delta_{p} (i,j) = 0$ otherwise. In particular, we find
$$
\dE |\langle \phi , X \psi \rangle| ^4 \leq 3 m \sum_{i_1,i_2,j_1,j_2} | \phi_{i_1}  \phi_{i_2}  \psi_{j_1}  \psi_{j_2} |^2 = 3m.
$$
This proves \eqref{eq:tight4}.

Theorem \ref{th:WnW} is then an immediate consequence of Theorem \ref{th:cvWn}, Proposition \ref{prop:kernel} and the uniform integrability of $W^{(1)}_n(z)_{pq} W^{(1)}_n(z')_{p'q'}$. The only remaining statement that we need to check is that when $U_n = I_n$ then $\tilde W_n^{(1)} = Q_n R'_n(z) X R'_n(z) P_n$ converges weakly. Write 
$$
R'_n(z) = \begin{pmatrix}
G_{1}\\
G_{2} \\
G_{3} \\
\end{pmatrix}  = \begin{pmatrix}
\tilde G_{1}^\intercal &
\tilde G_{2}^\intercal  &
\tilde G_{3}^\intercal 
\end{pmatrix},
$$
where $G_{1}$, $\tilde G_{3}$ have size $s \times n$ and $G_{3}, \tilde G_{1}$ have size $r \times n$, 
 We have
\begin{align*}
\tilde W_n^{(1)}  = 
\cD
\begin{pmatrix}
G_1 X \tilde G_3^\intercal & G_1 X \tilde G_1^\intercal \\
G_3 X \tilde G_3^\intercal & G_3 X \tilde G_1^\intercal
\end{pmatrix}
 \cE.
\end{align*}
We should therefore check that the above matrix in the middle converges. To this end, we observe $(R'_n(z))_{pq} = \gamma_n(p-q)$ for some $n$-periodic function $\gamma_n : \mathbb Z \to \dC$.  If $p-q = \ell \; \mathrm{ mod} (n)$ we have
\begin{align}
\gamma_n(\ell) = (R'_{n}(z))_{pq}  &= (F_n \Delta_n(z) F_n^*)_{pq} \nonumber \\
& = \frac 1 n \sum_{k=1}^n \omega_n^{(p-1)(k-1)}\frac{1}{z-\bfa(\omega_n^{k-1})}\omega_n^{-(q-1)(k-1)}   \nonumber \\
&  \underset{n \to \infty}{\longrightarrow} \gamma(\ell)  =  \frac{1}{2 \pi} \int_0^{2\pi} \frac{e^{i\theta \ell }}{z - \bfa(e^{i\theta})} d\theta. \label{eq:gammaldhedhe} 
\end{align}

Next, we consider the infinite array $(\tilde X^{(n)}_{ij})_{i,j \in \mathbb Z}$ defined by $\tilde X^{(n)}_{ij} = X_{i_n,j_n}$ where $i_n = i$, $j_n = j$ $\mathrm{mod}(n)$ and $1\leq i_n , j_n \leq n $. The infinite array $\tilde X^{(n)}$ converges in distribution (for the product topology on infinite arrays) toward $\tilde X  = ( X_{ij})_{i,j \in \mathbb Z}$ an infinite array of iid copies of $X_{11}$.

We may now check that $\tilde W_n^{(1)}$
converges weakly to an explicit matrix expressed in terms of $\gamma$ and $\tilde X$.  Consider for example, the entry $G_1 X \tilde G_3^\intercal$. For $1 \leq p , q \leq s$, we have 
\begin{align*}
(G_1 X \tilde G_3^\intercal)_{p,q} & =  \sum_{a = 1}^n \sum_{b=1}^{n} (R'_{n}(z))_{p a } X_{a ,b} (R'_n(z) )_{b, n-s + q } \\  
& =  \sum_{a , b \in D_n }  \gamma_n(p-a)  \tilde X^{(n)}_{a ,b} \gamma_n (b + s-q)  ,
\end{align*}
where $D_n = \{ -\lceil n / 2 \rceil +1 , \ldots \lfloor n /2 \rfloor \}$ and we have performed the change of variable from $\{1,\ldots , n \} \to D_n$, $a \to a \IND (a \leq n/2) + (a - n) \IND( a > n/2) $ and used the $n$-periodicity of $\tilde X^{(n)}$ and $\gamma_n$.
Similarly, for $1 \leq p \leq s$, $1 \leq q \leq r$,
\begin{align*}
(G_1 X \tilde G_1^\intercal)_{p,q} & =  \sum_{a , b \in D_n}  \gamma_n(p-a)  \tilde X^{(n)}_{a ,b} \gamma_n (b-q)  .
\end{align*}
For $1 \leq p \leq r$, $1 \leq q \leq s$,
\begin{align*} 
(G_3 X \tilde G_3^\intercal)_{p,q} & =  \sum_{a , b \in D_n}  \gamma_n(p-r-a)  \tilde X^{(n)}_{a ,b} \gamma_n (b + s-q)  .
\end{align*}
For $1 \leq p \leq r$, $1 \leq q \leq r$,
\begin{align*} 
(G_3 X \tilde G_1^\intercal)_{p,q} & =  \sum_{a, b \in D_n}  \gamma_n(p-r-a)  \tilde X^{(n)}_{a ,b} \gamma_n (b-q)  .
\end{align*}
Note also that assumption $(H_0)$ and Markov inequality imply that the event \{\hbox{for all} $a,b$, $| \tilde X^{(n)}_{a ,b} | \leq C ( {|a| + |b|})^2\}$ has probability tending to one as $C \to \infty$. Using Lemma \ref{le:bdres}, we deduce by dominated convergence that $\tilde W_n^{(1)}$ converges weakly toward 
\begin{equation}\label{eq:W1UI}
    W^{(1)} = \cD \cA \cE,
\end{equation} 
where $\cA$ is obtained by taking the limit $n\to \infty$ in the above formulas. More precisely, for $1 \leq p , q \leq r+s$, we have
$$
\cA_{pq} =  \sum_{ (a, b) \in \mathbb{Z}^2}  \gamma(p-a + \veps_p)  X_{a ,b} \gamma (b + s-q),
$$
with $\veps_p = -(r+s) \IND ( p > s)$
(recall that $\gamma$ depends implicitly on $z$). It concludes the proof of the convergence.\qed

\subsection{Convergence of the outlier eigenvalues : proof of Theorem \ref{th:W2Out}}

\noindent{\em Proof of the first claim: convergence of the outlier eigenvalues if $\varphi$ is a.s. not  the zero function. } Let $\Omega$ be a bounded open connected subset of $\cR_{\sigma,\delta}$, $\delta \ne 0$, as in the statement of Theorem \ref{th:W2Out}.  Set $\Gamma = \bar \Omega \subset \cR_{\sigma,\delta}$ be the closure of $\Omega$. On the event $\{\spectrum(S_n) \cap \Gamma = \emptyset\}$, we set 
\[
\varphi_n\colon z \mapsto \left( \frac{\sqrt n}{\sigma_n} \right)^{|\delta|}\det \left( I_{r+s} + Q_n R_{n}(z)P_n  \right),
\]
 and $\varphi_n  = 0$ otherwise. The function $\varphi_n$ is a random analytic function on $\Omega$. Next, the event $\{\sup_{z \in \Gamma} \| G_n(z) \| \leq 1 \}$ has probability converging to $1$ by Proposition~\ref{convunif}.
Multiplying  \eqref{dev-jacobi2} by $\left(\frac{\sqrt n}{\sigma_n} \right)^{|\delta|}$, we have, on the  event $\{\sup_{z \in \Gamma} \| G_n(z) \| \leq 1\}$, that for any $z\in \Gamma$,
 \begin{align}
\varphi_n(z)
 & =  \left(\frac{\sqrt n}{\sigma_n} \right)^{|\delta|} \sum_{k=0}^{r+s} \Tr\left( \adj_k( I_{r+s} + Q_n R'_n(z) P_n ) \bigwedge\nolimits^k (
 G_n(z)) \right) \nonumber \\
 & =  \sum_{k=0}^{|\delta|}  \left(\frac{\sqrt n}{\sigma_n} \right)^{|\delta|-k} \Tr\left( \adj_k( I_{r+s} + \mathcal H(z) ) \bigwedge\nolimits^k \left( W_n(z) \right) \right) + \varepsilon_n(z) \nonumber,
 \end{align}
  where, for some constant $C = C_\Gamma$, 
 \begin{align*} \label{eq:maj-epsilon}
\| \varepsilon_n \|_\Gamma  =      \sup_{z \in \Gamma}| \varepsilon_n(z) | &  \leq C \left(\frac{\sqrt n}{\sigma_n} \right)^{|\delta|} \{ \sup_{z \in \Gamma} \| Q_n R'_n(z) P_n - \mathcal H(z)\| \} \vee  \{ \sup_{z \in \Gamma} \| Q_n R'_n(z) P_n - \mathcal H(z)\| \}^{|\delta|} \\
& \quad \quad \quad \quad \quad \quad + C \left\{ \left(\frac{\sigma_n}{\sqrt n} \right) \sup_{z \in \Gamma} \| W_n(z) \| \right\} \vee \left\{\left(\frac{\sigma_n}{\sqrt n} \right) \sup_{z \in \Gamma} \| W_n(z) \| \right\}^{r+s - |\delta|} .
 \end{align*}
Since $\frac 1 n \ln \sigma_n \to 0$, by Proposition~\ref{convH} and Theorem \ref{th:cvWn}, we have that, in probability,
$$\lim_{n \to \infty} \| \varepsilon_n \|_\Gamma  = 0.$$
 
 Now, from   Proposition~\ref{prop:rank}, $ \adj_k( I_{r+s} + \mathcal H(z) )=0$ for $k<|\delta|$. We find 
\begin{equation}
 \label{eq:dev-final}
 \varphi_n(z) =  \Tr\left( \adj_{|\delta|}( I_{r+s} + \mathcal H(z) ) \bigwedge\nolimits^{|\delta|} \left( W_n(z)\right) \right)  + \varepsilon_n(z).
\end{equation}
From the continuous mapping theorem, along any accumulation point $W$ of $W_n$, $\varphi_n$ converges weakly in $\cH(\Omega)$ toward 
$$
\varphi (z) = \Tr\left( \adj_{|\delta|}( I_{r+s} + \mathcal H(z) ) \bigwedge\nolimits^{|\delta|} \left( W(z)\right) \right) 
$$
 defined in the statement of Theorem \ref{th:W2Out}. If $\varphi$ is a.s. not the constant function, the first claim of the theorem is then a consequence of Proposition \ref{Shirai:zeroes}  in Appendix.

 \vspace{5pt}
 
 \noindent{\em Proof of the second claim: if $W$ is Gaussian  $\varphi$ is a.s. not the zero function. }  It remains to prove that if $W$ is Gaussian then  $\varphi$ is a.s. not the constant function. We start with a general lemma on Gaussian vectors. Recall that we say that a random vector $X$ on $\dC^d$ is Gaussian if $(\Re(X), \Im(X))$ is a real Gaussian vector on $\dR^{2d}$. 
 
 \begin{lem}\label{le:acG}
 Let $d \geq 1$ and $X$ be a centered Gaussian vector on $\dC^d$ such that
 $ \dE X X^*$ is positive definite. Then there exists $Z = (Z_1,\ldots,Z_d)$ centered Gaussian on $\dC^d$ with $\dE Z Z^* = I_d$ such that the laws of $X$  and $Z$ are equivalent measures.  
 \end{lem}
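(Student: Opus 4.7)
The plan is to set $Z := \Gamma^{-1/2} X$, where $\Gamma := \dE[XX^*]$ is Hermitian positive definite by hypothesis, so that its Hermitian positive-definite inverse square root $\Gamma^{-1/2}$ is well-defined and invertible. Then $Z$ is a centered Gaussian vector on $\dC^d$, and by direct computation
\[
\dE[Z Z^*] \,=\, \Gamma^{-1/2}\,\Gamma\,\Gamma^{-1/2} \,=\, I_d,
\]
so the normalization requirement is met automatically, and $X = \Gamma^{1/2} Z$ is a deterministic invertible $\dC$-linear (hence $\dR$-linear) function of $Z$.

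The remaining task, and the main subtlety, is to verify that the laws of $X$ and $Z$ on $\dC^d \simeq \dR^{2d}$ are mutually absolutely continuous. Both are centered Gaussian measures on $\dR^{2d}$, and a classical fact in finite-dimensional Gaussian theory says that two such measures are equivalent iff their real covariance matrices $\Sigma_X$ and $\Sigma_Z$ have the same range (equivalently, iff $X$ and $Z$ are supported on the same real linear subspace). In the generic case where $\Sigma_X$ has full rank $2d$, so that $X$ admits a strictly positive Gaussian density on all of $\dR^{2d}$, this is automatic: $Z = \Gamma^{-1/2} X$ then also has full-rank real covariance (as the image of $\Sigma_X$ under an invertible $\dR$-linear map), both measures are equivalent to Lebesgue measure on $\dR^{2d}$, and the Radon--Nikodym derivative $d\mu_Z/d\mu_X$ is an everywhere-positive exponential of a quadratic form.

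Full-rankness of the real covariance $\Sigma_X$ is equivalent, after identifying $2\Sigma_X$ (up to unitary conjugation by the standard isomorphism between $\dR^{2d}$ and $\dC^d \oplus \overline{\dC^d}$) with the $2d\times 2d$ Hermitian block matrix $\bigl(\begin{smallmatrix} \Gamma & P \\ \bar P & \bar\Gamma \end{smallmatrix}\bigr)$ where $P := \dE[XX^{\intercal}]$ is the pseudo-covariance, to the Schur-complement condition $\Gamma \succ P\bar\Gamma^{-1}\bar P$. The main obstacle is therefore to establish this strict inequality directly from positive-definiteness of $\Gamma$. In the degenerate case where the inequality fails, one must refine the construction by choosing $Z$ with an adapted pseudo-covariance $\dE[ZZ^{\intercal}]$ matched to that of $X$ so that the real supports of $X$ and $Z$ coincide as subspaces of $\dR^{2d}$; this can be achieved by working with the block matrix above directly and extracting a square root that respects the real support structure of $X$, and then using positivity of the Gaussian density on this common support to conclude equivalence as before.
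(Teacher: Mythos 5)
Your construction handles only the easy half of the statement. When the real covariance of $X$ (as a vector of $\dR^{2d}$) has full rank, both $X$ and $Z=\Gamma^{-1/2}X$ are equivalent to Lebesgue measure and the claim is immediate -- indeed in that case any nondegenerate Gaussian, e.g.\ the standard circular one, would do, and no square root of $\Gamma$ is needed. The whole content of the lemma lies in the degenerate case, and there your argument stops at a sketch that cannot be carried out as described. Equivalence of two Gaussian laws forces their supports to coincide, so any admissible $Z$ must be supported on the real subspace $V=\mathrm{supp}(X)\subset\dR^{2d}$ (in particular $\Gamma^{-1/2}X$, whose support is $\Gamma^{-1/2}V$, is of no use there). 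But positive definiteness of $\Gamma=\dE XX^*$ constrains $V$ very little, and on a degenerate $V$ the normalization $\dE ZZ^*=I_d$ can clash with the support condition. Concretely, take $d=2$ and $X=(G_1,\,G_1+iG_2)$ with $G_1,G_2$ independent real standard Gaussians: then $\Gamma=\bigl(\begin{smallmatrix}1&1\\1&2\end{smallmatrix}\bigr)$ is positive definite, $V=\{(a,a+ib):a,b\in\dR\}$, and every centered random vector $Z$ supported in $V$ satisfies $\Re\,\dE[Z_1\bar Z_2]=\dE|Z_1|^2$, which is incompatible with $\dE ZZ^*=I_2$. So "choosing $Z$ with an adapted pseudo-covariance matched to that of $X$" cannot restore equality of supports while keeping $\dE ZZ^*=I_d$, and the Schur-complement inequality $\Gamma\succ P\bar\Gamma^{-1}\bar P$ that you defer is genuinely not a consequence of $\Gamma\succ 0$; no choice of square root will produce it.

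For comparison, the paper's proof takes a different route and makes the needed structural input explicit: it never uses $\Gamma^{\pm1/2}$, but works directly with $V$, invokes the fact that two Gaussian vectors with equal supports have equivalent laws, and defines $Z$ coordinatewise, with independent unit-variance coordinates $Z_k$ supported on $V_k=V\cap E_k$ (where $E_k$ is the $k$-th complex coordinate line), relying on the splitting $V=V_1\oplus\cdots\oplus V_d$. It is precisely this coordinatewise decomposition of the support -- which the example above shows is not granted by positive definiteness of $\Gamma$ alone -- that your proposal would have to supply or assume in the degenerate case; as written, it proves the lemma only when the real covariance of $X$ has full rank.
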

\begin{proof}
Recall that the support of the law of a real Gaussian vector on $\dR^k$ is a linear subspace of $\dR^k$. Then, the proof is based on the observation that two real Gaussian vectors $Y_1$ and $Y_2$ on $\dR^k$ have equivalent laws if their supports are equal. This is immediate since Gaussian distributions have positive densities on their supports (with respect to the Lebesgue measure on their support).  Now, we identify $\dC^d$ with the real vector space, say $C_d$, of dimension $2d$, spanned by $( e_k,ie_k), 1 \leq k \leq d,$ where $(e_1,\ldots,e_d)$ is the canonical basis of $\dC^d$. We see $X = \Re(X) + i \Im(X)$ as a real Gaussian vector on $C_d$ and denote by $V$ its support. Let $E_k$, $1 \leq k \leq d$, be the vector subspace of $C_d$ spanned by $(e_k,i e_k)$, we may identify $E_k$ with a copy of $\dC$. We finally set $V_k = E_k \cap V$. Our assumption implies that $\dim (V_k) \in \{1,2\}$ and 
$$
V = V_1 \oplus V_2 \ldots \oplus V_d.
$$ It remains to define $Z$ as the Gaussian vector whose coordinates $(Z_1,\ldots,Z_d)$ in $\dC^d$ are centered independent, $Z_k$ has support $V_k$ and $\dE |Z_k|^2 = 1$ (if $\dim(V_k) = 2$, we further assume that $\dE Z_k^2 = 0$ to fully characterize the law of $Z_k$). \end{proof}

We may prove the second claim of the theorem. We fix $z \in \Gamma$ and set $A =\adj_{|\delta|}( I_{r+s} + \mathcal H(z) ) $.  It suffices to prove that $\varphi(z) \ne 0$ with probability one. By Lemma \ref{le:acG} and Lemma \ref{le:rankK} it suffices to prove that $T \ne 0$ with probability one, where  
$$
T =  \Tr\left( A \bigwedge\nolimits^{|\delta|} \left( Z\right) \right) 
$$
and $Z \in M_{r+s}(\dC)$ is a centered Gaussian matrix with independent indices of unit variance. We recall that, indexing the entries of the matrix  $\bigwedge\nolimits^{|\delta|} \left( Z\right)$ by pairs of subsets $(I,J)$ of $\{1,\ldots, r+s\}$ of cardinal $|\delta|$, the entry $(I,J)$  of $\bigwedge\nolimits^{|\delta|} \left( Z\right)$ is $ \det Z(I|J)$. Next, we write 
$$
\det  Z (I|J) = \sum_{\sigma} (-1)^{\sigma} \prod_{i \in I} Z_{i,\sigma(i)},
$$
where the sum is over all bijections $\sigma$ from $I$ to $J$ and $(-1)^{\sigma}$ is the corresponding signature. Since $Z_{i,j}$ are independent, centered of unit variance, we have the classical orthogonality relations: if $(I,J) \ne (I',J')$ 
$$
\dE [ \det Z (I|J) \overline{ \det Z (I'|J')} ] = 0  
$$
and 
$$
\dE [ | \det Z (I|J) |^2 ] = \sum_{\sigma}   \prod_{i \in J} \dE | Z_{i,\sigma(j)}|^2 = |\delta|!,
$$
(these relations are central in \cite{basak-zeitouni20}).  
Thus, we get 
$$
\dE [ | T |^2 ]= \dE \left| \sum_{I,J} A_{J,I} \det ( Z (I|J) )  \right|^2 = \sum_{I,J} |A_{J,I}|^2 |\delta|!.
$$  
Proposition \ref{prop:rank} implies that  $A \ne 0$ and thus
$$
\dE [ | T |^2 ] > 0.
$$
Finally, since $T$ is a polynomial in the variables $(Z_{ij})$, the Carbery-Wright inequality \cite{zbMATH01659346} implies that $T \ne 0$ with probability one.

 \vspace{5pt}

\noindent{\em Proof of the third claim: if $U_n = I_n$ and \Hac holds then $\varphi$ is a.s. not the zero function. }  
We finally prove that in the case $U_n = I_n$ then $\varphi$ is a.s.\ 
non-zero provided that the law of $X_{11}$ is absolutely continuous with respect to the Lebesgue measure on a real vector subspace, say $K$, of $\dC \simeq \dR^2$, $\dim _{\dR}(V) \in \{1,2\}$. Without loss of generality, up to taking a rotation, we can assume that $K = \mathbb R$ or $K = \dC$. We fix some $z \in \Gamma$.

From what precedes, it is enough to prove that the law of $W(z)$ is absolutely continuous with respect to a Gaussian law. Since $W(z) = W^{(1)}(z) + \sigma W^{(2)}(z)$ and $(W^{(1)},W^{(2)})$ independent, it suffices to check that the law of $W^{(1)}(z)$ is absolutely continuous with respect to a Gaussian law. This amounts to check that the law of $W^{(1)}(z)$ is absolutely continuous with respect to the Lebesgue measure on a vector subspace of $M_{r+s}(\dC) \simeq \dR^{2 ( r+s)^2}$.

Recall the definition of $W^{(1)}(z)$ in  \eqref{eq:W1UI}. Since $\cD$ and $\cE$ are invertible, it suffices to check that the marginal laws of  $\cA(z)$ is absolutely continuous with respect to the Lebesgue measure on real vector subspaces of $M_{r+s}(\dC) \simeq \dR^{2 (r+s)^2}$. To this end, recall that, for $1 \leq p, q \leq r+s$, we have 
$$
\cA_{p,q} =  \sum_{(a,b) \in \mathbb Z^2}  \gamma(p-a + \veps_p)  X_{a ,b} \gamma (b+s-q),
$$
where $\tilde X = (X_{a,b})_{(a,b) \in \mathbb Z^2}$ is an infinite array of iid copies of $X_{11}$, $\veps_p = -(r +s) \IND (p > s)$ and $\gamma (\ell)$ is given in \eqref{eq:gammaldhedhe}. By Lemma \ref{le:bdres}, for some $0 < \kappa < 1$ and $C > 0$,
$$
\gamma(\ell) \leq C \kappa^{|\ell|}.
$$

The proof is ultimately a consequence of the fact that the Lebesgue measure on $\dR^n $ is invariant by orthogonal transformations and that $\cA$ can be regarded as the image of the infinite array $\tilde X = (X_{a,b})$ by a linear map of finite rank. The first part of the proof clarifies this last point. We fix some $\beta$ such that  $\kappa^2 < \beta < 1$, we consider the Hilbert space $H = \ell^2 ( \mathbb Z^2 , w) $ of infinite arrays $(x_{a,b})_{a,b \in \mathbb Z^2}$, $x_{a,b} \in K$, equipped with the scalar product 
$$
\langle x , y \rangle_H = \sum_{a,b} \bar x_{ab} y_{ab} \beta^{|a| + |b|}.
$$
We may then consider the linear map $T$ from $H$ to $ M_{r+s}(\dC) \simeq \dR^{2 (r+s)^2}$ defined by, for $1 \leq p,q \leq r+s$,  $$
T(x)_{pq} =   \sum_{(a,b) \in \mathbb Z^2}  \gamma(p-a + \veps_p)  x_{a ,b} \gamma (b+s-q).
$$
It is indeed a bounded linear map since, by Cauchy-Schwarz inequality,
\begin{align*}
|T(x)_{pq} |  & \leq C^2  \kappa^{2(r+s)} \sum_{a ,b }  \kappa^{|a| + |b| }   |x_{a b} | \\\
& \leq C^2  \kappa^{2(r+s)} \sum_{a , b }  \left(\frac{\kappa}{ \sqrt \beta}\right) ^{|a| + |b|}   |x_{a b} | \beta^{(|a| + |b|)/2} \\
 &  \leq C^2  \kappa^{2(r+s)} \sqrt{ \sum_{a , b }  \left(\frac{\kappa^2}{  \beta}\right) ^{|a| + |b|}  } \| x \|_H.
\end{align*}
We equip $M_{r+s}(\dC) \simeq \dR^{2(r+s)^2}$  with the Euclidean scalar product.  Let $T^*$ be the adjoint of $T$. Let $H_0 \subset H$ be  the image of $T^*$ and $H^*_0 \subset M_{r+s}(\dC)$ the image of $T$. We have $\dim_{\dR}(H_0)  = \dim_{\dR}(H^*_0) =d \in \{ 1 , \ldots, 2(r+s)^2 \}$ ($d \ne 0$ follows from Lemma \ref{le:rankK}). From the spectral theorem applied to $T T^*$ and $T^* T$, we obtain the singular value decomposition of $T$: for any $x \in H$,
$$
T (x ) = \sum_{i=1}^d \sigma_i u_i \langle f_i , x \rangle_H,
$$
where $\sigma_i > 0$, $(f_1,\ldots,f_d)$ and $(u_1,\ldots, u_d)$ are orthonormal basis of $H_0$ and $H^*_0$ respectively (for their respective scalar products).

Now, let $(E_{a,b}), (a,b) \in \mathbb Z^2$, be the canonical basis of $H$. For integer $n \geq 1$, let $\pi_n$ be the orthogonal projection from $H$ onto $H^n$ the subspace spanned by $\{ x_{a,b} E_{a,b} :  \max (|a|,|b|) \leq n , x_{a,b} \in K \}$.  For integer $n \geq 1$ large enough, we have $\pi_n(f_i) \ne 0$, for all $1 \leq i  \leq d$. In particular, for all $n$ large enough, the rank of $T_n = T \circ \pi_n$ is also $d$.

 We fix such integer $n \geq 1$. Let $T_n^*$ be the adjoint of $T_n$ now for the usual Euclidean scalar product on $H^n$,
$$
\langle x ,  y \rangle = \sum_{a , b} \bar x_{ab} y_{ab}.
$$
Let  $H_{0}^n \subset H^n$ be the image of $T_n^*$. The singular value decomposition of $T_n$ reads: for all $x \in H^n$: 
\begin{equation}\label{eq:Tnffrfr}
T_n(x) = \sum_{i=1}^d \sigma'_i u'_i \langle f'_i , x \rangle,
\end{equation}
where $\sigma'_i > 0$, $(f'_1,\ldots,f'_d)$  and  $(u'_1,\ldots, u'_d)$ are  orthonormal basis of $H_0^n$ and $H^*_0$.

All ingredients are gathered to conclude. Recall that $\tilde X = (X_{a,b})$. From assumption $(H_0)$ and $\beta < 1$, 
$$
\dE \| \tilde X \|_H^2  = \sum_{(a,b)} \dE |X_{ab}|^2 \beta^{|a|+|b|} =  \sum_{(a,b)}   \beta^{|a|+|b|} < \infty.
$$
Hence, almost surely $\tilde X \in H$. It follows that 
$$
\cA \stackrel{d}{=} T(\tilde X) = T_n(\tilde X) + T \circ ( 1- \pi_n) (\tilde X).
$$
By construction $ T_n(\tilde X)$ and  $T \circ ( 1- \pi_n) (\tilde X)$ are independent variables whose supports are both included in $H_0^*$. Therefore the claim that the law of $\cA$ is absolutely continuous with respect to the Lebesgue measure on $H_0^*$ is implied by the claim that the law of $ T_n(X)$ is absolutely continuous with respect to the Lebesgue measure on $H_0^*$.

This is now easy to check. The real vector space $H^n $ has finite dimension $N =\dim_{\dR}(K) (2n+1)^2$. By assumption \Hac $X_n = \pi_n(\tilde X)$ is  absolutely continuous with respect to the Lebesgue measure on $H^n \simeq \dR^N$. We may complete $(f'_1,\ldots,f'_d)$ in an orthonormal basis  $(f'_1,\ldots, f'_N)$ of $H^n$. Let $U$ be the orthogonal matrix which maps the canonical basis of $H^n$ to $(f'_1,\ldots , f'_N)$.  By the change of variable formula, the law of the vector $U X_n =  (\langle f'_1,X_n \rangle, \ldots,  \langle f'_N,X_n  \rangle)$ is also absolutely continuous with respect to the Lebesgue measure on $H^n \simeq \dR^N$. In particular the law of $Y = (\langle f'_1,X_n \rangle, \ldots,  \langle f'_d,X_n \rangle)$ is absolutely continuous with respect to the Lebesgue measure on $H_0^n \simeq \dR^d$. Finally, from the singular value decomposition \eqref{eq:Tnffrfr}, the law of $T_n (\tilde X) = \sum_i \sigma'_i u_i Y_i$ is a absolutely continuous with respect to the Lebesgue measure on $H_0^*$ (since $\sigma'_i >0$ and $(u'_1,\ldots,u'_d)$ orthonormal basis). \qed


\section{Central limit theorem for random matrix products}

\label{sec:CLT}

 In this section, for ease of notation, we drop the explicit dependency in $n$ and write $X$ in place of $X_n$.

\subsection{A central limit theorem for random matrix products}

%

Let $n \geq 1$ be an integer and let $B = (B_0,B_1,\ldots ) \in M_n(\dC)^{\dN}$ be a sequence of deterministic matrices in $M_n(\dC)$ such that, for some vectors $u,v \in \dC^n$,
\begin{equation}\label{eq:defB0}
B_0 = u \otimes \bar v, 
\end{equation}
so that $B_0$ is a rank one projector. Writing $X$ in place of $X_n$, for integer $k \geq 1$, we are interested in the complex random variable
\begin{equation}\label{eq:defZ}
Z = Z_k(B) = n^{-(k-1)/2}  \TR( B_0 X B_1  X \cdots  B_{k-1} X ) =   n^{-(k-1)/2}   \langle v , X B_1  X \cdots  B_{k-1} X ,  u \rangle.
\end{equation}
The variable $Z$ is a generalized entry of the matrix product $X B_1  X \cdots  B_{k-1} X $. The goal of this subsection is prove that under mild assumptions, the process $(Z_k(B))$, indexed by $k\geq 1$ and $B \in M_n(\dC)^\dN$, is close in distribution to Gaussian process that we now define.

%
%
%

We introduce the following quantities, for $B,B' \in M_n(\dC)^{\dN}$, 
\begin{equation}\label{eq:defSigma}
\Sigma_{k}(B,B') = n^{-k+1} \prod_{t=0}^{k-1} \TR (B_t (B'_t)^*)    \quad \hbox{ and } \quad  \Sigma'_{k}(B,B') = \varrho^k  n^{-k+1} \prod_{t=0}^{k-1} \TR (B_t (B'_t)^\intercal).
\end{equation}
We note that for $B_0 = u \otimes \bar v$ and $B'_0 = u'\otimes \bar v'$ as in \eqref{eq:defB0}, we have 
$$
\TR (B_0 (B'_0)^*) = \langle v , u' \rangle \langle v' , u \rangle \quad \hbox{ and } \quad     \TR (B_0 (B'_0)^\intercal) = \langle \bar v , u' \rangle \langle \bar v' , u \rangle.
$$

We consider $W = W_k(B)$ as the centered Gaussian process, indexed by $k \geq 1$ and $B \in M_n(\dC)^{\dN}$ such that 
$ \dE W_k (B) \bar W_l (B')  = \dE W_k  W_l= 0 $ for $k \ne l$ and for $k = l$, 
$$
\dE W_k (B) \bar W_k (B') = \Sigma_k(B,B') \quad \hbox{ and } \quad     \dE W_k (B)  W_k (B') = \Sigma'_k(B,B').
$$
This process $W$ can be realized as follows: 
\begin{equation}\label{eq:defWk}
W_k (B)=  n^{-(k-1)/2} \sum_{\pi \in P_k} W_\pi  \prod_{t=0}^{k-1}  (B_t)_{i_{t} j_{t+1}},
\end{equation}
where $P_k$ is the set of $\pi = (j_1,i_1,\ldots, j_k, i_k)$ with $i_t,j_t$ in $\{1,\ldots ,n\}$, $i_0 = i_k$ and $(W_\pi)_{\pi \in P_k}$ are independent and identically distributed centered Gaussian random variables with $\dE | W_\pi |^2 = 1$ and $\dE W_\pi ^2 = \varrho^k$.

We define a new process $\tilde W = \tilde W_k(B)$ by setting
$$
\tilde W_1 (B) = Z_1 (B) \quad \hbox{ and } \quad \tilde W_k (B) = W_k (B) \hbox{ for } k \geq 2,
$$
where $W$ and $X$ are chosen to be independent. Note that $Z_1 (B) = \langle v ,  X u\rangle$ by definition, in particular, it is linear in $X$.


We denote by $\| \cdot \|$ the operator norm. We consider the following assumption on the sequence of matrices $B = (B_0,B_2,\ldots)$, for any integer $k \geq 0$,
\begin{equation}\label{eq:hypTCL}
\| B_k \| \lesssim 1, 
\end{equation}
where here and below the notation $\lesssim$ coincides with the $O(\cdot)$ notation and the constants hidden  may depend on  $k$ and the upper bound $C_k$ on the moments $\dE |X_{ij }|^k$ in Assumption $(H_k)$. Note that for $k= 0$, we have $\| B_0 \| = \| u\|_2 \| v \|_2$ where $\| \cdot \|_2$ is the Euclidean norm.

Since $\TR (M) \leq \mathrm{rank}(M) \| M \|$  for $M \in M_n(\dC)$, under assumption \eqref{eq:hypTCL}, we have 
$$
| \Sigma_k(B,B) | \lesssim 1 \quad  \hbox{ and } \quad | \Sigma'_k(B,B) | \lesssim 1 . 
$$
This explains the choice of the scaling for $Z$ and $W$.

The main result in this subsection is the following multivariate  central limit theorem theorem.

\begin{thm}\label{th:steinTCL}
Assume that $(H_k)$ holds for all $k \geq 2$. Let $T$ be a finite set and for $t \in T$, let $k_t \geq 1$ be an integer and $B_t = (B_{t,0},B_{t,1},\ldots) \in M_n(\dC)^\dN$ with $B_{t,0} =  u_{t} \otimes \bar v_{t}$ as in \eqref{eq:defB0} and such that,  for all $ t\in T$, \eqref{eq:hypTCL} holds for $B_t$, uniformly in $n$.

Then, the Lévy-Prokhorov distance between the laws of $(Z_{k_t}(B_t))_{t \in T}$ and $(\tilde W_{k_t}(B_t))_{t \in T}$ goes to $0$ as the $n$ goes to infinity.

Moreover, if  for all $t \in T$, $\| u_t\|_{\infty}$ and $\|v_t \|_{\infty}$ go to $0$ as $n$ goes to infinity, then the Lévy-Prokhorov distance between the laws of $(Z_{k_t}(B_t))_{t \in T}$ and $(W_{k_t}(B_t))_{t \in T}$ goes to $0$ as the $n$ goes to infinity. 
\end{thm}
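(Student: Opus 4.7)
The strategy is a Cramér--Wold reduction to univariate Gaussian approximation followed by Stein's method of exchangeable pairs. Fix real coefficients $(\alpha_t, \beta_t)_{t \in T}$ and set
$$
S_n = \sum_{t \in T} \alpha_t \Re Z_{k_t}(B_t) + \beta_t \Im Z_{k_t}(B_t),
$$
with analogues $\tilde S_n$ and $S_n^*$ built from $\tilde W$ and $W$. It suffices to prove that the Lévy distance between $\mathcal L(S_n)$ and $\mathcal L(\tilde S_n)$ (resp.\ $\mathcal L(S_n^*)$) goes to $0$. The indices with $k_t = 1$ are treated separately: for the first statement $\tilde W_1 = Z_1$ by definition, so nothing is required; for the second statement $Z_1(B)=\sum_{i,j}\bar v_i X_{ij} u_j$ is a sum of independent random variables whose Lindeberg ratio is dominated by $\|u\|_\infty \|v\|_\infty \to 0$, and the classical CLT applies. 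Asymptotic independence of the $k_t = 1$ block from the $k_t \geq 2$ block follows from the orthogonality of different-degree homogeneous polynomials in $X$: the cross-covariance $\dE[Z_1(B_s)\overline{Z_{k_t}(B_t)}]$ reduces, by a Wick pairing computation, to a quantity of size $O(n^{-1})$.

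Henceforth assume $k_t \geq 2$ for every $t \in T$. Build the exchangeable pair by drawing $(I,J)$ uniformly on $\{1,\ldots,n\}^2$ independently of $X$, replacing $X_{IJ}$ by an independent copy $X'_{IJ}$, and letting $\tilde S_n'$ denote the resulting perturbed combination. Since each $Z_k(B)$ is a homogeneous polynomial of total degree $k$ in the entries of $X$, expanding $Z_k(B) = \sum_\alpha c_\alpha X^\alpha$ and using $\dE X'_{IJ} = 0$ gives the regression
$$
\dE\bigl[\tilde S_n' - S_n \mid X\bigr] = -\Lambda_n S_n + R_n,
$$
with $\Lambda_n$ positive-definite diagonal of order $n^{-2}$ (its entries are proportional to the $k_t$), and $R_n$ the contribution of those monomials of $Z_{k_t}(B_t)$ in which at least two copies of $X$ share the index pair $(I,J)$. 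Such ``collision'' monomials are indexed by a set of strictly smaller dimension than the full index sum, and a direct $L^2$ bound via the graphical sum--product inequality (Theorem~\ref{th:sumproduct} in the appendix) shows $R_n = o_{L^2}(n^{-2})$, hence genuinely subleading relative to $\Lambda_n S_n$.

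The second Stein condition is the identification of the conditional covariance of $\tilde S_n' - S_n$ with the target Gaussian one. Expanding the cross-products $(\tilde Z_{k_s}(B_s) - Z_{k_s}(B_s))(\tilde Z_{k_t}(B_t) - Z_{k_t}(B_t))^*$ and averaging over $(I,J)$, only Wick-type pairings of $X$-indices across the two traces survive at leading order. The unique dominant pairing reconstructs $\Sigma_{k_s}(B_s,B_t)\delta_{k_s k_t}$, the pairing involving $\dE X_{ij}^2 = \varrho$ reconstructs $\Sigma'_{k_s}(B_s,B_t)\delta_{k_s k_t}$, and all other pairings are suppressed by at least a factor $n^{-1}$, again by Theorem~\ref{th:sumproduct}. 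The rank-one structure $B_{t,0} = u_t \otimes \bar v_t$ is essential here: it pins two of the $2k_t$ indices of the $t$-th trace to prescribed vectors, which is exactly what produces the correct $n^{-k+1}$ normalization of the variance.

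With the two Stein conditions in hand, together with a uniform control of $\dE|\tilde S_n' - S_n|^p$ for arbitrary $p$ (granted by $(H_p)$ combined with $\|B_{t,i}\| \lesssim 1$), a vectorial exchangeable-pair bound (in the form of Reinert--Röllin or Meckes) yields a Lévy--Prokhorov bound between $\mathcal L(S_n)$ and the centered Gaussian law whose covariance matches $\dE[\tilde S_n^2]$; Cramér--Wold then lifts this to the asserted joint convergence. The main obstacle is the uniform combinatorial bookkeeping of the error terms in both the regression identity and the variance computation; this rests decisively on the deterministic sum--product estimate of Theorem~\ref{th:sumproduct} applied to the directed graphs encoding the various trace expansions, and constitutes the bulk of the technical work.
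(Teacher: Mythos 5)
Your overall strategy (exchangeable pairs plus a Reinert--Röllin/Meckes-type multivariate normal approximation, with graph/path combinatorics controlling the errors) is the same as the paper's, but there is a genuine gap in the way you treat the $k_t=1$ components. For the first claim, the whole difficulty is precisely that $Z_1(B_t)=\langle v_t, X u_t\rangle$ need \emph{not} be asymptotically Gaussian (e.g.\ $u_t=v_t=e_1$ gives $Z_1=X_{11}$), and you must show that the \emph{joint} law of the $k_t=1$ block and the $k_t\geq 2$ block is close to that of $(\tilde W_{k_t}(B_t))_t$, in which these blocks are independent. Your argument for this is that the cross-covariance is $O(n^{-1})$; but vanishing correlation does not imply asymptotic independence when the limit of the $k=1$ block is non-Gaussian, so this step fails. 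The paper needs a genuinely different device here: it resamples the entries of $X$ indexed by the $n^{1/5}$ largest coordinates of $u_t,v_t$, shows via the fourth-moment bound of Lemma \ref{le:stein3} that this barely changes the $k\geq 2$ quantities (an estimate of type \eqref{eq:Lresample}), splits $Z_1$ by bilinearity into a part independent of the resampled variables plus parts with flat vectors that enter the Stein argument, and only then concludes. A similar (milder) issue affects your second claim: you prove marginal Gaussianity of the $k_t=1$ block by Lindeberg and of the $k_t\geq2$ block by Stein, but joint convergence again does not follow from marginals plus vanishing covariance; in the flat case the fix is to keep the $k_t=1$ components inside the multivariate Stein argument (the paper does this, using that the error terms in Lemmas \ref{le:Sk2} and \ref{le:stein3} for $k=1$ are controlled by $\|u_t\|_4\|v_t\|_4\to 0$).

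Two secondary problems. First, you apply the exchangeable pair directly to $Z_k(B)$, so your regression identity carries an extra error $R_n$ from monomials with repeated index pairs, which must be shown to be $o(n^{-2})$ in $L^1$ after multiplication by $\|\Lambda^{-1}\|\sim n^2$; this is plausible but is exactly the kind of combinatorial estimate you only assert. The paper sidesteps it by first replacing $Z$ with the distinct-edge linearization $L$ of \eqref{eq:defZ1} (Lemma \ref{le:ZZ1}), for which the regression $\dE[\hat L-L\mid X]=-\tfrac{k}{n^2}L$ is exact. Moreover, the deterministic bound you invoke for these errors, Theorem \ref{th:sumproduct}, requires an even graph; the graphs arising in the regression and covariance computations need not be even, and the paper instead uses the elementary Cauchy--Schwarz bound \eqref{eq:boundCp} together with the rank-one structure of $B_0$ (Theorem \ref{th:sumproduct} is only used for the tightness results under \Hsym or the diagonal assumption). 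Second, your opening Cramér--Wold reduction to a scalar statistic is incompatible with the regression you then write down: when the $k_t$ differ, $\dE[\hat S_n-S_n\mid X]$ is not proportional to $S_n$, so no scalar $\lambda$ exists; one must work with the vector $(Z_{k_t}(B_t))_t$ and a matrix $\Lambda$ as in Theorem \ref{th:stein}, which makes the Cramér--Wold step superfluous.
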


The special role played by $k=1$ is easy to notice. For example, if $u = v = e_1$ is a vector the canonical basis then $Z_1 = X_{11}$ and we might not expect a central limit theorem (unless the law of $X_{ij}$ is Gaussian). However, as soon as $k \geq 2$, there is a central limit theorem. This central limit phenomenon can be related to the second-order freenees, see \cite{MR2216446}.

The proof of Theorem \ref{th:steinTCL} will rely on Stein's method of exchangeable pairs. In particular, a quantitative version of Theorem \ref{th:steinTCL} can be extracted from the proof (even if we made no effort to optimize the bounds on the various asymptotically negligible terms).

The remainder of this subsection is devoted to the proof of Theorem \ref{th:steinTCL}. Expanding the matrix product, in \eqref{eq:defZ} we find
\begin{equation}\label{eq:devZ}
Z   = n^{-(k-1)/2} \sum_{\pi \in P_k}  \prod_{t=0}^{k-1}  (B_t)_{i_{t} j_{t+1}}   \prod_{t=1}^{k} X_{j_t i_{t}} = \sum_{\pi \in P_k} M_\pi X_\pi ,
\end{equation}
where, as above, $P_k$ is the set of $\pi = (j_1,i_1,\ldots, j_k, i_k)$ with $i_t,j_t$ in $\{1,\ldots ,n\}$, $i_0 =i_k$ and we have set
$$
X_\pi = \prod_{t=1}^{k} X_{j_t i_{t}} \; \quad \hbox{   and  } \quad \;  M_\pi = n^{-(k-1)/2}    \prod_{t=0}^{k-1}  (B_t)_{i_{t} j_{t+1}}.
$$
The vector $\pi$ can be thought as a path on the set of indices. The complex number $M_\pi$ is the deterministic weight associated to the path $\pi$ while the complex number $X_\pi$ is the random  weight associated to $\pi$.  When we want to make the dependence in the variables explicit in $k,B$ we will write 
$
Z$ as $Z = Z_k (B)$ and $M_\pi$ as $M_\pi = M_\pi (B)$ for example.

We set 
\begin{equation}\label{eq:defZ1}
L = L_k(B) = \sum_{\pi \in Q_{k}} M_\pi(B) X_\pi ,
\end{equation}
where $Q_k \subset P_k$ is the subset of paths $\pi = (j_1,i_1, \ldots, j_k,i_k)$ such that for all $s \ne t$  $(j_t,i_t) \ne (j_s,i_s)$.  Our first lemma asserts that $Z$ and $T$ are close in $L^2(\dP)$. The main interest of $L$ is that it is a linear function of the matrix entry $X_{ij}$ for any $i,j$. Obviously $L_1 = Z_1$. For $k \geq 2$, the next lemma asserts that $Z_k$ and $L_k$ remain close.

\begin{lem}\label{le:ZZ1}
For any $k \geq 1$ and $B = (B_0,B_1,B_2,\ldots)$ satisfying \eqref{eq:defB0} and \eqref{eq:hypTCL}, we have
$$
\dE |Z - L|^2 \lesssim n^{-1}.  
$$
\end{lem}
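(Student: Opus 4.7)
The proof is a direct second-moment computation. When $k=1$ we have $Q_1=P_1$, so $Z=L$ and the bound is trivial; henceforth assume $k\ge 2$. Expanding,
\begin{equation*}
\dE|Z-L|^2 \;=\; \sum_{\pi,\pi'\in P_k\setminus Q_k} M_\pi\bar M_{\pi'}\,\dE[X_\pi\bar X_{\pi'}].
\end{equation*}
Since the entries of $X$ are independent and centered, $\dE[X_\pi\bar X_{\pi'}]$ factors over the distinct pairs $(j,i)$ appearing in the combined multiset of $2k$ positions and vanishes unless each such pair has total multiplicity at least two; under $(H_{2k})$, the non-vanishing expectations are uniformly bounded by a constant $C_k$.

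The plan is to group the surviving pairs $(\pi,\pi')$ by their combinatorial type: a partition $\tau$ of $\{1,\ldots,2k\}$, all of whose blocks have size $\ge 2$, where two positions lie in the same block iff the pairs $(j,i)$ they carry agree. The defining condition $\pi\in P_k\setminus Q_k$ forces $\tau$ to identify at least two positions inside $\{1,\ldots,k\}$, and the same for $\pi'$ inside $\{k+1,\ldots,2k\}$. There are only finitely many admissible types, the number depending on $k$ alone.

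For each admissible type $\tau$, the sum $\sum|M_\pi\bar M_{\pi'}|$ over compatible pairs takes the form of a graphical sum--product expression: the distinct index labels are the vertices, the matrix factors $(B_t)_{i_t j_{t+1}}$ and the rank-one contributions $u_{i_0},\bar v_{j_1}$ (and their primed counterparts) coming from $B_0=u\otimes\bar v$ are the edges. Theorem~\ref{th:sumproduct} of the appendix then bounds this sum in terms of operator/Hilbert--Schmidt norms of the $B_t$'s and Euclidean norms of $u,v$, all $\lesssim 1$ by \eqref{eq:hypTCL}. Matching this against the normalisation $n^{-(k-1)}$ from $|M_\pi\bar M_{\pi'}|$ gives, for every admissible $\tau$, a contribution of order $n^{-1}$: the two forced identifications inside $\pi$ eliminate one free $j$- and one free $i$-index relative to the ``leading'' type (where $\pi'$ is a relabelling of $\pi\in Q_k$), lowering the $n$-exponent in the prefactor by two; the closing constraint $i_0=i_k$ combined with the matching between $\pi$ and $\pi'$ absorbs one of these factors, leaving the advertised~$n^{-1}$.

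The main technical obstacle lies in this last step: one must handle the rank-one structure of $B_0$ so that $\|u\|_2\|v\|_2$ rather than $\|B_0\|_F$ is extracted at the two open vertices of each path, and must ensure the dimensional gains from the internal identifications in $\pi$ (and $\pi'$) are not double-counted against the matching constraints coupling $\pi$ to $\pi'$. Summing over the finitely many admissible types then yields $\dE|Z-L|^2\le C_k/n$.
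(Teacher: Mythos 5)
Your high-level strategy (expand $\dE|Z-L|^2$ over pairs of paths in $P_k\setminus Q_k$, reduce to finitely many combinatorial types, and beat the normalisation $n^{-(k-1)}$ using operator norms and the rank-one structure of $B_0$) is the same family of argument as the paper's, but the proposal has a genuine gap exactly where the content of the lemma lies: the assertion that \emph{every} admissible type contributes $O(n^{-1})$. The heuristic you give (``the two forced identifications lower the $n$-exponent by two, and the closing constraint $i_0=i_k$ absorbs one factor'') does not survive the extremal configurations, which are precisely the ones the paper has to work for. Two of them: (a) \emph{disconnected} pairs, where $\pi$ and $\pi'$ share no $JI$-edge but each separately doubles all of its own edges, so that $\dE[X_\pi\bar X_{\pi'}]=\dE X_\pi\,\dE \bar X_{\pi'}\neq 0$; these are handled in the paper by proving the separate first-moment bound $|\dE(Z-L)|\lesssim n^{-1/2}$, whose worst case ($k$ even, $|I|=|J|=k/2$) already requires removing the distinctness constraints, decomposing $G_{IJ}$ into cycles, and using $\Tr(M_1B_0M_2)=\langle v, M_2M_1 u\rangle$ so that the $B_0$-component contributes $O(1)$ instead of $O(n)$; a naive free-index count gives only $O(1)$ there. (b) \emph{Connected} pairs with $|I|=|J|=k$, where the generic bound \eqref{eq:mpiI1} again only gives $O(1)$, and the missing factor $n^{-1}$ comes from a topological argument: since $\pi\neq\pi'$ share a $JI$-edge and all degrees are exactly two, some vertex is incident to two distinct $IJ$-edges, which forces the number of components of $G_{IJ}$ to be at most $k-1$. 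You flag the ``double-counting'' issue as a technical obstacle to be handled, but that obstacle \emph{is} the proof; nothing in the proposal supplies these arguments.

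In addition, the appeal to Theorem~\ref{th:sumproduct} is not legitimate at this point: that theorem requires the graph carrying the matrices (here $G_{IJ}$) to have all vertex degrees even, which fails in general for the graphs arising in this second moment (a vertex may be visited an odd number of times while every $JI$-edge still has multiplicity $\geq 2$); in the paper it is only invoked later, in Section~\ref{sec:CLT}, precisely under the additional hypotheses $(H_\text{sym})$ or diagonal $B_t$'s which guarantee evenness. Even where it applies, its gain of $\sqrt{rn}$ for a rank-$r$ factor is weaker than the $O(1)$ trace bound needed in the critical cases above, so it would not close the argument. The paper instead relies on the elementary Cauchy--Schwarz bound \eqref{eq:boundCp}, an inclusion--exclusion step removing the distinctness constraints, the cycle/trace decomposition of $G_{IJ}$ with the rank-one trick for $B_0$, and the mean--variance split that isolates the disconnected types; some version of these ingredients has to be added for your approach to yield the stated $n^{-1}$ bound.
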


\begin{proof} 
Set $$\hat Z = Z - L = \sum_{\pi \in R_k} M_\pi X_\pi,$$ with $R_k = P_k \backslash Q_k$. The statement of the lemma follows from the two claims 
\begin{equation}\label{eq:ZZ1ES}
| \dE \hat Z | \lesssim n^{-1/2}
\end{equation}
and 
\begin{equation}\label{eq:ZZ1VARS}
\dE | \hat Z - \dE \hat Z |^2  \lesssim n^{-1}.
\end{equation}

We start with \eqref{eq:ZZ1ES}.  Note  that by assumption $(H_k)$, we have $ \dE |  X_\pi| \lesssim 1$. We write 
$$
\dE \hat Z = \sum_{\pi \in R_k}  M_\pi  \dE   X_\pi.
$$

To each $\pi = (j_1,\ldots,i_k)$ in $P_k$, we associate a bipartite directed (multi)-graph $G(\pi)$ with bipartite vertex sets $I = \{i_t : 1 \leq t \leq k\}$ and $J = \{ j_t : 1 \leq t \leq k\}$ and edge sets $E_{JI} = \{ (j_t,i_t) : 1 \leq t  \leq k\}$ and $E_{IJ} = \{ (i_{t-1},j_{t}) : 1 \leq t  \leq k\}$ with the convention $i_0 = i_k$. The multiplicity of an edge $(j,i) \in E_{JI}$ is the number of $t$ such that $(j,i) = (j_t,i_t)$. Since $\dE X_{ji} = 0$, $\dE X_\pi = 0$ unless all edges in $E_{JI}$ have multiplicity at least two. We denote by $\bar P_k$ the subset of such paths.

We say that two paths $\pi = (j_1,\ldots,i_k)$, $\pi' = (j'_1, \ldots,j'_k)$ are isomorphic and write $\pi \sim \pi'$  if there exist two bijections $\phi, \psi$ on $\{1,\ldots,n\}$ such that $\phi(j_t) = j'_t$ and $\psi(i_t) = i'_t$. This is an equivalence relation.  By construction $G(\pi)$ and $G(\pi')$ are isomorphic if $\pi \sim \pi'$. Also, by assumption $\dE X_\pi = \dE X_{\pi'}$ if $\pi \sim \pi'$. Note that there is a finite number of equivalence classes ($k^{2k}$ is a very rough upper bound). To prove \eqref{eq:ZZ1ES}, it follows that it is enough to prove that for any $\pi \in \bar P_k$,
\begin{equation}\label{ZZ1ES1}
m_1(\pi) =  \ABS{ \sum_{\pi' : \pi' \sim \pi} M_{\pi'} }   \lesssim n^{-1/2}.
\end{equation}
We fix such $\pi = (j_{1},i_{1},\ldots,j_{k},i_{k}) \in \bar P_k$ and denote by $I,J, E_{IJ}, E_{JI}$ the vertex and edge sets of $G(\pi)$. Writing $\pi' = (j'_1,i'_1,\ldots,j'_k, i'_k)$, we have 
\begin{equation}\label{ZZ1ES2}
m_1(\pi) =   n^{-(k-1)/2}  \ABS{   
\sum_{\pi' : \pi' \sim \pi}  \prod_{t=0}^{k-1}  (B_t)_{i'_{t} j'_{t+1}} }.
\end{equation}

Since each edge, say $(j,i)$, in $E_{JI}$ has multiplicity at least $2$, $i$ and $j$ are visited at least twice (that is $i_{t} = i$ for at least two distinct $1 \leq t \leq k$ and similarly for $j$). In particular, $|I| \leq k/2$ and $|J| \leq k/2$. We define the in-degree of $j \in J$ as the number of distinct $0 \leq t \leq k-1$ such that $j_{t+1} = j$ (that is $(i_{t},j_{t+1}) \in E_{IJ}$ is an incoming edge). From what precedes, all $j \in J$ have in-degree at least two. 

We start with a rough upper bound. We claim that 
\begin{equation}\label{eq:m1bd1}
m_1 (\pi)  \lesssim n^{-(k-1)/2} n^{|I| \wedge |J|-1/2}.
\end{equation}

Let us prove this claim.  We start with the following observation. The involution map on $P_k$, $\theta :   (j_1, i_1,\ldots, j_k,i_k) \to (i_k, j_k,\ldots, i_1,j_1)$ inverses the role played by $I$ and $J$. Moreover, we have $m_{1,B} (\theta(\pi) ) = m_{1,\check B} ( \pi)$ where we have written the dependence of $m_1$ on $B$ explicitly and set for $B = (B_0,B_1,B_2 \ldots)\in M_n(\dC)^{\dN}$, $\check B = (B_0^\intercal, B_{k-1}^\intercal,\ldots,B_1^\intercal, B_k, \ldots)$.  Since that $M$ and $M^\intercal$ have the same norm, it is sufficient to prove that \eqref{eq:m1bd1} holds with only $|I|$  in the exponent.

We use the convention that a product over an emptyset is equal to $1$. For any $\iota_1,\ldots,\iota_p$ in $\{1,\ldots, n\}$ with $p \geq 2$ and $M_1,\ldots, M_p$ in $M_n(\dC)$, from Cauchy-Scwharz inequality, we have the upper bound,
\begin{align}
 \sum_{j'=1}^n  \prod_{q=1}^p\ABS{ (M_q)_{\iota_q j'}}&  \leq  
 \prod_{q=3}^p \| M_q \| \sqrt{ \sum_{j'=1}^n  | (M_{1})_{\iota_1 j'} |^2 } \sqrt{\sum_{j'=1}^n  | (M_{2})_{\iota_2 j'} |^2 } \nonumber \\
 & = \prod_{q=3}^p \| M_q \|  \sqrt { (M_1 M_1)^*_{\iota_1 \iota_1} } \sqrt { (M_2 M_2)^*_{\iota_2 \iota_2} }
  \leq    \prod_{q=1}^p \| M_q \| .\label{eq:boundCp}
\end{align}

To estimate $m_1(\pi)$ in \eqref{ZZ1ES2},  we sum over all possible bijections, that is over all possible values of $(j'_1,\ldots,i'_k)$.   As already pointed, all $J$-vertices have in-degree at least $2$. Then, we put the absolute value in \eqref{ZZ1ES2} inside the sum. We  use \eqref{eq:boundCp}  for each $j \in J$ when summing over all possible images say $j'$. Here, in \eqref{eq:boundCp}, $p \geq 2$ is the in-degree of $j \in J$ in $G(\pi)$, the matrices $M_q$ are equal to $B_{t_q}$ for some $0 \leq t_q \leq k-1$ and $\iota_q = i'_{t_q}$ (recall $i'_0= i'_k$). There is at most $n$ possible images, say $i'$, of any $i \in I$, we deduce that 
$$
m_1 (\pi) \leq \PAR{ \prod_{t=0}^{k-1} \|B_{t}\| } n^{-(k-1)/2} n^{|I|}.
$$
It does not quite prove \eqref{eq:m1bd1} due to a missing factor $n^{-1/2}$. To obtain this improvement, we use that $B_0$ is a rank one projector. More precisely, we use the first expression in \eqref{eq:boundCp} when $j = j_1\in J$ and pick $\iota_1 = i'_{0} = i'_k$,  $M_1 = B_0$. Then, since $\TR( B_0 B_0^* ) = \| u \|_2 \| v \|_2 = \| B_0 \|^2$, we deduce from Cauchy-Schwarz inequality,
$$
\sum_{i'=1}^n \sqrt { (B_0 B^*_0)_{i'i'} } \leq \sqrt n \sqrt { \TR (B_0 B^*_0) } \leq \| B_0\| \sqrt n.
$$
Hence, when summing over the possible images of $I$-vertices, we have improved the bound from $n^{|I|}$ to $n^{|I|-1/2}$.  This proves \eqref{eq:m1bd1}.

The bound \eqref{eq:m1bd1} proves \eqref{ZZ1ES1} as soon as $ |I| \wedge |J| \leq k/2-1$. Since $|I| \vee |J| \leq k/2$, the remaining case to consider is: $k$ even and $|I| = |J|= k/2$ 

We first remark that when summing over all $\pi' \sim \pi$ that is over all possible values of $i'_{t}$, $j'_{t}$, we may remove the constraints that $i'_{t} \ne i'_{t'}$ if $i_{t} \ne i_{t'}$. Indeed, the case where this constraint is not satisfied corresponds to $i'_{t} = i'_{t'}$ and can thus be written in absolute value as $m_1(\tilde \pi)$ where $\tilde \pi$ is obtained from $\pi$ by identifying  $i_{t}$ and $i_{t'}$. In particular, there are at most $|I| - 1$ vertices in $I(\tilde \pi)$ and we  can apply \eqref{eq:m1bd1} to neglect this contribution. The same argument works for $J$-vertices. We denote by $\hat m_1(\pi)$, the expression of $m_1(\pi)$ when the constraints $i_{t} \ne i_{\veps',t'}$ if $i_{t} \ne i_{\veps',t'}$ have been removed, and similarly for $J$-vertices. From what precedes, to prove \eqref{ZZ1ES1}, it suffices to check $\hat m_1(\pi) \lesssim n^{-1/2}$. We have the following expression for $\hat m_1(\pi)$,
$$
\hat m_1 (\pi) = n^{-(k-1)/2} \ABS{ \sum_{i'_1,\ldots,i'_{k/2} , j'_1,\ldots,j'_{k/2}} \prod_{t=0}^{k-1} (B_{t})_{i'_{i_{t-1}},j'_{j_{t}}} },
$$
where the sum is over all $1 \leq i'_s , j'_s \leq n$, for all $1 \leq s \leq k/2$.

Consider $G_{IJ}$ as the bipartite (multi)-graph on the disjoint union $I \sqcup J$ with edge (multi)-set $E_{IJ}$ (this is a slight abuse of notation $E_{IJ}$ is treated here as a multiset with $k$ elements). The assumption $2|I| = 2|J| = k$ implies that all  vertices are of degree  $2$ in $G_{IJ}$. In particular, $G_{IJ}$ is a disjoint union of alternating cycles (where a cycle a length $2$ is composed of two distinct edges $(i,j)$ from $i$ to $j$).

Pick an element $i \in I$. It appears in exactly two factors in the above expression for $\hat m_1(\pi)$, say $t=t_1$ and $t=t_2$. We then use the elementary identity 
$$
\sum_{i'} (M_{1})_{i'k_1} (M_2)_{i'k_2} = (M^\intercal_1 M_2)_{k_1k_2}.
$$
with $M_\veps = B_{t_\veps}$ and $k_{\veps} = j'_{j_{t_\veps}}$, $\veps = 1,2$. Repeating this for all $I$ and $J$-vertices, we deduce that 
$$
\hat m_1 (\pi) = n^{-(k-1)/2} \prod_{l=1}^c \ABS{ \TR ( A_l)},
$$
where $c$ is the number of connected components of $G_{IJ}$ and $A_l$ is a product of the matrices $(B^{\sigma_t}_t)$, $t \in p_l$, where $p = (p_1,\ldots,p_c)$ is a partition of $\{0,\ldots,k-1\}$ into $c$ blocks and $B_t^{\sigma_t}\in \{ B_t , B^\intercal_{t}\}$. Since $\| B_t\| \lesssim 1 $, we have $\| A_l \| \lesssim 1$ and thus $| \TR(A_l)| \lesssim n$. However, if $0 \in p_l$, then, since $B_0 = u \otimes \bar v$, $\TR ( M_1 B_0 M_2 ) = \langle v , M_2 M_1 u \rangle$ and we find 
$$
\TR ( A_l)= \langle v , A'_l  u \rangle
$$
where $A'_l$ is a product of the matrices $(B^{\veps_t}_t)$, $t \in p_l \backslash \{0\}$. Hence,  $|\TR ( A_l)| \lesssim 1$ if $0 \in p_l$.  We have thus checked that 
$$
\hat m_1 (\pi) \lesssim n^{-(k-1)/2}  n ^{c -1} \lesssim n^{-(k-1)/2}  n ^{|I| -1}. 
$$
This concludes the proof of \eqref{ZZ1ES1} and \eqref{eq:ZZ1ES}.

We now prove \eqref{eq:ZZ1VARS}. The proof follows exactly the same lines. It ultimately relies on \eqref{eq:boundCp} and some topological facts. We write
\begin{eqnarray}
\dE |\hat Z - \dE \hat Z |^2   & = & \sum_{\pi_1,\pi_2 \in R_k}  M_{\pi_1} \bar M_{\pi_2} \dE [ (X_{\pi_1} - \dE X_{\pi_1} )\overline{(X_{\pi_2} - \dE X_{\pi_2} )}].\label{eq:vardedef}
\end{eqnarray}
With the above notation, to each $\pi_1 = (j_{1,1},\ldots,i_{1,k})$, $\pi_2 = (j_{2,1},\ldots, i_{2,k})$ in $P_k$, we associate the bipartite directed graph $G(\pi_1,\pi_2) = G(\pi_1) \cup G(\pi_2)$ with vertex sets $I = I(\pi_1) \cup I(\pi_2)$, $J = J(\pi_1) \cup J(\pi_2)$ and edge sets $E_{IJ} = E_{IJ} (\pi_1) \cup E_{IJ}(\pi_2)$, $E_{JI} = E_{JI} (\pi_1) \cup E_{JI}(\pi_2)$.  Due to the independence of the entries of $X$, the  expectation in \eqref{eq:vardedef} is zero unless $E_{JI}(\pi_1)$ and $E_{JI}(\pi_2)$ have a non-empty intersection and each edge in $E_{JI}$ has multiplicity at least two. We denote by $\bar P_{k,k} \subset P_k \times P_k$ the collection of pairs of paths $\pi = (\pi_1,\pi_2)$ such that each edge in $E_{JI}$ is visited twice. We set $\bar R_{k,k} = (R_k \times R_k) \cap \bar P_{k,k}$.

As above, we introduce an equivalence relation on $P_k \times P_k$: $\pi = (\pi_1,\pi_2) \sim \pi' = (\pi'_1,\pi'_2)$ if there exist two bijections $\phi, \psi$ on $\{1,\ldots,n\}$ such that for $\veps = 1,2$, $t = 1, \ldots,k$, $\phi(j_{\veps,t}) = j'_{\veps,t}$,  $\psi(i_{\veps,t}) = i'_{\veps,t}$ with $\pi^\delta_\veps = ( j^\delta_{\veps,1}, \ldots, i^\delta_{\veps,k})$. There is a finite number of equivalence classes. To prove \eqref{eq:ZZ1VARS}, it is thus sufficient to prove that for any fixed $\pi =  (\pi_1,\pi_2) \in  \bar R_{k,k}$, 
\begin{equation}\label{ZZ1VARS1}
m_2 (\pi ) =  \ABS{ \sum_{ (\pi'_1,\pi'_2) \sim (\pi_1,\pi_2) }  M_{\pi'_1} \bar M_{\pi'_2} } \lesssim n^{-1},
\end{equation}
where we have used that the expectation in \eqref{eq:vardedef} is uniformly upper bounded by some constant depending on $k$ and depends only the equivalence class.  Writing, for $\veps =1,2$, $\pi_\veps = (j_{\veps,1},\ldots,i_{\veps,k})$, we get \begin{equation*}\label{ZZ1VARS2}
m_2(\pi ) = 
 n^{-k+1} \ABS{ \sum_{ \pi' \sim \pi }  \prod_{t=0}^{k-1}  (B_t)_{i'_{1,t} j'_{1,t+1}} (\bar B_t)_{i'_{2,t} j'_{2,t+1}}  }.
\end{equation*}

We first claim that 
\begin{equation}\label{eq:mpiI1}
 m_2(\pi) \leq \PAR{ \prod_{t=0}^{k-1} \| B_t \|^2 } n^{-k+1} n^{|I|\wedge |J|-1}
  \lesssim n^{-k+1} n^{|I|\wedge |J|-1}. 
\end{equation}
Indeed, we argue as in the proof of \eqref{eq:m1bd1}. Thanks to the involution map $\theta$, it suffices to prove that $m_2(\pi) \lesssim n^{-k+1} n^{|I|-1}$. Then we use \eqref{eq:boundCp} for each $j \in J$. If $j_{1,1} \ne j_{2,1}$, when summing over $j_{\veps,1}$, we set in the first expression in  \eqref{eq:boundCp}, $M_1 = B_0$, $\iota_1 =  i'_{\veps,k}$ ($i_{1,k}$ and $i_{2,k}$ could be equal or not). Similarly, if $j_{1,1} = j_{2,1}$, we set in the first expression in \eqref{eq:boundCp}, $M_1 = M_2 = B_0$, $\iota_\veps = i'_{\veps,k}$ for $\veps = 1,2$. Hence in both cases, there is factor $( (B_0B^*_0)_{i'_{1,k} i'_{1,k}} (B_0B^*_0)_{i'_{2,k} i'_{2,k}} )^{1/2} $. As in the proof of \eqref{eq:m1bd1}, we sum over all possible images of $I$-vertices and use that $B_0$ is a rank one projector, it gives \eqref{eq:mpiI1} (in both cases  $i_{1,k} = i_{2,k}$ and $i_{1,k} \ne i_{2,k}$).

We consider two cases in the proof of \eqref{ZZ1VARS1}.

{\em Case 1 : $\pi_1 = \pi_2$. } Since $\pi_1 \in R_k$, there exists at least one edge $JI$ of multiplicity at least $2$ in $E_{JI}(\pi_1)$ and thus of multiplicity at least $4$ in $E_{JI}$. All other edges in $E_{JI}$ have multiplicity at least $2$. The sum of all edge multiplicities in $E_{JI}$ is $2k$. It follows that the total number of distinct edges in $E_{JI}$ is at most $k-1$. In particular $|I| \vee |J|  \leq k-1$. Thanks to \eqref{eq:mpiI1}, this proves that \eqref{ZZ1VARS1} when $\pi_1 = \pi_2$.

{\em Case 2 : $\pi_1 \ne \pi_2$. } Vertices in $J$ have out-degree at least $2$ since each edge in $E_{JI}$ has multiplicity at least two. The sum of out-degrees of $J$-vertices  is $2k$. It follows that $|J| \leq k$. Similarly $|I| \leq k$. If $|I| \wedge |J| \leq k-1$, then thanks to \eqref{eq:mpiI1}, \eqref{ZZ1VARS1} holds. The last case to consider is thus $|I| = |J| = k$.

As in the proof of \eqref{eq:m1bd1},  we remark also that when summing over all $\pi' = (\pi'_1,\pi'_2)$ in the expression of $m_2(\pi)$, we may remove the constraints that $i'_{\veps,t} \ne i'_{\veps',t'}$ if $i_{\veps,t} \ne i_{\veps',t'}$. Indeed, the case where this constraint is not satisfied can be reduced to cases of $\tilde \pi \in \bar R_{k,k}$ with $| I(\tilde \pi) | < | I(\pi) |$ or $| J(\tilde \pi) | < |J(\pi)|$ which can be directly bounded thanks to \eqref{eq:mpiI1}.  We denote by $\hat m_2(\pi)$, the expression of $m_2(\pi)$ when the constraints $i_{\veps,t} \ne i_{\veps',t'}$ if $i_{\veps,t} \ne i_{\veps',t'}$ have been removed. From what precedes, it suffices to check $\hat m_2(\pi) \lesssim n^{-1}$.

As above, we consider the graph $G_{IJ}$  on $I \sqcup J$ spanned by the edges $E_{IJ}$. Since $|I| = |J| = k$, all vertices have degree exactly two in $G_{IJ}$. As above, by construction, $\hat m_2(\pi)$ is equal to a product of contributions on each connected component. Arguing as in the proof of \eqref{eq:m1bd1}, we get that 
$$
\hat m_2 (\pi) = n^{-k+1} \prod_{l=1}^c \ABS{ \TR ( A_l)},
$$
where $c$ is the number of connected components of $G_{IJ}$ and $A_l$ is a product of the matrices $(C_{\veps,t})$, $(\veps,t) \in p_l$, where $p = (p_1,\ldots,p_c)$ is a partition of $\{1,2\} \times \{0,\ldots,k-1\}$ into $c$ blocks and $C_{\veps,t} \in \{ B_{t} , B^\intercal_{t}\}$. Using that $B_0$ is a rank one projector, we deduce that 
\begin{equation}\label{eq:mpifinal}
\hat m_2(\pi) \lesssim n^{-k+1} n ^{c'},
\end{equation}
where $c'\leq c-1$ is the number of connected components of $G_{JI}$ not containing  a boundary vertex $j_{\veps,1}, i_{\veps_k}$.  The proof of \eqref{ZZ1VARS1} will thus be complete if we check that $c \leq  k-1$. Since $|I| = |J| = k$, it is thus sufficient to check that there exists a connected component with two distinct $IJ$-edges (since each connected component contains at least one $J$-vertex and $I$-vertex).

Now, we use the assumption that $\pi_1 \ne \pi_2$ but $E_{JI}(\pi_1) \cap E_{JI}(\pi_2)$ non-empty. It implies that there exists $v_* \in I \cup J$  which is in $\pi_1 \cap \pi_2$ but the neighboring edges in $G(\pi_1)$ and $G(\pi_2)$ are not all equal. Then, since in-degrees and out-degrees are exactly two, it leaves only two possibilities.  If $v_* = i_* \in I$ then for some $t_1,t_2$: $j_{1,t_1} = j_{2,t_2}$, $i_* = i_{1,t_1}  = i_{2,t_2}$ and $j_{1,t_1+1} \ne  j_{2,t_2+1}$. If $v_* = j_* \in J$, $i_{1,t_1-1} \ne i_{2,t_2-1}$, $j_* = j_{1,t_1}  = j_{2,t_2}$ and $i_{1,t_1} =  i_{2,t_2}$. Indeed, only the edge $IJ$ may differ: if two neighboring $JI$-edges of $v_\star$ differ, then the out and in-degree of $v_*$ would be at least $4$ (since each $JI$-edge has multiplicity at least $2$). We have thus found a vertex with two distinct adjacent $IJ$-edges. It  concludes the proof of \eqref{eq:ZZ1VARS}.
\end{proof}

Our next lemma computes the asymptotic covariance of $L = L_k (B)$. For integer $k \geq 1$. For $B,B'$ in $M_{N}(\dC)^\dN$, we set 
\begin{equation}\label{eq:defhatSigma}
\hat \Sigma_{k}(B,B') = \sum_{\pi \in Q_k} M_\pi (B)  \bar M_\pi (B') \quad \hbox{ and } \quad  \hat \Sigma_{k}'(B,B') = \varrho^k \sum_{\pi \in Q_k} M_\pi (B)  M_\pi (B'),
\end{equation}
where we have written explicitly the dependence of $M_\pi$ in $B$.
The following lemma asserts that $\hat \Sigma$ and $\hat \Sigma'$ is the asymptotic covariance of $L$. 
\begin{lem}
Let $k,l \geq 1$ be integers  and $B,B'$ in $M_{N}(\dC)^{\dN}$ satisfying assumptions \eqref{eq:defB0}-\eqref{eq:hypTCL}. Set $ L = L_k(B)$ and $ L' = L_l (B')$. We have $\dE L = 0$. Moreover, if $k \ne l$, $\dE L  \bar L' = \dE L   L'=  0$ while if $k = l$,
$$
\dE L   \bar L' = \hat \Sigma_k(B,B') + o(1) \quad \hbox{ and } \quad \dE L   L' = \hat \Sigma'_k (B,B') + o(1), 
$$
where $o(1)$ denotes a vanishing sequence  as $n\to \infty$ depending on $k,l$ and the upper bound on the operator norms of $B_t$ and $B'_t$, $0 \leq t \leq k-1$.
\end{lem}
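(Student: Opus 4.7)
The plan is as follows. First, $\dE L = 0$ is immediate: for $\pi \in Q_k$ the $k$ edges $(j_t, i_t)$ are distinct, so $X_\pi = \prod_{t=1}^k X_{j_t i_t}$ is a product of independent centered variables by $(H_0)$, hence $\dE X_\pi = 0$. For the covariances, expand
\[
\dE L \bar L' = \sum_{\pi \in Q_k,\ \pi' \in Q_l} M_\pi(B)\,\bar M_{\pi'}(B')\,\dE[X_\pi \bar X_{\pi'}].
\]
Because every factor appearing in $X_\pi$ (resp.\ in $\bar X_{\pi'}$) has multiplicity one, independence and centering force $\dE[X_\pi \bar X_{\pi'}]$ to vanish unless every edge of $\pi$ is matched by an equal edge of $\pi'$, which in particular requires $k=l$. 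The same argument applies to $\dE[X_\pi X_{\pi'}]$, so both covariances vanish when $k \ne l$.

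For $k = l$, a coincidence of edge sets between $\pi, \pi' \in Q_k$ is parametrised by a permutation $\sigma \in S_k$ via $\pi' = \pi_\sigma := (j_{\sigma(1)}, i_{\sigma(1)}, \ldots, j_{\sigma(k)}, i_{\sigma(k)})$, and one checks $\dE[X_\pi \bar X_{\pi_\sigma}] = \prod_t \dE|X_{j_t i_t}|^2 = 1$ and $\dE[X_\pi X_{\pi_\sigma}] = \varrho^k$. Hence
\[
\dE L \bar L' \;=\; \sum_{\sigma \in S_k} R_\sigma(B, B'), \qquad R_\sigma(B, B') := \sum_{\pi \in Q_k} M_\pi(B)\,\bar M_{\pi_\sigma}(B'),
\]
and the $\sigma = \mathrm{id}$ contribution is by definition $\hat \Sigma_k(B, B')$. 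A parallel identity, with $\varrho^k$ in front and $(B'_t)^\intercal$ replacing $(B'_t)^*$, reduces $\dE L L'$ to $\hat \Sigma'_k(B, B')$ plus an analogous sum over $\sigma \ne \mathrm{id}$. The task is therefore to prove $R_\sigma(B, B') = o(1)$ for every $\sigma \ne \mathrm{id}$ (and the analogue for the unconjugated covariance).

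The main obstacle is this last step, which I would handle by a graph-theoretic count in the spirit of Lemma~\ref{le:ZZ1}. For fixed $\sigma$, associate to the summand the bipartite multigraph $G_\sigma$ on vertex set $\{B_0, \ldots, B_{k-1}\} \cup \{B'_0, \ldots, B'_{k-1}\}$, with one edge joining two factors for every index variable ($i_t$ or $j_t$) they share, under the cyclic convention $i_0 = i_k$. Every vertex has degree exactly two, so $G_\sigma$ is a disjoint union of $c$ cycles $C_1, \ldots, C_c$. Once the $Q_k$ restriction is discarded (the excess contribution from colliding edges is $O(n^{-1})$ by the same rank-one improvement used in Lemma~\ref{le:ZZ1}), the sum factorises into
\[
R_\sigma \;=\; n^{-(k-1)} \prod_{l=1}^c \Tr(A_l) + O(n^{-1}),
\]
where each $A_l$ is a product of matrices drawn from $\{B_t, (B'_t)^*\}$ in the order prescribed by $C_l$. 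By \eqref{eq:hypTCL} each generic trace is $O(n)$, whereas any cycle containing the rank-one factor $B_0 = u \otimes \bar v$ (or $B'_0$) collapses to a scalar of the form $v^* (\cdots) u$ of order $O(1)$. Writing $\epsilon \in \{1,2\}$ for the number of cycles hosting $B_0$ or $B'_0$, this yields $|R_\sigma| = O(n^{c - \epsilon - k + 1}) + O(n^{-1})$, so $R_\sigma = o(1)$ unless $c = k$ and $\epsilon = 1$. This last configuration demands that $\{B_0, B'_0\}$ itself be a size-two cycle and that every other cycle have size two; through the constraints $\sigma(1) = 1$, $\sigma(k) = k$, and $\sigma(s+1) = \sigma(s) + 1$, these conditions collectively force $\sigma = \mathrm{id}$. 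For every other $\sigma$ the count gives $R_\sigma = O(n^{-1})$, and the identical argument treats the analogue for $\dE L L'$.
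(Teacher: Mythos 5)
Your proposal is correct, but it proves the $k=l$ covariance identity by a genuinely different route than the paper. The paper only argues directly for $\dE L=0$ and the case $k\ne l$ (exactly as you do), and then deliberately skips a direct computation for $k=l$: it obtains $\dE L\bar L'=\hat\Sigma_k+o(1)$ as a by-product of the exchangeable-pair identities it needs anyway for Stein's method, namely $\dE[\hat L-L\,|\,X]=-(k/n^2)L$ (Lemma \ref{le:stein1}), $\dE[(\hat L-L)\overline{(\hat L'-L')}\,|\,X]=\delta_{kl}S_k/n^2+E/n^2$ with $\dE|E|^2\lesssim n^{-1}$ (Lemma \ref{le:stein2}), and $\dE S_k=2\hat\Sigma_k$ from \eqref{eq:ESk}. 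You instead expand $\dE L\bar L'$ directly over pairs of paths in $Q_k$, observe that non-vanishing terms force the two edge sets to coincide and are thus indexed by a permutation $\sigma\in S_k$, identify the $\sigma=\mathrm{id}$ term with \eqref{eq:defhatSigma}, and kill every $\sigma\ne\mathrm{id}$ by the cycle decomposition of the degree-two factor graph together with the rank-one structure of $B_0,B'_0$; your counting ($c$ cycles, $\epsilon\in\{1,2\}$ of them rank-one-pinned, contribution $O(n^{c-\epsilon-k+1})$, and the verification that $c=k$, $\epsilon=1$ forces $\sigma=\mathrm{id}$) is sound, and the removal of the $Q_k$ constraint at cost $O(n^{-1})$ is justified exactly as in Lemma \ref{le:ZZ1}, since a repeated edge inside $\pi$ creates a $JI$-edge of multiplicity at least $4$, hence at most $k-1$ distinct edges, and \eqref{eq:mpiI1} then gives $O(n^{-1})$. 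What each approach buys: yours is self-contained, quantitative (the error is $O(n^{-1})$ rather than just $o(1)$), and does not depend on the later Stein lemmas; the paper's choice avoids redoing a path-counting estimate by recycling machinery it must establish anyway, at the price of making this lemma logically dependent on Subsection results proved afterwards. One stylistic caveat: your write-up of the final rigidity step ("$\sigma(1)=1$, $\sigma(k)=k$, $\sigma(s+1)=\sigma(s)+1$") compresses the bookkeeping of the cyclic convention $i_0=i_k$; spelled out, the two-cycle conditions read $\sigma(b)=a$, $\sigma(b+1)=a+1$ for the pairing of $B'_b$ with $B_a$, and induction from the $\{B_0,B'_0\}$ pairing indeed yields $\sigma=\mathrm{id}$, so the conclusion stands.
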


\begin{proof}
The claim $\dE L = 0$ follows from $\dE X_\pi = 0$ for all $\pi \in Q_k$ which is an immediate consequence of $\dE X_{ij} = 0$ and the definition of $Q_k$. For the second claim, we write
$$
\dE L   \bar L' =  \sum_{\pi \in Q_{k}, \pi' \in Q_l} M_\pi (B) \bar M_{\pi'} (B') \dE [ X_\pi \bar X_{\pi'}].
$$
Assume without loss of generality $k > l$. Recall $\dE X_{ij} = 0$ and each $(j_t, i_t)$ appears once in $\pi = (j_1,i_1, \ldots, j_k,i_k) \in Q_k$. It follows $\dE [ X_\pi \bar X_{\pi'}] $ will be zero unless the $k$ distinct $(j_t,i_t)$'s appear also $\pi'$. This is impossible since $k > l$.  The same argument applies to $\dE L L'$.  We skip the proof of the claim for $k= l$: it will be a straightforward consequence of the forthcoming Lemma \ref{le:stein1}, Lemma \ref{le:stein2} and \eqref{eq:ESk}.
\end{proof}

Our next lemma checks that $\hat \Sigma_k$ and $\Sigma_k$ defined respectively by \eqref{eq:defhatSigma} and \eqref{eq:defSigma} are close. 

\begin{lem}\label{le:hatSigma}
Let $k\geq 1$ be and integer  and $B,B'$ in $M_{N}(\dC)^{\dN}$ satisfying assumptions \eqref{eq:defB0}-\eqref{eq:hypTCL}. Then 
$$
\ABS{\Sigma_k(B,B') - \hat \Sigma_k(B,B') } \lesssim n^{-1} \quad \hbox{ and } \quad \ABS{\Sigma'_k(B,B') - \hat \Sigma'_k(B,B') } \lesssim n^{-1}.
$$
\end{lem}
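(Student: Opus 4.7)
The first step is to identify $\Sigma_k(B,B')$ with a sum over the full index set $P_k$. Expanding each trace $\Tr(B_t(B'_t)^*) = \sum_{i,j}(B_t)_{ij}\overline{(B'_t)_{ij}}$ and relabelling the summation indices of the $t$-th trace as $(i_t, j_{t+1})$ (with $i_0 = i_k$), each of the $2k$ indices $i_0,\ldots,i_{k-1}, j_1,\ldots,j_k$ appears in exactly one factor of $\prod_{t=0}^{k-1}(B_t)_{i_t j_{t+1}}\overline{(B'_t)_{i_t j_{t+1}}}$, so the product of traces factorises into a sum over $P_k$ and we obtain
$$\Sigma_k(B,B') = \sum_{\pi \in P_k} M_\pi(B)\overline{M_\pi(B')}, \qquad \Sigma'_k(B,B') = \sum_{\pi \in P_k} M_\pi(B) M_\pi(B'),$$
where the primed identity uses $\Tr(B_t(B'_t)^\intercal) = \sum_{ij}(B_t)_{ij}(B'_t)_{ij}$ instead.

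Subtracting the definition \eqref{eq:defhatSigma} reduces the lemma to the bound
$$\Big|\sum_{\pi \in P_k \setminus Q_k} M_\pi(B)\overline{M_\pi(B')}\Big| \lesssim n^{-1}$$
and its analogue with $\overline{M_\pi(B')}$ replaced by $M_\pi(B')$. Recall that $P_k \setminus Q_k$ is the set of paths exhibiting at least one coincidence $(j_t, i_t) = (j_{t'}, i_{t'})$ for some distinct $t, t' \in \{1,\ldots,k\}$.

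I would now partition $P_k \setminus Q_k$ into finitely many equivalence classes under the relation $\pi \sim \pi'$ used in the proof of Lemma \ref{le:ZZ1} (simultaneous bijective relabelling of $i$- and $j$-indices), with the number of classes depending only on $k$. Within each class the identification pattern is fixed, and the corresponding sub-sum is a sum-product expression over fewer independent indices than the full $P_k$-sum. I would control each class by iterated Cauchy--Schwarz in the spirit of \eqref{eq:boundCp}, using two key ingredients: (i) each identification $(j_t, i_t)=(j_{t'}, i_{t'})$ forces a loss of one independent $i$-index and one independent $j$-index, producing a factor $n^{-2}$ relative to the full $P_k$-sum; (ii) the rank-one factor $B_0 = u \otimes \bar v$ absorbs the boundary degrees of freedom at cost $O(1)$ via the elementary estimates $\sum_i |u_i||u'_i| \leq \|u\|_2\|u'\|_2 \lesssim 1$ and $\sum_j |v_j||v'_j| \lesssim 1$, mirroring the gain exploited in the derivation of \eqref{eq:m1bd1}. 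Together with the uniform operator-norm bound \eqref{eq:hypTCL}, this shows each non-trivial identification class contributes at most $O(n^{k-2})$ before the $n^{-(k-1)}$ normalisation, which yields $O(n^{-1})$ overall.

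The main obstacle is the combinatorial bookkeeping of how identifications interact with the bipartite graph underlying $M_\pi$: one must verify that no topological degeneracy prevents the rank-one gain from (ii) from being realised. This is the analogue of Cases 1 and 2 in the proof of Lemma \ref{le:ZZ1}, and the same technique applies essentially verbatim. The primed version $|\Sigma'_k - \hat\Sigma'_k| \lesssim n^{-1}$ is handled identically, since replacing $\overline{(B'_t)_{ij}}$ by $(B'_t)_{ji}$ leaves all absolute-value Cauchy--Schwarz estimates unchanged.
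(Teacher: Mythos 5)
Your proposal is correct and follows essentially the same route as the paper: the factorization identity $\Sigma_k(B,B')=\sum_{\pi\in P_k}M_\pi(B)\overline{M_\pi(B')}$ (and its primed analogue, where you only omit the harmless common factor $\varrho^k$), reduction to the sum over $P_k\setminus Q_k$, decomposition into the finitely many equivalence classes of Lemma \ref{le:ZZ1}, and the per-class bound via the Cauchy--Schwarz/rank-one-of-$B_0$ estimate \eqref{eq:mpiI1}, where a repeated pair $(j_t,i_t)=(j_{t'},i_{t'})$ forces $|I|\wedge|J|\leq k-1$ and hence a contribution $O(n^{-1})$. The paper merely streamlines this by first polarizing to $B=B'$ and quoting \eqref{eq:mpiI1} applied to $m_2(\pi,\pi)$ for $\pi\in R_k$, which is exactly the estimate you propose to rederive.
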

\begin{proof}
By polarization, it suffices to check the statement for $B = B'$. We have 
$$
\Sigma_k = \sum_{\pi \in P_k} M_\pi  \bar M_\pi  \quad \hbox{ and } \quad \Sigma_k = \rho^k \sum_{\pi \in P_k} M_\pi M_\pi .
$$
Therefore, 
$$
\ABS{ \Sigma_k - \Sigma_{k} }  = \ABS{ \sum_{\pi \in P_k \backslash Q_k } M_\pi  \bar M_\pi }.
$$
Using the notation of Lemma  \ref{le:ZZ1}, we decompose $R_k = P_k \backslash Q_k$ over equivalence classes.  Then \eqref{eq:mpiI1} applied to $m_2( \pi,\pi)$ with $\pi \in R_k$ implies $\ABS{ \Sigma_k - \hat \Sigma_{k} } \lesssim n^{-1}$. 
A similar bounds holds for $\Sigma'_k - \hat \Sigma'_k$.
\end{proof}


We now use Stein's method of exchangeable pairs. Let $\tilde X = (\tilde X_{ij})$ be an independent copy of $X$. For integers $1 \leq p,q \leq n$, we define $X^{pq}$ by setting for all $1 \leq i,j \leq n$, 
$$
X^{pq}_{ij} = \left\{ \begin{array}{ll}
X_{ij} & \hbox{ if $(i,j) \ne (p,q)$} \\
\tilde X_{pq} & \hbox{ if $(i,j) = (p,q)$}
\end{array}\right. .
$$
Finally, we set $\hat X = X^{pq}$ where $(p,q)$ is uniformly distributed on $\{1,\ldots,n\}^2$. By construction $(X,\hat X)$ and $(\hat X ,X)$ have the same distribution: they form an exchangeable pair. We define $L^{pq} = L^{pq}_k (B)$ and $\hat L = \hat L_{k} (B)$ as the random variables defined as $L$ in \eqref{eq:defZ1} with $X$ replaced by $X^{pq}$ and $\hat X$ respectively. 

\begin{lem}\label{le:stein1}
For $L = L_k (B)$ and $\hat L = \hat L_k (B)$ as above, we have 
$$
\dE[ \hat L - L | X] = -\frac{k}{n^2} L.
$$
\end{lem}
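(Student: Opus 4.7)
The plan is to unwind the definitions and exploit the two defining features of the index set $Q_k$: each pair $(j_t,i_t)$ in a path $\pi\in Q_k$ is distinct, so $X_\pi=\prod_{t=1}^k X_{j_ti_t}$ is multilinear in the entries of $X$, and in particular affine in each single entry. Combined with $\dE X_{ij}=0$, this makes the computation essentially a counting exercise.

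First, by the definition of $\hat X = X^{pq}$ with $(p,q)$ uniform on $\{1,\ldots,n\}^2$ and independent of everything else, we have
\[
\dE[\hat L\mid X]=\frac{1}{n^2}\sum_{p,q=1}^n \dE[L^{pq}\mid X].
\]
Fix $\pi=(j_1,i_1,\ldots,j_k,i_k)\in Q_k$. Since the pairs $(j_t,i_t)$ are pairwise distinct, either $(p,q)\notin\{(j_t,i_t):1\le t\le k\}$, in which case $X^{pq}_\pi=X_\pi$, or $(p,q)=(j_{t_0},i_{t_0})$ for a unique index $t_0$, in which case
\[
\dE[X^{pq}_\pi\mid X]=\Bigl(\prod_{t\ne t_0}X_{j_ti_t}\Bigr)\,\dE\tilde X_{pq}=0,
\]
using the independence of $\tilde X$ from $X$ and the mean-zero assumption $(H_0)$.

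Consequently,
\[
\dE[L^{pq}\mid X]-L=\sum_{\pi\in Q_k}M_\pi\bigl(\dE[X^{pq}_\pi\mid X]-X_\pi\bigr)=-\sum_{\substack{\pi\in Q_k\\ (p,q)\in\pi}}M_\pi X_\pi,
\]
where $(p,q)\in\pi$ means $(p,q)=(j_t,i_t)$ for some $t$. Summing over $(p,q)$ and interchanging the order of summation,
\[
\sum_{p,q=1}^n\sum_{\substack{\pi\in Q_k\\(p,q)\in\pi}}M_\pi X_\pi=\sum_{\pi\in Q_k}M_\pi X_\pi\cdot\#\{(p,q):(p,q)\in\pi\}=k\sum_{\pi\in Q_k}M_\pi X_\pi=kL,
\]
the cardinality being exactly $k$ because the $k$ pairs $(j_t,i_t)$ of any $\pi\in Q_k$ are distinct. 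Dividing by $n^2$ yields the desired identity. The only step requiring care is the second one, where the distinctness condition built into $Q_k$ is used twice: once to ensure $X_\pi$ is linear in each $X_{pq}$ (so that replacing $X_{pq}$ by an independent mean-zero copy kills the expectation), and once to get the exact multiplicity $k$ in the final count. There is no real obstacle here; the lemma is a clean algebraic identity that justifies the use of Stein's method of exchangeable pairs in the subsequent CLT argument.
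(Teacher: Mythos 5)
Your argument is correct and is essentially the paper's own proof: both fix $\pi\in Q_k$, use that the distinctness of the pairs $(j_t,i_t)$ makes $X_\pi$ linear in each resampled entry so that $\dE[X^{pq}_\pi\mid X]=0$ when $(p,q)\in\pi$ and $X^{pq}_\pi=X_\pi$ otherwise, and then count that each path is hit by exactly $k$ pairs $(p,q)$. The only cosmetic difference is the order in which you interchange the sums over $(p,q)$ and $\pi$.
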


\begin{proof}
By construction 
$$
\dE[ \hat L - L | X] = \frac{1}{n^2} \sum_{p,q} \dE[ L^{pq} - L | X] = \frac{1}{n^2} \sum_{\pi \in Q_k} M_\pi \sum_{p,q}  \dE [ X^{pq} _{\pi}   - X_\pi|X].
$$
Fix $\pi = (j_1,i_1, \ldots, j_k,i_k) \in Q_k$. Observe that $X^{pq} _{\pi}  =  X_\pi$ if $(p,q) \notin \{ (j_t,i_t) : 1 \leq t \leq k\}$. Otherwise, we have $X^{pq}_\pi = \tilde X_{pq} X_{\pi \backslash pq}$ where we have set $X_{\pi \backslash pq } = \prod_{ t : j_t i_t \ne pq } X_{j_t i_t}$. Hence $\dE [ X^{pq} _{\pi} | X ] =  0$ since, by the definition of $Q_k$, $(p,q)$ is then equal to exactly one $(j_t,i_t)$. We deduce that 
$$
\sum_{p,q} \dE [ X^{pq}_{\pi}   - X_\pi|X] = - \sum_{t=1}^k X_\pi = -k X_\pi. 
$$
The conclusion follows.
\end{proof}

For the next lemma, we introduce a new quantity which can be interpreted as the asymptotic quadratic variation of $\hat L - L$. More precisely, for $B,B'$ in $M_{N}(\dC)^\dN$, we set 
\begin{eqnarray*}
S_{k}(B,B') & = & \sum_{\pi \in Q_k} M_\pi (B)  \bar M_\pi (B')  \sum_{l=1}^k (|X_{\pi \backslash (j_li_l)}|^2 + |X_\pi|^2) \\
S_{k}'(B,B') & =  & \sum_{\pi \in Q_k} M_\pi (B) M_\pi (B')  \sum_{l=1}^k (\rho  X^2_{\pi \backslash (j_li_l)} + X^2_{\pi}) ,
\end{eqnarray*}
where we have set $X_{\pi \backslash j_li_l } = \prod_{ t : j_t i_t \ne j_li_l  } X_{j_t i_t} = \prod_{ t \ne l } X_{j_t i_t}$. We observe that $S_k$ and $S_k'$ are measurable with respect to $\sigma$-algebra generated by $X$ and that, by assumption $(H_0)$,
\begin{equation}\label{eq:ESk}
\dE S_k =2  \hat \Sigma_k \quad \hbox{ and } \quad \dE S'_k = 2 \hat \Sigma'_k. 
\end{equation}

\begin{lem}\label{le:stein2}
Let $k,l \geq 1$ be integers, and $B,B'$ in $M^{\dN}_{N}(\dC)$ satisfying assumptions \eqref{eq:defB0}-\eqref{eq:hypTCL}. Set $ L = L_k(B$, $\hat L = \hat L_k (B)$, $ L' = L_l (B')$, $\hat L' = \hat L_l (B')$, $S_k = S_k (B,B')$, $S'_k =S_k (B,B')$.  We have
$$
\dE[ (\hat L - L)  \overline{(\hat L' - L')} | X]  = \delta_{kl} \frac{S_k}{n^2}  +  \frac{E}{n^2} \quad \hbox{ and } \quad\dE[ (\hat L - L)  {(\hat L' - L')}  | X] = \delta_{kl} \frac{S'_k}{n^2} + \frac{ E'}{n^2},
$$
where the random variables $E, E'$ satisfy $\dE |E|^2 \lesssim n^{-1}$ and $\dE |E'|^2 \lesssim n^{-1}$.
\end{lem}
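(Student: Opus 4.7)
The idea is to carry out the exchangeable-pair computation by hand, identify the diagonal contribution as $S_k/n^2$ (resp.\ $S_k'/n^2$), and then control the off-diagonal error in $L^2$.

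\textbf{Step 1: explicit substitution.} Because $\pi \in Q_k$ has pairwise distinct pairs $(j_s, i_s)$, the perturbation $X^{pq}_\pi - X_\pi$ is nonzero precisely when $(p,q)$ coincides with the unique pair $(j_s, i_s)$ of $\pi$; in that event the factor $X_{pq}$ is swapped for $\tilde X_{pq}$. Writing $X_{\pi \setminus s} = \prod_{t \neq s} X_{j_t i_t}$, this yields
\[
L^{pq} - L = (\tilde X_{pq} - X_{pq}) A_{pq}, \qquad A_{pq} = \sum_{s=1}^k \sum_{\substack{\pi \in Q_k \\ (j_s, i_s) = (p,q)}} M_\pi(B) \, X_{\pi \setminus s},
\]
and analogously $L'^{pq} - L' = (\tilde X_{pq} - X_{pq}) A'_{pq}$. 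Since $\tilde X$ is independent of $X$ and centered, $\dE[|\tilde X_{pq} - X_{pq}|^2 \mid X] = 1 + |X_{pq}|^2$ and $\dE[(\tilde X_{pq} - X_{pq})^2 \mid X] = \varrho + X_{pq}^2$. Averaging over $(p,q)$ uniform on $\{1,\ldots,n\}^2$ gives
\[
\dE[(\hat L - L)\overline{(\hat L' - L')} \mid X] = \frac{1}{n^2}\sum_{p,q} (1 + |X_{pq}|^2)\, A_{pq} \overline{A'_{pq}},
\]
and the same formula with $1 + |X_{pq}|^2$ replaced by $\varrho + X_{pq}^2$ for the unconjugated variant.

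\textbf{Step 2: diagonal extraction.} Expand $A_{pq}\overline{A'_{pq}}$ as a sum over quadruples $(\pi, s, \pi', t) \in Q_k \times \{1,\ldots,k\} \times Q_l \times \{1,\ldots,l\}$ subject to $(j_s, i_s) = (j'_t, i'_t) = (p,q)$. The \emph{diagonal} piece $\pi = \pi'$, $s = t$ (which forces $k = l$) collapses the $(p,q)$-sum with the constraint $(j_s, i_s) = (p,q)$ and uses $(1 + |X_{pq}|^2)|X_{\pi \setminus s}|^2 = |X_{\pi \setminus s}|^2 + |X_\pi|^2$ to produce exactly $\delta_{kl} S_k/n^2$. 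An identical manipulation with the $\varrho$-version delivers $\delta_{kl} S_k'/n^2$ for the unconjugated expression (the $\varrho$ inherited from $\dE \tilde X_{pq}^2$ supplying precisely the factor in the definition of $S_k'$). The residual sum is the desired $E/n^2$ (resp.\ $E'/n^2$).

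\textbf{Step 3: $L^2$ control of the off-diagonal.} The main work, and the chief obstacle, is the estimate $\dE |E|^2 \lesssim n^{-1}$ (and likewise for $E'$). Squaring $E$ yields a sum indexed by two off-diagonal quadruples $(\pi_1, s_1, \pi'_1, t_1)$ and $(\pi_2, s_2, \pi'_2, t_2)$; pairing the monomials in $X$ via $(H_0)$--$(H_k)$ converts each term into a deterministic graphical sum attached to a bipartite directed (multi)graph on $I \sqcup J$-vertices, exactly as analyzed in Lemma~\ref{le:ZZ1}. The off-diagonal condition---excluding the exact matching $\pi_r = \pi'_r, s_r = t_r$ for $r = 1,2$---forces either $|I|\wedge |J|$ to fall strictly below the maximal value reached by the diagonal quadruples, or, in the boundary case of equality, a reduction by at least one in the number of connected components of the $IJ$-subgraph $G_{IJ}$. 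In either sub-case, the Cauchy--Schwarz-type bound \eqref{eq:boundCp} summed vertex by vertex over $J$, combined with the rank-one identity $\TR(B_0 B_0^*) = \|u\|_2^2\|v\|_2^2$ applied at the vertex $i_k$, produces an additional factor of $n^{-1}$ relative to the diagonal scaling, giving the claim. The case analysis for $E'$ is the same up to replacing adjoints by transposes and introducing the moment $\dE X_{ij}^2 = \varrho$.
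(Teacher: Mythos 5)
Your Steps 1 and 2 are correct and are exactly the paper's computation: factoring $L^{pq}-L=(\tilde X_{pq}-X_{pq})A_{pq}$ and using $\dE[|\tilde X_{pq}-X_{pq}|^2\mid X]=1+|X_{pq}|^2$, $\dE[(\tilde X_{pq}-X_{pq})^2\mid X]=\varrho+X_{pq}^2$ reproduces the paper's identity; since the pairs $(j_s,i_s)$ of a path in $Q_k$ are distinct, your diagonal $(\pi,s)=(\pi',t)$ is the same as the paper's diagonal $\pi_1=\pi_2$ (forcing $k=l$), and your residual $E$ coincides with the paper's $E'_1+E'_2$ (conjugated analogue). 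So the decomposition and the identification of $\delta_{kl}S_k/n^2$, $\delta_{kl}S'_k/n^2$ are fine.

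The genuine gap is Step 3, which is where essentially all of the work of this lemma lies: you assert, rather than prove, that the off-diagonal constraint yields the extra factor $n^{-1}$, and the dichotomy you state is not sufficient as formulated. After reducing to a fixed equivalence class of quadruples $(\pi_1,\pi_2,\pi_3,\pi_4)$ with $\pi_1\ne\pi_2$, $\pi_3\ne\pi_4$, each pair sharing a $JI$-edge and every $JI$-edge of multiplicity at least two, the generic bound in the spirit of \eqref{eq:mpiI1} only gives $m_4(\pi)\lesssim n^{-2k+2}\,n^{(|I|+\delta_I)\wedge(|J|+\delta_J)-2}$, and degree counting still allows the extremal case $|I|=|J|=2k$ with $\delta_I=\delta_J=1$. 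In that case $G_{IJ}$ is a disjoint union of $c$ alternating cycles and the estimate becomes $n^{-2k+2}n^{c'}$, where $c'$ counts the components avoiding the boundary vertices $j_{\veps,1},i_{\veps,k}$; the trivial bounds are $c\le 2k$ and $c'\le c-1$, so "a reduction by at least one component" only gives $\dE|E|^2\lesssim 1$, not $\lesssim n^{-1}$. One actually needs $c'\le 2k-3$, and establishing this requires using simultaneously $\pi_1\ne\pi_2$, $\pi_3\ne\pi_4$ \emph{and} the shared-edge (nonempty intersection) constraints — they force vertices incident to two distinct $IJ$-edges inside specific components — together with bookkeeping of where the rank-one factor $B_0$ sits (whether all four paths share the same initial $J$-vertex, three of them do, etc., i.e. the values of $\delta_I,\delta_J$). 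The remaining boundary cases $|I|=2k-1$ with $\delta_I\in\{1/2,1\}$, where $G_{IJ}$ has a vertex of degree $3$ or $4$, require yet another component-by-component argument tracking which trace factors contain $B_0$. None of this is carried out in your sketch; as written, Step 3 restates the conclusion of the case analysis rather than proving it, so the bound $\dE|E|^2\lesssim n^{-1}$ (and likewise for $E'$) is not established.
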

\begin{proof}We give the proof for $(\hat L - L)  {(\hat L' - L')}$, the case of $(\hat L - L)  \overline{(\hat L' - L')}$ being identical. 
For $\pi \in \sqcup_k Q_k$, we set $M_\pi= M_\pi(B)$ and $M'_{\pi} = M_\pi (B')$. We write 
\begin{eqnarray*}
\dE[ (\hat L - L)  {(\hat L' - L')}  | X] &=& \frac 1 {n ^2}  \sum_{p,q} \dE[ (\hat L^{pq}- L)   {(\hat {L'}^{pq} - L')}  | X] \\
& = & \frac 1 {n ^2}  \sum_{\pi \in Q_k , \pi' \in Q_l}  M_\pi  M'_{\pi'} \sum_{p,q} \dE [ (X^{pq} _{\pi}   - X_\pi)(X^{pq} _{\pi'}   - X_{\pi'}) |X].
\end{eqnarray*}
Observe that $X^{pq} _{\pi}  =  X_\pi$ if $(p,q) \notin \{ (j_t,i_t) : 1 \leq t \leq k\}$, say $(p,q) \notin \pi$ for short. Otherwise, $(p,q) \in \pi$ and we have $X^{pq}_\pi = \tilde X_{pq} X_{\pi \backslash pq}$ where we have set $X_{\pi \backslash pq } = \prod_{ t : j_t i_t \ne pq } X_{j_t i_t}$. By the definition of $Q_k$, $Q_l$ and Assumption $(H_k)$ we get 
$\dE [ (X^{pq} _{\pi}   - X_\pi)(X^{pq} _{\pi'}   - X_{\pi'}) |X] = \rho X_{\pi \backslash pq}X_{\pi' \backslash pq} + X_\pi X_{\pi'}$ if $(p,q) \in \pi \cap \pi'$ and otherwise $\dE [ (X^{pq} _{\pi}   - X_\pi)(X^{pq} _{\pi'}   - X_{\pi'}) |X] = 0$.

We deduce that 
\begin{eqnarray*}\label{eq:SI22}
\dE[ (\hat L - L)   {(\hat L' - L')}  | X] & = &  \frac 1 {n ^2}\sum_{\pi_1 \in Q_k , \pi_2 \in Q_l}  M_{\pi_1}   M'_{\pi_2} \sum_{(p,q) \in \pi_1 \cap \pi_2}  \left( \rho X_{\pi_1 \backslash pq}X_{\pi_2 \backslash pq}  + X_{\pi_1} X_{\pi_2}\right) \\
& =& \delta_{kl} \frac{S'_k}{n^2} + \frac{E'_1 +E'_2}{n^2},
\end{eqnarray*}
with \begin{eqnarray*}
E'_1 & =&  \sum_{\pi_1 \in Q_k, \pi_2 \in Q_l , \pi_1 \ne \pi_2}  M_{\pi_1} M'_{\pi_2} |  \pi_1 \cap \pi_2 |  X_{\pi_1} X_{\pi_2},\\ E'_2 & = & \rho \sum_{\pi_1 \in Q_k, \pi_2 \in Q_l , \pi_1 \ne \pi_2}  M_{\pi_1} M'_{\pi_2} \sum_{(p,q) \in \pi_1 \cap \pi_2}  X_{\pi_1 \backslash pq } X_{\pi_2\backslash pq }.
 \end{eqnarray*}

We should prove that for $i =1,2$, $\dE |E'_i|^2 \lesssim n^{-1}$. We prove only the case $i =1$, the case $i=2$ being   identical. We may assume $k \geq 2$ (if $k=1$, $E'_i = 0$). To that end,  we start by expanding $|E'_1|^2 $:
$$
\dE  |E'_1|^2 = \sum  M_{\pi_1} M'_{\pi_2} \bar M_{\pi_3} \bar M'_{\pi_4} |  \pi_1 \cap \pi_2| |  \pi_3 \cap \pi_4 |  \dE \left[ X_{\pi_1} X_{\pi_2}  \bar X_{\pi_3} \bar X_{\pi_4}  \right],
$$
where the sum is over $4$-tuple $\pi = (\pi_1,\pi_2,\pi_3,\pi_4 ) \in Q_{k} \times Q_l \times Q_k \times Q_k$, $\pi_1 \ne \pi_2$, $\pi_3 \ne \pi_4$.
 To $4$-tuple $\pi = (\pi_1,\pi_2,\pi_3,\pi_4 ) \in Q_{k} \times Q_l \times Q_k \times Q_k$, we may associate a bipartite directed graph $G(\pi) = ( I, J, E_{IJ}, E_{JI})$ which is the union of the graphs $G(\pi_\veps), \veps = 1,\ldots,4$ defined in Lemma \ref{le:ZZ1}. We consider the subset $\bar Q_{k,l,k,l}$ of $Q_{k} \times Q_l \times Q_k \times Q_k$ such that $\pi_1 \ne \pi_2$, $\pi_3 \ne \pi_4$, $E_{JI} (\pi_1) \cap E_{JI} (\pi_2) \ne \emptyset$, $E_{JI} (\pi_3) \cap E_{JI} (\pi_4) \ne \emptyset$ and such that each edge in $E_{JI}$ is visited at least twice. By assumption, in the above expansion for $\dE  |E'_1 |^2 $, only elements $\pi \in \bar Q_{k,l,k,l}$ might give a non-zero term in the summand. 

We consider the usual equivalence class by the action of pairs of bijections  on $\{1,\ldots,n\}$. Arguing as in Lemma \ref{le:ZZ1}, it is sufficient to prove for any fixed $\pi  = (\pi_1,\pi_2,\pi_3,\pi_4 )\in \bar Q_{k,l,k,l}$, we have
\begin{equation}\label{eq:mpi4}
m_4 (\pi) =  \ABS{ \sum_{ \pi'  \sim\pi  }  M_{\pi'_1}  M_{\pi'_2} \bar M_{\pi'_3}  \bar M_{\pi'_4}   } \lesssim n^{-1},
\end{equation}
where the sum is over all $\pi' = (\pi'_1,\pi'_2,\pi'_3,\pi'_4) \sim \pi$. Arguing as in the proof of Lemma \ref{le:ZZ1}, we have 
\begin{equation}\label{eq:boundmpi1}
m_4(\pi) \leq n^{-2k +2 } n^{(|I| + \delta_I )\wedge (|J| + \delta_J)  -2},
\end{equation}
where $\delta_I,\delta_J \in \{0,1/2,1\}$. We have $\delta_I = 1$ if $j_{\veps,1} = j_{\veps',1}$ for all $\veps,\veps'$ in $\{1,\ldots,4\}$. Indeed, arguing as in the proof of \eqref{eq:mpiI1}, in this case the factor $\prod_{\veps=1}^2  ( (B_0 B_0^*)_{i'_{\veps,k}i'_{\veps,k}})^{1/2}$ will appear when using \eqref{eq:boundCp} for $j = j_{\veps,1}$. Similarly, we have $\delta_I =1/2$, if $j_{\veps,1} = j_{\veps',1}$ for exactly $3$ different indices, say $\veps = 2,3,4$ for example: in this case, the factor $\prod_{\veps =1}^3 ((B_0 B_0^*)_{i'_{\veps,k}i'_{\veps,k}})^{1/2}$ will appear when using \eqref{eq:boundCp} for $j = j_{1,1}$ and $j = j_{2,1}$, hence the exponent $|I| - 3/2$. In all other cases,  when using \eqref{eq:boundCp}, there will be $4$ terms $\prod_{\veps=1}^4 ((B_0 B_0^*)_{i'_{\veps,k}i'_{\veps,k}})^{1/2}$ and $\delta_I = 0$. Thanks to the involution $\theta$ the same argument holds with $J$ replaced by $I$ and $\delta_J$ defined accordingly.

Hence if $(|I| + \delta_I) \wedge (|J| + \delta_J) \leq 2k-1$, \eqref{eq:boundmpi1} implies  \eqref{eq:mpi4}. Moreover, by counting degrees of $G$, we find that $|I| \leq 2k$ and $|J| \leq 2k$. We assume from now on $2k - 1  \leq |I|  \leq |J|  \leq 2k$.

We start with the case  $|I| = 2k-1$ and $\delta_I \in \{1/2,1\}$. Then there is one $JI$-edge of multiplicity $p \in \{ 3,4\}$ (this is $(j_1,i_1)$) and all other $JI$-edges are of multiplicity $2$.  It follows $|J|  = 2k-1$ as well.

We may argue as in the proof of Lemma \ref{le:ZZ1}:  to prove \eqref{eq:mpi4}, it suffices to prove that $\hat m_4(\pi) \lesssim n^{-1}$ where $\hat m_4(\pi)$ is obtained from $m_4(\pi)$ by removing, when  summing over $\pi'$, the constraints that $i'_{\veps,t} \ne i'_{\veps',t'}$ if $i_{\veps,t} \ne i_{\veps',t'}$ and similarly for the $J$-vertices.

Also, as in the proof of Lemma \ref{le:ZZ1}, we consider the graph $G_{IJ}$ on $I \sqcup J$ spanned by the edges in $E_{IJ}$. All vertices of $G_{IJ}$ have degree $2$, except one $J$-vertex with degree $p$ (this is a $j_{\veps,1}$) and one $I$-vertex with degree $p$ (this is a $i_{\veps,1}$).

{\em Case 1. } $|I| = 2k-1$ and $\delta_I =1/2$.
We have $p=3$ in this case. Then, by parity, there is one connected component of $G_{IJ}$ which the two vertices of degree $3$ and the other connected components of $G_{IJ}$ contain vertices of degree $2$ only.  The argument leading to \eqref{eq:mpifinal} implies that 
$$
\hat m_4(\pi) =n^{-2k+2}  | \sum_{i,j} \prod_{q=1}^3 (A_{1,q})_{ij} |\prod_{l=2}^{c} |\TR(A_l)| , 
$$
where $c$ is the number of connected components of $G_{IJ}$, $A_l$, $A_{1,q}$ are product of matrices $B_t$ or $B^\intercal _t$, $0 \leq t \leq k-1$, each $B_t$ appears exactly 4 times. The assumption $\delta_I = 1/2$, implies that each $A_{1,q}$ contains as a factor $B_0$ and one $A_l$, $l \geq 2$, contains a factor $B_0$. Since $B_0$ is a rank one projector, we deduce $|\prod_{l=2}^{c} |\TR(A_l)| \lesssim n^{c-2}$ and from \eqref{eq:boundCp}, that 
$$
| \sum_{i,j} \prod_{q=1}^3 (A_{1,q})_{ij} | \lesssim 1.
$$
So finally, 
$$
\hat m_4(\pi) \lesssim n^{-2k+2} n^{c-2} \leq n^{-1},
$$
where the inequality $c \leq 2k -1$ follows the fact that each  connected component contains at least one $J$-vertex and the connected  component of the $j_{\veps,1}$ of degree $3$ contains at least $2$. It concludes the proof of \eqref{eq:mpi4} in this case.

{\em Case 2. } $|I| = |J| =  2k-1$ and $\delta_I = \delta_J = 1/2$. In this case, there are two subcases to consider. Set $j_1 = j_{\veps,1}$ and $i_1 = i_{\veps,1}$. Either the two vertices of degree $4$ in $G_{IJ}$ (that is $j_1$ and $i_1$) are in the same connected component or not. If they are in the same connected component, then arguing as in case 1, we get 
\begin{align*}
& \hat m_4(\pi)  =n^{-2k+2}  | \sum_{i,j} \prod_{q=1}^4 (A_{1,q})_{ij} |\prod_{l=2}^{c} |\TR(A_l)| \\
 \quad \hbox{ or } \quad & \hat m_4(\pi) =n^{-2k+2}  | \sum_{i,j} (A_{1,q})_{ii} (A_{2,q})_{jj}  \prod_{q=3}^4 (A_{1,q})_{ij} |\prod_{l=2}^{c} |\TR(A_l)|, 
\end{align*}
with $A_{1,q}, A_l$ as above (the two cases corresponding to the two $4$-regular unlabeled connected multigraphs with $2$ vertices). Moreover, each $A_{1,q}$ contain $B_0$ in its product since $\delta_I = 1$. Hence from  \eqref{eq:boundCp}, we find
$$
\hat m_4(\pi)  \lesssim n^{-2k+2} n^{c-1} \leq n^{-1}.
$$
where the inequality $c \leq 2k -2$ follows from the fact that each  connected component contains one $I$-vertex and the connected component of $j_{1}$ contains at least $3$ (consider a path from $i_1$ to $j_1$ and recall that $(i_{\veps,k},j_1)$ is a $JI$-edge, $k \geq 2$). It concludes the proof of \eqref{eq:mpi4} in this subcase.

In the other subcase, $j_1$ and $i_1$ are not in the same connected component. Then these connected components are composed of two alternating cycles attached to $j_1$ or $i_1$. We obtain the expression:
$$
\hat m_4(\pi) =n^{-2k+2}  | \sum_{j} \prod_{q=1}^2 (A_{1,q})_{jj} | | \sum_{i} \prod_{q=1}^2 (A_{2,q})_{ii} | \prod_{l=3}^{c} |\TR(A_l)|. 
$$
Since $\delta_I = 1$, $A_{1,q}$ contains two factor $B_0$. From \eqref{eq:mpi4}, we get
$$
\hat m_4 (\pi) \lesssim n^{-2k+2} n^{c-1} \leq n^{-1}.
$$
where the inequality $c \leq 2k -2$ follows the fact that each  connected component contains at least two $IJ$-edges and two connected components contain at least $4$. It concludes the proof of \eqref{eq:mpi4} in this case.

{\em Case 3. } The final case to consider is $|I| = |J| = 2k$, $\delta_I = \delta_J =1$, all $JI$-edges have multiplicity $2$. As in the proof of Lemma \ref{le:ZZ1}, it suffices to prove that $\hat m_4(\pi) \lesssim n^{-1}$ where $\hat m_4(\pi)$ is obtained from $m_4(\pi)$ by removing, when  summing over $\pi'$, the constraints that $i'_{\veps,t} \ne i'_{\veps',t'}$ if $i_{\veps,t} \ne i_{\veps',t'}$ and similarly for the $J$-vertices.
In the graph $G_{IJ}$ on $I \sqcup J$ spanned by the edges in $E_{IJ}$, all vertices have now degree $2$. The argument leading to \eqref{eq:mpifinal} implies that 
$$
\hat m_4(\pi) \lesssim n^{-2k+2} n^{c'},
$$
where $c'$ is the number of connected components of $G_{IJ}$ not containing a boundary vertex $j_{\veps,1}, i_{\veps,k}$. To prove  \eqref{eq:mpi4}, it is thus sufficient to check that $c' \leq 2k -3$. Let $c$ be the number of connected components of $G_{IJ}$.

Let us first assume that all vertices $i_{\veps,k}$ are in the same connected component. Then $c' = c-1$ and this connected component has at least $4$, $IJ$-edges.  Moreover, as in the proof of case 2 in Lemma \ref{le:ZZ1}, the assumptions:  $\pi_1 \ne \pi_2$, $\pi_3 \ne \pi_4$ imply that there exist two distinct vertices and each adjacent to two distinct $IJ$-edges all distinct. Combining all possibilities and using $k\geq 2$, these $4$ $IJ$-edges can be chosen to be different from the $4$ $IJ$-edges attached to $i_{\veps,k}$ mentioned above. Since each connected component contains at least $2$ $IJ$-edges, it implies that $c \leq 2k-2$ and thus $c' \leq 2k -3$ as claimed.

Similarly, if not all vertices $i_{\veps,k}$ are in the same connected component then $c' \leq c-2$. Then, as in the proof of case 2 in Lemma \ref{le:ZZ1}, the bound $c \leq 2k-1$ comes from any of the two assumptions  $\pi_1 \ne \pi_2$ or $\pi_3 \ne \pi_4$.
\end{proof}

Our next lemma studies the variance of the variables $S_k(B,B)$ and $S'_k(B,B)$ defined above Lemma \ref{le:stein2}. Recall that the expectation of these variables were computed in \eqref{eq:ESk}.

\begin{lem}\label{le:Sk2}
Let $k \geq 1$ be an integer, and $B,B'$ in $M_{N}(\dC)^{\dN}$ satisfying assumptions \eqref{eq:defB0}-\eqref{eq:hypTCL}. Set $S_k = S_k (B,B')$ and $S'_k = S'_k (B,B')$.  We have
$$
\dE[ |S_k |^ 2 ] \lesssim  1 \quad \hbox{ and } \quad\dE[ |S'_k |^ 2 ] \lesssim  1.
$$
Moreover if $k \geq 2$, then 
$$
\dE [ |S_k - \dE S_k|^2 ] \lesssim n^{-1}\quad \hbox{ and } \quad \dE [ |S'_k - \dE S'_k|^2 ] \lesssim n^{-1}.
$$ 
For $k = 1$, we have, with $B_0 = u \otimes \bar v$ and $B'_0 = u' \otimes \bar v'$,
$$
\dE [ |S_1 - \dE S_1|^2 ] \lesssim \|u \|_4^2 \| u'\|^2_ 4 \|v \|_4^2 \| v'\|_4^2   \quad \hbox{ and } \quad \dE [ |S'_1 - \dE S'_1|^2 ] \lesssim \|u \|_4^2 \| u'\|^2_ 4 \|v \|_4^2 \| v'\|_4^2 .
$$ 
\end{lem}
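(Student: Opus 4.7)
The plan is to treat $T_\pi$ as a polynomial in the i.i.d.\ non-negative variables $Y_e := |X_e|^2$ (so $\dE Y_e = 1$ and all moments are finite by the assumptions $(H_k)$ for every $k$). Since for $\pi \in Q_k$ the $k$ edges $e_t = (j_t,i_t)$ are pairwise distinct, $|X_\pi|^2 = \prod_{t=1}^k Y_{e_t}$ and $|X_{\pi\setminus e_l}|^2 = \prod_{t \ne l} Y_{e_t}$, so that $T_\pi$ is a polynomial in $(Y_{e_t})_t$ with deterministic expectation $2k$ and uniformly bounded moments. The centred variable $\tilde T_\pi := T_\pi - 2k$ then expands in $\tilde Y_e := Y_e - 1$ as a finite, bounded-coefficient sum of monomials $\prod_{t \in S}\tilde Y_{e_t}$ indexed by non-empty subsets $S \subseteq \{1,\ldots,k\}$.

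The first step is the observation that $\tilde T_{\pi_1}$ and $\tilde T_{\pi_2}$ depend respectively on $(Y_e)_{e \in E_{JI}(\pi_1)}$ and $(Y_e)_{e \in E_{JI}(\pi_2)}$, hence are independent (and therefore have zero covariance) as soon as $E_{JI}(\pi_1) \cap E_{JI}(\pi_2) = \emptyset$. Denoting by $R_{k,k} \subseteq Q_k \times Q_k$ the subset of pairs with a common $JI$-edge, this yields
\[
\dE\ABS{S_k(B,B') - \dE S_k(B,B')}^2 \;=\; \sum_{(\pi_1,\pi_2) \in R_{k,k}} M_{\pi_1}(B)\overline{M_{\pi_1}(B')}\, \overline{M_{\pi_2}(B)}\, M_{\pi_2}(B')\, c_{\pi_1,\pi_2},
\]
with $|c_{\pi_1,\pi_2}| \lesssim 1$ from the moment bounds on the $Y_e$'s. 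The first assertion $\dE|S_k|^2 \lesssim 1$ then follows immediately from $\dE|S_k|^2 = |\dE S_k|^2 + \dE|S_k - \dE S_k|^2$, using that $\dE S_k \lesssim 1$ via Lemma~\ref{le:hatSigma} and bounding the above sum by its unconstrained version $|\hat\Sigma_k|^2 \lesssim 1$. The bounds for $S_k'$ are obtained identically after replacing adjoints by transposes and inserting the factor $\varrho^k$.

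For the variance bound when $k \ge 2$, I will estimate the above sum via the equivalence-class grouping used in the proof of Lemma~\ref{le:ZZ1}: partition $R_{k,k}$ into the finitely many classes under the action of simultaneous bijections of the $J$- and $I$-indices, and bound each class's contribution by $O(n^{-1})$. For a fixed representative, the class sum will be controlled just as in~\eqref{eq:m1bd1} and~\eqref{eq:mpiI1}: apply the Cauchy--Schwarz bound~\eqref{eq:boundCp} when summing over each $J$-vertex image, estimate the sums over $I$-vertex images by $n^{|I|}$, and exploit the rank-one structure of $B_0 = u \otimes \bar v$ and $B_0' = u' \otimes \bar v'$ to save a factor $n^{-1/2}$ at each of the boundary $I$-vertices $i_{\veps,k}$ ($\veps = 1,2$). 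Compared to the unconstrained pair sum $|\hat\Sigma_k|^2 \lesssim 1$, the additional constraint that $E_{JI}(\pi_1) \cap E_{JI}(\pi_2) \ne \emptyset$ identifies at least one $J$- and one $I$-index between the two paths, effectively removing one free cycle from the associated trace factorization and producing the desired $n^{-1}$ gain.

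Finally, the case $k = 1$ reduces to an explicit computation: $M_{(j,i)}(B) = u_i \bar v_j$ and $T_{(j,i)} = 1 + |X_{ji}|^2$, so that
\[
S_1 - \dE S_1 \;=\; \sum_{i,j} u_i \bar u'_i \bar v_j v'_j \, (|X_{ji}|^2 - 1),
\]
and by the independence and bounded second moments of the $|X_{ji}|^2 - 1$,
\[
\dE|S_1 - \dE S_1|^2 \;\lesssim\; \sum_{i,j} |u_i|^2 |u'_i|^2 |v_j|^2 |v'_j|^2 \;\le\; \|u\|_4^2 \|u'\|_4^2 \|v\|_4^2 \|v'\|_4^2
\]
by two applications of Cauchy--Schwarz, and the same identity gives the estimate for $S_1'$. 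The main obstacle is expected to lie in the exceptional graph topologies appearing in the $k \ge 2$ variance bound, namely when the shared edge sits at (or coincides in position with) a boundary index $(j_{\veps,1}, i_{\veps,k})$: there the rank-one $B_0, B_0'$ factors cannot be used independently of the shared-edge identification, and a combined bookkeeping in the spirit of Case~2 of the proof of Lemma~\ref{le:stein2} will be needed to still extract the required $n^{-1}$ saving.
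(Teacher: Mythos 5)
Your proposal is correct and follows essentially the same route as the paper: the zero-covariance observation for pairs $(\pi_1,\pi_2)$ with disjoint $JI$-edge sets, the reduction to finitely many equivalence classes bounded via \eqref{eq:boundCp} and the rank-one saving from $B_0, B_0'$ (with the delicate boundary topologies deferred to the bound \eqref{eq:boundmpi1} and Case 2 of the proof of Lemma \ref{le:stein2}, exactly as the paper does), and the same explicit computation for $k=1$. The only slip is cosmetic: the unconstrained pair sum should be bounded, after taking absolute values and using Cauchy--Schwarz, by $\hat\Sigma_k(B,B)\,\hat\Sigma_k(B',B') \lesssim 1$ rather than by $|\hat\Sigma_k(B,B')|^2$, since the mixed sum $\hat\Sigma_k(B,B')$ is signed and its smallness does not control the sum of moduli.
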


\begin{proof}
We start with the first claim. Thanks to Assumption $(H_4)$, we have
$$
\dE |S_k| ^2 \vee \dE |S'_k|^2 \lesssim \sum_{(\pi_1,\pi_2) \in Q^{\times 2}_k } m_4(\pi_1,\pi_2,\pi_1,\pi_2), 
$$
where $m_4$ was defined in \eqref{eq:mpi4} on $Q^{\times 4}_k$. We next remark that 
$$
m_4(\pi_1,\pi_2,\pi_1,\pi_2) \leq m_2(\pi_1,\pi_1) m_2(\pi_2,\pi_2) 
$$
where for $m_2$ was defined in  \eqref{ZZ1VARS1} on $Q^{\times 2}_k$. From \eqref{eq:mpiI1} applied to $(\pi,\pi)$, $\pi \in Q_k$, we have  $m_2(\pi,\pi) \lesssim n^{-k+1} n^{|I|-1}$ where $I$ is the number of $I$-vertices in $G(\pi)$. Since $|I| \leq k$, we deduce that  $m_2 (\pi,\pi) \lesssim 1$ for any $\pi \in Q_k$ and that
$
m_4 (\pi_1,\pi_2,\pi_1,\pi_2) \lesssim 1.
$
It concludes the proof of the first claim of the lemma. 

We now prove the second claim for $k\geq 2$. We treat the case of $S_k$, the case of $S'_k$ being identical. For ease of notation, for $\pi \in Q_k$, we set 
$$
Y_\pi = \sum_{l=1}^k \PAR{ |X_{\pi\backslash (j_{l}i_{l})}|^2 + |X_{\pi}|^2}.
$$
We have
$$
\dE |S_k - \dE S_k|^2 = \sum_{(\pi_1,\pi_2) \in Q^{\times 2}_k } |M_{\pi_1}|^2 |M_{\pi_2}|^2 \dE \left[ (Y_{\pi_1} - \dE Y_{\pi_1} ) ( Y_{\pi_2} - \dE Y_{\pi_2} ) \right].
$$
In particular, if $E_{JI} (\pi_1) \cap E_{JI}(\pi_2) = \emptyset$, then the summand is $0$, where $E_{JI} (\pi_\veps)$ is defined in Lemma \ref{le:ZZ1}. We denote by $Q_{k,k}$ the subset of $(\pi_1,\pi_2)$ in $Q_k \times Q_k$  such that 
$ E_{JI} (\pi_1) \cap E_{JI}(\pi_2) \ne \emptyset $. Thanks to Assumption $(H_4)$, it suffices to prove that for any $(\pi_1,\pi_2) \in Q_{k,k}$, we have 
$$
m_4(\pi_1,\pi_2,\pi_1,\pi_2)  \lesssim n^{-1}. 
$$
where $m_4$ was defined in \eqref{eq:mpi4}. Set $\pi = (\pi_1,\pi_2,\pi_1,\pi_2)$. The assumption $(\pi_1,\pi_2) \in Q_{k,k}$ implies that $\pi$ has at least one $JI$-edge of multiplicity at least $4$. Hence, counting the sum of  multiplicities of $JI$-edges, we get $2 (|I| -1) + 4 \leq 4k$. Hence $|I| \leq 2k -1$ the bound $m_4(\pi_1,\pi_2,\pi_1,\pi_2)  \lesssim n^{-1}$ is a consequence of \eqref{eq:boundmpi1} and case 2 in the proof of Lemma \ref{le:stein2}.

We finally consider the case $k=1$. By definition, we simply have 
$$
S_1 = \sum_{i,j} u_{i} \bar u'_i  v'_j \bar v_j  (1 +|X_{ij}|^2). 
$$
Hence, 
$$
\dE |S_1 - \dE S_1 |^2 = \dE \ABS{ \sum_{i,j} u_{i} \bar u'_i  v'_j \bar v_j ( |X_{ij}|^2 - 1)}^2 = \sum_{i,j}|u_{i}|^2 |u'_{i}|^2 |v_j|^2 |v'_j|^2  \dE [  ( |X_{ij}|^2 - 1)^2 ] = \kappa \|u\cdot u'\|^2_2\|v \cdot v'\|_2^2 ,
$$
with $\kappa = \dE [  ( |X_{ij}|^2 - 1) )^2 ]$ and, for vectors $w,w'$, $w \cdot w'$ is the vector with coordinates for $(w \cdot w')_i = w_i w'_i$. From Cauchy-Scwharz inequality, the conclusion follows for $S_1$. For $S'_1$, the computation is identical.
\end{proof}

Our final technical lemma is a deviation bound of $\hat L - L$.
\begin{lem}\label{le:stein3}
Let $k \geq 1$ be an integer, and $B$ in $M_{N}(\dC)^{\dN}$ satisfying assumptions \eqref{eq:defB0}-\eqref{eq:hypTCL}. Then, for $L = L_k (B)$ and $\hat L = \hat L_k (B)$, we have for $k \geq 2$,
$$
\dE | L - \hat L |^4 \lesssim  n^{-3}.
$$
For $k = 1$,
$$
\dE | L - \hat L |^4 \lesssim n^{-2} \|u \|_4^4 \|v \|_4^4.
$$
 
\end{lem}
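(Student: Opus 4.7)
The plan starts from the multilinearity of $L=L_k(B)$ in the entries of $X$ (since $\pi\in Q_k$ forces all $(j_t,i_t)$ distinct): in the exchangeable pair,
$$
\hat L-L=-\xi_{pq}\,A_{pq},\qquad \xi_{pq}=\tilde X_{pq}-X_{pq},
$$
with $(p,q)$ uniform on $\{1,\ldots,n\}^2$, $\xi_{pq}$ independent of $A_{pq}=\partial L/\partial X_{pq}$, and
$$
A_{pq}=\sum_{\pi\in Q_k:(p,q)\in\pi}M_\pi\prod_{(j_s,i_s)\neq(p,q)}X_{j_si_s}.
$$
Hypothesis $(H_4)$ gives $\dE|\xi_{pq}|^4\lesssim 1$, whence
$$
\dE|L-\hat L|^4=\frac{1}{n^2}\sum_{p,q}\dE|\xi_{pq}|^4\,\dE|A_{pq}|^4\lesssim\frac{1}{n^2}\sum_{p,q}\dE|A_{pq}|^4.
$$
For $k=1$, $L=v^*Xu$ and $A_{pq}=\bar v_p u_q$, so $\sum_{p,q}|A_{pq}|^4=\|v\|_4^4\|u\|_4^4$, yielding the second claim directly.

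For $k\geq 2$ the target reduces to $\sum_{p,q}\dE|A_{pq}|^4\lesssim n^{-1}$. Split $A_{pq}=\sum_{t=1}^k A_{pq}^{(t)}$ according to the slot $t$ of $\pi$ at which $(j_t,i_t)=(p,q)$; by convexity $|A_{pq}|^4\leq k^3\sum_t|A_{pq}^{(t)}|^4$, so it suffices to bound each $t$. Introduce the random row/column vectors
$$
L_t:=v^*XB_1X\cdots XB_{t-1},\qquad R_t:=B_tXB_{t+1}\cdots B_{k-1}Xu,
$$
with boundary conventions $L_1=v^*$ and $R_k=u$. The $t$-th partial derivative of the trace form $v^*XB_1X\cdots B_{k-1}Xu$ is the "unconstrained" version
$$
\tilde A_{pq}^{(t)}:=n^{-(k-1)/2}L_{t,p}R_{t,q},
$$
so $A_{pq}^{(t)}=\tilde A_{pq}^{(t)}+(\text{contribution from } \pi\in P_k\setminus Q_k)$, and $\sum_{p,q}|\tilde A_{pq}^{(t)}|^4=n^{-2(k-1)}\|L_t\|_4^4\|R_t\|_4^4$.

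The core estimate is $\sum_p(\dE|L_{t,p}|^2)^2\lesssim n^{2t-3}$ for $t\geq 2$ (and $\lesssim 1$ for $t=1$, since $L_1$ is deterministic), and symmetrically $\sum_q(\dE|R_{t,q}|^2)^2\lesssim n^{2(k-t)-1}$ for $t\leq k-1$. This follows from a Wick-type expansion of $\dE|L_{t,p}|^2$: the dominant diagrammatic contraction pairs each $X$ with the $X^*$ in the symmetric position and produces
$$
\dE|L_{t,p}|^2\lesssim\|v\|_2^2\prod_{s=1}^{t-2}\|B_s\|_F^2\cdot(B_{t-1}^*B_{t-1})_{pp}\lesssim n^{t-2}(B_{t-1}^*B_{t-1})_{pp},
$$
while the "crossing" contractions contribute only $O(1)$; then $\sum_p(B_{t-1}^*B_{t-1})_{pp}^2\leq\Tr((B_{t-1}^*B_{t-1})^2)\lesssim n$. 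Because $L_{t,p}R_{t,q}$ is a polynomial of the fixed degree $k-1$ in the i.i.d.\ entries of $X$, which have moments of every order under $(H_k)$ for all $k>2$, the polynomial chaos moment comparison (hypercontractivity) gives
$$
\dE|L_{t,p}R_{t,q}|^4\lesssim(\dE|L_{t,p}R_{t,q}|^2)^2\lesssim(\dE|L_{t,p}|^2)^2(\dE|R_{t,q}|^2)^2,
$$
the second inequality combining Cauchy-Schwarz with hypercontractivity applied to each factor separately. Summing yields
$$
\sum_{p,q}\dE|\tilde A_{pq}^{(t)}|^4\lesssim n^{-2(k-1)}\cdot n^{2t-3}\cdot n^{2(k-t)-1}=n^{-1},
$$
uniformly in $t$; the boundary cases $t\in\{1,k\}$ are absorbed via $\|v\|_4^4\leq\|v\|_2^4$ or $\|u\|_4^4\leq\|u\|_2^4$ together with $\|v\|_2^2\|u\|_2^2=\|B_0\|^2\lesssim 1$.

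It remains to control the residue $A_{pq}^{(t)}-\tilde A_{pq}^{(t)}$, whose support is paths $\pi\in P_k\setminus Q_k$ with $(j_t,i_t)=(p,q)$. Each such path carries at least one additional coincidence $(j_s,i_s)=(j_{s'},i_{s'})$, which in the equivalence-class graphical sum argument from the proof of Lemma \ref{le:ZZ1} removes one free vertex and gains an extra factor $n^{-1}$, making this contribution to $\sum_{p,q}\dE|\cdot|^4$ of order $n^{-2}$, absorbed by the main term. The principal obstacle is establishing the dominant-pairing bound on $\dE|L_{t,p}|^2$ at general $t$: the cleanest route is induction on $t$, writing $L_{t,p}=\sum_{i,j}L_{t-1,i}X_{ij}(B_{t-1})_{jp}$ and separating the "diagonal" contraction of $X_{ij}$ with $\bar X_{ij}$ (which contributes $\dE\|L_{t-1}\|_2^2\cdot(B_{t-1}^*B_{t-1})_{pp}$ and carries the inductive rate) from the "crossing" contractions (which remain $O(1)$ and do not compound across the induction).
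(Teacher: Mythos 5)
Your first reduction is correct and coincides with the paper's: on $Q_k$ the monomials are multilinear, so $\hat L-L=(\tilde X_{pq}-X_{pq})A_{pq}$ with $A_{pq}$ independent of the resampled entry, the $k=1$ case is exactly the paper's computation, and the split of $A_{pq}$ over the slot $t$ at which $(p,q)$ sits is the paper's decomposition into the $D^{pq}_l$. Your target $\sum_{p,q}\dE|A^{(t)}_{pq}|^4\lesssim n^{-1}$ is also the right one (and your scaling count is in fact $n^{-2}$ for interior slots $2\le t\le k-1$, not $n^{-1}$ as displayed — a harmless slip, the boundary slots $t\in\{1,k\}$ are the ones that saturate $n^{-1}$).

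The genuine gap is that the two estimates carrying the whole proof are asserted rather than proved, and they are not routine. First, the moment comparison $\dE|L_{t,p}|^8\lesssim(\dE|L_{t,p}|^2)^4$ (and its analogue for $R_{t,q}$): $L_{t,p}$ is a polynomial of fixed degree in the entries of $X$ but it is \emph{not} multilinear (an entry can occur with multiplicity up to $t-1$ along unconstrained paths), and under $(H_k)$ alone the entries are neither Gaussian nor uniformly hypercontractive in the usual sense; a comparison with constants uniform in $n$, $p$ and the matrices $B_t$ needs a decoupling/ANOVA argument or an explicit expansion — i.e., precisely the kind of combinatorial control the paper performs — and cannot be cited as generic ``hypercontractivity''. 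Second, the localized bound $\dE|L_{t,p}|^2\lesssim n^{t-2}$ uniformly in $p$ (a uniform-in-$p$ bound suffices, the $(B_{t-1}^*B_{t-1})_{pp}$ refinement is not needed, but the crude bound via $\dE\|L_t\|_2^2\lesssim n^{t-1}$ loses a full power of $n$ and does not suffice): your inductive sketch splits $\dE[L_{t-1,i}\bar L_{t-1,i'}X_{ij}\bar X_{i'j'}]$ into a ``diagonal'' and a ``crossing'' part as if $X_{ij}$ were independent of $L_{t-1}$, which it is not; for non-Gaussian entries one must also account for coincidences of multiplicity $\ge 3$, and justifying that these contributions neither exceed $n^{t-2}$ nor compound through the induction is exactly the multiplicity/graph bookkeeping of the paper's proof (its Lemmas \ref{le:ZZ1} and \ref{le:stein2}, reused there to get $m_4(\pi)\lesssim n^{-1}$ per equivalence class). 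The same applies to the residue coming from $P_k\setminus Q_k$, which you invoke at the fourth-moment level with only a pointer to the second-moment argument of Lemma \ref{le:ZZ1}. So the strategy is a coherent alternative (factor the slot derivative as $n^{-(k-1)/2}L_{t,p}R_{t,q}$ and trade path counting for chaos moment inequalities), and its exponents check out, but as written the central inequalities are unproved — as you yourself flag for the second one — so the argument does not yet establish the lemma.
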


\begin{proof}
We first assume that $k \geq 2$. The proof is again a variant of Lemma \ref{le:ZZ1}. Recall our notation: we have $\hat X = X^{pq}$ and $\hat L = L^{pq}$ with $(p,q)$ uniformly distributed on $\{1,\ldots,n\}^2$. As already observed, for any $p,q$, we have
$$
L^{pq} - L = \sum_{\pi \in Q_k} M_{\pi} (X^{pq}_\pi - X_\pi) = \sum_{l=1}^k \sum_{\pi \in Q^{l,pq}_{k}} M_{\pi} (X^{pq}_\pi - X_\pi) = \sum_{l=1}^k   D^{pq}_l ,
$$
where $Q^{l,pq}_{k} \subset Q_k$ is the subset of paths $\pi = (j_1,i_1,\ldots,j_k,i_k)$ such that $(p,q) =  (j_l,i_l)$. From Hölder inequality, we deduce that 
$$
|L^{pq} - L |^ 4 \leq k ^3  \sum_{l=1}^k |  D^{pq}_l |^4 .
$$
Hence, to prove the lemma when $k\geq 2$, it is sufficient to prove that for any $1 \leq l \leq k$, 
\begin{equation}\label{eq:SI04}
\dE [ |D^{pq}_{l}|^4 ] \lesssim n^{-3} ,
\end{equation}
where $(p,q)$ is uniform on $\{1,\ldots,n\}^2$.  To this end, we write,
\begin{equation}\label{eq:SI4}
\dE [|D^{pq}_{l}|^ 4] =  \frac{1}{n^2} \sum_{p,q} \sum_{\pi \in (Q^{l,pq}_{k})^{\times 4}} M_{\pi_1} M_{\pi_2} \bar M_{\pi_3} \bar M_{\pi_4}  \dE \left[(X^{pq}_{\pi_1} - X_{\pi_1})(X^{pq}_{\pi_2} - X_{\pi_2})\overline{(X^{pq}_{\pi_3} - X_{\pi_3})}\overline{(X^{pq}_{\pi_4} - X_{\pi_4})}\right],
\end{equation}
where $\pi = (\pi_1,\pi_2,\pi_3,\pi_4)$. Next, as already observed, for $\pi \in Q^{l,pq}_{k}$, we have $$X^{pq}_\pi - X_\pi = X_{\pi\backslash pq} (X'_{pq} - X_{pq}).$$

We  use the notation of the proof of Lemma \ref{le:ZZ1} and Lemma \ref{le:stein2}. For $\pi \in Q_k^{\times 4}$, we consider the bipartite oriented graph $G(\pi)$ with vertex set $I \cup J$ and edge sets  $E_{IJ} = \cup_\veps E_{IJ} (\pi_\veps)$, $E_{JI} = \cup_\veps E_{JI} (\pi_\veps) $.  Due to the independence of the entries of $X$, the  expectation in \eqref{eq:SI4} is zero unless each edge in $E_{JI}$ is visited at least twice. We denote by $\bar Q^l_{k,k,k,k} \subset Q_{k}^{\times 4}$ the collection of  elements $\pi = (\pi_1,\pi_2,\pi_3,\pi_4)$ with $\pi_{\veps} = (j_{\veps,1}, \ldots, i_{\veps,k})$ such that $(j_{\veps,l},i_{\veps,l})  = (j_{\veps',l},i_{\veps',l})$ for all $\veps, \veps' \in \{1,\ldots,4\}$ and such that each edge in $E_{JI}$ is visited at least twice.

We consider the same equivalence class on $Q_k ^{\times k}$ than in the proof of Lemma \ref{le:stein2}. Since there is a finite number of equivalence classes and the expectation on right-hand side of \eqref{eq:SI4} is constant over each equivalence class,   to prove \eqref{eq:SI04}, it is thus sufficient to prove that for any fixed $ \pi = (\pi_1,\pi_2,\pi_3,\pi_4) \in  \bar Q^l_{k,k,k,k}$, we have
\begin{equation}\label{eq:mgpi}
m_4(\pi) =  \ABS{ \sum_{  \pi' \sim \pi}  M_{\pi'_1} M_{\pi'_2} \bar M_{\pi'_3} \bar M_{\pi'_4}  } \lesssim n^{-1}.
\end{equation}
where $\pi' =(\pi'_1,\pi'_2,\pi'_3,\pi'_4)$. Note that $m_4$ was already defined in \eqref{eq:mpi4}.

By construction, each edge, say $(j,i)$, in $E_{JI}$ has multiplicity at least $2$ and $ (j_{\veps,l},i_{\veps,l}) \in E_{JI}$ has multiplicity at least $4$. In particular, $ 2(|I|-1) + 4 \leq  4 k$ and $2 (|J|-1) +  4 \leq 4k$. The bound \eqref{eq:mgpi} is then a consequence of \eqref{eq:boundmpi1} and case 2 in the proof of Lemma \ref{le:stein2}.

It remains to check the lemma for $k=1$. This is straightforward. We have
$$
L^{pq} - L = \bar v_p ( X'_{pq} - X_{pq} ) u_q.  
$$
Hence $\dE |\hat L - L |^4 = n^{-2} \|u \|_4^4 \|v \|_4^4 \dE | X'_{11} - X_{11} |^4$. It concludes the proof of the lemma. \end{proof}

All technical lemmas are now gathered to prove our main statement.

\begin{proof}[Proof of Theorem \ref{th:steinTCL}]
By Lemma \ref{le:ZZ1}, it suffices to prove the statement with $L = (L_{k_t} (B_t))_{t \in T}$ in place of $Z = (Z_{k_t}(B_t))_{t \in T}$. We set  $W = (  W_{k_t}(B_t))_{t \in T}$, $\tilde W = (\tilde W_{k_t}(B_t))_{t \in T}$ and $\Delta = \max_t \|u_t \|_\infty \vee \| v_t \|_\infty$. Note that since $\|u_t\|_2 , \|v_t\|_2 \lesssim 1$ by assumption \eqref{eq:hypTCL}, we have $\|u_t\|_4 \leq (\|u_t\|_\infty \|u_t \|_2 )^{1/2} \lesssim \Delta^{1/2}$.

We start with the case $\Delta = o(1)$. We apply Theorem \ref{th:stein} in Appendix with $d = |T| $, $X = L$, $X' = \hat L$, $\cF = \sigma(X)$, $\Lambda = k/n^2$ and $E_0 = 0$. Then Lemmas \ref{le:hatSigma}-\ref{le:stein3} imply that if $f$ be a smooth function on $\dC^{T}$, $
\ABS{ \dE f (L) - \dE f (W) } \lesssim n^2 \times ( n^{-5/2} + n^{-9/4} + n^{-2} \Delta^{3}) \lesssim n^{-1/4} + \Delta^3$, where the constant hidden in $\lesssim$ depend on the first three derivatives of $f$, $|T|$ and $\dE [|X_{ij}|^4]$. It concludes the proof of the last claim of the theorem.

We now prove the first claim of the theorem. Let $T_2 = \{ t \in T : k_t \geq 2 \}$ and $T_1 = T \backslash T_2$. The idea to use  Theorem \ref{th:stein}  for elements in $T_2$ after having resampled a few entries of $X$ so that to have removed most of the dependency from $L_{k_t} (B_t)$, $t\in T_1$. More precisely, let $X''$ be an independent copy of $X$ and $X'$. Let $S  \subset \{1,\ldots,n\}^2$ and define $L^S$ as the random variable $L = L(X^S)$ where $X^S_{ij} = X_{ij}$ if $(i,j) \notin S$ and $X^S_{ij} = X''_{ij}$ if $(i,j) \in S$.

We write $S = \{(p_1q_1),\ldots,(p_m q_m) \}$ and, set $S_0 = S$ and for $1 \leq l \leq |S|$, $S_l = \{(p_1q_1),\ldots,(p_lq_l)\}$.  From the triangle inequality, we have, for $L = L_{k_t}(B_t)$, $t \in T_2$,  
\begin{equation}\label{eq:Lresample}
\dE [|L^S - L|] \leq \sum_{l=1}^{|S|}  \dE [|L^{S_l} - L^{S_{l-1}}|] \lesssim |S| n^{-1/4},
\end{equation}
where the last inequality is a consequence of Lemma \ref{le:stein3}: $\max_{p,q} \dE | L^{pq} - L |^4 \leq \sum_{p,q} \dE | L^{pq} - L |^4 \lesssim n^{-1}$ if $L = L_k (B)$ with $k \geq 2$.

Now, we set $m = n^{1/5}$ and for each $t \in T$, let $I_{t} \subset \{1,\ldots,n\}$ be the $m$-largest coordinate entries of $u_{t}$, $(u_{1,t})_i = (u_t)_i \IND_{i \in I_t}$ and $u_{2,t} = u_t - u_t$. Since $\| u_t\|_2 \lesssim 1$, we have  
$$
\| u_{2,t} \|_{\infty} \leq \|u_t\|_2  /\sqrt m \lesssim n^{-1/10}.
$$
We define $J_t$ and $v_{\veps,t}$ similarly with the $m$-largest entries of $v_t$. We set $S = \cup_t I_t \times J_t$. From what precedes, $\dE [|L^S_{k_t}(B_t) - L_{k_t} (B_t)|] \lesssim n^{-1/20}$ for all $t \in T_2$.

We note also that if $t \in T_1$, then $k_t = 1$ by definition For ease of notation, we set $L_1(B) = L_1(u,v)$. From the bilinearity of the scalar product and the definition of $S$, $$L_{1} (B_t) = L_1 (u_t,v_t) = L_1 (u_{1,t},v_{1,t}) + L^S_1 (u_{2,t},v_{2,t}) + L^S_1 (u_{2,t},v_{1,t})  +L^S_1 (u_{1,t},v_{2,t}). $$

We consider the vector $$((L^S_{k_t}(B_t))_{t \in T_2}, (L^S_1 (u_{2,t},v_{2,t}),L^S_1 (u_{1,t},v_{2,t}) ,L^S_1 (u_{2,t},v_{1,t}))_{t \in T_1}).$$ This vector is independent of $(L_1 (u_{1,t},v_{1,t}))_{t \in T_1}$ by construction. Also,  as above, we may apply Theorem \ref{th:stein}: we deduce from  Lemmas \ref{le:hatSigma}-\ref{le:stein3} that this vector is close in distribution to the corresponding Gaussian vector: $$(W_{k_t}(B_t))_{t \in T_2}, (W_1 (u_{2,t},v_{2,t}),W_1 (u_{1,t},v_{2,t}) ,W_1 (u_{2,t},v_{1,t}))_{t \in T_1}).$$
It remains to use the bilinearity of $L_{1}$ again: $L_1 (u_t,v_t) $ is close in distribution to $L_1(u_{2,t},v_{2,t}) +  W_1 (u_{2,t},v_{2,t})  + W_1 (u_{1,t},v_{2,t})  + W_1 (u_{2,t},v_{1,t})$. It concludes the proof. \end{proof}

\subsection{Tightness of long random matrix products}

The goal of this subsection is to prove that the random variable $Z = Z_k(B)$ is tight in $(n,k,B)$ under natural assumptions on $B$ and as long as $k$ is not too large. The main difference with the preceding subsection being that we will now follow more closely the dependence in $k$.

Our next proposition will be quantitative. We make the following assumption on the growth rate of $C_k = \dE |X_{ij}|^k$.


Finally, we will have two type technical conditions which are restrictions due the limitations of our proofs. We will assume either that Assumption \Hsym holds 
or that the matrices $B_k$ are diagonal:
\begin{eqnarray}\label{eq:diagB}
\hbox{for all $k\geq 1$, $B_k$ is diagonal}.
\end{eqnarray}

Our main result in this subsection is the following quantitative estimate. 

\begin{prop}\label{prop:tight}
Assume that assumption $(H_k)$ holds and $C_k = (\dE |X_{ij}|^k)^{1/k} \leq n^{0.02}$ and let $B \in M_n(\dC)^{\dN}$ such that \eqref{eq:defB0} holds. We assume also that either \Hsym or \eqref{eq:diagB} holds.  Then, there exists a numerical constant $c >0$   such that for all $k \leq n^{c}$, we have
$$
\dE | Z_k ( B) |^2 \leq c k^{12} \prod_{t=0}^{k-1}\| B_k\|^2.
$$
\end{prop}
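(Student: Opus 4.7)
The plan is to expand
\[
\dE |Z_k(B)|^2 = n^{-(k-1)} \sum_{\pi_1,\pi_2 \in P_k} M_{\pi_1}(B) \overline{M_{\pi_2}(B)} \, \dE[ X_{\pi_1} \overline{X_{\pi_2}} ]
\]
and group terms according to combinatorial type. To each pair $(\pi_1,\pi_2)$ I associate the bipartite directed multigraph $G(\pi_1,\pi_2) = (I,J,E_{IJ},E_{JI})$ introduced in Lemma~\ref{le:ZZ1}. The equivalence class of $(\pi_1,\pi_2)$ under relabelling determines both $\dE[X_{\pi_1}\overline{X_{\pi_2}}]$ (up to moments of the $X_{ij}$'s) and, once one factors out the sum over labellings, a graphical sum-product expression of the form handled by Theorem~\ref{th:sumproduct} in the appendix. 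The proof reduces to bounding, for each combinatorial type $T$, the quantity
\[
m_T = n^{-(k-1)} \Big| \sum_{(\pi_1',\pi_2') \sim T} M_{\pi_1'} \overline{M_{\pi_2'}} \Big|
\]
and then summing over types.

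The first key input is the restriction imposed by $(H_{\text{sym}})$ or \eqref{eq:diagB}. Under $(H_{\text{sym}})$ every odd moment of $X_{ij}$ vanishes, so only types in which every directed edge of $E_{JI}$ is visited with even multiplicity contribute; under the diagonal assumption \eqref{eq:diagB}, the product $\prod_{t=0}^{k-1}(B_t)_{i_t j_{t+1}}$ forces $i_t = j_{t+1}$ for every $t\geq 1$, which identifies many vertices of $G$ and again leaves a manageable class of graphs. In both cases the resulting graphs have the structural property that they are unions of ``pair-matched'' cycles glued to a distinguished edge carrying the rank-one factor $B_0 = u\otimes \bar v$.

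Given this structural description, I apply Theorem~\ref{th:sumproduct} to each type, exploiting the Cauchy--Schwarz input \eqref{eq:boundCp} at every non-boundary vertex, so that the contribution of the matrices $B_t$, $t\geq 1$, is controlled by $\prod_{t=1}^{k-1}\|B_t\|^2$. The presence of $B_0$ as a rank-one matrix saves an extra factor $n^{-1}$ (as in Case~2 of Lemma~\ref{le:ZZ1}, where the trace estimate $\TR(B_0 B_0^*) = \|B_0\|^2$ improves the naive bound from $n^{|I|}$ to $n^{|I|-1}$). A careful accounting of the Euler relation between $|V|$, $|E|$, and the cycle structure then gives $m_T \leq C^k \prod_{t=0}^{k-1}\|B_t\|^2$, where $C^k$ absorbs the bounded moment contributions $\prod C_{m_e} \leq C_{2k}^{2k} \leq n^{0.04 \cdot 2k}$ permitted by the growth assumption.

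The last step is to count types. The number of combinatorial types is at most the number of ways to partition $\{1,\dots,2k\}$ into pair-matched edges with the endpoint and cycle structure above; standard estimates bound this by $k^{O(1)}$ after cancellation, and a careful bookkeeping (tracking the position of $B_0$, the matching of the $k$ edges of $\pi_1$ with those of $\pi_2$, and the tree-excess of $G$) produces a polynomial factor of degree $12$, which yields the announced $k^{12}$. Choosing $c>0$ small enough so that $n^{0.04\cdot 2k}\cdot C^k \leq k^{12}/2$ for $k\leq n^{c}$ closes the proof. The main obstacle will be this last combinatorial counting step: keeping the number of types polynomial in $k$ (rather than exponential) requires grouping ``similar'' graphs and extracting the $k^{12}$ exponent with a sharp enough degree bound; this is where the symmetry assumption $(H_{\text{sym}})$ really pays off, since it halves the set of admissible types and lets the tree-excess argument go through.
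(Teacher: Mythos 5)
Your skeleton (moment expansion, graph encoding of paths, sum--product bounds with a rank-one saving from $B_0$, then a count of combinatorial types) is the right one and matches the paper's strategy in spirit, but the two steps that actually carry the quantitative content of Proposition \ref{prop:tight} are wrong or missing. First, the moment accounting: you propose to bound the contribution of a type by $C^k\prod_t\|B_t\|^2$ with ``$C^k$ absorbing $\prod_e C_{m_e}\le C_{2k}^{2k}\le n^{0.08k}$'' and then to beat this at the end by choosing $c$ so small that $n^{0.08k}\,C^k\le k^{12}/2$ for $k\le n^c$. That inequality cannot hold (already for $k=1$ and $n$ large), and the logic is backwards: the moments are not uniformly bounded, only $C_k\le n^{0.02}$, so the factors $C_k^{\,\cdot}$ must be paired, type by type, against the negative powers of $n$ gained precisely when an edge has multiplicity larger than two (fewer distinct edges/vertices means fewer free indices). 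This coupling is the content of Lemma \ref{le:moment} combined with the stratification by the defect $k-\ell$ (with $\ell=|I|\wedge|J|$ in the diagonal case, $\ell=|E_{JI}|$ in the symmetric case), and it is what produces a geometric series with ratio of the form $C_k^6 k^{O(1)}/n^{1/2}<1/2$, which is where the hypotheses $C_k\le n^{0.02}$ and $k\le n^c$ are actually used. Second, the type count: the total number of types is not $k^{O(1)}$ ``after cancellation'' --- already the pair matchings of $2k$ edges number $(2k-1)!!$. What is true, and what the proof needs, is that the number of equivalence classes \emph{at fixed defect} is at most $(2k)^{O(k-\ell)+O(1)}$ (Lemmas \ref{le:nbclasseq} and \ref{le:nbclasseqS}, proved via a canonical-path encoding with first/tree/important/merging times); the exponential-in-defect part is then absorbed into the geometric series, and only the defect-zero classes contribute the polynomial factor $k^{12}$. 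Asserting a global polynomial bound on types skips the heart of the argument.

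There is also a structural gap specific to the case $(H_{\mathrm{sym}})$ with non-diagonal $B_t$, which the paper flags as the genuinely hard case: the vertex-relabelling equivalence you implicitly use (the one from Lemma \ref{le:ZZ1}) has far too many classes once the $B_t$ are not diagonal, so one must pass to the coarser equivalence $\hat\sim$ that relabels the $JI$-edges, then convert the constrained sums $\mu_2(\pi)$ into unconstrained sums $\hat\mu_2(\pi)$, which generates the extra combinatorial factor $L(\pi)\le(4k)^{2(k-\ell)}$ (Bell-number bound) that must also be tracked against $n^{-(k-\ell)/2}$. Moreover, the per-type bound in this case does not come from the vertex-by-vertex Cauchy--Schwarz estimate \eqref{eq:boundCp} but from Theorem \ref{th:sumproduct}, whose hypothesis that the graph $G_{IJ}$ be even is exactly what $(H_{\mathrm{sym}})$ guarantees, and whose rank-one refinement yields a saving of $n^{1/2}$ (not $n$) per component containing $B_0$ (Lemma \ref{le:lemma1112}). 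Your description conflates these two mechanisms and does not keep the $k$-dependence of the constants explicit, which is precisely what distinguishes this proposition from the fixed-$k$ estimates of Lemma \ref{le:ZZ1}. As written, the proposal would not close.
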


The proof of Proposition \ref{prop:tight} starts as in the proof of Lemma \ref{le:ZZ1}. The technical challenge is that we now track carefully the dependency $k$, this is where the extra assumptions \Hsym or \eqref{eq:diagB} come into play. When assumption \eqref{eq:diagB} holds, Proposition \ref{prop:tight} is closely related to  \cite[Proposition 6.1]{bordenave-capitaine16}where the assumptions on $B_0$ are however stronger. When assumption \Hsym holds, our proof is significantly more difficult and requires more ideas.

For ease of notation, we set $Z = Z_k(B)$. We have
$$
\dE  [ |Z|^2 ]   = n^{-k+1} \sum_{\pi \in \bar P_{k,k}}  M_{\pi_1} \bar M_{\pi_2}  \dE [ X_{\pi_1} \bar X_{\pi_2} ],
$$
where $\pi = (\pi_1,\pi_2)$ with $\pi_{\veps} = (j_{\veps,1},\ldots, i_{\veps,k})$, $\bar P_{k,k} \subset P_k \times P_k$ was defined in the proof of Lemma \ref{le:ZZ1} below \eqref{eq:vardedef} and $M_{\pi_\veps}$, $X_{\pi_\veps}$ was defined below \eqref{eq:devZ}. We recall that $\bar P_{k,k}$ is the set of $\pi = (\pi_1,\pi_2)$ such that each $JI$-edge has multiplicity at least two.

We start with the proof of Proposition \ref{prop:tight} when \eqref{eq:diagB} holds.
Recall the definition of equivalence classes on $P_k \times P_k$ introduced in the proof of Lemma \ref{le:ZZ1}.  Since $\dE [ X_{\pi_1} \bar X_{\pi_2} ]$ is constant over each equivalent class, we get
\begin{equation}\label{eq:Z2expr}
\dE  [ |Z|^2 ] = \sum_{\pi \in \bar \cP_{k,k}} \dE [ X_{\pi_1} \bar X_{\pi_2} ] \PAR{\sum_{\pi' \sim \pi }  M_{\pi'_1} \bar M_{\pi'_2} },
\end{equation}
where $\bar \cP_{k,k}$ is the set of equivalence classes in $\bar P_{k,k}$. Thanks to assumption \eqref{eq:diagB}, we may further restrict our attention to $\widetilde P_{k,k}$ and $\widetilde \cP_{k,k}$ defined as the subsets of $\bar P_{k,k}$ and $\bar \cP_{k,k}$ such that $j_{\veps,t+1} = i_{\veps,t}$ for all $t \in \{1,\ldots,k-1\}$, $\veps = 1,2$.

 For integer $1 \leq \ell \leq k$, we denote by $\widetilde \cP_{k,k}(\ell)$, the set of $\pi$ in $\widetilde \cP_{k,k}$  such its associated bipartite directed graph $G(\pi)$ satisfies $|I| \wedge |J| = \ell$. Recall that, by degree counting, $|I| \vee |J| \leq k$ for all $\pi \in \bar P_{k,k}$. We deduce from \eqref{eq:mpiI1} that, if \eqref{eq:diagB} holds,
\begin{equation}\label{eq:Z2exprD}
\dE  [ |Z|^2 ] \leq \beta^{2k} \sum_{\ell= 1}^{k} n^{\ell -k} \sum_{\pi \in \widetilde \cP_{k,k}(\ell)} \ABS{   \dE [X_{\pi_1}  \bar X_{\pi_2} ]}.
\end{equation}

Our first lemma gives a bound $\dE [  | X_{\pi_1} X_{\pi_2} |]$, which is classical in random matrix theory, see for example \cite[p24]{MR2760897}. Recall  that  $C_k^k = \dE |X_{ij}|^k$.
\begin{lem}\label{le:moment}
If $\pi \in \bar P_{k,k}$ is such that $|E_{JI}|= \ell$,
$$
\ABS{ \dE [  X_{\pi_1} \bar X_{\pi_2}  ]} \leq C_k^{6(k-\ell)}.
$$
Moreover if $\dE X_{ij}^3 = \dE X_{ij} \bar X_{ij}^2 = 0$ then $
\ABS{ \dE [  X_{\pi_1} \bar X_{\pi_2}  ]} \leq C_k^{4(k-\ell)}$.
\end{lem}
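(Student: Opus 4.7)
The plan is a direct computation exploiting the independence of distinct edges and a short degree-counting argument. Because the entries $\{X_e\}_{e \in E_{JI}}$ are jointly independent and identically distributed under $(H_0)$, I would first write
\[
\dE [ X_{\pi_1} \bar X_{\pi_2} ] = \prod_{e \in E_{JI}} \dE \bigl[ X_e^{a_e} \bar X_e^{b_e} \bigr],
\]
where, for each distinct edge $e$, $a_e$ and $b_e$ denote the multiplicities of $e$ in $\pi_1$ and $\pi_2$ respectively, and $m_e := a_e+b_e \geq 2$ is its total multiplicity across the two paths. The triangle inequality followed by Lyapunov's inequality (together with $C_2 = 1$ and the fact that $p \mapsto C_p$ is non-decreasing, so $C_k \geq 1$) yields the per-edge bound $|\dE[X_e^{a_e}\bar X_e^{b_e}]| \leq \dE|X_e|^{m_e} \leq C_k^{m_e}$, while factors with $m_e = 2$ contribute exactly $|\dE X_e^{a_e}\bar X_e^{b_e}|\leq 1$.

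The heart of the argument is then purely combinatorial. Since $m_e \geq 2$ for every $e$ and $\sum_e m_e = 2k$, the set $S := \{e \in E_{JI} : m_e \geq 3\}$ satisfies
\[
\sum_{e \in S}(m_e - 2) = 2k - 2\ell,
\]
and since $m_e - 2 \geq 1$ on $S$, we get $|S| \leq 2(k-\ell)$. Consequently, $\sum_{e \in S} m_e = 2|S| + 2(k-\ell) \leq 6(k-\ell)$, which, after multiplying over all edges and using $C_k \geq 1$, gives the first bound $C_k^{6(k-\ell)}$.

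For the improved bound under $\dE X^3 = \dE X\bar X^2 = 0$, I would observe that these vanishing assumptions (together with their complex conjugates $\dE \bar X^3 = \dE X^2 \bar X = 0$) cover all mixed third-moment patterns $(a_e,b_e)$ with $a_e+b_e=3$; hence the presence of any edge with $m_e = 3$ forces the whole product to vanish. Thus for nonzero contributions every edge has $m_e \in \{2\} \cup \{4,5,\ldots\}$. Redoing the counting with $S' := \{e : m_e \geq 4\}$, one now has $m_e - 2 \geq 2$ on $S'$, so $|S'| \leq k-\ell$, and $\sum_{e \in S'} m_e = 2|S'| + 2(k-\ell) \leq 4(k-\ell)$, giving $C_k^{4(k-\ell)}$.

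There is essentially no obstacle: the argument is elementary once the product is split along distinct edges. The only mild technicality is the Lyapunov step $C_{m_e} \leq C_k$, which requires $m_e \leq k$; this is automatic under the ambient assumption that all moments of $X_{ij}$ are finite (as in Theorem~\ref{th:WnW}), or in the finite-moment setting one can separate out the rare configurations with an extreme $m_e$ and absorb them into the same combinatorial bound. The combinatorial inequality $|S|\leq 2(k-\ell)$ (respectively $|S'|\leq k-\ell$) is really the only substantive step.
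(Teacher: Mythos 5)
Your proof is correct and follows essentially the same route as the paper: factor the expectation over distinct $JI$-edges by independence, bound multiplicity-two edges by $1$ and the others by $C_k^{m_e}$, and conclude by the same multiplicity count giving $\sum_{e:m_e\geq 3} m_e \leq 6(k-\ell)$ (respectively $\leq 4(k-\ell)$ when the third moments vanish, since all mixed patterns with $a_e+b_e=3$ are killed by the hypotheses and their conjugates). The only caveat you flag, namely that the Lyapunov/H\"older step needs $m_e\leq k$ while a priori $m_e$ can reach $2k$, is present in the paper's own argument as well and only affects the constant, not the structure of the proof.
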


\begin{proof}
If $e= (j,i) \in E_{JI}$, we denote by $m_e , m_{1,e} , m_{2,e}$ the multiplicities of $e$ in $\pi = ( \pi_1,\pi_2)$, $\pi_1$ and $\pi_2$. We have $m_e = m_{1,e} + m_{2,e} \geq 2$. Using the independence of the entries $X_{ij}$ and $\dE |X_{ij}|^2 = 1$, we get 
\begin{equation}\label{eq:EXpi}
\ABS{ \dE [ X_{\pi_1} \bar X_{\pi_2} ] }= \ABS{ \prod_{e \in E_{JI}} \dE  [ X_e ^{m_{1,e}} \bar X_e ^{m_{2,e}}] } \leq \prod_{e \in E_{JI} : m_e \geq 3} \dE  [ |X_e |^{m_e}].
\end{equation}
Since $m_e \leq k$, we deduce from H\"older inequality that 
$$
\dE [ | X_{\pi_1} X_{\pi_2} | ]\leq \prod_{e : m_e \geq 3} C_k ^{m_e}.
$$
Finally, we observe that $\sum_{e} m_e\IND_{m_e \geq 3} = \sum_e m_e - 2 \sum_{e} \IND_{m_e = 2} = 2( k -  a_2)$, where $a_2$ is the number of $JI$-edges of multiplicity $2$.  Observe that
$$
 2 a_2 + 3 (a - a_2) \leq 2k.  
$$ 
In other words, $2 (k-a_2) \leq 6 ( k -\ell)$. The conclusion follows. If $\dE X_{ij}^3 = \dE X_{ij} \bar X_{ij}^2 = 0$, we repeat the above argument and use that $m_e > 2$ implies $m_e \geq 4$.
\end{proof}

Our final lemma for Proposition \ref{prop:tight} when assumption \eqref{eq:diagB} holds is an upper bound on $\widetilde \cP_{k,k}(\ell)$. A slightly weaker statement is proved in \cite[Lemma 6.1]{MR2760897}.

\begin{lem}\label{le:nbclasseq}
For any integers $1 \leq \ell \leq k$, we have
$$
| \widetilde \cP_{k,k}(\ell) | \leq  2(2k)^{6 (k-\ell)+11}.
$$\end{lem}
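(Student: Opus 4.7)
The bound is a Füredi--Komlós type counting estimate, and the proof is purely combinatorial. My plan is to parameterize each equivalence class by a canonical walk data, then bound the encoding choices.

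\emph{Step 1 (Reduction to paired closed walks).} Under assumption \eqref{eq:diagB} we have $j_{\varepsilon,t+1}=i_{\varepsilon,t}$, so setting $s_t^\varepsilon := i_{\varepsilon,t}$ (with $s_0^\varepsilon := i_{\varepsilon,k}$), each $\pi_\varepsilon$ is a closed walk $(s_0^\varepsilon,\dots,s_k^\varepsilon)$ of length $k$ in $\{1,\dots,n\}$. The condition $\pi\in\widetilde P_{k,k}$ means: in the directed multigraph whose edges are $(s_{t-1}^\varepsilon,s_t^\varepsilon)$ (union over $\varepsilon,t$), every directed edge has multiplicity $\ge 2$. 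Also, under this diagonal constraint the two permutations $\phi,\psi$ in the equivalence relation must agree on all labels occurring in the walks, so equivalence reduces to a single permutation of $\{1,\dots,n\}$, and $|I|=|J|=\ell$ is the number of distinct labels visited.

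\emph{Step 2 (Canonical labels and typed encoding).} In each class, take as canonical representative the one whose $\ell$ labels are $1,2,\dots,\ell$ in order of first appearance in the concatenated sequence $s_0^1,\dots,s_{k-1}^1,s_0^2,\dots,s_{k-1}^2$. The initial factor $2$ in the target bound accounts for the choice of which walk is $\pi_1$. I then encode the walk step-by-step: at each of the $2k$ transitions, classify the step as (N) introducing a new vertex, ($E_{\text{new}}$) traversing a previously unused directed edge but to an already-seen vertex, or ($E_{\text{old}}$) traversing an edge already used. Let $a,b,c$ denote the numbers of each type; then $a+b+c=2k$, $a=\ell-O(1)$, and $a+b=e\le k$ (the number of distinct edges), so $b\le k-\ell+O(1)$ and $c\le 2k-e$.

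\emph{Step 3 (Counting encodings).} The number of new-vertex positions is $\binom{2k}{a}\le (2k)^{k-\ell+O(1)}$ (using $a=\ell-O(1)$ and duality $\binom{2k}{a}=\binom{2k}{2k-a}$). For each type-$E_{\text{new}}$ step there are $\le \ell\le 2k$ choices of target label, contributing at most $(2k)^{k-\ell+O(1)}$. The key and more delicate point is type $E_{\text{old}}$: naively each step has $\ell$ choices and $c$ can be as large as $\sim 2k$. The fix is the classical tree-surgery encoding: on the skeleton graph $G$ with $\ell$ vertices and $e\le k$ edges, the ``excess'' $\mu:=e-\ell+(\text{components})\le k-\ell+O(1)$ controls all non-tree structure; every repeated-edge traversal is either forced by a tree-backtracking move in an Eulerian sub-exploration (no choice beyond the bookkeeping) or corresponds to using one of the $O(\mu)$ non-tree edges, which is charged to an excess unit. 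A careful such encoding (see \cite[Lemma 6.1]{MR2760897}) contributes at most $(2k)^{C\mu}$ with $C\le 4$, and combining with the previous factors gives the total bound $2(2k)^{6(k-\ell)+11}$, the constant $11$ absorbing the $O(1)$'s from the starts of the two walks and the canonical-labeling setup.

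\emph{Main obstacle.} The main technical difficulty is Step 3's treatment of the type-$E_{\text{old}}$ steps: showing that although there are many such steps, they carry only $O(\mu)=O(k-\ell)$ logarithmic-in-$k$ bits of information, once the positions of new vertices and new edges are recorded. This is where the Füredi--Komlós bookkeeping is really used; everything else is routine counting.
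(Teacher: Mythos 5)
There is a genuine gap, and it sits exactly where the lemma is sharpest. The decisive error is the first factor of your Step 3: the inequality $\binom{2k}{a}\le (2k)^{k-\ell+O(1)}$ is false. With $a=\ell-O(1)$ and $\ell\le k$, duality gives at best $\binom{2k}{a}\le (2k)^{\ell-O(1)}$, and for $\ell$ of order $k$ the binomial is genuinely exponential: for $\ell=k$ one has $\binom{2k}{k-O(1)}\asymp 4^k$, whereas the lemma demands a count that is polynomial in $k$ in that regime. More structurally, \emph{any} encoding that freely records which of the $2k$ steps introduce a new vertex cannot give a bound of the form $(2k)^{6(k-\ell)+11}$, since there is no room for a factor exponential in $k$ when $\ell$ is close to $k$. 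The paper's proof is organized precisely to avoid this cost: for the canonical (lexicographically minimal) representative, the walk decomposes into runs of ``first times'' whose positions and values are forced by canonicity, runs of ``tree times'' which are forced because a maximal stretch of tree-edge traversals is the unique path in the current spanning tree between its starting vertex and an endpoint recorded once, and at most $2(k-\ell)+2$ ``important times'' (since every $JI$-edge, in particular every tree edge, is traversed at least twice, so first and tree times already account for at least $2(\ell-1)$ of the $2k$ steps). Only the positions and marks of the important times, the merging time and the two terminal indices are recorded, giving $k^2(2\ell^3)^{2(k-\ell)+3}$ per choice of which walk comes first; no innovation positions are ever stored.

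The second weak point is your type-$E_{\mathrm{old}}$ accounting, which you yourself identify as the main obstacle but resolve only by citation and assertion. The reference you invoke is the one the paper explicitly describes as proving a slightly weaker statement, so it cannot be quoted for the present bound, and the claim that every repeated-edge traversal is ``forced by tree backtracking'' up to $O(\mu)$ exceptions is exactly what has to be proved here; it is nontrivial because the edges are directed, multiplicities may exceed $2$, the union graph may have two components (whence the merging-time bookkeeping), and one must control not only the number of free choices but also where each forced stretch ends — in the paper this endpoint is stored in the mark of the preceding important time, so it costs nothing beyond the $O(k-\ell)$ recorded times. A smaller inaccuracy: under \eqref{eq:diagB} the walks are not closed, since $i_{\veps,k}$ need not equal $j_{\veps,1}$; the object is an open walk on $k+1$ indices and $|I|,|J|$ may differ from the number of distinct labels by boundary terms, so the identification $|I|=|J|=\ell$ is not literally correct (harmless, but worth fixing). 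In sum, your skeleton (canonical labelling plus an FK-type encoding) is in the same family as the paper's argument, but the two quantitative pillars — that innovation positions need not be recorded at all, and that repeated traversals are free only at $O(k-\ell)$ recorded times — are respectively misestimated and unproven, so the proposal does not establish the stated bound.
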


\begin{proof}
Up to multiplying by a factor $2$, we give an upper bound on the number  of equivalence classes of $\widetilde P_{k,k}$ with $|J| = \ell$.  We equip the set $T =    \{1,2\} \times \{1,\ldots, k\}$  with the lexicographic order, $ (1,1) < (1,2) < \cdots < (1,k) < (2,1) < \cdots < (2,k)$. We think of as an element of $T$ as a time, if $t = (\veps,l)$, $l \geq 2$, we write $t-1 \in T$ for the immediate predecessor of $t$. 
Recall that $j_{\veps,l+1} = i_{\veps,l}$ for $\pi \in \widetilde P_{k,k}$, hence $\pi$ is uniquely determined by $(j_{t})_{t \in T}$ and $i_{\veps,k}$, $\veps = 1,2$.

In each equivalence class, we define a canonical element  by saying that $\pi$ is {\em canonical} if it is minimal 
for the lexicographic order in its equivalence class. For example, if $\pi \in \bar P_{k,k}$ is canonical then 
$j_{1,1} = 1$,  $J = \{1,\ldots, |J| \}$ and the $J$ vertices appear for the first time in the sequence $(\pi_{t})_{t \in T}$ in order ($2$ before $3$ and so on).  
Our goal is then to give an upper bound on the number of canonical elements in $\tilde P_{k,k}$ with $|J| = \ell$.

Fix such canonical element $\pi = (\pi_{t})_{t\in T}$ in $\tilde P_{k,k}$. We build a non-decreasing sequence of trees $(\cT_t)$ on $J$ as follows. Initially, for $t=(1,1)$, $\cT_{(1,1)}$ has no edge and vertex set $j_{1,1} = \{1 \}$. At time $t > (1,1) \in T$, if the addition to $\cT_{t-1}$ of the oriented edge 
$(j_{t-1},j_{t})$ does not create a cycle, then $\cT_t$ is the tree spanned by $\cT_t$ and $(j_{t-1},j_{t})$, we then say that 
$(j_{t-1},j_{t})$ is a {\em tree edge}. Otherwise $\cT_t = \cT_{t-1}$. By construction, the vertex set of $\cT_{t}$ is $J_t = \{ j_s : s \leq t\}= \{ 1, \ldots , j\}$ where $j$ is the number of visited $J$-vertices so far (since $\pi$ is canonical). In particular, $\cT_{(2,k)}$ is a spanning tree of $J$, and the number of tree edges is $e =\ell-1$. 

Next, we divide the elements of $T$ into types. We say that $t \in T$ is a {\em first time} if $\cT_t = \cT_{t-1} \cup (j_{t-1},j_t)$. 
We say that 
$t \in T$ is a {\em tree time} if  $(j_{t-1},j_{t})$ is a tree edge of $\cT_{t-1}$. Finally, the other times $t \in T$ are 
called {\em important times}. We have thus partitioned $T$ into first times, tree times, and important times. By construction, $j_t \in J_{t-1}$ for important and tree times. We also add another special time, the {\em merging time} $m_\star$ defined as follows. If $J(\pi_2) \cap J(\pi_1) = \emptyset$, then $m_\star = 0$. Otherwise $m_\star = (2,l)$ where $l$ is minimal with the property that $j_{2,l} \in J_{1,k}$. In this last situation, the merging time is a first time.

We also note that all edges are visited at least twice. We deduce that the number of important times is at most 
$$
2k - 2 e = 2( k - \ell) +2. 
$$

Observe that if $t = (\veps,l)$, $l < k$ is a first time and $t \ne m_\star$ then $t+1$ cannot be a tree time. The sequence $(j_{\veps,l}), 1 \leq l \leq k$, can thus be decomposed as 
the successive repetitions of: $(i)$ a sequence of first times (possibly empty), $(ii)$ an important time or the merging time, 
$(iii)$ a sequence of tree times (possibly empty).

We mark each important time $t = (\veps,l)$ by the vector $(j_{t}, j_{\veps,\tau-1})$ where $\tau > l$ is the next time which is not a tree time if this time exists. Otherwise $t = (\veps,l)$ is the last important time of $\pi_{\veps}$ and we put the mark $(j_t,j_{\veps,k})$. Similarly, if the merging time $m_\star = (2,l)$, $1 \leq l \leq k$, we mark it  by the vector $(j_{m*}, j_{2,\tau-1})$ where $\tau > l$ is is the next time which is not a tree time if this time exists or otherwise we mark it by $(j_{m*}, j_{2,k})$.

We claim that the canonical sequence $\pi$ is 
uniquely determined by $i_{1,k},i_{2,k}$, the positions of the important times, their marks, and the value of the merging time $m_\star$ and its mark. Indeed, if $t = (\veps,l)$ is an important time or the merging time, the sequence $(j_{\veps,l},\ldots, j_{\veps,\tau-1})$ is the unique path in $\cT_t$ from $j_{\veps,l}$ to $j_{\veps,\tau-1}$ (there is a unique path between two vertices of a tree). Moreover, if $\sigma \geq \tau$ is the next important time, merging time or $\sigma = k+1$ if $t$ is the last important time of $\pi_\veps$, $(\veps,\tau), \ldots, (\veps,\sigma-1)$ is a sequence of first times (if $\tau = \sigma$ this sequence is empty). It follows that if $j = |J_t|$, we have $j_{\veps,\tau +s-1} = j+s$, for $s = 1, \cdots, \sigma- \tau$, by the minimality of canonical paths.

There are at most $2k$ possibilities for the position of an important time and the value of the merging time and $\ell^2$ possibilities for its mark. In particular, the number of distinct canonical paths in $\widetilde P_{k,k}$ 
with $|J| = \ell$  is bounded by 
$$
k^2 (2\ell^3)^{2 (k-\ell)+3},
$$
(the factor $k^2$ is front accounts for the possibilities for $(i_{1,k},i_{2,k})$). It gives the requested bound. \end{proof}

We are ready for the proof of Proposition \ref{prop:tight} when assumption \eqref{eq:diagB} holds. 

\begin{proof}[Proof of Proposition \ref{prop:tight} when assumption \eqref{eq:diagB} holds.] By Lemma \ref{le:moment}, if $\pi \in \widetilde \cP_{k,k}(\ell)$, we have $$\ABS{ \dE [X_{\pi_1} \bar X_{\pi_2} ] } \leq  C_k^{6 (k-\ell)},$$
 (indeed, since each $I$ vertex is preceded by an edge in $E_{JI}$, we have $ | E_{JI} | \geq \ell$ and similarly for $J$).  It follows from  \eqref{eq:Z2exprD} and Lemma \ref{le:nbclasseq} that
$$
\dE  [ |Z|^2 ] \leq 2 (2k)^{11} \prod_{t=0}^{k-1}\| B_k\|^2 \sum_{\ell= 1}^{k} \rho^{k -\ell},
$$
where 
$$
\rho = \frac{C_k^6 (2k)^6}{n}. 
$$
Thanks to the assumption $C_k \leq n^{0.02}$, we have $\rho \leq 1/2$ for all $n$ large enough and $k \leq n^{c_1}$ for $c_1 >0$ small enough,. The conclusion follows for all $n$ large enough. For smaller $n$, the conclusion of the proposition trivially holds if the constant $c$ is large enough. \end{proof}

The remainder of this subsection is devoted to the proof of Proposition \ref{prop:tight} when \Hsym holds. The upper bound \eqref{eq:mpiI1} is not sufficient when the matrices $B_k$ are not diagonal, essentially because the number of equivalence classes is much larger than the equivalence classes considered in Lemma \ref{le:nbclasseq}.

Let $\pi = (\pi_1,\pi_2) \in  P_{k} \times P_{k}$ with associated graph $G = G(\pi)$. Observe from \eqref{eq:EXpi}  that $\dE [X_{\pi_1} \bar X_{\pi_2} ]$  depends only on the multiplicities $(m_{1,e}, m_{2,e})$ of each $JI$-edge $e$ of $G$. We introduce a new equivalence relation on $P_k \times P_k$, we write $\pi \hat \sim \pi'$ if there exists a bijection $\psi$ of $\{1,\ldots,n\}^2$ such that $\psi(j_{\veps,t} , i_{\veps,t}) = (j'_{\veps,t} , i'_{\veps,t})$ for all $(\veps,t)$. By construction, this equivalence relation is coarser that the equivalence relation $\sim$ and $\dE [X_{\pi_1} \bar X_{\pi_2}]$ is constant over each equivalence class.
 We rewrite \eqref{eq:Z2expr} as 
\begin{equation*}
\dE  [ |Z|^2 ]   =  \sum_{\pi \in \hat \cP_{k,k}}  \dE [ X_{\pi_1} \bar X_{\pi_2} ] \PAR{ \sum_{\pi' \hat \sim \pi} M_{\pi'_1} \bar M_{\pi'_2} } ,
\end{equation*}
where $\pi = (\pi_1,\pi_2)$ with $\pi_{\veps} = (j_{\veps,1},\ldots, i_{\veps,k})$, $\hat \cP_{k,k}$ is the set of equivalence classes of elements in $P_k \times P_k$ such that each $JI$-edge has even multiplicity (otherwise $\dE [ X_{\pi_1} \bar X_{\pi_2} ]$ is zero by assumption \Hsym and the independence of entries).

We next introduce $\hat \cP_{k,k}(\ell,c) \subset \hat \cP_{k,k}$ as the set of elements $\pi$ such that $G = G(\pi)$ has $|E_{JI} | =  \ell$ distinct $JI$-edges and the graph $G_{IJ}$ spanned by $IJ$-edges has $c$ connected components (beware that $\ell$ is not $|I| \wedge |J|$ contrary to as above). Observe that $c \leq \ell$ (in each connected component, pick a distinguished $I$-vertex, this vertex is neighbor to a $JI$-edge). By Lemma \ref{le:moment}, we arrive at
\begin{equation}\label{eq:Z2exprS}
\dE  [ |Z|^2 ]   \leq  \sum_{\ell = 1}^{k} \sum_{c =1 }^{\ell}   \sum_{\pi \in \hat \cP_{k,k}(\ell,c)}  C_k^{4(k-\ell)} \ABS{ \sum_{\pi' \hat \sim \pi} M_{\pi'_1} \bar M_{\pi'_2} }.
\end{equation}
The graph $G_{JI} = (I\sqcup J ,E_{JI})$ is a multigraph with $2k$ oriented edges. If $e = (i_{\veps,t},j_{\veps,t+1}) \in E_{JI}$, we set $M_e = B_{t}$ if $\veps = 1$ and $M_e = \bar B_{t}$ otherwise, we also set $e_- = i_{\veps,t}$, $e_+ = j_{\veps,t+1}$. We may then write 
$$
\mu_2(\pi) = \sum_{\pi' \hat \sim \pi} M_{\pi_1} \bar M_{\pi_2} = n^{-k+1} \sum_{ \bm i' , \bm j' } \prod_{e \in E_{IJ}} (M_e)_{i'_{e_-}j'_{e_+}},
$$
where the sum is over all $\bm i' = (i'_u)_{u \in I}$, $\bm j' = (j'_v)_{v \in J}$, with $i'_u , j'_v \in \{1,\ldots,n\}$, and for all $(\veps_1,t_1)$, $(\veps_2,t_2)$, $(j'_{j_{\veps_1,t_1}},i'_{i_{\veps_1,t_1}}) \ne (j'_{j_{\veps_2,t_2}},i'_{i_{\veps_2,t_2}}) $  if $(j_{\veps_1,t_1},i_{\veps_1,t_1}) \ne (j_{\veps_2,t_2},i_{\veps_2,t_2})$ (this is the constraint imposed by the equivalence relation $\hat \sim$). 

We set 
$$
\hat \mu_2 (\pi ) = n^{-k+1} \sum_{ \bm i' , \bm j' } \prod_{e \in E_{IJ}} (M_e)_{i'_{e_-}j'_{e_+}},
$$
where the sum is over all $\bm i' = (i'_u)_{u \in I}$, $\bm j' = (j'_v)_{v \in J}$, with $i'_u , j'_v \in \{1,\ldots,n\}$.  Arguing as in the proof of Lemma \ref{le:ZZ1}, if we remove iteratively the constraints $(j'_{j_{\veps_1,t_1}},i'_{i_{\veps_1,t_1}}) \ne (j'_{j_{\veps_2,t_2}},i'_{i_{\veps_2,t_2}}) $  if $(j_{\veps_1,t_1},i_{\veps_1,t_1}) \ne (j_{\veps_2,t_2},i_{\veps_2,t_2})$, we deduce that 
$$
\mu_2(\pi) =  \hat \mu_2 (\pi) + \sum_{\pi'< \pi} \pm \hat \mu_2(\pi'),
$$  where the sum is over all $\pi' \in \hat \cP_{k,k} (\ell',c)$, $\ell' > \ell$ is obtained from $\pi$ choosing a collection of edges in $E_{JI}$ and gluing them (that is, identify their end vertices).

We thus obtain from \eqref{eq:Z2exprS} that 
$$
\dE  [ |Z|^2 ]   \leq  \sum_{\ell=1}^{k} \sum_{c =1 }^{\ell}   \sum_{\pi \in \hat \cP_{k,k}(\ell,c)}  C_k^{4(k-\ell)} L(\pi) \ABS{ \hat \mu_2(\pi)},
$$
where $L(\pi)$ is the number of $\pi'$ which in their iterative  decomposition from $\mu_2 (\pi')$ to $\hat \mu_2(\pi') $ have a term $\pm \hat \mu_2(\pi)$. Note that $$L(\pi)\leq \prod_{e \in E_{JI}} K(m_e),$$ where  $m_e \in 2 \dN$ is the multiplicity $e \in E_{JI}$ and $K(m)$ is the number of ways to partition $\{1,\ldots,m\}$ into blocks of even size. We have $K(2) = 1$ and for $m \geq 4$, 
\[
K(m) \leq \binom{m}{m/2}  \cB_{m/2},
\]
where $\cB_k$ is the $k$-th Bell's number (number of partitions of $\{1,\ldots,k\}$). Since $\cB_k \leq k^k$ and $m/2 \leq m-2$ for $m \geq 4$, we get 
$$
K(m) \leq (2m)^{m-2}.
$$
Since $m \leq 2k$, we obtain the rough upper bound: 
$$
L(\pi) \leq (4k)^{\sum_{e} (m_e - 2)} = (4k)^{2(k -  \ell)}.
$$
We arrive at the expression 
$$
\dE  [ |Z|^2 ]   \leq  \sum_{m=1}^{2k} \sum_{c =1 }^{\ell}   \sum_{\pi \in \hat \cP_{k,k}(\ell,c)}  \PAR{ 4k C^2_k }^{2(k-\ell)} \ABS{ \hat \mu_2(\pi)}.
$$
The reason we have introduced the equivalence relation $\hat \sim$ was to get an upper bound on $L(\pi)$ where in the exponent $k-\ell$ appears, for the equivalence relation $\sim$ the exponent would have been too large for large $\ell$ (of order $\ell$).

\begin{lem}\label{le:lemma1112}
Let $p = 1/2$. For any $\pi \in \hat \cP_{k,k} (\ell,c)$, we have 
$$
|\hat \mu_2(\pi) | \leq  n^{c + p (k - \ell) - k}  \prod_{t=0}^{k-1}\| B_k\|^2  .
$$
\end{lem}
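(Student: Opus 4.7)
The plan is to recognize $\hat\mu_2(\pi)$ as a deterministic graphical sum-product expression on the bipartite multigraph $G_{IJ} = (I \sqcup J, E_{IJ})$, where each directed edge carries a matrix (one of the $B_t$'s from $\pi_1$ or its conjugate from $\pi_2$), and then apply Theorem~\ref{th:sumproduct} from the appendix. That theorem produces an upper bound of the form $n^{\alpha}\prod_{e \in E_{IJ}} \|M_e\|$, where $\alpha$ is a combinatorial invariant of $G_{IJ}$. Since each $B_t$, $0 \leq t \leq k-1$, appears exactly twice in the labelling (once from $\pi_1$ and once from $\pi_2$), the product of operator norms collapses to $\prod_{t=0}^{k-1}\|B_t\|^2$, giving exactly the deterministic factor in the target bound.

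The target exponent $c + \tfrac{1}{2}(k-\ell) - k$ on $n$ (equivalently, $c + \tfrac{1}{2}(k-\ell) - 1$ once the $n^{-k+1}$ prefactor of $\hat\mu_2$ is cleared) should decompose as follows. Each of the $c$ connected components of $G_{IJ}$ supplies one ``free'' trace-like summation of size $n$, producing $n^c$ in total. The excess multiplicity of $E_{JI}$ beyond the minimum value $2$ satisfies $\sum_{e \in E_{JI}} (m_e-2) = 2(k-\ell)$, and each such excess edge contributes a factor $n^{1/2}$ via a Cauchy--Schwarz bound on overlapping sums (as in the proof of Lemma~\ref{le:ZZ1}, cf. \eqref{eq:boundCp}), producing $n^{(k-\ell)/2}$; this is the source of $p = 1/2$. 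Finally, the rank-one identity $(B_0)_{ij} = u_i \bar v_j$ factorizes the summation at the two edges of $E_{IJ}$ labelled by $B_0$ (namely $(i_{\veps,k}, j_{\veps,1})$ for $\veps = 1,2$), and mimicking the argument leading to \eqref{eq:m1bd1} and \eqref{eq:mpiI1} each such factorization saves a factor of $n^{1/2}$, yielding the extra $n^{-1}$ in the exponent while replacing the operator norms on those edges by $\|u\|_2\|v\|_2 = \|B_0\|$.

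The principal obstacle is the precise combinatorial bookkeeping relating the graph invariant of Theorem~\ref{th:sumproduct} (which is formulated in terms of $G_{IJ}$, involving its vertex count, number of distinct edges, and cycle rank or number of components) to the lemma's hypotheses, which are phrased in terms of $\ell$ (a feature of the companion graph $G_{JI}$) and the component count $c$ of $G_{IJ}$. Although $G_{IJ}$ and $G_{JI}$ share the vertex set $I \sqcup J$, their distinct-edge counts need not coincide, so one must transfer information between the two. The delicate step, likely requiring a case analysis similar to the one in Lemma~\ref{le:ZZ1}, is to fix a spanning tree in each connected component of $G_{IJ}$ and relate the remaining cycle-edges to the multiplicity excess $\sum_e(m_e-2)$ of $E_{JI}$ via the cyclic pairing of the two paths $\pi_1,\pi_2$; this correspondence is what ultimately forces the factor $p = 1/2$ rather than a worse exponent.
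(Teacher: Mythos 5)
Your overall strategy --- write $n^{k-1}\hat\mu_2(\pi)$ as a product over the connected components of $G_{IJ}$, apply Theorem \ref{th:sumproduct} to each component, and exploit the rank-one structure of $B_0$ --- is exactly the paper's route (note in passing that the even-degree hypothesis of Theorem \ref{th:sumproduct} must be checked; it holds because every $JI$-edge of $\pi\in\hat\cP_{k,k}$ has even multiplicity). However, your exponent bookkeeping contains a genuine gap. The factor $n^{p(k-\ell)}$ is not something the estimate produces: it is slack in the statement. The paper proves the single uniform bound $|\hat\mu_2(\pi)|\le n^{c+1/2-k}\prod_{t}\|B_t\|^2$ and observes that this already implies the lemma whenever $\ell<k$, since then $p(k-\ell)\ge 1/2$. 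Your plan to generate $n^{(k-\ell)/2}$ from the multiplicity excess $\sum_e(m_e-2)=2(k-\ell)$ via Cauchy--Schwarz is not how an upper bound can arise (excess multiplicity only removes free indices), and it obscures where the real difficulty sits.

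The real difficulty is the boundary case $\ell=k$, and there your argument fails. You claim that the two $IJ$-edges labelled by $B_0$ and $\bar B_0$ each save a factor $n^{1/2}$, ``yielding the extra $n^{-1}$''. Theorem \ref{th:sumproduct} does not give this: its second statement admits only one distinguished low-rank edge per connected component, so the guaranteed saving is $n^{-\IND(\delta_i>0)/2}$ per component (this is \eqref{eq:djeidje}), hence only $n^{-1/2}$ in total when both $B_0$-labelled edges lie in the same component --- which can certainly happen (e.g.\ $\pi_1=\pi_2$ with $k$ distinct $JI$-edges, where moreover $\ell=k$). For $\ell<k$ the slack absorbs this, but for $\ell=k$ the target is exactly $n^{c-k}$ and $n^{c+1/2-k}$ is not enough. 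The paper closes this case by a separate, sharper argument: when $\ell=k$ all vertices of $G(\pi)$ have degree $2$, each component of $G_{IJ}$ is a cycle, so $n^{k-1}\hat\mu_2(\pi)$ is a product of traces of products of the $B_t$'s and their transposes, and $|\Tr(M)|\le\mathrm{rank}(M)\,\|M\|$ applied to the component(s) containing $B_0$ saves a full factor of $n$, giving $n^{c-1}$. Without this step (or some strengthening of the rank-one saving that handles two rank-one edges inside one component), your proof does not cover $\ell=k$.
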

\begin{proof}
We note that $n^{k-1} \hat \mu_2(\pi)$ is equal to a product over the connected components of $G_{IJ}$. From assumption \Hsym, all vertices of the oriented graph $G_{IJ}$ have even degrees. We may thus apply Theorem \ref{th:sumproduct} (in Subsection \ref{subsec:sumproduct} of the Appendix), it implies that 
\begin{equation}\label{eq:djeidje}
n^{k-1} \ABS{ \hat \mu_2(\pi) } \leq \prod_{t=0}^{k-1}\| B_k\|^2  \prod_{i=1}^{c} n^{1 - \frac{\IND_{\delta_i > 0}}{2}},
\end{equation}
where $\delta_i \in \{0,1,2\}$ is the number of matrices $B_0$ in this connected component. Note that we arrive at the claim of the lemma. We get 
$$
\ABS{ \hat \mu_2(\pi) } \leq n^{c + 1/2 -k} \prod_{t=0}^{k-1}\| B_k\|^2.
$$
In particular, if  $\ell < k$, we obtain the claimed statement since $(k-\ell) /2 \leq 1/2 $. It remains to check the lemma when $\ell = k$. In this case, all vertices of $G(\pi)$ have degree $2$. It follows that each weak connected component of $G(\pi)$ is a cycle. In particular $n^{k-1}  \hat \mu_2(\pi) $ is a product of traces of the $B_t$'s or their transposes. Since $\TR(M) \leq \mathrm{rank} (M) \| M \|$, the statement is straighforward.
\end{proof}

We have thus proved so far that 
\begin{equation}\label{eq:exprZ2S}
\dE  [ |Z|^2 ]   \leq  \left( \prod_{t=0}^{k-1}\| B_k\|^2  \right)  \sum_{m=1}^{2k} \sum_{c =1 }^{\ell}  | \hat \cP_{k,k}(\ell,c)|  \PAR{ 4k C^2_k }^{2(k-\ell)} n^{c + p (k - \ell) - k} .
\end{equation}
Our final lemma is an upper bound on $\hat \cP_{k,k}(\ell,c)$.

\begin{lem}\label{le:nbclasseqS}
For any integers $1 \leq c \leq \ell \leq k$, we have
$$
| \hat \cP_{k,k}(\ell,c) | \leq  (2k )^{12 (k-\ell) + 4 (\ell - c) +12}.
$$\end{lem}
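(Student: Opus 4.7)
The proof is a refinement of the canonical-encoding strategy used in Lemma~\ref{le:nbclasseq}, now accounting for the additional $IJ$-edge structure that appears since assumption~\eqref{eq:diagB} is not imposed. For each $\hat\sim$-class in $\hat\cP_{k,k}(\ell,c)$ I would work with the maximally-separated $\sim$-representative, in which distinct pair-labels $(j_{\veps,t},i_{\veps,t})$ have distinct $j$- and distinct $i$-coordinates; this makes $G(\pi)$ and $G_{IJ}$ well-defined and fixes $p=q=\ell$. Since $\sim$ refines $\hat\sim$, bounding the number of such canonical walks yields an upper bound on $|\hat\cP_{k,k}(\ell,c)|$.

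I would then traverse the $4k$ successive vertex visits of the two walks (with $4k-2$ edge traversals, broken into the two walks) and maintain a growing spanning forest $\cT_t$ of $G(\pi)$. Each traversal is classified as a \emph{first time} (extending $\cT$ by a new edge to a new vertex), a \emph{tree time} (re-traversing an existing tree edge), or an \emph{important time} (traversing a non-tree edge). As in Lemma~\ref{le:nbclasseq}, I would also distinguish a \emph{merging time} when the two walks first connect. Each important time and the merging time are encoded by their position (in $\{1,\ldots,2k\}$, factor $\leq 2k$) together with a mark identifying the re-used edge or re-used vertex (factor $\leq (2k)^2$), while first and tree times require no mark; the two initial vertices $j_{1,1},j_{2,1}$ add only a constant-size encoding.

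The core estimate is a bound on the number of important times, split into $JI$- and $IJ$-types. On the $JI$-side, the symmetry assumption $(H_\text{sym})$ implies each $JI$-edge has even multiplicity $\geq 2$, so $\sum_{e\in E_{JI}}(m_e-2)=2(k-\ell)$; combined with the spanning-forest analysis this bounds the number of important $JI$-times (traversals of non-tree $JI$-edges, counted with multiplicity) by $2(k-\ell)+O(1)$. On the $IJ$-side, since $p=q=\ell$ in the chosen representative, $G_{IJ}$ has $2\ell$ vertices and $c$ components, so its cycle rank is $\ell'-(2\ell-c)$; together with the $2(k-1)-\ell'$ repeat $IJ$-traversals, the number of important $IJ$-times is $O(\ell-c)+O(1)$. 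Using at most $(2k)^6$ encoding symbols per important $JI$-time (position plus an extended edge/vertex mark coping with the even-multiplicity pairing) and $(2k)^4$ per important $IJ$-time, and absorbing constant overheads from the initial conditions and merging time into a factor $(2k)^{12}$, I would obtain the stated bound $(2k)^{12(k-\ell)+4(\ell-c)+12}$.

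The main obstacle will be sharpening the coefficients $12$ and $4$ to their claimed values. This requires carefully partitioning important times into subtypes (new-edge versus repeat-edge, $JI$ versus $IJ$) and verifying that the extra factor on the $JI$-side really is $(2k)^6$ per excess pair, reflecting that even-multiplicity pairings propagate across several positions and edges, while the $IJ$-side mark is only $(2k)^4$. A secondary subtlety is the handling of the four possible boundary vertices $\{j_{1,1},j_{2,1},i_{1,k},i_{2,k}\}$, which are the only candidates for isolated vertices of $G_{IJ}$ and contribute to the additive constant in the exponent.
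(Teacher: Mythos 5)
Your canonical choice (the maximally separated representative, which is exactly the paper's canonical element with $JI$-edges $(1,1),\dots,(\ell,\ell)$) and the general first/tree/important-time encoding are in the right spirit, but the step where the proof actually lives is the bound on the number of marked times, and your $IJ$-side estimate fails there. By your own formulas, "cycle rank plus repeat $IJ$-traversals" is $[\ell'-(2\ell-c)]+[2(k-1)-\ell'] = 2(k-\ell)+c-2$, which is \emph{not} $O(\ell-c)+O(1)$: it carries an additive $c$, of order $k$ precisely in the extreme case $\ell=c=k$ (every $JI$-edge of multiplicity exactly two and $G_{IJ}$ a disjoint union of doubled edges). In that regime the lemma demands a bound $(2k)^{O(1)}$, and this is also the regime that matters in Proposition \ref{prop:tight}, since there the factor $n^{c+p(k-\ell)-k}$ from Lemma \ref{le:lemma1112} gives no decay and the whole contribution must be polynomial in $k$. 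The underlying problem is that you treat every repeated $IJ$-traversal as potentially important: a spanning forest of $G(\pi)$ has about $2\ell$ edges which are only guaranteed to be traversed \emph{once} (only $JI$-edges have even multiplicity), so your accounting gives at best "number of important times $\le 4(k-\ell)+|F_1|+O(1)$", where $F_1$ is the set of forest edges traversed exactly once, and without controlling $|F_1|$ you are left with $O(k)$ marked times and a useless $(2k)^{O(k)}$ bound.

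The missing idea — and the heart of the paper's proof — is the interplay between the even $JI$-multiplicities and the component structure of $G_{IJ}$. The paper runs the exploration not on the vertices of $G(\pi)$ but on the set $C$ of connected components of $G_{IJ}$, with tree edges coloured by $JI$-edges; the key observation is that if a component contains a single $J$-vertex, every transition leaving it is forced, and since the corresponding $JI$-edge has multiplicity at least two, that coloured tree edge is traversed at least twice. Since the number of components with at least two $J$-vertices is at most $|J|-c\le \ell-c$, at least $c-1-(\ell-c)$ tree edges are doubly traversed, whence the number of important times is at most $2k+\ell-3c+2=2(k-\ell)+3(\ell-c)+2$; with marks of size at most $2kc^2\ell$ this yields the lemma. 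Your vertex-level framework could be repaired along the same lines — e.g.\ show that multiplicity-one $IJ$-edges occur only in components of $G_{IJ}$ with at least two $J$-vertices, and that the total number of $IJ$-traversals emanating from $I$-vertices of such components is at most $4(\ell-c)+2(k-\ell)$, which bounds $|F_1|$ linearly — but some such structural input is indispensable, and it is absent from your sketch. Finally, chasing the exact coefficients $12$ and $4$ is not the real obstacle: any absolute constants suffice for the application (they only change $\rho_1,\rho_2$ in Proposition \ref{prop:tight}), and indeed tracking the paper's own count gives an exponent of the form $8(k-\ell)+12(\ell-c)+12$; what is essential, and what your argument does not yet deliver, is a bound on the number of marked times that is linear in $(k-\ell)$ and $(\ell-c)$ with no residual term of order $k$ or $c$.
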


\begin{proof}
The proof is a variant of the proof Lemma \ref{le:nbclasseq}. We equip $T = \{1,2\} \times \{1,\ldots, k\}$ with the lexicographic order. We write $\pi = (\pi_1,\pi_2) \in \hat P_{k,k}$, $\pi_{\veps} = (j_{\veps,1},\ldots,i_{\veps,k})$ as $\pi = (\pi_t)_{t \in T}$ with $\pi_{t} = (j_{\veps,l},i_{\veps,l})$ for $t = (\veps,l)$.  We say that $\pi \in \hat P_{k,k}$ is {\em canonical}, if its distinct $JI$-edges are $e_1 = (1,1), e_2 = (2,2), \ldots, e_\ell = (\ell,\ell)$ where $\ell  = |E_{JI}|$ and if these edges are visited for the first time in increasing order. By construction, there is a unique canonical element in each equivalence class for $\hat \sim$. For any canonical element $\pi \in \hat \cP_{k,k}(\ell,c) $, we have $E_{JI} = \{e_1,\ldots,e_\ell\}$. The set $C = \{1,\ldots,c\}$ is thought as the set of connected components of $G_{IJ}$ ordered by the order of appearance of its $I$-vertices. For each $t \in T$, we denote by $c_t \in C$ the index of the connected component of $i_t$. 

Similarly to the proof of  Lemma \ref{le:nbclasseq}, we build an non-decreasing sequence of edge-colored spanning trees $\cT_t$, $t \in T$, on the vertex set $C_t =\{c_{s} : s \leq t \}$. A colored edge from $p$ to $q$ is a triple $(p,e,q)$ with $p,q \in C$ and $e \in E_{JI}$. At time  $t=(1,1)$, $\cT_{(1,1)}$ has no edge and vertex set $c_{1,1} = \{ 1 \}$. At time $t > (1,1) \in T$, if the addition to $\cT_{t-1}$ of the oriented edge 
$(c_{t-1},c_{t})$ does not create a cycle, then $\cT_t$ is $\cT_{t-1} \cup (c_{t-1},\pi_{t-1},c_t)$, we then say that 
$(c_{t-1},\pi_{t-1},c_t)$ is a {\em tree edge}. Otherwise $\cT_t = \cT_{t-1}$. By construction, the vertex set of $\cT_{t}$ is $C_t$. In particular, $\cT_{(2,k)}$ is a spanning tree of $C$, and the number of tree edges is $c-1$. 

As in the proof of  Lemma \ref{le:nbclasseq}, we divide the elements of $T$ into types. We say that $t \in T$ is a {\em first time} if $\cT_t = \cT_{t-1} \cup (c_{t-1},\pi_{t-1},c_t)$. 
We say that 
$t \in T$ is a {\em tree time} if  $ (c_{t-1},\pi_{t-1},c_t)$ is a tree edge of $\cT_{t-1}$. The other times $t \in T$ are 
called {\em important times}. Finally, the {\em merging time} $m_\star$ is $0$ if $E_{JI} (\pi_2) \cap E_{JI} (\pi_1) = \emptyset$, otherwise $m_\star = (2,l)$ where $l$ is minimal with the property that $c_{2,l} \in \Gamma_{1,k}$

As in the proof of  Lemma \ref{le:nbclasseq}, the sequence $(\pi_{\veps,l}), 1 \leq l \leq k$, can be decomposed as 
the successive repetitions of: $(i)$ a sequence of first times (possibly empty), $(ii)$ an important time or the merging time, 
$(iii)$ a sequence of tree times (possibly empty).

We mark each important time $t = (\veps,l)$ by the vector $(\pi_{t-1},c_t,c_{\veps,\tau-1})$ where $\tau > l$ is the next time which is not a tree time if this time exists. Otherwise $t = (\veps,l)$ is the last important time of $\pi_{\veps}$ and we put the mark $(\pi_{t-1},c_t,c_{\veps,k})$. Similarly, if the merging time $m_\star = (2,l)$, $1 \leq l \leq k$, we mark it  by the vector $(\pi_{m*-1},c_{m^*}, c_{2,\tau-1})$ where $\tau > l$ is is the next time which is not a tree time if this time exists or otherwise we mark it by $(\pi_{m*-1},c_{m^*}, c_{2,k})$.

Arguing exactly as in the proof of  Lemma \ref{le:nbclasseq},  the canonical sequence $\pi$ is  uniquely determined by the positions of the important times, their marks, and the value of the merging time $m_\star$ and its mark.

We are left with the task of giving an upper bound on the  number of important times. Assume that $(c_{t-1} , \pi_{t-1},c_{t})$  is a tree edge and $c_{t-1} =p \in C$,  $\pi_{t-1}= e_i = (i,i)$, $\pi_{t} = e_j = (j,j)$. Then, $i \ne j$ are in $p$-th  connected component of $G_{IJ}$. In particular, if $j$ is the only $J$ -vertex in the $p$-th  connected component of $G_{IJ}$ and since $e_i$ is visited at least twice, the tree edge $(c_{t-1} , \pi_{t-1},c_{t})$  will be visited at least twice (indeed, otherwise, $(e_i,e_{j'}) = (\pi_{s-1},\pi_s)$ with $s \ne t$ and $j \ne j'$ and we would have $(i,j,j')$ in the same connected component of $G_{IJ}$). Let $\kappa_{2}$ be the number of connected components of $G_{IJ}$ which have at least two distinct $J$-vertices. It follows that the number of tree edges which are visited at least twice is $t_2 \geq c - 1 -\kappa_2$. 

If $n_p$ is the number of $J$-vertices in the $p$-th connected component of $G_{IJ}$, we have 
$$
|J| - c = \sum_p (n_p -1 ) \geq \kappa_2.
$$
As already pointed, we have $|J| \leq \ell$. It follows that $\kappa_2 \leq \ell -c$ and $t_2 \geq 2 c - \ell - 1$. Let $t_1$ be the number of tree edges visited once. We have $t_2 + t_1 = c - 1$. the number of important times is at most 
$$
2k - 2  t_2 - t_1 =  2k - t_2 - c +1  \leq  2 k + \ell - 3 c +2. 
$$

Now, there are at most $2k$ possibilities for the position of an important time and the value of the merging time and $c^2\ell$ possibilities for its mark. In particular, the number of distinct canonical paths in $\widetilde P_{k,k}$ 
with $\ell = |E_{JI}|$ and $c$ connected components in $G_{IJ}$ is upper bounded by 
$$
 (2k c^2\ell )^{2 k+ \ell - 3 c +3}.
$$
It gives the requested bound. \end{proof}

\begin{proof}[Proof of Proposition \ref{prop:tight} when assumption \Hsym holds.]
From \eqref{eq:exprZ2S} and Lemma \ref{le:nbclasseqS}, we have 
\begin{equation}\label{eq:dedeoid}
\dE[ | Z|^2] \leq \left( \prod_{t=0}^{k-1}\| B_k\|^2  \right)  (2k)^{12} \sum_{\ell=1}^{ k } \sum_{c=1}^\ell \rho_1^{k-\ell} \rho_2^{\ell - c},
\end{equation}
where 
$$
\rho_1 = \frac{(4k C^2_k )^2 (2k )^{12}}{n^{1-p}}
\quad \hbox{ and } \quad 
\rho_2 = \frac{(2k )^{4}}{n} .
$$
Thanks to assumption $C_k \leq n^{0.02}$, $\rho_1$ and $\rho_2$ are less than $1/2$ for all $n$ large enough and $k \leq n^{c_1}$ with $c_1>0$ small enough. The conclusion follows.
\end{proof}

We conclude this subsection with a variant of Proposition \ref{prop:tight} which will be useful in the sequel. We introduce the normalized Frobenius norm:  for $M \in M_n(\dC)$, 
$$
\| M \|_2 = \sqrt{ \frac{1}{n} \TR ( M M ^*) }.
$$
We have $\| M \|_2 \leq \|M \| \leq \sqrt n \| M \|_2$. We consider the  assumption :  for some $0 < \veps < 1/2$,
\begin{equation}\label{eq:hypB20}
\hbox{for all $k\geq 1$ : } \| B_k \| \leq  n^{\veps} \| B_k \|_2.
\end{equation}
We have the following strengthening of Proposition \ref{prop:tight}.
\begin{prop}\label{prop:tight2}
Assume that assumption $(H_k)$ holds and $C_k = (\dE |X_{ij}|^k)^{1/k} \leq n^{0.02}$ and let $B \in M_n(\dC)^{\dN}$ such that \eqref{eq:defB0} and \eqref{eq:hypB20} hold for some $\veps >0$. We assume also that either \Hsym or \eqref{eq:diagB} holds.  Then,  there exists  a numerical constant $c >0$ such that, if  $\veps < 1/c$, for all $k \leq n^{c}$, we have
$$
\dE | Z_k ( B) |^2 \leq c k^{12}  \| B_0\|^2 \prod_{t=1}^{k-1} \| B_t \|^2_2.
$$
\end{prop}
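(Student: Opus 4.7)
}

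The plan is to retrace the proof of Proposition \ref{prop:tight} and, at each step where the operator norm $\|B_t\|$ appears for some $t\ge 1$, replace it by the normalized Frobenius norm $\|B_t\|_2$, using the assumption \eqref{eq:hypB20} together with the special identity $\|B_0\|^2 = n\|B_0\|_2^2$ (coming from $B_0 = u\otimes\bar v$) to absorb the loss. I would first expand $\dE|Z_k(B)|^2$ exactly as in \eqref{eq:Z2expr} (diagonal case) or \eqref{eq:Z2exprS} (symmetric case), stratifying the sum by the number $\ell=|E_{JI}|$ of distinct $JI$-edges of $G(\pi)$ and, in the \Hsym case, by the number $c$ of connected components of the auxiliary graph $G_{IJ}$. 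The moment estimate of Lemma \ref{le:moment} and the equivalence-class counts of Lemmas \ref{le:nbclasseq} and \ref{le:nbclasseqS} carry over verbatim.

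The essential modification is a refined version of Lemma \ref{le:lemma1112}. Inspecting the proof of the graphical sum-product bound Theorem \ref{th:sumproduct}, one sees that inside each connected component $\gamma$ of $G_{IJ}$ with $v(\gamma)$ vertices, summing over vertex indices produces a factor $n^{v(\gamma)-1}$ that can be paired with the edge-factors $(B_t)_{i,j}$ via $\sum_{i,j}|(B_t)_{ij}|^2 = n\|B_t\|_2^2$ rather than via the cruder $|(B_t)_{ij}|\le \|B_t\|$. For a generic edge one such pairing is available, but near vertices of large degree (those coming from $JI$-multiplicities $>2$) or near the boundary of the path there remain at most $O(k-\ell+c)$ edges for which only the operator norm can be used; each of these edges then costs a factor $n^{2\veps}$ thanks to \eqref{eq:hypB20}. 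Moreover, in any connected component containing the $B_0$-edge, the identity $\|B_0\|^2=n\|B_0\|_2^2$ provides exactly the one extra factor of $n$ needed to perform the Frobenius conversion there. The resulting bound takes the shape
\[
|\hat\mu_2(\pi)|\le n^{c+\tfrac12(k-\ell)-k+O(\veps(k-\ell+c))}\,\|B_0\|^2\prod_{t=1}^{k-1}\|B_t\|_2^2.
\]

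Plugging this into \eqref{eq:Z2exprS} (respectively \eqref{eq:Z2exprD}) and using the combinatorial bounds on $|\hat\cP_{k,k}(\ell,c)|$ (resp.\ $|\widetilde\cP_{k,k}(\ell)|$) gives
\[
\dE|Z_k(B)|^2\le (2k)^{12}\,\|B_0\|^2\prod_{t=1}^{k-1}\|B_t\|_2^2\,\sum_{\ell=1}^k\sum_{c=1}^{\ell}\tilde\rho_1^{k-\ell}\,\tilde\rho_2^{\ell-c},
\]
where $\tilde\rho_1=n^{O(\veps)}\rho_1$ and $\tilde\rho_2=n^{O(\veps)}\rho_2$ differ from the quantities in \eqref{eq:dedeoid} only by a polynomial-in-$n^{\veps}$ factor. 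Provided $\veps$ is smaller than an explicit constant $1/c$ and $k\le n^{c}$, one still has $\tilde\rho_1,\tilde\rho_2<1/2$, so the geometric double sum is $O(1)$ and the claimed bound follows; the diagonal case is identical with $c=1$.

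The hard part is the second step: rigorously setting up the refined graphical sum-product bound that exchanges operator norms for Frobenius norms. The bookkeeping is delicate because one must simultaneously (i) identify, component by component of $G_{IJ}$, which edges admit Frobenius bounds without needing assumption \eqref{eq:hypB20}, (ii) verify that the $n$-factor freed by $\|B_0\|^2 = n\|B_0\|_2^2$ is routed to the unique connected component containing $B_0$, and (iii) show that the residual operator-norm edges are at most $O(k-\ell+c)$ in number, which is the exact slack that the geometric factors $\rho_1^{k-\ell}\rho_2^{\ell-c}$ in \eqref{eq:dedeoid} can absorb.
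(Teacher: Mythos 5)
Your overall strategy is the paper's: rerun the proof of Proposition \ref{prop:tight}, upgrade Lemma \ref{le:lemma1112} so that Frobenius norms replace operator norms, and absorb the $n^{O(\veps)}$ conversion penalties into the geometric factors of \eqref{eq:dedeoid}. But the crucial quantitative step is both mis-stated and not actually carried out. You bound the number of edges that must be kept in operator norm by $O(k-\ell+c)$ and claim this is ``the exact slack that $\rho_1^{k-\ell}\rho_2^{\ell-c}$ can absorb''; the slack those factors provide is $(k-\ell)+(\ell-c)=k-c$, not $k-\ell+c$. In the dominant stratum $\ell=c=k$ (every $JI$-edge of multiplicity two, every component of $G_{IJ}$ with exactly two edges), your count gives $O(k)$ penalty edges, hence a factor $n^{O(\veps k)}$, while $\rho_1^{k-\ell}\rho_2^{\ell-c}=1$ offers no decay at all; so your displayed bound does not yield $\tilde\rho_i=n^{O(\veps)}\rho_i$ and the final summation breaks down precisely where the main contribution lives. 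What is needed (and what the paper proves) is that the number $m$ of penalty edges satisfies $m\le 6(k-c)$, and in the stratum $\ell=c=k$ one must have $m=0$, i.e.\ \emph{no} edge pays $n^{\veps}$ there.

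The mechanism you propose for the refined bound --- reworking the interior of Theorem \ref{th:sumproduct} so that ``generic'' edges pair the vertex sums with $\sum_{i,j}|(B_t)_{ij}|^2$ --- is left heuristic, and it is not how the difficulty is resolved. The paper keeps Theorem \ref{th:sumproduct} untouched and instead splits the connected components of $G_{IJ}$: a component with exactly two edges $e,f$ is bounded directly by Cauchy--Schwarz, $\sum_{j,j'}|(M_e)_{jj'}(M_f)_{jj'}|\le\sqrt{\TR(M_eM_e^*)\TR(M_fM_f^*)}$, which already produces $n\|M_e\|_2^2$ factors (and $\TR(B_0B_0^*)=\|B_0\|^2$ for the rank-one block, so no ``routing'' of an extra $n$ is needed --- the target bound keeps $\|B_0\|^2$, not $\|B_0\|_2^2$); only edges lying in components with at least three edges are estimated via Theorem \ref{th:sumproduct} in operator norm and then converted through \eqref{eq:hypB20} at cost $n^{\veps}$ each. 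The count then follows from $c_2+c_{>2}=c$ and $2c_2+3c_{>2}\le 2k$, giving $c_2\ge 3c-2k$ and $m\le 2k-2c_2\le 6(k-c)=6(k-\ell)+6(\ell-c)$, which is exactly what the modified $\rho_1,\rho_2$ can swallow (this is \eqref{eq:mbd}). Without this component-splitting and the $6(k-c)$ count, your proposal has a genuine gap at its load-bearing step.
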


\begin{proof}
We only give the proof when assumption \Hsym holds. The proof extends immediately to the simpler case when \eqref{eq:diagB} holds. The only difference  with Proposition \ref{prop:tight} lies in the application of Lemma \ref{le:lemma1112}.  As pointed in the proof of Lemma \ref{le:lemma1112}, $n^{k-1} \hat \mu_2 (\pi)$ is equal to a product over the connected components of $G_{IJ}$, say 
$$n^{k-1} \hat \mu_2 (\pi) = \prod_{i=1}^c L_i.$$ Since all degrees in $G_{IJ}$ are at least two, each connected component has at least two (oriented) edges. If the $i$-th connected component has exactly two edges, say $e,f$ then, we may directly upper bound $L_i$ by 
$$
\sum_{j,j'} |(M_e)_{jj'} (M_f)_{jj'}| \leq \sqrt{ \TR (M_e M_e^*) \TR (M_f M_f^*)},
$$
where the inequality follows from the Cauchy-Schwarz inequality. If $M_e \notin \{ B_0,\bar B_0\}$, we use $\TR (M_e M_e^*) \leq n \|M_e\|_2^2$. If $M_e  \in \{B_0,\bar B_0\}$, thanks to \eqref{eq:defB0}, we have 
$$
\TR (B_0 B_0^*) = \| B_0 \|^2.
$$

If $L_i$ has more than $2$ edges, we apply Theorem \ref{th:sumproduct} instead. Therefore, using \eqref{eq:hypB20}, Equation \eqref{eq:djeidje} is improved in 
$$
n^{k-1} \ABS{ \hat \mu_2(\pi) } \leq   n^{\veps m}  \| B_0\|^2 \prod_{t=1}^{k-1} \| B_t \|^2_2 \prod_{i=1}^{c} n^{1 - \frac{\IND_{\delta_i >0} }{2}}, 
$$
where $m$ is the number of  edges $e$ of $G_{IJ}$ such that $M_e \notin \{B_0,\bar B_0\}$ and which are in a connected component with more than two  edges. Consequently, the proof of Lemma \ref{le:lemma1112} gives the improved bound
$$
|\hat \mu_2(\pi) | \leq   n^{c + p (k - \ell) - k} n^{\veps m}  \| B_0\|^2 \prod_{t=1}^{k-1} \| B_t \|^2_2 .
$$
We next claim that 
\begin{equation}\label{eq:mbd}
m \leq 6 (k-c).
\end{equation}
Indeed, let $m ' \geq m$  be the number of edges of $G_{IJ}$  which are in a connected component with more than two edges. Let $c_2$ (resp. $c_{3}$) be the number of connected components with two edges (resp. at least $3$). We have 
$$
c_2 + c_{> 2} = c \quad \hbox{ and } \quad 2 c_2 + 3 c_{> 2} \leq 2k.
$$ 
(thanks to the symmetry assumption \Hsym the factor $3c_{>2}$ could in fact be improved in $4 c_{>2}$). We deduce that $c_2 \geq 3 c - 2k$. Since $m' = 2k -2 c_2$, we deduce \eqref{eq:mbd}. We write $k-c = (k-\ell) + (\ell -c)$  and then \eqref{eq:dedeoid} becomes
$$
\dE[ | Z|^2] \leq \left( \| B_0\|^2 \prod_{t=1}^{k-1} \| B_t \|^2_2  \right) \sum_{\ell=1}^{ k } \sum_{c=1}^\ell \rho_1^{k-\ell} \rho_2^{\ell - c},
$$
where 
$$
\rho_1 = \frac{(4k C^2_k )^2 (2k )^{12} n^{6 \veps } }{n^{1-p}}
\quad \hbox{ and } \quad 
\rho_2 = \frac{(2k )^{4} n^{6 \veps} }{n} .
$$
 The conclusion follows.
\end{proof}

\section{Appendix}

\label{sec:appendix}
\subsection{Random analytic functions}

\label{subsec:RAF}
We recall here some results from Shirai \cite{shirai12}. Let $D \subset \mathbb{C}$ be a   bounded connected open   set in
the complex plane. Denote by ${\mathcal H} (D)$  the space of complex analytic functions in $D$, endowed 
 with the distance
\begin{equation}\label{eq:distHU}
d(f,g)  = \sum_{j \geq 1} 2^{-j} \frac{ \| f -g\|_{K_j} }{ 1 + \| f - g \|_{K_j}},
\end{equation}
where $(K_j)_{j \geq 1}$ is an exhaustion by compact sets of $D$ and $\|f-g\|_{K_j}$ denotes the supremum norm of $f-g$ on $K_j$. We recall that it is a complete separable metric space.

The space ${\mathcal H} (D)$ is equipped with the (topological)
Borel $\sigma$-field ${\mathcal B}({\mathcal H} (D))$ and the set of probability measures on $({\mathcal H} (D), {\mathcal B}({\mathcal H} (D)))$ is
denoted by ${\mathcal P}({\mathcal H}(D))$.
By {\it a random analytic function on D} we mean an ${\mathcal H} (D)$-valued
random variable  on a probability space. The probability
law of a random analytic function is uniquely determined by its finite dimensional
distributions.

The following proposition from  \cite{shirai12} provides a typical example of random analytic functions.

\begin{prop}[Prop.~2.1 in \cite{shirai12}] 
\label{prop2.1Shirai}
Let  $\{\Psi_k\}_k \subset {\mathcal H} (D)$ be a sequence of independent centered random analytic functions defined on the same probability space. Suppose that $\sum_{k=1}^\infty \mathbb{E}\left( |\Psi_k|^2\right)$
is a locally integrable function of $z$ in $D$. Then, 
$X(z)=  \sum_{k=1}^\infty \Psi_k(z)$ is convergent in ${\mathcal H} (D)$ almost surely and thus defines a random analytic function on $D$.
\end{prop}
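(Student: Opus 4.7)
The plan is to reduce almost sure convergence in $\mathcal{H}(D)$ (which is uniform convergence on compact subsets) to almost sure convergence in $L^2$ on slightly enlarged compacts, and then to exploit the Hilbert space structure of $L^2$ together with independence to apply a maximal inequality of Kolmogorov--Doob type. The bridge between the two will be the mean value property for holomorphic functions, which controls sup norms on a compact $K$ by $L^2$ norms on a thickening $K_\delta$.

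Concretely, fix a compact $K \subset D$ and choose $\delta>0$ small enough that the closed $\delta$-neighborhood $K_\delta = \{z : \mathrm{dist}(z,K)\leq \delta\}$ is contained in $D$. For any $f \in \mathcal{H}(D)$ and any $z \in K$, the subharmonicity of $|f|^2$ combined with the disc $B(z,\delta) \subset K_\delta$ gives the Cauchy-type bound
\[
|f(z)|^2 \;\leq\; \frac{1}{\pi \delta^2} \int_{B(z,\delta)} |f(w)|^2\, dA(w) \;\leq\; \frac{1}{\pi \delta^2}\, \|f\|^2_{L^2(K_\delta,\,dA)}.
\]
This reduces uniform control on $K$ to $L^2$ control on $K_\delta$. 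Now set $S_N = \sum_{k=1}^N \Psi_k$ and view the partial sums as random elements of the Hilbert space $H=L^2(K_\delta, dA)$. Because the $\Psi_k$ are independent and centered, the sequence $(S_N)$ is an $H$-valued martingale in its natural filtration. By Fubini and the hypothesis that $\sum_k \mathbb{E}|\Psi_k|^2$ is locally integrable, one has
\[
\sum_{k\geq 1} \mathbb{E}\,\|\Psi_k\|_H^2 \;=\; \int_{K_\delta} \sum_{k\geq 1} \mathbb{E}\,|\Psi_k(w)|^2\, dA(w) \;<\; +\infty .
\]
Doob's $L^2$ maximal inequality for this Hilbert-valued martingale yields, for every $m$ and every $\varepsilon>0$,
\[
\mathbb{P}\!\left(\sup_{N \geq m}\, \|S_N - S_m\|_H \geq \varepsilon\right) \;\leq\; \varepsilon^{-2} \sum_{k>m} \mathbb{E}\,\|\Psi_k\|_H^2,
\]
and the right-hand side tends to $0$ as $m\to\infty$. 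Hence, almost surely, $(S_N)$ is Cauchy in $H$.

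Combining this with the mean value bound applied to the analytic function $S_N - S_m$ on $K$, we obtain almost sure uniform convergence of $(S_N)$ on $K$. Since analytic functions converging uniformly on $K$ converge to an analytic limit (on the interior of $K$, hence on $K$ up to shrinking $\delta$), the limit is analytic on a neighborhood of $K$. Applying this to an exhaustion $(K_j)$ of $D$ by compact sets and intersecting the corresponding full-probability events yields an almost sure limit $X \in \mathcal{H}(D)$ for the metric $d$ in \eqref{eq:distHU}, which is exactly convergence in $\mathcal{H}(D)$.

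The only subtlety I expect is the justification of Doob's maximal inequality for an $H$-valued martingale with $H$ infinite-dimensional: one avoids any measurability issue by noting that $\|S_N - S_m\|_H^2 = \sum_{m < k\leq N}\|\Psi_k\|_H^2 + 2\mathrm{Re}\sum_{m<j<k\leq N}\langle \Psi_j,\Psi_k\rangle_H$ is a real-valued submartingale in $N$, to which the classical scalar Doob inequality applies directly; alternatively, one may argue coordinate by coordinate in an orthonormal basis of $H$ and take suprema. Beyond this routine point, the rest of the argument is essentially the translation of the scalar independent-sum statement to the analytic setting via the mean value inequality.
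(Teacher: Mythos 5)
The paper does not actually prove this statement --- it is quoted from Shirai \cite{shirai12} without proof --- and your argument is correct and follows essentially the same standard route as the cited source: the area mean-value (subharmonicity) bound, i.e.\ the $p=2$ case of Lemma~\ref{Shirai} quoted in the appendix, reduces sup-norm control on a compact $K$ to $L^2(K_\delta)$ control, and independence plus $\sum_k \mathbb{E}\|\Psi_k\|_{L^2(K_\delta)}^2<\infty$ yields almost sure Cauchyness of the partial sums via the Doob/Kolmogorov maximal inequality (the squared norm of the Hilbert-valued martingale being a real submartingale, as you note), after which the compact exhaustion and Weierstrass's theorem give an analytic limit in $\cH(D)$. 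Your main argument is sound as written; only the parenthetical ``coordinate by coordinate and take suprema'' alternative is loose and unnecessary, since the submartingale observation already settles the point.
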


The following  lemmas from \cite{shirai12} turn out to be useful to prove  tightness results.
\begin{prop}[Prop.~2.5 in \cite{shirai12}]
 Let $(f_n)_n$ be a sequence of random analytic functions in $D$. If $(\Vert f_n\Vert_K)_n$ is tight for any compact set K, then the sequence of distributions of $f_n$, $({\mathcal L}(f_n))_n$, is
tight in ${\mathcal P}({\mathcal H}(D))$.
\end{prop}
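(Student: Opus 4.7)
The plan is to combine Prokhorov's theorem with Montel's theorem. Since the metric space $(\mathcal{H}(D),d)$ defined by \eqref{eq:distHU} is a complete separable metric space, a sequence of laws $(\mathcal{L}(f_n))_n$ in $\mathcal{P}(\mathcal{H}(D))$ is tight if and only if for every $\varepsilon>0$ there exists a compact set $K_\varepsilon \subset \mathcal{H}(D)$ with $\mathbb{P}(f_n \in K_\varepsilon) \geq 1-\varepsilon$ for all $n \geq 1$. So the whole problem reduces to exhibiting such compact sets from the hypothesis that $(\|f_n\|_K)_n$ is tight on every compact $K \subset D$.

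The key tool is Montel's theorem: a subset $F \subset \mathcal{H}(D)$ is relatively compact in $\mathcal{H}(D)$ if and only if it is uniformly bounded on every compact subset of $D$. The equicontinuity required by Arzelà--Ascoli is automatic for analytic functions via Cauchy's integral formula, so boundedness on compacts is the only condition to check.

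Concretely, I would fix the exhaustion $(K_j)_{j \geq 1}$ used to define $d$, which we may assume satisfies $K_j \subset \mathrm{int}(K_{j+1})$ and $\cup_j K_j = D$, so that every compact subset of $D$ is contained in some $K_j$. For $\varepsilon > 0$, the tightness of $(\|f_n\|_{K_j})_n$ on the real line provides, for each $j \geq 1$, a constant $M_{j,\varepsilon} > 0$ such that
$$
\mathbb{P}\bigl(\|f_n\|_{K_j} > M_{j,\varepsilon}\bigr) \leq \varepsilon\, 2^{-j}, \quad \text{for all } n \geq 1.
$$
I would then define
$$
K_\varepsilon = \bigl\{ f \in \mathcal{H}(D) : \|f\|_{K_j} \leq M_{j,\varepsilon} \text{ for every } j \geq 1 \bigr\}.
$$
This set is closed in $\mathcal{H}(D)$ (each seminorm $f \mapsto \|f\|_{K_j}$ is lower semicontinuous for the topology of locally uniform convergence) and is uniformly bounded on every compact of $D$ by the exhaustion property, hence compact by Montel's theorem. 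A union bound finally yields $\mathbb{P}(f_n \notin K_\varepsilon) \leq \sum_{j \geq 1} \varepsilon\, 2^{-j} = \varepsilon$.

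I do not anticipate a serious obstacle: the main conceptual point is invoking Montel to convert pointwise (in $j$) tail bounds on the seminorms $\|\cdot\|_{K_j}$ into the existence of a compact set in $\mathcal{H}(D)$, and the $2^{-j}$ weighting in the metric is perfectly matched by the $2^{-j}$ weighting used in the union bound. The only mild care needed is to insist that the exhaustion $(K_j)$ be cofinal in the compact subsets of $D$ (any reasonable choice, e.g.\ $K_j = \{z \in D : \mathrm{dist}(z,\partial D) \geq 1/j,\ |z|\leq j\}$, works), so that boundedness on each $K_j$ really does imply boundedness on every compact subset of $D$.
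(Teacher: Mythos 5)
Your argument is correct and is the standard proof: the paper itself quotes this statement from Shirai \cite{shirai12} without proof, and your combination of the exhaustion-based tail bounds with Montel's theorem (closed, locally uniformly bounded sets are compact in $\mathcal{H}(D)$) plus the $2^{-j}$ union bound is exactly the expected route. No gaps; the only cosmetic remark is that you do not need Prokhorov's theorem itself—exhibiting the compact sets $K_\varepsilon$ is already the definition of tightness in $\mathcal{P}(\mathcal{H}(D))$.
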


\begin{lem}[Lemma~2.6 in \cite{shirai12}]\label{Shirai}
For any compact set K in D, there exists $\delta > 0$ such that for all  $f \in {\mathcal H}(D)$,
\[
\left\| f \right\|_K^p \leq (\pi \delta^2)^{-1} \int _{\overline{K_\delta}} \left| f(z)\right|^p m(dz), 
\]
for any $p > 0$, where $\overline{K_\delta}\subset D$ is the closure of the $\delta$-neighborhood of K  and $m$ denotes the Lebesgue measure.
\end{lem}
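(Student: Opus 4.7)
The plan is to deduce this inequality from the sub-mean value property for subharmonic functions, applied pointwise on $K$ and then uniformized. The starting point is that for any $f \in \mathcal H(D)$ and any $p > 0$, the function $|f|^p$ is subharmonic on $D$. Indeed, $\log |f|$ is subharmonic (it is harmonic away from the zeros of $f$, and takes the value $-\infty$ on the zero set, which is discrete since $f$ is analytic), hence $|f|^p = \exp(p \log|f|)$ is also subharmonic as the composition of a subharmonic function with a convex nondecreasing function. This step is classical and does not require $f$ to be zero-free.

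Next, since $K$ is compact and $D$ is open, the distance $\eta = \mathrm{dist}(K, D^c)$ is strictly positive (with the convention $\eta = +\infty$ if $D = \mathbb{C}$, but in our setting $D$ is bounded so we just need $K \subset D$ and $D$ open). Pick any $\delta \in (0,\eta)$; by construction the closure $\overline{K_\delta}$ of the $\delta$-neighborhood of $K$ is a compact subset of $D$. Then for every $z_0 \in K$, the closed disk $\overline{B(z_0,\delta)}$ is contained in $\overline{K_\delta} \subset D$, so the sub-mean value inequality for the subharmonic function $|f|^p$ yields
\[
|f(z_0)|^p \leq \frac{1}{\pi \delta^2}\int_{B(z_0,\delta)} |f(z)|^p\, m(dz) \leq \frac{1}{\pi\delta^2}\int_{\overline{K_\delta}} |f(z)|^p\, m(dz),
\]
where the second inequality uses $B(z_0,\delta) \subset \overline{K_\delta}$ and positivity of the integrand.

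The right-hand side does not depend on $z_0$, so taking the supremum over $z_0 \in K$ on the left gives $\|f\|_K^p \leq (\pi\delta^2)^{-1} \int_{\overline{K_\delta}} |f(z)|^p\, m(dz)$, which is the desired bound. There is no real obstacle here; the only point that deserves care is the subharmonicity of $|f|^p$ at the zeros of $f$ when $p < 1$, which is handled by noting that $\log |f|$ is subharmonic as a function with values in $[-\infty, \infty)$ and that composition with the convex nondecreasing map $t \mapsto e^{pt}$ (extended by $0$ at $-\infty$) preserves subharmonicity.
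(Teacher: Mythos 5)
Your proof is correct, and it is the standard argument: the paper itself states this lemma without proof, quoting it from Shirai (Lemma~2.6 in \cite{shirai12}), where it is established exactly this way, via subharmonicity of $|f|^p$ and the area sub-mean value inequality on disks of radius $\delta$ with $\overline{K_\delta}\subset D$. Your handling of the case $0<p<1$ at the zeros of $f$ (composition of the subharmonic function $\log|f|$ with a convex nondecreasing map, or equivalently noting the inequality is trivial where $f$ vanishes) is sound, so there is nothing to add.
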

Using that, by Markov's inequality,   for any $C>0$ and any $p>0$, 
\begin{equation}\label{ineg0}\mathbb{P} \left( \left\| f_n \right\|_K >C \right) \leq \frac{1}{C^p} \mathbb{E} \left( \left\| f_n \right\|^p_K \right),\end{equation} this leads to the following tightness criterion  of sequence of random analytic functions. 
\begin{lem}\label{criterion}
Let $D \subset \dC$ be an open connected set in the complex plane.
Let $K \subset  D$ be a compact set  and $U\subset K$ be an open set. Let $(f_n)$ be a sequence of random analytic functions  on $D$. If there exists $p>0$ and $C>0$ such that for all large $n$, $\sup_{z\in K} \dE \vert f_n(z)\vert^p < C $ then $(f_n)$ is a tight sequence of random analytic functions  on $U$.
\end{lem}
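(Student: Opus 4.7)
The plan is to reduce tightness in $\mathcal{P}(\mathcal{H}(U))$ to a uniform $L^p$ bound on sup-norms over compact subsets of $U$, using the two Shirai lemmas verbatim.

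First, by Proposition~\ref{prop2.1Shirai} (or more precisely Prop.~2.5 from \cite{shirai12}) applied to the open connected set $U$, it suffices to show that for every compact set $K' \subset U$, the sequence of nonnegative random variables $(\|f_n\|_{K'})_n$ is tight on $\mathbb R$. By Markov's inequality \eqref{ineg0}, this in turn reduces to producing some $p>0$ such that $\sup_n \mathbb{E}\,\|f_n\|_{K'}^p < \infty$.

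Next, fix a compact $K' \subset U$. Since $U$ is open and $K'$ is compact in $U$, the distance from $K'$ to $\partial U$ (in $\mathbb C$) is strictly positive. I would choose $\delta>0$ small enough so that the $\delta$-neighborhood $\overline{K'_\delta}$ is contained in $U$, and in particular in $K$ (using $U \subset K$). One can then apply Lemma~\ref{Shirai} to the compact $K'$, with this choice of $\delta$, which gives, for the $p$ provided by the hypothesis,
\[
\|f_n\|_{K'}^p \leq (\pi \delta^2)^{-1} \int_{\overline{K'_\delta}} |f_n(z)|^p\, m(dz).
\]

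Taking expectations and applying Fubini (the integrand is nonnegative), together with the hypothesis $\sup_{z \in K} \mathbb{E}|f_n(z)|^p < C$ for all large $n$ and the inclusion $\overline{K'_\delta} \subset K$, I obtain
\[
\mathbb{E}\,\|f_n\|_{K'}^p \leq (\pi \delta^2)^{-1} m(\overline{K'_\delta})\, C,
\]
uniformly in $n$ large. Combined with Markov's inequality this yields tightness of $(\|f_n\|_{K'})_n$, and thus the desired tightness of $(f_n)$ in $\mathcal{P}(\mathcal{H}(U))$ via Prop.~2.5.

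There is no real obstacle here: the argument is a direct chaining of Lemma~\ref{Shirai}, Fubini, Markov and the Shirai tightness criterion. The only point meriting a moment of care is the choice of $\delta$, which must be small enough that the fattened compact $\overline{K'_\delta}$ stays inside $K$ (where the pointwise $L^p$ bound is available), and this is precisely why the hypothesis is formulated with the compact $K$ strictly containing the open set $U$ on which tightness is claimed.
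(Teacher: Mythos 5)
Your proof is correct and is essentially the paper's intended argument: the paper proves this lemma implicitly by exactly the same chain (Lemma~\ref{Shirai} on a compact $K'\subset U$ with a $\delta$-fattening staying inside $K$, Fubini and the pointwise moment bound, Markov's inequality \eqref{ineg0}, then Shirai's criterion that tightness of $\|f_n\|_{K'}$ for all compacts $K'\subset U$ gives tightness in $\cP(\cH(U))$). Only a cosmetic remark: the relevant Shirai result is the unnumbered Proposition (Prop.~2.5 of \cite{shirai12}), not Proposition~\ref{prop2.1Shirai}, which you do correct in passing.
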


Let us first recall some basic facts on  point processes (we refer to Daley and Vere-Jones \cite{daley-verejones} for details and terminology).

Define $\mathcal N(\C)$ the set of boundedly finite Borel measures  $\mu$ on $\C$, that is $\mu(A)<\infty$ for every bounded Borel set $A$. We endow $\mathcal N (\C)$ with the usual weak topology: $\mu_n \to \mu$ in $\mathcal N(\C)$ if for all continuous function $f$ with compact support, $\int f d\mu_n \to \int f d\mu$ (see \cite[Prop.~A.2.6.II]{daley-verejones}). The space $\mathcal N(\C)$ is a complete separable metric space (see \cite[Thm.~A.2.6.III]{daley-verejones}). The set $\mathcal P( \mathcal N(\C))$ of probability measures on $\cN(\C)$ is a complete separable metric space and the L\'evy-Prokhorov distance is a metric for the weak convergence of measures in $\cP ( \cN(\C))$.

For instance, for any Borel set $B$, and any  random matrix $M$,
\[
\sum_{ \lambda \in \spectrum M \cap B} \delta_\lambda
\]
is an example of a  $\mathcal N(\C)$-valued random variable. 

Let $D \subset \C$ be an open connected set. 
For a nonzero analytic function $f\in {\mathcal H}(D)$, we denote by $Z_f$ the set of zeroes of $f$ and define
\[
\xi_f=\sum_{z\in Z_f} m_z \delta_z
\]
where $m_z$ is the multiplicity of a zero $z$.  If moreover $f$ is  random, then $\xi_f$ is $\mathcal N(\C)$-valued. 

The following proposition is fundamental for our purpose.

\begin{prop}[Prop.~2.3 in \cite{shirai12}] \label{Shirai:zeroes}
Suppose that a sequence of random analytic functions $\{X_n\}$ converges in law to $X$ in $\mathcal H(D)$. Then the zero process $\xi_{X_n}$ converges in law to $\xi_X$ provided that $X\neq 0$ almost surely,  i.e. for all continuous function $\varphi$ with compact support in $D$, 
\[
\int \varphi d \xi_{X_n} \overset{\text{weakly}}{\underset{n \to \infty}{\longrightarrow}} \int \varphi d \xi_{X} .
\]
\end{prop}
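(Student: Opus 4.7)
The plan is to reduce convergence in law of the zero processes to an almost sure statement via Skorokhod, and then apply Hurwitz's theorem from complex analysis to convert almost sure convergence of analytic functions into weak convergence of their zero counting measures.

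First, since $\cH(D)$ equipped with the metric \eqref{eq:distHU} is a complete separable metric space, Skorokhod's representation theorem lets me construct, on a suitable probability space, analytic functions $\widetilde X_n$ and $\widetilde X$ with the same distributions as $X_n$ and $X$ respectively, such that $\widetilde X_n \to \widetilde X$ almost surely in $\cH(D)$. Since the zero process $\xi_f$ of $f$ is a measurable functional of $f$, and since the weak convergence of point processes is itself a statement about laws, it suffices to prove that on this Skorokhod space, $\xi_{\widetilde X_n}  \to \xi_{\widetilde X}$ in $\cN(\dC)$ almost surely. By assumption $\widetilde X \neq 0$ almost surely, so I may restrict attention to the event on which $\widetilde X_n \to \widetilde X$ in $\cH(D)$ and $\widetilde X \not\equiv 0$, which has probability one.

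On this event, I fix a continuous test function $\varphi$ with compact support $K\subset D$. Hurwitz's theorem will do the main work. Recall its standard form: if $f_n \to f$ uniformly on compacts, $f \not\equiv 0$, and $\gamma$ is a simple closed contour in $D$ on which $f$ does not vanish, then for $n$ large enough $f_n$ and $f$ have the same number of zeros (with multiplicities) inside $\gamma$. Since $\widetilde X \not\equiv 0$, its zeros are isolated, so $Z_{\widetilde X}\cap K$ is a finite set $\{z_1,\dots,z_m\}$ with multiplicities $m_1,\dots,m_m$. I cover $Z_{\widetilde X}\cap (\mathrm{supp}\,\varphi)_{\veps}$ (a small compact neighborhood of the support) by disjoint small open disks $D_j$ of radius $\eta$ around each $z_j$, chosen so that $\widetilde X$ has no zero on $\partial D_j$ nor in the compact set $K \setminus \bigcup_j D_j$. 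Applying Hurwitz's theorem to each $\partial D_j$ and using the lower bound $|\widetilde X| \geq c > 0$ on $K\setminus\bigcup_j D_j$ together with uniform convergence, for all large $n$ the zeros of $\widetilde X_n$ inside $K$ are all contained in $\bigsqcup_j D_j$, and each $D_j$ contains exactly $m_j$ zeros of $\widetilde X_n$ counted with multiplicity. Letting $\eta \to 0$ and using uniform continuity of $\varphi$ on $K$, this yields
\[
\int \varphi \,d\xi_{\widetilde X_n} \longrightarrow \sum_j m_j \varphi(z_j) = \int \varphi\, d\xi_{\widetilde X} ,
\]
proving $\xi_{\widetilde X_n}\to \xi_{\widetilde X}$ vaguely, hence in $\cN(\dC)$.

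The main technical point, and the only potential obstacle, is ensuring that one can always choose the neighborhoods $D_j$ so that $\widetilde X$ does not vanish on $\partial D_j$: this is immediate since zeros are isolated, so all but countably many radii work. Passing back from the almost sure convergence of $\xi_{\widetilde X_n}$ to $\xi_{\widetilde X}$ to weak convergence of the laws $\cL(\xi_{X_n})\to \cL(\xi_X)$ in $\cP(\cN(\dC))$ is then standard: almost sure convergence in a Polish space implies convergence in distribution, and the laws of $\xi_{\widetilde X_n}$ and $\xi_{X_n}$ coincide by construction.
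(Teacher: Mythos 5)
Your proof is correct: the Skorokhod representation plus Hurwitz's theorem (equivalently, continuity of $f\mapsto\xi_f$ at every $f\not\equiv 0$ followed by the continuous mapping theorem) is exactly the standard argument, and it matches the proof in Shirai's paper, which this article simply cites rather than reproves. The only points left implicit — measurability of $f\mapsto\xi_f$, and the convention for $\xi_{X_n}$ on the asymptotically negligible event $\{X_n\equiv 0\}$ — are routine and do not affect the argument.
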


 Following the terminology of Hough, Krishnapur, Peres, Vir{\'a}g \cite{HKP09}, we say that $g$ is a {\it Gaussian analytic function} on a domain $D$, if $g$ is a random analytic function on $D$, $(g(z), z \in D)$ is a Gaussian process and  for $z , w \in D$, $\mathbb{E}( g(z)  g(w)) =0$. The distribution of a Gaussian analytic function is characterized by its kernel $K(z,w) = \mathbb{E}( g(z) \overline{g(w)})$ (see \cite[Chapter 2]{HKP09}). The  Edelman-Kostlan's formula takes a particularly simple form for Gaussian analytic functions: the intensity of zeros of $g$ is given by $\frac 1 {4 \pi} \Delta \ln K(z,z)$, where $\Delta$ is the usual Laplacian. 

\begin{rem}
Let $d \geq 1$ be an integer. By taking the product topology on $M_d(\dC) \simeq \dC^{d^2}$, the topologies defined on $\cH(D)$ and $\cP(\cH(D))$ extend immediately for the set $\cH_d(D)$ of complex analytic functions $D \to M_{d}(\dC)$.
\end{rem}

\subsection{Stein's method of exchangeable pairs}

We use the following  normal approximation theorem which relies on Stein's method of exchangeable pairs. It is a straightforward extension of \cite[Theorem 3]{MR2797946}. 

Let $X \in \dC^d$ be a centered complex random variable and $X' \in \dC^d$ such that $(X,X')$ and $(X',X)$ have the same law.

We denote by $Z \in \dC^d$ be the centered complex Gaussian vector whose covariance is characterized by $\dE [Z Z^*] = \Sigma_1$ and $\dE [ ZZ^{\T}] = \Sigma_2$. Let $\cF$ be a $\sigma$-subalgebra of our underlying probability space such that $X$ is $\cF$-measurable.

\begin{thm}\label{th:stein}
Let $\Lambda \in M_d(\dC)$ be an invertible matrix. Assume that, for some $\cF$-measurable $E_0 \in \dC^d$ and $E_1,E_2 \in M_d(\dC)$,  
$$
\dE [ X' - X  | \cF ] = - \Lambda X + E_0,
$$
$$
\dE [ (X' - X) (X'-X)^*  | \cF ] = 2 \Lambda \Sigma_1 + E_1 \quad   \hbox{ and } \quad  \dE [ (X' - X) (X'-X)^{\T}  | \cF ] = 2 \Lambda \Sigma_2 + E_2.
$$
 Then for any function $f \in C^3 (\dC^d)$, 
\begin{align*}
&| \dE f (X) - \dE f (Z) | \leq \\
& \| \Lambda^{-1} \| \left( M_1(f) \dE \|E_0\|_2 + \frac{\sqrt{d}}{4} M_2(f) (\dE \NRMHS{E_1} + \dE \NRMHS{ E_2}) + M_3(f) \dE \| X - X' \|_2^3 \right).  
\end{align*}
 where for integer $k \geq 1$, $M_k(f) = \sup_{x \in \dC^n} \| D^k f(x) \|$ and $D^k$ denotes the $k$-th derivative. 
\end{thm}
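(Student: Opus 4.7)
The plan is to follow the generator approach of Stein's method of exchangeable pairs, adapting the real Gaussian argument of Meckes \cite{MR2797946} to the complex setting. First, I would identify $\dC^d \simeq \dR^{2d}$ by the map $x \mapsto (\Re x, \Im x)$; under this identification, the law of $Z$ is a centered Gaussian on $\dR^{2d}$ whose real covariance $\widetilde\Sigma \in M_{2d}(\dR)$ is the unique symmetric nonnegative matrix determined by $\Sigma_1=\dE[ZZ^*]$ and $\Sigma_2=\dE[ZZ^\intercal]$. All the hypotheses on $(X,X')$ pass to the real vectors $\widetilde X, \widetilde X'$ with conditional linear term $-\widetilde\Lambda \widetilde X+\widetilde E_0$ and conditional quadratic form $2\widetilde\Lambda\widetilde\Sigma+\widetilde E$ where $\widetilde\Lambda$ is the realification of $\Lambda$ and $\widetilde E$ is built from $E_1,E_2$ in a straightforward way. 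Since the realification is an isometry for the Euclidean and Hilbert-Schmidt norms, and $\|\widetilde\Lambda^{-1}\|\leq\|\Lambda^{-1}\|$, it will suffice to prove the real statement.

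For a test function $f\in C^3(\dR^{2d})$, the next step is to introduce the Stein equation $\cL_{\widetilde\Sigma} g=f-\dE f(\widetilde Z)$ where $\cL_{\widetilde\Sigma}g(x)=\tr(\widetilde\Sigma\,D^2 g(x))-\langle x,\nabla g(x)\rangle$ is the Ornstein-Uhlenbeck generator whose invariant measure is the law of $\widetilde Z$. The solution $g(x)=-\int_0^\infty(P_t f(x)-\dE f(\widetilde Z))\,dt$, where $P_t$ is the associated semigroup, is $C^3$ and satisfies the classical bounds $M_k(g)\leq M_k(f)/k$ for $k=1,2,3$. Thus the target quantity is $|\dE \cL_{\widetilde\Sigma}g(\widetilde X)|$.

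The core step is the exchangeable-pair identity. By the symmetry $(\widetilde X,\widetilde X')\overset{d}{=}(\widetilde X',\widetilde X)$, for any smooth antisymmetric $F(x,x')$ one has $\dE F(\widetilde X,\widetilde X')=0$; applying this with $F(x,x')=\langle x'-x,\widetilde\Lambda^{-\intercal}\nabla g(x)\rangle-\langle x-x',\widetilde\Lambda^{-\intercal}\nabla g(x')\rangle$ and Taylor expanding $g(x')$ to third order around $x$ yields the identity
\begin{equation*}
0=\dE\bigl[\langle\widetilde\Lambda^{-1}(\widetilde X'-\widetilde X),\nabla g(\widetilde X)\rangle\bigr]+\tfrac{1}{2}\dE\bigl[\tr\bigl(\widetilde\Lambda^{-1}(\widetilde X'-\widetilde X)(\widetilde X'-\widetilde X)^\intercal D^2 g(\widetilde X)\bigr)\bigr]+\cR,
\end{equation*}
where $|\cR|\leq \tfrac{1}{6}\|\widetilde\Lambda^{-1}\|\, M_3(g)\dE\|\widetilde X'-\widetilde X\|_2^3$. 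Substituting the hypotheses $\dE[\widetilde X'-\widetilde X\mid\cF]=-\widetilde\Lambda\widetilde X+\widetilde E_0$ and $\dE[(\widetilde X'-\widetilde X)(\widetilde X'-\widetilde X)^\intercal\mid\cF]=2\widetilde\Lambda\widetilde\Sigma+\widetilde E$ into this identity, the main terms collapse to exactly $\dE \cL_{\widetilde\Sigma}g(\widetilde X)$ while the $\widetilde E_0$ and $\widetilde E$ contributions produce, respectively, a term bounded by $\|\widetilde\Lambda^{-1}\|M_1(g)\,\dE\|\widetilde E_0\|_2$ and one bounded by $\tfrac{1}{2}\|\widetilde\Lambda^{-1}\|\dE|\tr(\widetilde E\,D^2 g(\widetilde X))|$.

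To convert the trace into a Hilbert-Schmidt bound, I would apply Cauchy-Schwarz: $|\tr(\widetilde E\,D^2 g)|\leq\NRMHS{\widetilde E}\NRMHS{D^2 g}\leq\sqrt{2d}\,M_2(g)\NRMHS{\widetilde E}$. Translating back through the realification gives the factor $\sqrt d$ (up to constants absorbed in the coefficient $1/4$) and the contributions of $E_1,E_2$ separately via $\NRMHS{\widetilde E}\leq\NRMHS{E_1}+\NRMHS{E_2}$. Summing the three estimates and using $M_k(g)\leq M_k(f)$ produces the stated bound. The only real obstacle is the bookkeeping of constants in the realification — one must check that the factor $\widetilde\Lambda^{-1}$ appearing naturally on the left of the Taylor identity yields exactly the factor $\|\Lambda^{-1}\|$ of the claim and that no extra dimension factor from the complex-to-real passage pollutes the $\sqrt{d}$ in the $M_2$ term; apart from that, the proof is a routine adaptation of \cite[Thm.~3]{MR2797946}.
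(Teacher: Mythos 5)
Your proposal is correct and follows essentially the same route as the paper: identify $\dC^d$ with $\dR^{2d}$, realify $\Lambda$, $E_0$, $E_1$, $E_2$ and the covariance, and invoke the real-valued exchangeable-pairs bound of Meckes — which the paper simply cites as \cite[Theorem 3]{MR2797946}, whereas you re-derive it via the Ornstein--Uhlenbeck generator; that re-derivation is the standard proof and is sound. The only bookkeeping point is the constant: to land exactly on $\frac{\sqrt d}{4}$ you should use the identity $2\NRMHS{\widetilde E}^2=\NRMHS{E_1}^2+\NRMHS{E_2}^2$ for the realified error block matrix (as the paper does), since the triangle inequality $\NRMHS{\widetilde E}\leq \NRMHS{E_1}+\NRMHS{E_2}$ you invoke loses a factor $\sqrt 2$.
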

\begin{proof}
If $X$ is real-valued then the claim is contained in \cite[Theorem 3]{MR2797946}. In the general complex case, we associate to $X$ the real vector of dimension $2d$, $Y = (\Re(X),\Im(X))$ of covariance matrix
$$
\Sigma = \dE [ Y Y^\T]  = \begin{pmatrix} \dE[\Re(X) \Re(X)^\T] & \dE[\Re(X) \Im(X)^\T] \\
\dE[\Im(X) \Re(X)^\T] & \dE[\Im(X) \Im(X)^\T]
\end{pmatrix} = \begin{pmatrix} \Sigma_{11} & \Sigma_{12} \\
\Sigma_{21} & \Sigma_{22}
\end{pmatrix}.
$$
By construction, we have $\Sigma_{11} = \Re ( \Sigma_1 + \Sigma_2)/2$, $\Sigma_{22} = \Re ( \Sigma_1 - \Sigma_2)/2$ and $\Sigma_{21} = \Sigma_{12}^{\T} = \Im ( \Sigma_1 + \Sigma_2)/2$. Similarly, we have
$$
\dE [ Y' - Y  | \cF ] = - \Lambda' Y + E'_0, \quad \hbox{ and } \quad  \dE [ (Y' - Y)(Y'-Y)^\T  | \cF ] = 2 \Lambda' \Sigma + E, 
$$
with 
$$
\Lambda' = \begin{pmatrix}
\Re(\Lambda) & - \Im (\Lambda) \\
\Im(\Lambda) &\Re (\Lambda)
\end{pmatrix} \;, \quad E'_0 = \begin{pmatrix}
\Re (E_0) \\ \Im(E_0)
\end{pmatrix}\quad \hbox{ and } \quad E = \begin{pmatrix}
E_{11} & E_{12} \\
E_{21} & E_{22}
\end{pmatrix},
$$
where $E_{11} = \Re ( E_1 + E_2)/2$, $E_{22} = \Re (E_1 - E_2)/2$ and $E_{21} = E_{12}^{\T} = \Im ( E_1 + E_2)/2$. The map $\dC \to \dR^2 :  z \mapsto (\Re(z),\Im(z))$ being an isometry, we find that $\| \Lambda'^{-1} \| = \| \Lambda ^{-1} \|$, $\|E'_0\|_2 = \| E_0\|_2$ and $2 \NRMHS{E}^2 =  \NRMHS{E_1}^2  + \NRMHS{E_2}^2 $. It then remains to apply \cite[Theorem 3]{MR2797946} to $Y$.
\end{proof}

\subsection{Convergence of Riemann sum of rational functions}

In this paragraph, we prove the following improvement on classical rates for the convergence of Riemann sums. 

\begin{prop}\label{prop:convratfunc}
Let $P,Q$ be polynomials in $\dC[X]$. Denote by   $z_i$'s the roots of $Q$,  $m_i$ their  multiplicities  and set $\delta_i =  \min ( |z_i|, 1/|z_i|)$. Then, if $\max_i \delta_i <1$, for some constant $C >0$ (depending only on the degrees of $P,Q$), for all integers $n \geq \max_i m_i$,
$$
\ABS{\frac{1}{n} \sum_{k=0}^{n-1} \frac{P(e^{\frac{2 \iC \pi k}{n}})}{Q(e^{\frac{2 \iC \pi k}{n}})} - \frac{1}{2 \iC \pi} \oint_{\dS^1} \frac{P(\omega)}{\omega Q(\omega)} d\omega } \leq C \max_i \left( \frac{n^{m_i-1} \delta_i^{n} }{\min(|z_i|, |z_i|^{m_i} ) ( 1- \delta_i)} \right).
$$
\end{prop}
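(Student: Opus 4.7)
The plan is to reduce to the case of elementary fractions via partial fraction decomposition and then estimate each piece by an explicit contour/residue computation. Write
\[
\frac{P(\omega)}{Q(\omega)} = R(\omega) + \sum_{i} \sum_{j=1}^{m_i} \frac{a_{i,j}}{(\omega - z_i)^j},
\]
where $R$ is a polynomial of degree at most $\deg P - \deg Q$ (empty if $\deg P < \deg Q$), and the coefficients $a_{i,j}$ are bounded by a constant depending only on the degrees. Since $R(\omega)/\omega$ is a Laurent polynomial with finitely many nontrivial Fourier modes (at worst of order up to $\deg R$), the orthogonality relation $\frac{1}{n}\sum_k \omega_n^{kp} = \frac{1}{2\pi}\int e^{ip\theta}\,d\theta$ for $|p|<n$ implies that, as soon as $n$ exceeds the degrees of $P$ and $Q$, the polynomial part of the decomposition contributes zero error. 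It therefore suffices to prove the estimate separately for the elementary building blocks $f_{z,j}(\omega) := 1/(\omega - z)^j$, $z = z_i$, $j \leq m_i$.

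For these, I will exploit the closed form $\sum_{k=0}^{n-1} \frac{1}{\omega - \omega_n^k} = \frac{n\omega^{n-1}}{\omega^n -1}$, which comes from logarithmically differentiating $\omega^n - 1 = \prod_k (\omega - \omega_n^k)$. Specialising at $\omega = z$ gives
\[
S_n(z,1) := \frac{1}{n}\sum_{k=0}^{n-1} \frac{1}{\omega_n^k - z} = \frac{z^{n-1}}{1-z^n},
\]
and a direct residue computation of $\oint_{\mathbb{S}^1} \frac{d\omega}{2i\pi\,\omega(\omega-z)}$ gives $I(z,1)=0$ when $|z|<1$ and $I(z,1) = -1/z$ when $|z|>1$. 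For higher orders $j\ge 2$ I will use
\[
S_n(z,j) = \frac{1}{(j-1)!}\,\frac{d^{j-1}}{dz^{j-1}} S_n(z,1), \qquad I(z,j) = \frac{1}{(j-1)!}\,\frac{d^{j-1}}{dz^{j-1}} I(z,1),
\]
which follow either from $1/(\omega-z)^j = \frac{1}{(j-1)!}\partial_z^{j-1}(1/(\omega-z))$ or from differentiation under the contour integral. This reduces the problem to a single explicit quantity.

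The last step is the analytic estimate. For $|z| = \delta < 1$, expanding the geometric series yields
\[
S_n(z,j) - I(z,j) = \sum_{\ell \ge 1} \binom{\ell n - 1}{j-1}\, z^{\ell n - j},
\]
and an analogous formula in $1/z$ for $|z|>1$ (with the $\ell=0$ term reproducing $I(z,j)$). Bounding $\binom{\ell n-1}{j-1} \le (\ell n)^{j-1}/(j-1)!$ and summing the resulting series $\sum_{\ell\geq 1} \ell^{j-1}\delta^{\ell n}$ via the identity $\sum_{\ell\geq 1}\ell^{j-1}y^\ell \le C_j\, y/(1-y)^j$ with $y = \delta^n$, together with $1-\delta^n \ge 1-\delta$, leads to the claimed bound of order $n^{m_i-1}\delta_i^n/[\min(|z_i|,|z_i|^{m_i})(1-\delta_i)]$; here the absolute factor $\min(|z_i|,|z_i|^{m_i})^{-1}$ appears because the $\delta^{-j}$ that pops out of shifting $z^{\ell n -j}$ by $j$ equals $|z|^{-j}$ if $|z|<1$ and $|z|^{j}$ if $|z|>1$. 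The main obstacle is controlling the powers of $(1-\delta)$ that arise when summing $\sum \ell^{j-1}\delta^{\ell n}$: naively one picks up a factor $(1-\delta)^{-j}$, and to obtain the single power $(1-\delta)^{-1}$ claimed in the statement one must absorb the remaining $(1-\delta)^{-(j-1)}$ into the constant $C$ by using $(j-1)! = (j-1)!$ at the cost of a dimension-dependent constant, which is allowed since the bound is only asserted up to a constant depending on $\deg P$ and $\deg Q$. This is the step that must be done carefully.
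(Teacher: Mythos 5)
Your reduction to partial fractions and the exact identity $\frac1n\sum_{k=0}^{n-1}(\omega_n^k-z)^{-1}=z^{n-1}/(1-z^n)$, differentiated $j-1$ times in $z$, is a sound and arguably cleaner route than the paper's, which instead truncates the expansion of $(z-\omega)^{-l-1}$ in powers of $\omega/z$ (resp.\ $z/\omega$) and uses orthogonality of the roots of unity; both computations lead to the same exact error series $\sum_{\ell\ge1}\binom{\ell n-1}{j-1}z^{\ell n-j}$ for $|z|<1$ and its analogue in $1/z$ for $|z|>1$, and your treatment of the polynomial part is the same as the paper's case (i).

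The gap is in your final step. Summing $\sum_{\ell\ge1}\ell^{j-1}\delta^{\ell n}$ produces a factor $(1-\delta^n)^{-j}\le(1-\delta)^{-j}$, and the surplus $(1-\delta)^{-(j-1)}$ cannot be ``absorbed into the constant $C$'': it depends on the location of the roots, not only on the degrees of $P$ and $Q$, and it blows up as $\delta_i\to1$, so the proposed absorption is not a valid move. The loss is not an artifact of crude bounding: for $P=1$, $Q(x)=(x-z)^2$, $z=\delta\in(0,1)$, your own closed form gives the exact error $\bigl((n-1)\delta^{n-2}+\delta^{2n-2}\bigr)/(1-\delta^n)^2$, which for $1-\delta=n^{-3}$ is of order $n^{5}$, whereas the right-hand side of the Proposition is of order $n^{4}$; so the single power of $(1-\delta_i)$ is genuinely out of reach of this computation, and what your argument actually proves is the estimate with $(1-\delta_i^n)^{m_i}$, hence $(1-\delta_i)^{m_i}$, in place of $1-\delta_i$ in the denominator (a similar caveat applies to your claim that the partial-fraction coefficients are bounded in terms of the degrees alone). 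For what it is worth, the same difficulty is hidden in the paper's asserted tail bound $\sum_{p\ge n}\binom{l+p}{p}x^p\le Cn^l x^n/(1-x)$, which also requires $1-x\gtrsim 1/n$; the weaker bound with the exponent $m_i$ is all that is needed for the applications (Lemma \ref{le:bdres} and Proposition \ref{convH}), where $P,Q$ are fixed and only exponential decay in $n$ matters, but as written your concluding absorption step fails.
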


\begin{proof}
We set $\omega_n = e^{\frac{2 \iC \pi }{n}}$,
$$
I_n = \frac{1}{n} \sum_{k=0}^{n-1} \frac{P(\omega_n^k)}{Q(\omega_n^k)} \quad \hbox{ and } \quad I = \frac{1}{2 \iC \pi }  \oint_{\dS^1} \frac{P(\omega)}{\omega Q(\omega)} d\omega = \frac{1}{2 \pi} \int_0^{2\pi} \frac{P(e^{\iC \theta})}{Q(e^{\iC \theta})} d\theta.
$$
We can decompose $P/Q$ into simple elements and use the linearity of sums and integrals. To prove the proposition, it follows that it is sufficient to check the claim in the following cases, for some integer $l \geq q \geq  0$,
\begin{enumerate}[(i)]
\item $P(x) = x^l, Q(x) = 1$, 
\item $P(x) = x^q$, $Q(x) = (z - x)^{l+1}$ with  $|z| > 1$,
\item $P(x) = x^q$, $Q(x) = (z - x)^{l+1}$ with $|z| < 1$ (and for $z = 0$ for $q = 0$).
\end{enumerate}

Case (i) is trivial. If $l = 0$, $I_n = I = 1$, if $l \geq 1$, $I_n = I = 0$. We next consider the case (ii). We have, for $|x| < 1$, the converging Taylor series: 
\begin{equation}\label{eq:Taylor}
(1- x)^{-l-1} = \sum_{p=0}^\infty x^p \binom{l+p}{p}.
\end{equation}
Since $\binom{l+p}{p} \sim p^l/l!$ as $p$ goes to infinity, it is standard to check that for all $n \geq 1$,
$$
\sum_{p=n}^\infty x^p \binom{l+p}{p} \leq C n^l \frac{x^n}{1-x},
$$
for some constant $C$ depending on $l$.

In particular, for $|z| > 1$ and $\omega \in \dS^1$, 
$$
(z- \omega)^{-l-1} = \frac{1}{z^{l+1}}\sum_{p=0}^{n-q-1} \left(\frac{\omega}{z}\right) ^p \binom{l+p}{p} + R_n(\omega),
$$
with $|R_n(\omega) | \leq C n^l |z|^{-n+q-l} / ( |z| - 1)$.
We deduce that 
$$
I_n =   \sum_{p=0}^{n-q-1} \frac{1}{z^{l+1+p}}  \binom{l+p}{p} \frac {1}{n} \sum_{k=0}^{n-1} \omega^{ k(p+q)}_n  + R_n = \frac{\IND_{q=0}}{z^{l+1}} + R_n,
$$
with $|R_n  | \leq C n^l |z|^{-n+q-l} / ( |z| - 1)$ and we have used that $p+q = 0 \hbox{ mod}(n)$ has no solution on $0 \leq p \leq n-q-1$ except for $q=p=0$. It concludes the proof in this case since $I = \IND_{q = 0} / z^{l+1}$ by the Residue Theorem (the worst bound corresponding to $q = l$).

The final case (iii) is similar. From \eqref{eq:Taylor}, for $|z| < 1$ and $\omega \in \dS^1$, we have for $n \geq l-q+1$, 
$$
(z- \omega)^{-l-1} = \frac{1}{(-\omega)^{l+1}}\sum_{p=0}^{n-l+q-2} \left(\frac{z}{\omega}\right) ^p \binom{l+p}{p} + \tilde R_n(\omega),
$$
with $|\tilde R_n(\omega) | \leq C n^l |z|^{n-l+q-1} / ( 1 - |z|)$.
We deduce that 
$$
I_n =   (-1)^{l+1} \sum_{p=0}^{n-l+q - 2} z^p  \binom{l+p}{p} \frac {1}{n} \sum_{k=0}^{n-1} \omega^{ -k( p -q + l + 1)}_n  + \tilde R_n =  \tilde R_n,
$$
where $|\tilde R_n  | \leq C n^l |z|^{n-l+q-1} / (1 -  |z|)$ and we have used that $p-q + l+1 = 0 \hbox{ mod}(n)$ has no solution on $0 \leq p \leq n-l+q-2$. It concludes the proof since $I =0$ by the Residue Theorem (the worst bound corresponding to $q=0$). \end{proof}

\subsection{Norm bound for sum-product matrix factors}
\label{subsec:sumproduct}

In this subsection, we consider the following setting. Let $G = (V,E)$ be a finite directed graph on the vertex set $V$. The graph $G$ is allowed to be a multigraph, that is, the edge set $E$ is a finite set equipped with two maps $o,t : E \to V$ where $o(e)$ is interpreted as the origin vertex of the edge and $t(e)$ as the terminal vertex. Note that $o(e) = t(e)$ is not forbidden. For ease of notation, we set $e_- = o(e)$ and $e_+ =t(e)$. A (weak) {\em connected component} of $G$ is a connected component of the undirected graph obtained from $G$ by removing edge orientations.  The degree of $v \in V$ is $\deg(v) = \sum_{e} \IND( v = o(e) ) + \IND( v = t(e)) $.  We say that $G$ is {\em even} if all vertices have even degree. Note that the degree of a vertex is even if and only if its loop-less degree is even.

We consider a collection $M = (M_e)_{e \in E}$ of elements in $M_n(\dC)$.  We are interested in evaluating the expression 
\begin{equation}\label{eq:defTGM}
T_G (M) = \sum_{ \bm i  } \prod_{e \in E} (M_e)_{i_{e_-}i_{e_+}},
\end{equation}
where the sum is over all $\bm i = (i_v)_{v \in V}  \in \{1,\ldots,n\}^V$.

For example, if $V = \{1,2\}$ and $E = \{ e_1,e_2 \}$ with $e_{\veps,-} = 1$ and $e_{\veps,+} = 2$, then 
$$
T_G (M) = \sum_{i,j} (M_{e_1})_{ij}(M_{e_2})_{ij} = \TR(M_{e_1} M_{e_2}^\intercal). 
$$   

The main result of this subsection is the following general upper bound on $T_G(M)$ for even graphs.  The statement is tailored for our needs. The first statement is a corollary of \cite[Theorem A.31]{BaiSilversteinbook} which built upon \cite{MR0727035}, see also \cite{zbMATH06017643} for an improved bound. We will give an independent proof for completeness.

\begin{thm}\label{th:sumproduct}
Let $G = (V,E)$ be a finite weakly connected directed even graph and $M \in M_n(\dC)^E$. We have
$$
| T_G(M) | \leq n  \prod_{e \in E} \| M_e\|. 
$$
Moreover, if $f \in E$ and $M_{f}$ has rank $r$,  we have 
$$
| T_G(M) | \leq \sqrt{r n }    \prod_{e \in E} \| M_e\|.
$$
\end{thm}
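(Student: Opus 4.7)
The plan is to prove both bounds by induction on $|E|$, with the base case $|V|=1$ handled directly and a reduction for $|V|\geq 2$ based on an Eulerian circuit decomposition of $G$. The base case and the "clean" split step give the full $\sqrt{rn}$-improvement more or less automatically; the technical difficulty lies in dealing with repeated vertices in the Eulerian circuit.

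For the base case $|V|=1$, the graph is a single vertex with $|E|$ loops and $T_G(M)=\sum_{i=1}^n \prod_{e\in E}(M_e)_{ii}$. The inequality $|(M_e)_{ii}|=|\langle e_i,M_ee_i\rangle|\leq\|M_e\|$ gives the first bound. For the rank-$r$ strengthening, Cauchy--Schwarz isolating the factor $(M_f)_{ii}$ yields
\[
|T_G(M)|\leq\Bigl(\sum_{i=1}^n|(M_f)_{ii}|^2\Bigr)^{1/2}\Bigl(\sum_{i=1}^n\prod_{e\neq f}|(M_e)_{ii}|^2\Bigr)^{1/2}\leq\sqrt{\Tr(M_fM_f^*)}\cdot\sqrt{n}\prod_{e\neq f}\|M_e\|,
\]
and $\Tr(M_fM_f^*)\leq r\|M_f\|^2$ since $M_f$ has rank at most $r$, giving $\sqrt{rn}\prod_e\|M_e\|$.

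For the inductive step with $|V|\geq 2$, the key observation is that, since every vertex has even total degree and $G$ is weakly connected, the edges of $G$ can be reoriented so that in-degree equals out-degree at every vertex; this reorientation only transposes some of the $M_e$, which preserves operator norms. By Euler's theorem, the reoriented graph carries a directed Eulerian circuit $e_1,\ldots,e_m$ with visited vertices $v_0,\ldots,v_{m-1},v_0$ (each edge used exactly once). If every vertex is visited exactly once, then $T_G(M)=\Tr(N_1\cdots N_m)$ where $N_k\in\{M_{e_k},M_{e_k}^\intercal\}$, and the cyclic trace bound $|\Tr(N_1\cdots N_m)|\leq n\prod\|N_k\|$ gives the first inequality, while a Cauchy--Schwarz split at the edge $f$ (writing the trace as $\Tr(A B)$ where $A$ contains $M_f$) together with $\|A\|_F\leq\sqrt{r}\|A\|$ gives $\sqrt{rn}\prod\|M_e\|$. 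Otherwise, choose a vertex $v$ visited at least twice and split the Eulerian circuit at $v$ into two sub-circuits $C_1,C_2$. If $C_1,C_2$ share no other vertex, then $T_G(M)=\sum_{i_v}(W_1)_{i_v,i_v}(W_2)_{i_v,i_v}$ with $W_j$ obtained by summing $C_j$'s contribution over its internal indices and $\|W_j\|\leq\prod_{e\in C_j}\|M_e\|$; the bound $|(W_j)_{i_v,i_v}|\leq\|W_j\|$ delivers the first inequality, while $\sum_{i_v}|(W_1)_{ii}|^2\leq r\|W_1\|^2$ (when $f\in C_1$, since then $\mathrm{rank}(W_1)\leq r$) combined with Cauchy--Schwarz gives the $\sqrt{rn}$ improvement.

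The main obstacle is that $C_1$ and $C_2$ may share repeated vertices other than $v$, in which case the indices at those shared vertices are tied between the two sub-circuits and $W_1,W_2$ are no longer legitimate independent matrices. The plan for this step is to iterate the splitting: at each additional shared repeated vertex $w$, split again, producing a nested recursion that terminates when every innermost sub-circuit visits its vertices only once. The identity $T_G(M)=\sum_{i_v}(W_1)_{ii}(W_2)_{ii}$ is replaced by a nested version whose outermost form is still $\sum_{i_v}$ of a product of two diagonal entries, and Cauchy--Schwarz applied at each nesting level yields the stated bound, with ranks tracked so that the $\sqrt{rn}$ factor survives. The technical heart of the argument is choosing the right monotone quantity (for example, $\sum_v \binom{d_v/2}{2}$, which strictly decreases whenever two visits of a vertex are separated into different sub-circuits) so that the recursion is well-founded and terminates at the single-cycle or base case.
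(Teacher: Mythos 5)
Your skeleton --- Eulerian circuit, reduction to a trace, Cauchy--Schwarz at the low-rank edge --- points in a reasonable direction (the paper also builds its argument on an Eulerian circuit), and your base case and simple-cycle case are fine. But the step where all the difficulty lives, the recursive splitting at repeated vertices, is not established, and two of its asserted ingredients do not hold as stated. First, when a sub-circuit $C_1$ has repeated vertices of its own, $W_1$ is not a product of matrices: identified visits force diagonal extractions, e.g.\ a closed walk $v\to w\to u\to w\to v$ gives $W_1=A\,\diag\big((BC)_{jj}\big)\,D$. The bound $\|W_1\|\le\prod_{e\in C_1}\|M_e\|$ for such objects is itself a free-index version of the theorem you are proving, so asserting it without proof is essentially circular. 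Second, the rank claim fails in that situation: if $M_f$ is absorbed into a diagonal extraction, $\mathrm{rank}(W_1)\le r$ need not hold. For instance with $M_f=uv^\intercal$ of rank one, $\diag\big((M_fC)_{jj}\big)=\diag(u)\,\diag(C^\intercal v)$ generically has rank $n$, so the inequality $\sum_i|(W_1)_{ii}|^2\le r\|W_1\|^2$ that produces your $\sqrt{rn}$ factor breaks down. Third, the bookkeeping showing that nested splits cost only one global factor of $n$ (rather than a factor per split) is exactly the delicate point; your proposed monotone quantity only guarantees termination, not this uniform control. Already with two shared repeated vertices one faces sums like $\sum_{i,j}A_{ij}C_{ij}B_{ji}D_{ji}=\Tr\big((A\odot C)(B\odot D)\big)$, which can be handled ad hoc, but a uniform argument over all nesting patterns is precisely what is missing.

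The paper closes this gap by a different device: a deterministic lemma bounding $|T_G(M)|$ by $\prod_e\|M_e\|_e$, where the norm attached to an edge ($\|\cdot\|$, $\|\cdot\|_{1,\infty}$, $\|\cdot\|_{2,\infty}$, or the Frobenius norm for ``bad'' edges) depends on whether its endpoints start, end, or are open at that edge in a chosen ordering; the proof is a tensor-product embedding $\tilde M_{e_1}\cdots\tilde M_{e_m}=E_{11}^{\otimes V}\,T_G(M)$ together with submultiplicativity of the operator norm, and it treats all repeated-vertex patterns simultaneously. An Eulerian ordering then has exactly two bad edges, each costing at most $\sqrt n$ via $\|\cdot\|_\rF\le\sqrt n\,\|\cdot\|$, and the edge $f$ can be arranged to be one of them, where $\|M_f\|_\rF\le\sqrt r\,\|M_f\|$ yields the $\sqrt{rn}$ bound. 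To salvage your recursion you would in effect have to prove a free-boundary analogue of that lemma at every nesting level, at which point the tensor argument is the shorter route.
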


 The remainder of the subsection is devoted to the proof of  Theorem \ref{th:sumproduct}. We introduce the following $\ell^p\to \ell^q$ norms in $M_n(\dC)$: for $M \in M_n(\dC)$, 
$$
\| M \|_{1,\infty} = \max_{i,j} |M_{ij}| \quad  \hbox{ and } \quad   \| M \|_{2,\infty} = \max_{i} \sqrt{ \sum_{j} |M_{ij}|^2 }. 
$$
Note that $\| M \|_{1,\infty} \leq \| M \|_{2 , \infty } \leq \|M \|_{2,2} = \| M  \| \leq \|M \|_{\rF} \leq \sqrt{n} \|M\|$.

Now, let $G = (V,E)$ be as above, we fix an ordering on $E$ and write $ E = \{ e_1, e_2 , \ldots ,e_m\}$.  We classify elements of $E$ as follows. Let $e = e_t$. We say that $w\in \{e_-,e_+\}$  {\em starts at $e$} if $w \notin \cup_{s < t}\{e_{s,-}, e_{s,+} \}$. We say that $w$ {\em ends at $e$} if $w \notin \cup_{t  <  s }\{e_{s,-}, e_{s,+} \}$. We say that $e$ is {\em bad} if $e_+$ and $e_-$ both starts at $e$ or both ends at $e$. Finally, $w$ is {\em open at $e$} otherwise.

Next, for $e \in E$ and $M \in M_n(\dC)$, we set 
$$
\| M \|_{e} = \left\{ \begin{array}{ll} \|M\|_{\rF} & \hbox{ if $e$ is bad},\\
\|M\|_{2,\infty} \vee \|M^* \|_{2,\infty} &  \hbox{ if $e_{\pm}$ is open and $e_{\mp}$ is extreme at $e$}, \\
 \|M\| &  \hbox{ if $e_{\pm}$ starts at $e$ and $e_{\mp}$ ends at $e$}, \\
\|M \|_{1,\infty}  &  \hbox{ if $e_-$ and $e_+$ are open at $e$}
\end{array} 
\right.
$$ 

\begin{lem}\label{le:tenseur}
Let $G = (V,E)$ be a finite connected directed graph such that $|V|\geq 2$ and all vertices have degree at least two. Then, for any ordering of the edge set  $E = \{e_1,\ldots,e_m\}$, we have  
$$
T^+_G(M) \leq \prod_{e \in E} \| M_e \|_{e} \leq \prod_{e \in E_b} \| M_e \|_{\rF} \prod_{e \in E \backslash E_b} \| M_e\|,
$$
where   $E_b$ is the set of bad edges.
\end{lem}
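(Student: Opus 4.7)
The plan is to prove the first inequality $T^+_G(M) \le \prod_{e}\|M_e\|_e$ by strong induction on $|E|$; the second inequality in the statement is immediate since $\|M\|_e \le \|M\|_{\rF}$ for $e \in E_b$ and $\|M\|_e \le \|M\|$ for $e \notin E_b$ by the very definition of $\|\cdot\|_e$. The base case is the graph of two parallel edges between two vertices: both edges are bad (both endpoints start at $e_1$ and end at $e_2$), and Cauchy--Schwarz yields $\sum_{i,j}|(M_1)_{i,j}||(M_2)_{i,j}| \le \|M_1\|_{\rF}\|M_2\|_{\rF}$, matching the required bound.

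For the inductive step I would peel off the last edge $e=e_m$ of the ordering. Since no later edge exists, both endpoints $v_\pm := e_\pm$ end at $e$ vacuously, so $e$ is bad and the natural target is to extract a factor $\|M_e\|_{\rF}$. Writing
\[
T^+_G(M) = \sum_{i,j} |(M_e)_{i,j}|\,F(i,j), \qquad F(i,j) := \sum_{\bm{i}'}\prod_{e'\ne e}|(M_{e'})_{i_{e'_-},\,i_{e'_+}}|
\]
with $i_{v_-}=i$ and $i_{v_+}=j$ fixed, Cauchy--Schwarz gives $T^+_G(M) \le \|M_e\|_{\rF}\bigl(\sum_{i,j}F(i,j)^2\bigr)^{1/2}$. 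The inner quantity $\sum_{i,j}F(i,j)^2$ is itself of the form $T^+_{\tilde G}(\tilde M)$, where $\tilde G$ is the ``doubled'' graph obtained by taking two copies of $G\setminus\{e\}$ and identifying them at $v_-$ and $v_+$. An inductive argument applied to $\tilde G$ would then complete the step.

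The main obstacle I expect is that $\tilde G$ has roughly twice as many edges as $G\setminus\{e\}$, so the induction on $|E|$ does not close directly. I would resolve this by reorganising the argument to process \emph{vertices} rather than edges: sum out $i_v$ one vertex at a time in the order $\tau(v) := \max\{t : v\in e_t\}$, applying Cauchy--Schwarz (or Hölder for higher-degree vertices) to the $\deg(v)\ge 2$ entry-factors containing $i_v$. Each such summation contributes a partial norm---essentially a row or column $\ell^2$-norm---to each incident edge; aggregating these contributions over all vertices and finishing with a single outer Cauchy--Schwarz should reconstruct exactly $\|M_e\|_e$ for every edge. The delicate part is the combinatorial bookkeeping: each edge receives two partial contributions (one when each of its endpoints is processed), and one must verify that in each of the four configurations they combine to give exactly $\|M\|_{\rF}$, $\|M\|_{2,\infty}\vee\|M^*\|_{2,\infty}$, $\|M\|$, or $\|M\|_{1,\infty}$ according to whether both endpoints are extreme-of-the-same-kind (bad), one is open and one extreme, one starts and one ends, or both are open. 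Checking this case-by-case matching with the prescribed ordering-dependent classification is the crux.
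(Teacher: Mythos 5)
Your proposal does not reach a proof: both of its stages stop exactly at the decisive point. You concede that the edge-peeling induction ``does not close'', and the vertex-by-vertex elimination that is supposed to rescue it is left as unverified bookkeeping (``the crux''). The gap is not just one of detail, because the route you chose cannot deliver the required bound in the one nontrivial case. Your whole scheme is entrywise: you put absolute values on all matrix entries and use entrywise Cauchy--Schwarz/H\"older, so the only quantities you can ever produce are norms of the entrywise absolute value $|M_e|$ -- which coincide with $\|M_e\|_{\rF}$, $\|M_e\|_{2,\infty}$, $\|M_e\|_{1,\infty}$, but \emph{not} with the operator norm $\|M_e\|$ prescribed when one endpoint starts and the other ends at $e$. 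Indeed, with absolute values inside the sum the claimed inequality with $\|M_e\|$ is false: take the directed $3$-cycle $e_1=(1,2)$, $e_2=(2,3)$, $e_3=(3,1)$ with its natural ordering (so $e_1,e_3$ are bad and $e_2$ is of starts/ends type), $n=2$, $M_1=M_3=\tfrac1{\sqrt2}\begin{pmatrix}1&1\\1&1\end{pmatrix}$ and $M_2=\begin{pmatrix}1&1\\-1&1\end{pmatrix}$; then $\sum_{i,j,k}|(M_1)_{ij}|\,|(M_2)_{jk}|\,|(M_3)_{ki}|=4$ while $\|M_1\|_{\rF}\,\|M_2\|\,\|M_3\|_{\rF}=2\sqrt2$. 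So no amount of case-by-case matching of row/column $\ell^2$-contributions can ``reconstruct exactly $\|M_e\|_e$'': the inequality you are aiming at, in the absolute-value form you set up, does not hold, and what must be proved (and what the rest of the section uses) is the bound for $T_G(M)$ itself, where cancellations matter.

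The paper's argument is built precisely to keep that non-entrywise structure. For each edge $e$ one defines $\tilde M_e\in M_n(\dC)^{\otimes V}$ by inserting, in the slots of the two endpoints, $E_{1 i_\pm}$ if the endpoint starts at $e$, $E_{i_\pm 1}$ if it ends at $e$, and $E_{i_\pm i_\pm}$ if it is open, tensored with identities elsewhere; then $\tilde M_{e_1}\cdots \tilde M_{e_m}=E_{11}^{\otimes V}\,T_G(M)$, submultiplicativity of the operator norm gives $|T_G(M)|\le\prod_e\|\tilde M_e\|$, and a direct computation shows $\|\tilde M_e\|=\|M_e\|_e$ -- in particular $\bigl\|\sum_{i,j}M_{ij}\,E_{1i}\otimes E_{j1}\bigr\|=\|M\|$, which is exactly how the operator norm enters and exactly what entrywise estimates destroy. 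If you want a sequential-elimination proof, you must eliminate vertices by genuine matrix--vector multiplications, retaining signs/phases; carried out carefully this is essentially the tensorization argument in disguise, not the entrywise H\"older bookkeeping you describe. (Your remarks on the second inequality and on the two-parallel-edge base case are fine; they are not where the difficulty lies.)
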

\begin{proof}
Note that since $\|M_e\|_e$ only depends on the absolute values of the entries of $M_e$, it is sufficient to prove the required bound for $T_G(M)$. We define for $e = (e_- ,e_+)$, the matrix in $ M_{n ^{|V|}} (\dC) \simeq M_n (\dC)^{\otimes V}$,
$$
\tilde M_e = \sum_{i_-,i_+}  (M_e)_{i_-i_+} \bigotimes_{v \in V} J_{v,e,i_-,i_+} ,
$$
where $J_{v,e,i_-,i_+} = I_n$ if $v \notin \{e_-,e_+\}$,  $J_{v,e,i_-,i_+} = E_{1i_\pm}$ if $e_\pm =v$ and $v$ starts at $e$, $J_{v,e,i_-,i_+} = E_{i_\pm 1}$ if $e_\pm =v$ and $v$ ends at $e$, finally  $J_{v,e,i_-,i_+} = E_{i_\pm i_\pm}$ if  $e_\pm =v$ and $v$ is open at $e$ (note that the case $e_-= e_+ = v$ where $v$ starts and ends at $e$ is not possible since $G$ connected and $|V| \geq 2$). We have the following identity
$$
\tilde M_{e_1} \cdots \tilde M_{e_m} = E_{11}^{\otimes V} T_G (M).
$$
Hence, using the sub-multiplicativity of the operator norm,
$$
\ABS{ T_G(M) } \leq \prod_{e \in E} \| \tilde M_e \|.
$$
We finally observe that 
$$
\| \tilde M_e \| = \| M \|_e.
$$
Indeed, since $\| I \otimes A \| = \| A \|$, it suffices to check to following identities, for $M \in M_n(\dC)$,
\begin{eqnarray*}
&  \| \sum_{i,j} M_{ij} E_{ii} \otimes E_{jj} \| = \| M \|_{1,\infty} \, ,   \quad \| \sum_{i,j} M_{ij} E_{ii} \otimes E_{1j} \| = \| M \|_{2,\infty}  \\
& \| \sum_{i,j} M_{ij} E_{1i} \otimes E_{j1} \| = \| M \| \; \hbox{ and } \; \| \sum_{i,j} M_{ij} E_{1i} \otimes E_{1j} \|  = \| M\|_{\rF}.
\end{eqnarray*}
The conclusion follows.
\end{proof}

The next lemma uses the probabilistic method to prove the existence of an ordering which does not have  too many bad edges. 

\begin{lem}\label{le:PM}
Let $G = (V,E)$ be a finite directed even graph with $|V| \geq 2$. There exists an ordering of $E$ with two  bad edges. Moreover, if $f \in E$, there exists an ordering of $E$ such that $f$ is among the two bad edges.
\end{lem}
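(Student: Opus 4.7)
The natural strategy is to order $E$ by traversing an Eulerian circuit of the underlying undirected multigraph. Since $G$ is weakly connected and every vertex has even (total) degree, forgetting orientations yields a connected Eulerian multigraph, which admits an Eulerian circuit $v_0, e_1, v_1, \dots, e_m, v_m$ with $v_0 = v_m$ visiting each edge exactly once. I would use this as the ordering $e_1, \dots, e_m$. Note that the notion of ``starts/ends at $e$'' depends only on the unordered vertex set $\{e_-, e_+\}$ and not on which endpoint is tail or head, so the directed structure is irrelevant; in particular one does not need the stronger condition that in-degree equals out-degree at each vertex.

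Next, I would verify that with this ordering only $e_1$ and $e_m$ are bad. The first edge $e_1$ is bad because no vertex has appeared in any earlier edge, hence both its endpoints start at $e_1$; symmetrically $e_m$ is bad because none of its endpoints appears in any later edge. For $2 \leq t \leq m-1$, the vertex $v_{t-1}$ is already an endpoint of $e_{t-1}$, so $v_{t-1}$ does not start at $e_t$, and $v_t$ is an endpoint of $e_{t+1}$, so $v_t$ does not end at $e_t$; thus $e_t$ is neither start- nor end-bad. Because $|V| \geq 2$ together with connectedness and evenness forces $m \geq 2$ (a single edge would leave two vertices of odd degree), $e_1$ and $e_m$ are distinct, giving exactly two bad edges.

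For the second statement, given $f \in E$, I would invoke the standard fact that a connected Eulerian multigraph admits an Eulerian circuit beginning with any prescribed edge, and take $e_1 = f$. If $f$ is not a loop, then $G - f$ is still connected (an Eulerian graph is bridgeless, as otherwise each side of a bridge would contain an odd-degree vertex) and has exactly the two endpoints of $f$ as its odd-degree vertices; it therefore admits an Eulerian trail between them, and prepending $f$ yields the required circuit. The loop case is immediate since removing a loop preserves all degrees mod $2$ and connectedness. The previous paragraph then shows that $f$ is one of the two bad edges. The main point is the simple observation that ``bad'' only sees the vertex structure of the edges, so an undirected Euler tour does the job; no genuine obstacle is anticipated beyond checking the $m=1$ corner case, which is ruled out by $|V|\geq 2$ and evenness.
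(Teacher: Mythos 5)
Your proof is correct and follows essentially the same route as the paper: order the edges along an Eulerian circuit of the underlying undirected multigraph (which exists since the graph is even and weakly connected), so that only the first and last edges are bad. The only cosmetic difference is in forcing $f$ to come first: the paper simply rotates the circuit using its cyclicity, whereas you re-derive the prescribed-first-edge fact via bridgelessness and an Eulerian trail, which is valid but slightly longer.
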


\begin{proof}
Let $\bar G = (V,\bar E)$ be the un-directed multi-graph associated to $G$.  Let $m = |E| = |\bar E|$ be the number of edges. Since $\bar G$ is even, from Euler's Theorem, there exists an Eulerian circuit on $\bar G$, that is a sequence $(\bar \pi_1,\ldots,\bar \pi_m)$ in $\bar E$ such that  for all $t = 1,\ldots,m$  $\bar \pi_t$ and $\bar \pi_{t+1}$ share at least one neighboring vertex with $\bar \pi_{m+1} = \bar \pi_1$. For each $t$, let $\pi_t \in E$ be the corresponding edge in $E$. We consider the ordering $\pi_1,ldots,\pi_m$. By construction, $\pi_1$ and $\pi_m$ are the only two bad edges. Moreover, using the cyclicity of the Eulerian circuit, we can choose the circuit such that  $\pi_1 = f$.
\end{proof}

We are ready for the proof of Theorem \ref{th:sumproduct}.
\begin{proof}[Proof of Theorem \ref{th:sumproduct}]
If $|V| = 1$ then the statement is a straightforward consequence of the bounds $\| M \|_{\rF} \leq \sqrt{\mathrm{rank}(M)} \| M \|$. If $|V| \geq 2$, it is a consequence of Lemma \ref{le:tenseur} and Lemma \ref{le:PM}.
\end{proof}

\bibliographystyle{alpha}
\bibliography{toeplitz_bib}

\end{document}